\documentclass[a4paper] {article}
[12pt]
\makeatletter
\renewcommand*\l@section{\@dottedtocline{1}{1.5em}{2.3em}}
\makeatother
\usepackage{amsfonts}
\usepackage{amssymb}
\usepackage[colorlinks=true,
            linkcolor=blue,
            anchorcolor=blue,  %%修改此处为你想要的颜色
            citecolor=blue,        %%修改此处为你想要的颜色，例如修改blue为red
            ]{hyperref}
\usepackage[T1]{fontenc}

\usepackage{tikz}\usepackage{float}%\usepackage{pgfplots}
\usetikzlibrary{calc}

\usepackage{CJK}
\usepackage{amsmath}

\usepackage{amsfonts}
\usepackage{amssymb}
\usepackage{amsthm}
\usepackage{amssymb}
\usepackage{enumerate}
\usepackage[calc]{picture}\usepackage{graphicx}\usepackage{subfigure}\usepackage{cite}\usepackage{booktabs}
\usepackage[all,cmtip]{xy}

\usepackage{eqlist}

\usepackage{color}
\usepackage{abstract}

\setlength{\abovecaptionskip}{0pt}
\setlength{\belowcaptionskip}{0pt}

\usepackage[top=3.0 cm, bottom=3.0cm, left=3.5cm, right= 3.5cm]{geometry}

\usepackage{cite}

\theoremstyle{plain}
\newtheorem{theorem}{Theorem}[section]
\newtheorem{proposition}[theorem]{Proposition}
\newtheorem{lemma}[theorem]{Lemma}
\newtheorem{corollary}[theorem]{Corollary}

\theoremstyle{definition}
\newtheorem{definition}{Definition}[section]

\theoremstyle{example}
\newtheorem{example}{Example}[section]

\theoremstyle{remark}%{myrem}
 \newtheorem{remark}{Remark}[section]
% \appto\remark{\leftskip\parindent}
 %\appto\remark{\rightskip\parindent}
 %\theoremstyle{myrem}
 %\newtheorem{remark}{Remark}
% \appto\remark{\leftskip\parindent}
 %\appto\remark{\rightskip\parindent}

%\theoremstyle{remark}
%\newtheorem{remark}{Remark}[section]

%{\theoremstyle{definition}}
%\newtheorem{remark}[theorem]{Remark}
%\theoremstyle{remark}
%\newtheorem{remark}[theorem]{Remark}

\usepackage{etoolbox}
%\newtheoremstyle{myrem}%name
 %{3pt}%Space above
 %{3pt}%Space below
 %{\normalsize}%Body font
 %{ }%Indent amount
% {\itshape}% Theorem head font
 %{:}%Punctuation after theorem head
 %{ }%Space after theorem head 2
 %{}%Theorem head spec (can be left empty, meaning ?normal?)

\numberwithin{equation}{section}
\numberwithin{theorem}{section}

\begin{document}
    \begin{CJK*}{GBK}{kai}
    \CJKtilde

\begin{center}
{\Large {\textbf {On the Cayley-persistence algebra
}}}
 \vspace{0.58cm}

Wanying Bi, Jingyan Li, Jian Liu, Jie Wu*

\bigskip

\bigskip

    \parbox{24cc}{{\small
{\textbf{Abstract}.}
In this paper, we introduce a persistent (co)homology theory for Cayley digraph grading. We give the algebraic structures of Cayley-persistence object. Specifically, we consider the module structure of persistent (co)homology and show the decomposition of a finitely generated Cayley-persistence module. Moreover, we introduce the persistence-cup product on the Cayley-persistence module and study the twisted structure with respect to the persistence-cup product.
As an application on manifolds, we show that the persistent (co)homology is closely related to the persistent map of fundamental classes.
}}
\end{center}

\vspace{1cc}

\footnotetext[1]
{ {\bf 2020 Mathematics Subject Classification.}  	Primary  55N31;  Secondary 55U25, 06F30.
}

\footnotetext[2]{{\bf Keywords and Phrases.}   Cayley digraph; Cayley-persistence algebra; persistence module; cup product; Poincar\'{e} duality. }

\footnotetext[3] {* corresponding author. }
%\tableofcontents
\section{Introduction}
Topological data analysis (TDA) has become one of the most successful applications of topology in data science. The persistent homology introduced by  G. Carlsson, A. Zomorodian et al. \cite{carlsson2005persistence,Zomorodian2005} is still one of the most important methods in TDA and has become a classic tool for extracting topological features. Nowadays, persistent homology has been widely used in various aspects of image science \cite{carlsson2008local,bonis2016persistence,edelsbrunner2013persistent}, neuroscience \cite{carlsson2007topological,baas2017concept,stolz2014computational}, biomolecular \cite{cang2015topological,gameiro2015topological,xia2014persistent}, machine learning \cite{clough2019topological,bae2017beyond,pun2018persistent,giansiracusa2017persistent}, materials science \cite{jiang2021topological,chen2020topology,ichinomiya2017persistent}, etc. These applications have led to updates and developments of persistent homology from theory to computation.

Different variants of persistent homology were developed over the years. In \cite{carlsson2009computing,carlsson2009theory}, G. Carlsson, A. Zomorodian et al. introduced the multidimensional (or multi-parameter) persistent homology, which is used to deal with multidimensional filtration of simplicial complexes.  And then G. Carlsson and V. D. Silva \cite{carlsson2010zigzag} developed the theory of zigzag persistence for studying the persistence of topological features across a family of spaces or point-cloud data sets. It is from the philosophy that the decomposition theory of graph representations is somewhat independent of the orientation of the graph edges.
In \cite{de2011persistent}, the persistent cohomology (or copersistence) was introduced to identify candidates for significant circle-structures in the data. These variants of persistence have also been widely concerned and applied in different fields \cite{xia2015multidimensional,tausz2011applications,liu2021hypergraph,corbet2019kernel}.

We compare the above outstanding variants of persistence as follows.
\begin{table}[H]
\centering
\begin{tabular}{c|c|c}
    \toprule
  % after \\: \hline or \cline{col1-col2} \cline{col3-col4} ...
 Different persistences & Gradings & The corresponding spaces\\
  \midrule
  The usual persistence & $\mathbb{Z}$ (or $\mathbb{R}$) & $\mathbb{Z}$-graded vector spaces \\
  \hline
  Multidimensional persistence& $\mathbb{Z}^{n}$ &$\mathbb{Z}^{n}$-graded vector spaces \\
\hline
  Zigzag persistence & Zigzag sequence & Zigzag diagrams of vector spaces  \\
\hline
  Copersistence & $\mathbb{Z}$ & $\mathbb{Z}$-graded dual vector spaces \\
  \bottomrule
\end{tabular}
\end{table}

In this paper, we consider the persistence based on group grading. The group graded algebra \cite{bahturin2001group,bahturin2002group} has been studied for many years. To give the persistence on a group, we recall the Cayley digraph \cite{xu1998automorphism,dobson2017cayley}, which endows a group with a direction in some sense. Let $G$ be a group and $S$ be a subset of $G$. A \emph{Cayley digraph} $\mathrm{Cay}(G,S)$ is a digraph with the elements of $G$ as vertices and the pairs $(a,b)\in G\times G$ satisfying $ba^{-1}\in S$ as arcs\footnote{The condition $ba^{-1}\in S$ in this  paper is a little different from that described in other papers.}. We may regard a Cayley digraph $\mathrm{Cay}(G,S)$ as a category, denoted by $\mathrm{cat}(\mathrm{Cay}(G,S))$, with the vertices as objects and directed paths as morphisms.
We assume that $S$ is a monoid. Indeed, if $S$ is a subset of $G$, let $\langle S\rangle$ be the monoid generated by $S$. Then we have (see Lemma \ref{lemma:monoid})
\begin{equation*}
  \mathrm{cat}(\mathrm{Cay}(G,S))=\mathrm{cat}(\mathrm{Cay}(G,\langle S \rangle)).
\end{equation*}
Let $\mathfrak{C}$ be a category. A \emph{Cayley-persistence object} is a functor $\mathcal{F}:\mathrm{cat}(\mathrm{Cay}(G,S))\rightarrow \mathfrak{C}$ from the category  $\mathrm{cat}(\mathrm{Cay}(G,S))$ to $\mathfrak{C}$.
Let $l_{x}:a\rightarrow xa$ be a $G$-graded map for $a\in G,x\in S$ in the category $\mathrm{cat}(\mathrm{Cay}(G,S))$. Then we have a morphism
\begin{equation*}
  \mathcal{F}(l_{x}):\mathcal{F}_{a}\rightarrow \mathcal{F}_{xa},\quad a\in G,x\in S
\end{equation*}
in category $\mathfrak{C}$. Moreover, the set $\{\mathcal{F}(l_{x})\}_{x\in S}$ can be regarded as a monoid with composition as multiplication, denoted by $\mathcal{L}_{S}$.

From now on, let $\mathbf{k}$ be a field. Let $M_{\mathcal{F}}=\bigoplus\limits_{x\in G}\bigoplus\limits_{\alpha\in \mathcal{F}_{x}}\mathbf{k}\alpha$ be a $\mathbf{k}$-linear space generated by the elements in $\mathcal{F}_{x},x\in G$. Then $M_{\mathcal{F}}$ is a $\mathbf{k}[\mathcal{L}_{S}]$-module, where $\mathbf{k}[\mathcal{L}_{S}]$ is the monoid ring of $\mathcal{L}_{S}$ over $\mathbf{k}$. Our first result is expressed as follows (see Theorem \ref{thm:module_decomposition}).
\begin{theorem}\label{theorem:main1}
Let $M_{\mathcal{F}}$ be a finitely generated $\mathbf{k}[\mathcal{L}_{S}]$-module and $S$ be a finitely generated monoid. Then we have a finite direct sum decomposition
\begin{equation*}
  M_{\mathcal{F}}\cong\bigoplus_{i=1}^{k} \mathbf{k}[\mathcal{L}_{S}]\cdot e_{i}^{x_{i}}\oplus \left(\left(\bigoplus_{j=1}^{l}\mathbf{k}[\mathcal{L}_{S}]\cdot\varepsilon_{j}^{y_{j}}\right)/N\right)
\end{equation*}
for some $k,l$, where $e_{i}^{x_{i}}\in \mathcal{F}_{x_{i}},\varepsilon_{j}^{y_{j}}\in \mathcal{F}_{y_{j}}$ and $N$ is a finitely generated $\mathbf{k}[\mathcal{L}_{S}]$-module generated by the elements of the form $\mathcal{F}(l_{y})\varepsilon_{t}^{y_{t}}-\mathcal{F}(l_{yy_{t}y_{s}^{-1}})\varepsilon_{s}^{y_{s}}$ for some $1\leq s,t\leq l,y\in S$.
\end{theorem}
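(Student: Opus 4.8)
The plan is to use that $M_{\mathcal{F}}$ is not an abstract $\mathbf{k}[\mathcal{L}_S]$-module but a \emph{monomial} one: it carries the distinguished $\mathbf{k}$-basis $B=\bigsqcup_{x\in G}\mathcal{F}_x$, and each generator $\mathcal{F}(l_x)$ of $\mathbf{k}[\mathcal{L}_S]$ sends basis elements to basis elements, raising the $G$-degree of an element of $\mathcal{F}_a$ from $a$ to $xa$. The first thing I would record is the elementary consequence of the $G$-grading: since $G$ is a group, left translation by $u$ is injective, so two basis elements $\mathcal{F}(l_u)b$ and $\mathcal{F}(l_v)b'$ can coincide only if $u\cdot\deg b=v\cdot\deg b'$. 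Every coincidence among basis elements is thus degree-constrained, and this is precisely what will force the relations into the stated shape.

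Next I would regard $B$ as an $\mathcal{L}_S$-set and split it into the connected components $C_1,\dots,C_r$ of the equivalence relation generated by $b\sim\mathcal{F}(l_x)b$. Each $C_p$ is closed under the action, so $\mathbf{k}C_p$ is a $\mathbf{k}[\mathcal{L}_S]$-submodule and $M_{\mathcal{F}}=\bigoplus_p\mathbf{k}C_p$ is already a direct sum of modules, with no splitting to engineer. Because $M_{\mathcal{F}}$ is finitely generated, every component must meet a fixed finite generating set, so $r<\infty$; after replacing the given generators by the finitely many basis elements appearing in them, I may assume each generator is a single element of $B$ and each $\mathbf{k}C_p$ is finitely generated.

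I would then classify the components by their number of orbits. The degree constraint already makes a single orbit free: $\mathcal{F}(l_u)b=\mathcal{F}(l_v)b$ forces $u=v$, so $\mathbf{k}[\mathcal{L}_S]\cdot b$ is free of rank one. Hence a component consisting of one orbit contributes a free summand $\mathbf{k}[\mathcal{L}_S]\cdot e_i$ with $e_i\in\mathcal{F}_{x_i}$, and finite generation guarantees that any connected component with no coincidence between distinct orbits is exactly of this kind; these assemble into the first summand. For the remaining components, in which orbits of distinct generators merge, I would present $\mathbf{k}C_p$ as the quotient of the finite free module $\bigoplus_j\mathbf{k}[\mathcal{L}_S]\hat\varepsilon_j$ by the kernel $N_p$ of $\hat\varepsilon_j\mapsto\varepsilon_j$. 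Because source and target are monomial, $N_p$ is spanned by binomials $\mathcal{F}(l_u)\hat\varepsilon_t-\mathcal{F}(l_v)\hat\varepsilon_s$ recording coincidences $\mathcal{F}(l_u)\varepsilon_t=\mathcal{F}(l_v)\varepsilon_s$; the degree constraint gives $u\,y_t=v\,y_s$, hence $v=u\,y_t y_s^{-1}$, and writing $y=u$ puts every relation in the asserted form $\mathcal{F}(l_y)\varepsilon_t-\mathcal{F}(l_{y y_t y_s^{-1}})\varepsilon_s$. Taking $N=\bigoplus_p N_p$ over the non-free components then yields the displayed decomposition.

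The main obstacle is the finite generation of $N$. As $\mathbf{k}[\mathcal{L}_S]$ need not be Noetherian—the monoid ring of a finitely generated cancellative monoid may fail the ascending chain condition—this cannot be invoked formally and must be drawn from the combinatorics of coincidences. I would prove it by showing that, within each component, the coincidences are generated under left multiplication by $\mathcal{L}_S$ from finitely many minimal ones: if $\mathcal{F}(l_u)\varepsilon_t=\mathcal{F}(l_v)\varepsilon_s$ is a coincidence then so is $\mathcal{F}(l_{wu})\varepsilon_t=\mathcal{F}(l_{wv})\varepsilon_s$ for every $w\in S$, and the resulting binomial is $\mathcal{F}(l_w)$ times the original. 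There being only finitely many ordered pairs $(s,t)$ of generators in a component, it suffices to check that for each pair the witness set—the $u\in S$ with $u\,y_t y_s^{-1}\in S$ realizing the coincidence—has finitely many minimal elements under left-divisibility. This is where I would use that $S$ is finitely generated and cancellative (being a submonoid of $G$): in the commutative case it is R\'edei's theorem that such a monoid is finitely presented, and in general one argues by well-foundedness of the left-divisibility preorder. This finiteness, together with the block form $N=\bigoplus_p N_p$, is the technical heart; everything else is bookkeeping.
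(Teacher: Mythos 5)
Your construction of the decomposition itself is correct, and despite the different packaging it is essentially the paper's argument: the paper likewise replaces a finite generating set by the basis elements $m_i^{x_i}$ occurring in it, exploits the monomial (basis-to-basis) nature of the $\mathcal{L}_S$-action so that the kernel of the presentation $\pi$ is spanned by differences of basis elements with equal image, and uses the $G$-degree together with cancellation in $G$ to force every such difference into the shape $\mathcal{F}(l_{y})\varepsilon_{t}^{y_{t}}-\mathcal{F}(l_{yy_{t}y_{s}^{-1}})\varepsilon_{s}^{y_{s}}$. The only organizational difference is that you decompose the basis into connected components of the $\mathcal{L}_S$-set, while the paper decomposes the set $E$ of translates of the generators into orbits of the unit group $\mathcal{L}_{S}^{\ast}$ and sorts representatives into $B_{free}$ and $B_{tor}$; both routes produce the stated direct sum, and your observation that any cyclic module on a basis element is automatically free is exactly the paper's degree argument.

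The genuine gap is in the finite generation of $N$, precisely where you part ways with the paper. The paper settles this in one line by Lemma \ref{lemma:noetherian} (noetherianity of $\mathbf{k}[\mathcal{L}_{S}]$ for finitely generated $S$), whose cited source, Gilmer's Theorem 7.7, concerns \emph{commutative} semigroup rings; so your refusal to invoke noetherianity in general is well judged, and your commutative argument (witness sets $W_{st}$ are ideals of $S$, and ideals of a finitely generated commutative monoid are finitely generated, by Dickson's lemma or R\'edei) is correct and is the honest content of the paper's appeal. But your fallback for the general case, ``one argues by well-foundedness of the left-divisibility preorder,'' does not work: well-foundedness yields that every witness lies above \emph{some} minimal witness, not that the minimal witnesses form a \emph{finite} set, and these are different finiteness statements. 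Concretely, in the free monoid $S$ on $a,b$ the left ideal $W=\bigcup_{n\geq 0}S\,ba^{n}b$ is well-founded for left-divisibility yet its minimal elements $ba^{n}b$, $n\geq 0$, form an infinite antichain. This is realizable by a Cayley-persistence set over $G=F(a,b)$: take $\mathcal{F}_{x}$ to be a two-point set for $x\in S\setminus W$, a one-point set for $x\in W$, and empty otherwise, with the evident collapsing maps (functorial because $W$ is a left ideal). Then $M_{\mathcal{F}}$ is generated by the two elements of $\mathcal{F}_{e}$, all coincidences occur in equal degrees, and $N$ is the span of $\mathcal{F}(l_{u})(\hat{\varepsilon}_{1}-\hat{\varepsilon}_{2})$ for $u\in W$, which is not finitely generated over $\mathbf{k}[\mathcal{L}_{S}]$. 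So no argument can close this step at the stated level of generality: finite generation of $N$ genuinely requires commutativity of $S$ (or a substitute hypothesis), a restriction the paper silently inherits from Gilmer's theorem and that you inherit from R\'edei's.
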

In \cite{carlsson2009theory}, G. Carlsson and A. Zomorodian considered classification of the multidimensional persistence module. As a corollary of Theorem \ref{theorem:main1}, we have a Cayley-persistence structure of modules based on the finitely generated module (see Corollary \ref{corollary:decomposition}).
\begin{corollary}
Let $\mathcal{F}:\mathrm{cat}(\mathrm{Cay}(G,S))\rightarrow \mathbf{Vec}_{\mathbf{k}}$ be a Cayley-persistence $\mathbf{k}$-linear space.
If $\mathcal{F}_{G}=\bigoplus\limits_{x\in G}\mathcal{F}_{x}$ is a finitely generated $\mathbf{k}[\mathcal{L}_{S}]$-module and $S$ is a finitely generated monoid, then we have a finite direct sum decomposition
\begin{equation*}
   \mathcal{F}_{G} \cong \bigoplus_{i=1}^{k} \mathbf{k}[\mathcal{L}_{S}]\cdot e_{i}^{x_{i}}\oplus \left(\left(\bigoplus_{j=1}^{l}\mathbf{k}[\mathcal{L}_{S}]\cdot\varepsilon_{j}^{y_{j}}\right)/N\right)
\end{equation*}
for some $k,l$, where $e_{i}^{x_{i}}\in \mathcal{F}_{x_{i}},\varepsilon_{j}^{y_{j}}\in \mathcal{F}_{y_{j}}$ and $N$ is a finitely generated $\mathbf{k}[\mathcal{L}_{S}]$-module generated by the elements of the form $\mathcal{F}(l_{y})\varepsilon_{t}^{y_{t}}-\mathcal{F}(l_{yy_{t}y_{s}^{-1}})\varepsilon_{s}^{y_{s}}$ or $\mathcal{F}(l_{y})\varepsilon_{t}^{y_{t}}$ for some $1\leq s,t\leq l,y\in S$.
\end{corollary}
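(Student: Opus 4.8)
The plan is to rerun the argument of Theorem \ref{theorem:main1} in the category $\mathbf{Vec}_{\mathbf{k}}$ and to track the single extra phenomenon the linear structure contributes. In Theorem \ref{theorem:main1} the module $M_{\mathcal{F}}$ is built as the free $\mathbf{k}$-space on the underlying \emph{sets} $\coprod_{x}\mathcal{F}_{x}$, so each structure map $\mathcal{F}(l_{y})$ carries a basis vector to a basis vector and can never annihilate it. When $\mathfrak{C}=\mathbf{Vec}_{\mathbf{k}}$ and we use $\mathcal{F}_{G}=\bigoplus_{x}\mathcal{F}_{x}$ instead, the maps $\mathcal{F}(l_{y})$ are honest $\mathbf{k}$-linear maps that may have nontrivial kernel. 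This is the only new feature, and it is precisely what produces the additional relations of the form $\mathcal{F}(l_{y})\varepsilon_{t}^{y_{t}}$.

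First I would choose a finite set of homogeneous generators of the finitely generated module $\mathcal{F}_{G}$, writing each as an element of some $\mathcal{F}_{x}$. Exactly as in the proof of Theorem \ref{theorem:main1}, I would sort them by the behaviour of their $\mathcal{L}_{S}$-orbits: a generator $e$ whose orbit $\{\mathcal{F}(l_{y})(e)\}_{y\in S}$ is $\mathbf{k}$-linearly independent and meets no other orbit yields a free cyclic summand $\mathbf{k}[\mathcal{L}_{S}]\cdot e_{i}^{x_{i}}$, while the remaining generators $\varepsilon_{1}^{y_{1}},\dots,\varepsilon_{l}^{y_{l}}$ are those among whose orbits some $\mathbf{k}$-linear dependence occurs. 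The latter generate a complementary submodule presented as the quotient of the free module $\bigoplus_{j=1}^{l}\mathbf{k}[\mathcal{L}_{S}]\cdot\varepsilon_{j}^{y_{j}}$ by its relation submodule $N$.

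The heart of the statement is the identification of $N$. Working one grading at a time, a relation in degree $g\in G$ is a $\mathbf{k}$-linear dependence among the vectors $\mathcal{F}(l_{y})\varepsilon_{j}^{y_{j}}$ lying in $\mathcal{F}_{g}$, namely those with $yy_{j}=g$. Two mechanisms create such a dependence. The first is a \emph{collision} of orbits, in which $\mathcal{F}(l_{y})\varepsilon_{t}^{y_{t}}$ and $\mathcal{F}(l_{y'})\varepsilon_{s}^{y_{s}}$ represent the same vector; matching the gradings forces $y'=yy_{t}y_{s}^{-1}$, giving the relation $\mathcal{F}(l_{y})\varepsilon_{t}^{y_{t}}-\mathcal{F}(l_{yy_{t}y_{s}^{-1}})\varepsilon_{s}^{y_{s}}$ already present in Theorem \ref{theorem:main1}. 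The second, new to the linear setting, is an \emph{annihilation}, in which $\mathcal{F}(l_{y})$ kills $\varepsilon_{t}^{y_{t}}$, giving the relation $\mathcal{F}(l_{y})\varepsilon_{t}^{y_{t}}$. Conceptually the annihilation is the degenerate collision whose partner is the zero vector, which is why both kinds appear together in the description of $N$; I would make this uniform by permitting the zero vector as a collision partner in the bookkeeping of the previous step.

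The hard part will be showing that these two families generate all of $N$, that is, that no further $\mathbf{k}$-linear dependence among the $\mathcal{F}(l_{y})\varepsilon_{j}^{y_{j}}$ survives. Here I would use the minimality of the chosen generating set together with the orbit combinatorics governed by $S$ to reduce an arbitrary homogeneous relation — by successively subtracting collision and annihilation relations — to the trivial one, while simultaneously checking that the free summands $\mathbf{k}[\mathcal{L}_{S}]\cdot e_{i}^{x_{i}}$ genuinely split off and remain free, so that passing from the set-level module of Theorem \ref{theorem:main1} to the linear module $\mathcal{F}_{G}$ does not covertly impose a relation on an orbit declared free. Granting this reduction, the asserted decomposition of $\mathcal{F}_{G}$ follows with $N$ generated by the two stated types of elements.
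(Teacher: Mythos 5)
Your strategy is genuinely different from the paper's, so let me first record the difference. The paper does not rerun the proof of Theorem~\ref{thm:module_decomposition} inside $\mathbf{Vec}_{\mathbf{k}}$: it reduces the corollary to that theorem by forming an auxiliary Cayley-persistence \emph{set} $\widetilde{\mathcal{F}}$ whose value at $a$ is the set $X_a=\{\mathcal{F}(l_{ax_i^{-1}})m_i^{x_i}\}$ of orbit vectors of a chosen finite generating set, asserting that $\rho:\mathcal{F}_G\to M_{\widetilde{\mathcal{F}}}$, $m_i^{x_i}\mapsto m_i^{x_i}$, is an isomorphism of $R_{\mathcal{F}}$-modules, and then quoting Theorem~\ref{thm:module_decomposition}. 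Your plan is instead a direct adaptation of that theorem's proof, and it founders precisely at the step you yourself call the hard part.

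The gap is this: a homogeneous element of $N$ is an arbitrary $\mathbf{k}$-linear dependence among the orbit vectors $\mathcal{F}(l_y)\varepsilon_j^{y_j}$ inside a single $\mathcal{F}_g$, and such a dependence need not involve any collision or annihilation at all, so your procedure of ``successively subtracting collision and annihilation relations'' can halt at the first step with a nonzero relation left over. Concretely, for $G=\mathbb{Z}$, $S=\mathbb{Z}_{\geq 0}$, take $\mathcal{F}_0=\mathbf{k}^3$ with basis $u,v,w$, let $\mathcal{F}_n=\mathbf{k}^2$ for $n\geq 1$, with $\mathcal{F}_0\to\mathcal{F}_1$ the quotient by $\langle u+v-w\rangle$ and all later maps isomorphisms. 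Then $\{u,v,w\}$ is a \emph{minimal} generating set, all orbit vectors are nonzero, pairwise distinct and pairwise non-proportional, so no collision or annihilation relation exists to subtract, yet $N\neq 0$. The corollary does hold for this module, but only after \emph{re-choosing} the generators ($w'=u+v-w$ turns the relation into the annihilation $tw'=0$); your proposal never re-chooses generators, and minimality cannot substitute for this, since the set above is already minimal. Whether such a re-choice always exists is the entire content of the statement, not bookkeeping. You should also know that the paper's own proof does not settle this point: it is hidden in the unproved injectivity of $\rho$ (when orbit vectors are linearly dependent, or when one of them is the zero vector, $\rho$ is not even well defined as an $R_{\mathcal{F}}$-linear map; and if $\rho$ \emph{were} an isomorphism, Theorem~\ref{thm:module_decomposition} would produce only collision relations, so the annihilation relations in the corollary's own statement could never arise). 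Worse, with the coefficients $\pm 1$ demanded by the stated form of $N$, the needed re-choice can be genuinely obstructed: for $G=\mathbb{Z}^2$ and $\mathrm{char}\,\mathbf{k}\neq 2$, consider a module with $\mathcal{F}_{(1,1)}=\mathbf{k}^2$ receiving the lines $\langle e_1\rangle$, $\langle e_2\rangle$ from one-dimensional spaces at $(0,1)$, $(1,0)$, and surjecting onto one-dimensional spaces at $(2,1)$, $(1,2)$ with kernels $\langle e_1-e_2\rangle$, $\langle e_1+e_2\rangle$ (completed by one-dimensional spaces at $(0,2)$, $(2,0)$ and zeros elsewhere); any presentation of the required form would yield a basis $\{\alpha e_1,\beta e_2\}$ of $\mathcal{F}_{(1,1)}$ whose images under the two outgoing maps land on the chosen basis vectors at $(2,1)$ and $(1,2)$, forcing $\alpha=\beta$ and $\alpha=-\beta$ simultaneously, which is impossible. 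So the hard part is not a verification you postponed; it is where all the mathematical content (and, as stated, the trouble) lies, and any honest proof must be restructured around an explicit choice of generators together with hypotheses guaranteeing that this choice exists.
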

This can be interpreted that the Cayley-persistence generators prefer to survive in some areas, not just in some intervals. We always hope that there are finitely many generators and the survival spaces can be computed. However, the condition that $M_{\mathcal{F}}$ or $\mathcal{F}_{G}$ is a finitely generated $\mathbf{k}[\mathcal{L}_{S}]$-module does not always hold. We then give a ``lower bounded and noetherian''  condition for $M_{\mathcal{F}}$ being a finitely generated $\mathbf{k}[\mathcal{L}_{S}]$-module in Theorem \ref{theorem:finitely}. Moreover, the multidimensional persistent homology considered in application always satisfies this condition.

Let $\mathcal{K}:\mathrm{cat}(\mathrm{Cay}(G,S))\rightarrow \mathbf{Simp}$ be a Cayley-persistence simplicial complex such that the homology $H_{\ast}(\mathcal{K}_{a};\mathbf{k})$ is of finite dimension  for all $a\in G$. Then the (reduced) cohomology induces a Cayley-copersistence module
\begin{equation*}
  H^{\ast}(\mathcal{K};\mathbf{k}):\mathrm{cat}(\mathrm{Cay}(G,S))\rightarrow \mathbf{Vec}_{\mathbf{k}}.
\end{equation*}
Let $S\subseteq G$ be a monoid such that the identity element $e$ is the unique invertible element in $S$. Then the group $G$ can be regarded as a poset with partial order given by $a\leq b$ if $ba^{-1}\in S$.
Suppose that the category $\mathrm{cat}(\mathrm{Cay}(G,S))$ has finite product, for example, $G$ is a lattice group \cite{anderson2012lattice,birkhoff1987lattice}. Let $\mathbf{H}=\bigoplus\limits_{a\in G}H^{\ast}(\mathcal{K}_{a};\mathbf{k})$. The morphism
\begin{equation*}
  L_{x}=H^{\ast}(\mathcal{K}_{a,xa};\mathbf{k}):H^{\ast}_{xa}\rightarrow H^{\ast}_{a},\quad a\in G,x\in S.
\end{equation*}
induced by $a\to xa$ gives a right action on $\mathbf{H}$. Note that the set $\mathbb{L}_{S}=\{L_{x}\}_{x\in S}$ is a monoid with multiplication $L_{x}\cdot L_{y}=L_{y}\circ L_{x}$ for $x,y\in S$. Let $\mathbf{k}[\mathbb{L}_{S}]$ be a monoid ring of $\mathbb{L}_{S}$ over $\mathbf{k}$, then $\mathbf{H}$ is a right $\mathbf{k}[\mathbb{L}_{S}]$-module.
Moreover, we introduce the persistence-cup product on $\mathbf{H}$ in Section \ref{section:algebra}, which leads to the persistence $\mathbf{k}[\mathbb{L}_{S}]$-twisted algebra (see Theorem \ref{theorem:twist}).
\begin{theorem}\label{theorem:main2}
Let $G$ be an abelian group.
The persistence-cup product on $\mathbf{H}$ is uniquely determined by the persistence-cup product of $\mathbf{k}[\mathbb{L}_{S}]$-module generators. Moreover, $(\mathbf{H},\cdot)$ is a $G$-graded $\mathbf{k}[\mathbb{L}_{S}]$-twisted algebra.
\end{theorem}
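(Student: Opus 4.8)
The plan is to work directly from the definition of the persistence-cup product given in Section~\ref{section:algebra}: for classes $\alpha\in H^{\ast}_{a}$ and $\beta\in H^{\ast}_{b}$, one forms the product $a\wedge b$ of $a$ and $b$ in $\mathrm{cat}(\mathrm{Cay}(G,S))$ (which exists by the finite-product hypothesis and satisfies $a\wedge b\leq a,b$), restricts both classes along the induced projections, and sets $\alpha\cdot\beta=(\alpha|_{a\wedge b})\cup(\beta|_{a\wedge b})\in H^{\ast}_{a\wedge b}$, where $\cup$ is the ordinary cup product in $H^{\ast}(\mathcal{K}_{a\wedge b};\mathbf{k})$. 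The two ingredients I would use throughout are (i) the \emph{naturality} of the ordinary cup product, which for any restriction map $L_{w}\colon H^{\ast}_{d}\to H^{\ast}_{e}$ (with $e\leq d$, $w=de^{-1}\in S$) gives $L_{w}(\xi\cup\eta)=L_{w}(\xi)\cup L_{w}(\eta)$, and (ii) the \emph{functoriality} of $H^{\ast}(\mathcal{K}_{-};\mathbf{k})$ on the poset $G$, which yields the composition law $L_{w}L_{w'}=L_{ww'}$ and makes the restriction to $a\wedge b$ factor coherently through any intermediate object.

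The heart of the argument, and the content of the first assertion, is a twisted compatibility relation between $\cdot$ and the right $\mathbf{k}[\mathbb{L}_{S}]$-action. Fix $\alpha\in H^{\ast}_{a}$, $\beta\in H^{\ast}_{b}$ and $x,y\in S$, so that $\alpha L_{x}\in H^{\ast}_{x^{-1}a}$ and $\beta L_{y}\in H^{\ast}_{y^{-1}b}$. Writing $c=a\wedge b$ and $m=(x^{-1}a)\wedge(y^{-1}b)$, the inequalities $x^{-1}a\leq a$ and $y^{-1}b\leq b$ give $m\leq c$, hence $cm^{-1}\in S$. Expanding both sides by the definition above and using (i)--(ii) to collapse each iterated restriction into a single $L_{\bullet}$, I expect to obtain
\begin{equation*}
  (\alpha L_{x})\cdot(\beta L_{y})=(\alpha L_{am^{-1}})\cup(\beta L_{bm^{-1}})=(\alpha\cdot\beta)\,L_{cm^{-1}},
\end{equation*}
the last equality because pushing $\alpha\cdot\beta=\alpha L_{ac^{-1}}\cup\beta L_{bc^{-1}}$ from $H^{\ast}_{c}$ down to $H^{\ast}_{m}$ by $L_{cm^{-1}}$ and invoking naturality recombines exactly to $\alpha L_{am^{-1}}\cup\beta L_{bm^{-1}}$. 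Since the persistence-cup product is $\mathbf{k}$-bilinear and every element of $\mathbf{H}$ is a $\mathbf{k}[\mathbb{L}_{S}]$-linear combination of module generators $g_{i}\in H^{\ast}_{a_{i}}$, the displayed relation expresses the product of any two elements in terms of the products $g_{i}\cdot g_{j}$ of generators alone; this is precisely the first claim.

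For the second assertion I would then verify the axioms of a $G$-graded $\mathbf{k}[\mathbb{L}_{S}]$-twisted algebra one by one. The grading statement $H^{\ast}_{a}\cdot H^{\ast}_{b}\subseteq H^{\ast}_{a\wedge b}$ is immediate from the definition, so $\mathbf{H}=\bigoplus_{a\in G}H^{\ast}_{a}$ carries a multiplication graded over the meet-semilattice $(G,\wedge)$. Associativity reduces, via functoriality, to restricting $\alpha,\beta,\gamma$ all the way down to $H^{\ast}_{a\wedge b\wedge c}$ and applying associativity of $\cup$; graded-commutativity of $\cdot$ follows from commutativity of $\wedge$ together with graded-commutativity of $\cup$. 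The twisting datum is then read off from the relation of the previous paragraph: the element $L_{cm^{-1}}\in\mathbb{L}_{S}$ records how the product of two orbits fails to be $\mathbf{k}[\mathbb{L}_{S}]$-bilinear in the naive sense, and I would check that it satisfies the cocycle/compatibility conditions required by the definition of twisted algebra in Section~\ref{section:algebra}.

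I expect the main obstacle to be bookkeeping rather than conceptual: establishing the displayed twisted relation requires carefully tracking four gradings ($a,b,x^{-1}a,y^{-1}b$) and their meets, and repeatedly invoking functoriality to collapse iterated restrictions into a single $L_{\bullet}$. The abelian hypothesis on $G$ is used to guarantee that $\mathbb{L}_{S}$ is commutative and that the various exponents combine unambiguously, so that $L_{cm^{-1}}$ defines a genuine twisting over the commutative ring $\mathbf{k}[\mathbb{L}_{S}]$. A secondary point to handle with care is the unit: a global unit for $\cdot$ exists only when $(G,\leq)$ has a greatest element (whose $H^{0}$-class restricts to the unit in every $H^{\ast}_{a}$), so I would either restrict to this case or present $(\mathbf{H},\cdot)$ as a possibly non-unital twisted algebra, matching whichever convention Section~\ref{section:algebra} adopts.
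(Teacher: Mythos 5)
Your proposal is correct and follows essentially the same route as the paper: you define $\alpha\cdot\beta$ by restricting to the categorical meet $a\times b$, prove the twisted relation $(\alpha\cdot L_{x})\cdot(\beta\cdot L_{y})=(\alpha\cdot\beta)\cdot L_{z}$ with $z=(a\times b)\bigl((x^{-1}a)\times(y^{-1}b)\bigr)^{-1}\in S$ via the universal property of the product and naturality of the cup product, and then extend bilinearly to obtain the twisting functions $f_{a,b}$, with associativity handled by restricting to $a\times b\times c$ exactly as in the paper's Lemma \ref{lemma:associated}. Your element $L_{cm^{-1}}$ is precisely the paper's $L_{z}$, so the two arguments coincide.
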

Now, we consider the case that $G$ is an ordered abelian group. By endowing $S$  with an interval topology, we have a topological monoid $S$. There is a decomposition theorem of persistent cohomology for ordered group grading (see Theorem \ref{thm:decomclousure}), which is essentially from the idea of A. Zomorodian and G. Carlsson.
\begin{theorem}\label{theorem:main3}
Let $G$ be an ordered abelian group. Let $\mathbf{H}$ be a finitely generated $\mathbf{k}[\mathbb{L}_{S}]$-module.
Then we have a finite direct sum decomposition
\begin{equation*}
   \mathbf{H}\cong \bigoplus_{i=1}^{k}  e^{i}_{x_{i}} \cdot \mathbf{k}[\mathbb{L}_{S}]\oplus \bigoplus_{j=1}^{l}\frac{\varepsilon^{j}_{y_{j}}\cdot \mathbf{k}[\mathbb{L}_{S}]}{\mathbf{k}[I_{j}]}
\end{equation*}
for some $k,l$. Here, $\overline{I_{j}}$ is the closure of $I_{j}$ and  $\mathbf{k}[\overline{I_{j}}]=\varepsilon^{j}_{y_{j}}\cdot L_{w_{j}}\cdot R$ for some $w_{j}\in S,j=1,\dots,l$.
\end{theorem}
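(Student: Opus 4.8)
The plan is to derive the decomposition from the general module decomposition of Theorem \ref{theorem:main1} and then to use the total order on $G$ together with the interval topology on $S$ to put the non-free part into the stated interval form. Since $\mathbf{H}$ is a finitely generated right $\mathbf{k}[\mathbb{L}_{S}]$-module, the right-module analogue of Theorem \ref{theorem:main1} first produces a finite decomposition
\begin{equation*}
  \mathbf{H}\cong\bigoplus_{i=1}^{k} e^{i}_{x_{i}}\cdot \mathbf{k}[\mathbb{L}_{S}]\oplus\Bigl(\bigl(\bigoplus_{j=1}^{l}\varepsilon^{j}_{y_{j}}\cdot \mathbf{k}[\mathbb{L}_{S}]\bigr)/N\Bigr),
\end{equation*}
where $N$ is generated by the homogeneous relations $\varepsilon^{t}_{y_{t}}\cdot L_{y}-\varepsilon^{s}_{y_{s}}\cdot L_{y y_{t}y_{s}^{-1}}$. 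The free summands $e^{i}_{x_{i}}\cdot \mathbf{k}[\mathbb{L}_{S}]$ already match the claim, so the real content is to split the single quotient $(\bigoplus_{j}\varepsilon^{j}_{y_{j}}\cdot \mathbf{k}[\mathbb{L}_{S}])/N$ into a direct sum of cyclic pieces $\varepsilon^{j}_{y_{j}}\cdot \mathbf{k}[\mathbb{L}_{S}]/\mathbf{k}[I_{j}]$.

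Next I would linearly order the torsion generators by their birth degrees, which is possible because $G$, being an ordered abelian group, is totally ordered; say $y_{1}\le\cdots\le y_{l}$. Commutativity of $\mathbb{L}_{S}$ (from $G$ abelian) means that each generating relation is homogeneous and equates the actions of two generators in one common degree. With total comparability available, I would run a Gaussian-elimination/diagonalization argument in the spirit of the Zomorodian--Carlsson normal form: ordering pivots by degree, I eliminate the cross-terms linking distinct generators and decouple the presentation of $N$. After this change of basis each non-free block reduces to a single cyclic module $\varepsilon^{j}_{y_{j}}\cdot \mathbf{k}[\mathbb{L}_{S}]$ modulo relations involving $\varepsilon^{j}_{y_{j}}$ alone, which is exactly the splitting of the quotient into a direct sum.

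I would then identify each single-generator relation submodule with an interval. For fixed $j$, the death set $D_{j}=\{x\in S:\varepsilon^{j}_{y_{j}}\cdot L_{x}=0\}$ is closed upward under the action, since $\varepsilon^{j}_{y_{j}}\cdot L_{x}=0$ forces $\varepsilon^{j}_{y_{j}}\cdot L_{x}L_{z}=\varepsilon^{j}_{y_{j}}\cdot L_{xz}=0$ for all $z\in S$; under the total order $D_{j}$ is therefore a ray, and the degrees on which the class survives form the interval $I_{j}$. Giving $S$ the interval topology turns it into a topological monoid, and passing to the closure $\overline{I_{j}}$ realizes the boundary between survival and death by an actual element $w_{j}\in S$, yielding $\mathbf{k}[\overline{I_{j}}]=\varepsilon^{j}_{y_{j}}\cdot L_{w_{j}}\cdot R$ and the torsion summand $\varepsilon^{j}_{y_{j}}\cdot \mathbf{k}[\mathbb{L}_{S}]/\mathbf{k}[I_{j}]$. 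The bounds $k$ and $l$ are finite because $\mathbf{H}$ is finitely generated.

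The step I expect to be the main obstacle is precisely this closure. In the classical setting $\mathbf{k}[t]$ is a graded principal ideal domain and every death submodule is automatically principal, generated by some $t^{n}$; here $G$ may be densely or incompletely ordered, so $I_{j}$ need not have a largest element and $D_{j}$ need not have a least element inside $G$. The interval topology and the continuity of multiplication on $S$ are introduced exactly to regularize this: I must show that passing to $\overline{I_{j}}$ contributes a single boundary degree $w_{j}$, so that $\mathbf{k}[\overline{I_{j}}]$ is genuinely the principal submodule $\varepsilon^{j}_{y_{j}}\cdot L_{w_{j}}\cdot R$, and that this closure is compatible with the $\mathbf{k}[\mathbb{L}_{S}]$-action. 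Establishing this principality is where the ordered-group hypothesis does the real work.
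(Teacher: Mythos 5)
Your proposal follows essentially the same route as the paper's proof: start from the right-module version of the decomposition in Corollary \ref{corollary:decomposition}, order the torsion generators by birth degree, perform an elimination/change-of-basis so that each relation involves a single generator (the paper's reduction to a generating set with pairwise distinct indices $t_{i}$ and the new basis $\tilde{\varepsilon}^{j}_{y_{j}}$), and then use the interval topology on $S$ to show that the closure of each relation set is a principal ray $Sa$ (the paper's Lemmas \ref{lemma:closure} and \ref{lemma:inf}), which yields $\mathbf{k}[\overline{I_{j}}]=\varepsilon^{j}_{y_{j}}\cdot L_{w_{j}}\cdot R$. You also correctly identify the closure/principality step as the point where the ordered-group and topological hypotheses do the real work, so the proposal matches the paper's argument in both structure and substance.
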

Note that $\mathbf{k}[I_{j}]$ does not have to be a finitely generated $\mathbf{k}[\mathbb{L}_{S}]$-module.
The structure of (co)persistence module can be described by the barcode, which reads the survival time of generators of the (co)persistence module. For an element $\alpha\in \mathbf{H}$, we denote
\begin{equation*}
  I(\alpha)=\mathrm{supp}(\alpha)=\{x\in G|\alpha_{x}\neq 0\}
\end{equation*}
the support of $\alpha$ in grading $G$. In Section \ref{section:algebra}, we introduce the persistence-cup product.  Let $\alpha,\beta$ be two of the generators of $\mathbf{H}$ as $\mathbf{k}[\mathbb{L}_{S}]$-module in Theorem \ref{theorem:main3}.
As an application of Theorems \ref{theorem:main2} and \ref{theorem:main3}, We have a description of barcode for the persistence-cup product elements (see Theorem \ref{theorem:main}).
\begin{theorem}\label{theorem:main4}
Let $G$ be an ordered abelian group. Let .
If $\alpha\cdot\beta$ is nontrivial, then
\begin{equation*}
\begin{split}
  \min(\sup (I(\alpha)),\sup (I(\beta)))&\leq \sup(I(\alpha\cdot\beta)), \\
  \max(\inf (I(\alpha)),\inf (I(\beta)))&\leq \inf(I(\alpha\cdot\beta)).
\end{split}
\end{equation*}
\end{theorem}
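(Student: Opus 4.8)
The plan is to deduce both estimates from two properties of the persistence-cup product: that it is computed gradewise by the ordinary cup product in each $H^{\ast}(\mathcal{K}_{g};\mathbf{k})$, so that $(\alpha\cdot\beta)_{g}=\alpha_{g}\smile\beta_{g}$, and that it is natural with respect to the structure maps $L_{x}$. The latter is precisely the $\mathbf{k}[\mathbb{L}_{S}]$-twisted algebra structure of Theorem \ref{theorem:main2}, which gradewise reads $(\alpha\cdot\beta)_{x^{-1}g}=L_{x}(\alpha_{g})\smile L_{x}(\beta_{g})=L_{x}\big((\alpha\cdot\beta)_{g}\big)$ for $x\in S$. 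Since $G$ is an ordered abelian group and $\alpha,\beta$ are module generators coming from the decomposition of Theorem \ref{theorem:main3}, the supports $I(\alpha),I(\beta)$ are order-intervals with well-defined suprema (their birth grades) and infima; I will use only this interval structure together with the total order on $G$.

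First I would settle the inequality for the infima, which is a direct consequence of support containment. Because $(\alpha\cdot\beta)_{g}=\alpha_{g}\smile\beta_{g}$, a grade $g$ can lie in $I(\alpha\cdot\beta)$ only if $\alpha_{g}\neq 0$ and $\beta_{g}\neq 0$; hence $I(\alpha\cdot\beta)\subseteq I(\alpha)\cap I(\beta)$. Passing to infima and using that $G$ is totally ordered, so that $\inf\big(I(\alpha)\cap I(\beta)\big)=\max\big(\inf I(\alpha),\inf I(\beta)\big)$, gives exactly $\max(\inf I(\alpha),\inf I(\beta))\leq\inf I(\alpha\cdot\beta)$.

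The supremum inequality is the crux, and it is here that the nontriviality hypothesis is essential. Set $m=\min(\sup I(\alpha),\sup I(\beta))$ and assume without loss of generality $m=\sup I(\alpha)$. For $g>m$ the grade lies above $\sup I(\alpha)$, so $\alpha_{g}=0$ and thus $(\alpha\cdot\beta)_{g}=0$; for $g\leq m$ naturality gives $(\alpha\cdot\beta)_{g}=L_{x}\big((\alpha\cdot\beta)_{m}\big)$ with $x=mg^{-1}\in S$. Hence the whole element $\alpha\cdot\beta$ is determined by its single value at $m$: were $(\alpha\cdot\beta)_{m}=0$, then $\alpha\cdot\beta$ would vanish identically. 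As $\alpha\cdot\beta$ is nontrivial, $(\alpha\cdot\beta)_{m}\neq 0$, so $m\in I(\alpha\cdot\beta)$ and $\min(\sup I(\alpha),\sup I(\beta))=m\leq\sup I(\alpha\cdot\beta)$. Combined with the containment above, this even forces $\sup I(\alpha\cdot\beta)=m$.

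The only genuine obstacle is the naturality input --- that each restriction $L_{x}$ is a ring homomorphism on cohomology --- but this is exactly the content of Theorem \ref{theorem:main2} and may be quoted verbatim. Granting it, the remainder is elementary order bookkeeping over the totally ordered group $G$; the one step requiring care is the upward propagation of nonvanishing to the top grade $m$ of the overlap, which is what turns the qualitative hypothesis ``$\alpha\cdot\beta$ nontrivial'' into the quantitative lower bound on $\sup I(\alpha\cdot\beta)$.
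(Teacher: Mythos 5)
Your naturality inputs are correct --- the gradewise formula $(\alpha\cdot\beta)_{g}=\alpha_{g}\cup\beta_{g}$ does follow from Theorem \ref{theorem:twist} --- but your proof rests on reading $I(\alpha\cdot\beta)$ as the support of the cyclic module generated by the single element $\alpha\cdot\beta$, and that is not the notion the paper's theorem is about. In the paper's proof of Theorem \ref{theorem:main}, the product is first expanded in the module generators of the decomposition of Theorem \ref{thm:decomclousure}, $\alpha\cdot\beta=\sum_{k\in J}\lambda_{k}\,\alpha^{k}\cdot L_{z_{k}}$, and by Lemma \ref{lemma:interval} its survival space is $I(\alpha\cdot\beta)=\bigcup_{k\in J}I(\alpha^{k})$, the union of the \emph{full bars} of the constituents. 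Since each $\alpha^{k}$ sits at grade $b_{k}=\min(b_{i},b_{j})z_{k}$, these bars extend above the grade $m=\min(\sup I(\alpha),\sup I(\beta))$ at which the product element lives: the whole content of the theorem is that a product can be the restriction of a longer-lived class. The paper's third torus filtration $\mathcal{K}''$ (torus, then $S^{1}\vee S^{2}$, then $S^{2}$) exhibits exactly this: the two $H^{1}$ generators die at parameters $1$ and $2$, while their product, being the restriction of the $H^{2}$ class that persists to the end, has a bar running past both. Consequently your asserted containment $I(\alpha\cdot\beta)\subseteq I(\alpha)\cap I(\beta)$ is false under the intended reading, and so is your stronger conclusion that $\sup I(\alpha\cdot\beta)=m$ with equality; indeed, if the first inequality were always an equality the theorem would be vacuous, which is a warning sign that the interpretation has drifted.

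Concretely, your supremum argument is repairable: nontriviality of $\alpha\cdot\beta$ at grade $m$ together with the direct-sum structure of the decomposition forces each constituent $\alpha^{k}\cdot L_{z_{k}}$ to be nonzero, hence $m\in I(\alpha^{k})$ for every $k$, and the paper's degree bookkeeping gives $\sup I(\alpha\cdot\beta)=\coprod_{k}b_{k}=\min(b_{i},b_{j})\coprod_{k}z_{k}\geq\min(b_{i},b_{j})$. The genuine gap is in the infimum. Because $I(\alpha\cdot\beta)$ is a union of whole bars, you must show that \emph{each constituent bar} dies no lower than $\max(a_{i},a_{j})$, not merely that the pushed-down element $(\alpha\cdot\beta)_{g}$ vanishes for $g$ below that level: vanishing of the sum $\sum_{k}\lambda_{k}\alpha^{k}(g)$ only passes to vanishing of every single $\alpha^{k}(g)$ through the linear independence furnished by the direct-sum decomposition. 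The paper supplies this by combining Lemma \ref{lemma:interval} with the twisted-algebra identity $(\alpha^{i}\cdot L_{x})\cdot\alpha^{j}=(\alpha^{i}\cdot\alpha^{j})\cdot L_{(x^{-1}b_{i}\times b_{j})^{-1}(b_{i}\times b_{j})}$ (death of a factor kills the product) and a case analysis on whether $I(\alpha^{i})$ is closed or half-open at its lower end. Your cup-product vanishing observation is the first ingredient of this argument, but without the expansion in generators and the independence step it never reaches the bars themselves, which is what the statement is actually about.
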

At last, we consider the Cayley-persistence manifold $\mathcal{M}:\mathrm{cat}(\mathrm{Cay}(G,S))\rightarrow \mathbf{Mani}$. All the manifolds considered are assumed to be compact orientable $n$-manifolds without boundary. Recall that the Poincar\'{e} duality
\begin{equation*}
  D:H^{p}(M; \mathbf{k})\rightarrow H_{n-p}(M; \mathbf{k})
\end{equation*}
defined by $D(\alpha) = \omega\cap \alpha$ is an isomorphism for all $p$. Here, $\omega$ is the fundamental class in $H_{n}(M;\mathbf{k})$.
Let $G$ be an abelian group, and  let $S\subseteq G$ be a monoid such that the identity element $e$ is the unique invertible element in $S$.
Let
\begin{eqnarray*}
% \nonumber to remove numbering (before each equation)
  H^{a,b}_{\ast} &=& \mathrm{im}(H^{a}_{\ast}(\mathcal{M};\mathbf{k})\to H^{b}_{\ast}(\mathcal{M};\mathbf{k})), \\
  H_{a,b}^{\ast} &=& \mathrm{im}(H_{b}^{\ast}(\mathcal{M};\mathbf{k})\to H_{a}^{\ast}(\mathcal{M};\mathbf{k})),  \\
  P_{a,b}^{\ast} &=& \mathrm{im}(H_{b}^{\ast}(\mathcal{M};\mathbf{k})\times H_{b}^{\ast}(\mathcal{M};\mathbf{k})\stackrel{\cup}{\rightarrow} H_{b}^{\ast}(\mathcal{M};\mathbf{k})\rightarrow H_{a}^{\ast}(\mathcal{M};\mathbf{k}))
\end{eqnarray*}
be the $(a,b)$-persistent homology, cohomology, and cup-space, respectively. We show that the persistent cohomology of the Cayley-persistence manifold is influenced by the map of fundamental classes (see Theorem \ref{theorem:duality}).
\begin{theorem}\label{theorem:main5}
For $a,b\in G$ with $ba^{-1}\in S$, let $f^{a,b}_{\ast}:H_{\ast}^{a}\rightarrow H_{\ast}^{b}$ be a map induced by $\mathcal{M}_{a}\rightarrow \mathcal{M}_{b}$ and $f^{a,b}_{n}(\omega^{a})=\lambda^{a,b}\omega^{b},\lambda^{a,b}\in \mathbf{k}$. Here, $\omega^{a},\omega^{b}$ are the fundamental classes in $H^{a}_{n},H^{b}_{n}$, respectively.
\begin{itemize}
  \item[$(i)$] If $\lambda^{a,b}\neq 0$, then the map $$D^{a,b}=f^{a,b}_{\ast}\circ D: H^{p}_{a,b}\rightarrow H_{n-p}^{a,b}$$ is an isomorphism for all $p$. Moreover, we have
  $\beta_{a,b}^{p}=\beta_{a,b}^{n-p}$.
  \item[$(ii)$] If $\lambda^{a,b}=0$, then the  $(a,b)$-persistent cup-space $P_{a,b}^{n}=0$. Moreover, we have
  \begin{equation*}
  \beta_{p}^{a,b}+\beta_{n-p}^{a,b}\leq \beta_{p}^{a},\quad \beta^{p}_{a,b}+\beta^{n-p}_{a,b}\leq \beta^{p}_{a}.
  \end{equation*}
\end{itemize}
\end{theorem}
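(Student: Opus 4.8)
The engine of the whole argument is the naturality of the cap product (the projection formula) applied to the map $f^{a,b}\colon\mathcal{M}_a\to\mathcal{M}_b$ induced by the arc $a\to b$ (with $x=ba^{-1}\in S$). Writing $D_a,D_b$ for the Poincar\'e duality isomorphisms $D_c(\,\cdot\,)=\omega^{c}\cap(\,\cdot\,)$ on $\mathcal{M}_a,\mathcal{M}_b$ and $(f^{a,b})^{\ast}\colon H^{\ast}(\mathcal{M}_b)\to H^{\ast}(\mathcal{M}_a)$ for the induced map on cohomology, the plan is first to establish the single commuting square
\begin{equation*}
f^{a,b}_{\ast}\circ D_a\circ (f^{a,b})^{\ast}=\lambda^{a,b}\,D_b .
\end{equation*}
This comes from feeding the fundamental class into the projection formula $f^{a,b}_{\ast}\big(\alpha\cap (f^{a,b})^{\ast}(\phi)\big)=f^{a,b}_{\ast}(\alpha)\cap\phi$: taking $\alpha=\omega^{a}$ and using $f^{a,b}_{n}(\omega^{a})=\lambda^{a,b}\omega^{b}$ turns the left side into $f^{a,b}_{\ast}(D_a((f^{a,b})^{\ast}(\phi)))$ and the right side into $\lambda^{a,b}(\omega^{b}\cap\phi)=\lambda^{a,b}D_b(\phi)$. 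Everything that follows is linear algebra built on this one identity.

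For part $(i)$, assume $\lambda^{a,b}\neq 0$. Since $H^{p}_{a,b}=\mathrm{im}\,(f^{a,b})^{\ast}$ and $H^{a,b}_{n-p}=\mathrm{im}\,f^{a,b}_{\ast}$, I would compute $D^{a,b}(H^{p}_{a,b})=f^{a,b}_{\ast}\big(D_a((f^{a,b})^{\ast}(H^{p}(\mathcal{M}_b)))\big)=\lambda^{a,b}D_b(H^{p}(\mathcal{M}_b))=H_{n-p}(\mathcal{M}_b)$, using the displayed identity and surjectivity of $D_b$. Since this image also lies in $\mathrm{im}\,f^{a,b}_{\ast}=H^{a,b}_{n-p}\subseteq H_{n-p}(\mathcal{M}_b)$, we get simultaneously that $H^{a,b}_{n-p}=H_{n-p}(\mathcal{M}_b)$ and that $D^{a,b}$ is onto. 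Injectivity is immediate: if $\xi=(f^{a,b})^{\ast}(\phi)$ and $D^{a,b}(\xi)=0$, the identity gives $\lambda^{a,b}D_b(\phi)=0$, whence $\phi=0$ and $\xi=0$ because $\lambda^{a,b}\neq0$ and $D_b$ is injective. Thus $D^{a,b}\colon H^{p}_{a,b}\to H^{a,b}_{n-p}$ is an isomorphism for every $p$, forcing $\beta^{p}_{a,b}=\beta^{a,b}_{n-p}$. Combining this with the over-a-field duality $\beta^{p}_{a,b}=\beta^{a,b}_{p}$ (the map $(f^{a,b})^{\ast}$ is the transpose of $f^{a,b}_{\ast}$, so the two images have equal rank) yields the stated symmetry $\beta^{p}_{a,b}=\beta^{n-p}_{a,b}$.

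For part $(ii)$, assume $\lambda^{a,b}=0$. On top degree $H_n(\mathcal{M}_a)=\mathbf{k}\,\omega^{a}$, the map $f^{a,b}_{n}$ is multiplication by $\lambda^{a,b}=0$, so its transpose $(f^{a,b})^{\ast}\colon H^{n}(\mathcal{M}_b)\to H^{n}(\mathcal{M}_a)$ vanishes as well; since $P^{n}_{a,b}$ is by definition contained in the image of this map, $P^{n}_{a,b}=0$. For the Betti inequalities, the displayed identity degenerates to $f^{a,b}_{\ast}\circ D_a\circ(f^{a,b})^{\ast}=0$, that is $D_a(H^{p}_{a,b})\subseteq\ker\big(f^{a,b}_{\ast}\colon H_{n-p}(\mathcal{M}_a)\to H_{n-p}(\mathcal{M}_b)\big)$. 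As $D_a$ is an isomorphism the left side has dimension $\beta^{p}_{a,b}$, while the kernel has dimension $\beta^{a}_{n-p}-\beta^{a,b}_{n-p}$, giving $\beta^{p}_{a,b}+\beta^{a,b}_{n-p}\leq\beta^{a}_{n-p}$. Poincar\'e duality and universal coefficients on $\mathcal{M}_a$ give $\beta^{a}_{n-p}=\beta^{a}_{p}=\beta_{a}^{p}$, and feeding in the rank dualities $\beta^{p}_{a,b}=\beta^{a,b}_{p}$ and $\beta^{a,b}_{n-p}=\beta^{n-p}_{a,b}$ converts this single inequality into the two stated ones.

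The part I expect to be most delicate is not any one estimate but the bookkeeping: pinning down the correct variance and sign in the projection formula, and then translating consistently between the four quantities attached to each degree — the rank of the homology map, the rank of the cohomology map, and their Poincar\'e/universal-coefficient duals — so that the heavily decorated symbols $\beta^{p}_{a,b},\beta_{p}^{a,b},\beta^{a}_{p},\beta_{a}^{p}$ line up exactly as claimed. A secondary point to record carefully is the (standard) connectedness of each $\mathcal{M}_c$, which is what makes $H_n(\mathcal{M}_c)\cong\mathbf{k}$ and hence makes $f^{a,b}_{n}$ literally multiplication by $\lambda^{a,b}$; once this is in place, both halves reduce to pure linear algebra on the single commuting square.
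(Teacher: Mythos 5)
Your proof is correct and follows essentially the same route as the paper's: both hinge on the projection-formula identity $f^{a,b}_{\ast}\circ D_a\circ (f^{a,b})^{\ast}=\lambda^{a,b}D_b$ (which the paper derives, in the form $\lambda^{a,b}D(\beta)=L^{x}(D(\beta\cdot L_{x}))$, in the proof of Proposition 5.5 and reuses in Theorem 5.7), prove $(i)$ by the same injectivity/surjectivity linear algebra, and obtain the inequalities in $(ii)$ from $D_a(H^{p}_{a,b})\subseteq\ker f^{a,b}_{\ast}$ together with rank--nullity and the equality of persistent homology and cohomology Betti numbers (the paper's Corollary 5.4). The only cosmetic difference is your derivation of $P^{n}_{a,b}=0$ from the vanishing of the transposed top-degree cohomology map, where the paper instead evaluates the key identity on the dual fundamental class to show its image under $L_{x}$ is zero.
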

In this paper, we present a few examples to illustrate our ideas and results. The paper is organized as follows. In the next section, we give some preliminary knowledge and recall some classic results. In Section \ref{section:module}, we show the module structure of a Cayley persistence object. In Section \ref{section:algebra}, we study the product on the Cayley-persistence module. In the last section, we show Theorem \ref{theorem:main5}.

\section{Preliminaries}

\subsection{Persistent homology and cohomology}

Let $(X,\preceq)$ be an ordered set, where $\preceq\subseteq X\times X$ gives an order. We can regard $X$ as a category with elements in $X$ as objects and the pairs $(a,b)\in \preceq$ as morphisms. We denote the category by $\mathrm{cat}(X,\preceq)$.
\begin{definition}
Let $\mathfrak{C}$ be a category. A \emph{persistence object} is a functor $\mathcal{F}:\mathrm{cat}(X,\preceq)\rightarrow \mathfrak{C}$ from the category $\mathrm{cat}(X,\preceq)$ to $\mathfrak{C}$. Dually, a \emph{copersistence object} is a contravariant functor $\mathcal{F}:\mathrm{cat}(X,\preceq)\rightarrow \mathfrak{C}$ from the category $\mathrm{cat}(X,\preceq)$ to $\mathfrak{C}$.
\end{definition}
The ring of integers $\mathbb{Z}$ is an ordered set with the order given by $\leq$. Usually, a \emph{persistence module} is a functor $\mathcal{F}:\mathrm{cat}(\mathbb{Z},\leq)\rightarrow \mathbf{Vec}_{\mathbf{k}}$ from the category $\mathrm{cat}(\mathbb{Z},\leq)$ to the category of $\mathbf{k}$-linear spaces. Let $\mathcal{K}:\mathrm{cat}(\mathbb{Z},\leq)\rightarrow \mathbf{Simp}$ be a persistence simplicial complex, that is, a filtration of simplicial complexes satisfies
\begin{enumerate}
  \item[($i$)]  For each $i\in \mathbb{Z}$, $\mathcal{K}_{i}$ is a simplicial complex. For any integers $i\leq j$, there is a morphism of simplicial complexes $f_{i,j}:\mathcal{K}_{i}\rightarrow \mathcal{K}_{j}$.
  \item[($ii$)] For $i\leq j\leq k$, we have $f_{j,k}\circ f_{i,j}=f_{i,k}$.
\end{enumerate}
There are many ways to obtain a persistence simplicial complex. The Vietoris-Rips complexes and C\v{e}ch complexes are the classical constructions from a data-set \cite{carlsson2006algebraic,ghrist2008barcodes}. More generally, we give a simplicial complex $K$ equipped with a function $f:K\rightarrow \mathbb{R}$ on the simplices of $K$ such that $f(\sigma)\leq f(\tau)$ for any face $\sigma$ of $\tau$. Choosing a sequence of real numbers $a_{0}<a_{1}<\cdots<a_{p}<\cdots$, we have a filtration of simplicial complexes $\{K_{a_{p}}\}$ given by
\begin{equation*}
  K_{a_{p}}=\{\sigma\in K|f(\sigma)\leq a_{p}\}.
\end{equation*}
By taking $\mathcal{K}_{p}=K_{a_{p}}$, we obtain a persistence simplicial complex $\mathcal{K}:\mathrm{cat}(\mathbb{Z},\leq)\rightarrow \mathbf{Simp}$.

Note that the homology $H_{\ast}(-;\mathbf{k}):\mathbf{Simp}\rightarrow \mathbf{Vec}_{\mathbf{k}}$ is a functor from the category of simplicial complexes to the category of $\mathbf{k}$-linear spaces. One has that the functor
\begin{equation*}
  H_{\ast}(\mathcal{K};\mathbf{k}):\mathrm{cat}(\mathbb{Z},\leq)\rightarrow \mathbf{Vec}_{\mathbf{k}},\quad i\mapsto H_{\ast}(\mathcal{K}_{i};\mathbf{k})
\end{equation*}
is a persistence module. For $i\leq j$, the \emph{$(i,j)$-persistent homology} is defined by
\begin{equation*}
 H_{\ast}^{i,j}=\mathrm{im}(H_{\ast}(\mathcal{K}_{i};\mathbf{k})\rightarrow H_{\ast}(\mathcal{K}_{j};\mathbf{k})).
\end{equation*}
For the sake of simplicity, we denote $H_{\ast}^{i}=H_{\ast}(\mathcal{K}_{i};\mathbf{k})$.
Let $\mathbf{H}=\bigoplus\limits_{i\in \mathbb{Z}}H_{\ast}^{i}$ be a graded $\mathbf{k}$-linear space. The morphism $t:H_{\ast}(\mathcal{K}_{i};\mathbf{k})\rightarrow H_{\ast}(\mathcal{K}_{i+1};\mathbf{k})$ induces a morphism
\begin{equation*}
  t: \mathbf{H}\rightarrow \mathbf{H}
\end{equation*}
of degree $1$. Consider the polynomial ring $\mathbf{k}[t]$. For any $f(t)=\sum\limits_{k=0}^{n}a_{k}t^{k}\in \mathbf{k}[t]$, we have a morphism
\begin{equation*}
  f(t):\mathbf{H}\rightarrow \mathbf{H}
\end{equation*}
given by $f(t)(\alpha)=\sum\limits_{k=0}^{n}a_{k}\cdot\overbrace{t\circ t\circ\cdots\circ t}^{k}(\alpha)$. This shows that $\mathbf{H}$ is a graded left $\mathbf{k}[t]$-module given by
\begin{equation*}
  \mathbf{k}[t]\times \mathbf{H}\rightarrow \mathbf{H},\quad (f(t),\alpha)\mapsto f(t)(\alpha).
\end{equation*}
A. Zomorodian and G. Carlsson \cite{Zomorodian2005} proved that
\begin{theorem}
If $\mathbf{H}$ is a finitely generated $\mathbf{k}[t]$-module, then we have a finite direct decomposition
\begin{equation*}
  \mathbf{H}\cong \left(\bigoplus_{i=1}^{k}\mathbf{k}[t]\cdot e^{b_{i}}_{i}\right)\oplus\left(\bigoplus_{j=1}^{l}\frac{\mathbf{k}[t]}{\mathbf{k}[t]\cdot t^{s_{j}}}\cdot \varepsilon^{r_{j}}_{j}\right),
\end{equation*}
where $e^{b_{i}}_{i},\varepsilon^{r_{j}}_{j}$ are generators of degree $b_{i},r_{j}$, respectively.
\end{theorem}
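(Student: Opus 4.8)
The plan is to recognize this as the classical structure theorem for finitely generated graded modules over a graded principal ideal domain, specialized to the graded ring $\mathbf{k}[t]$ with $\deg t = 1$. The first step is to record the arithmetic of $\mathbf{k}[t]$ that drives everything: since $\mathbf{k}$ is a field, $\mathbf{k}[t]$ is a Euclidean domain, hence a PID, and moreover every nonzero homogeneous element has the form $a\,t^{m}$ with $a\in\mathbf{k}^{\times}$, so every nonzero homogeneous ideal is $\mathbf{k}[t]\cdot t^{m}$ for a unique $m\ge 0$. In particular, divisibility between homogeneous elements is detected solely by comparing exponents of $t$, which is what makes the graded reduction below especially clean.

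Next I would produce a finite graded free presentation of $\mathbf{H}$. Choosing finitely many homogeneous generators (possible since $\mathbf{H}$ is finitely generated and graded) yields a degree-preserving surjection $\pi\colon F_{0}\to\mathbf{H}$ from a finite graded free module $F_{0}=\bigoplus_{i}\mathbf{k}[t](-\gamma_{i})$. Because $\mathbf{k}[t]$ is Noetherian, $\ker\pi$ is finitely generated, and because submodules of free modules over a PID are free, $\ker\pi$ is again graded free of finite rank. Hence I obtain an exact sequence $F_{1}\xrightarrow{\;\phi\;}F_{0}\to\mathbf{H}\to 0$ with $F_{0},F_{1}$ finite graded free and $\phi$ a degree-$0$ homomorphism.

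The heart of the argument is to diagonalize $\phi$ by a graded Smith normal form. Writing $\phi$ as a matrix with respect to homogeneous bases, each entry $\phi_{ij}$ is a homogeneous element of $\mathbf{k}[t]$, hence a scalar multiple of a power of $t$. I would run the usual reduction: locate a nonzero entry of minimal $t$-degree, use it as a pivot, and clear its row and column by homogeneous row and column operations; since among monomials the one of least degree divides all others in its row and column, these clearing steps are exactly the elementary operations over a PID and are automatically homogeneous. Iterating on the complementary block produces a diagonal matrix whose entries are, up to units, equal to $1$'s, positive powers $t^{s_{j}}$, and $0$'s.

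Finally I would read off the decomposition: unit diagonal entries cancel a free generator of $F_{0}$ against one of $F_{1}$ and contribute nothing; a zero diagonal entry leaves a free summand $\mathbf{k}[t]\cdot e^{b_{i}}_{i}$; and an entry $t^{s_{j}}$ contributes a torsion summand $\bigl(\mathbf{k}[t]/\mathbf{k}[t]\cdot t^{s_{j}}\bigr)\cdot\varepsilon^{r_{j}}_{j}$, where the degrees $b_{i},r_{j}$ are exactly the gradings carried by the surviving homogeneous generators of $F_{0}$ under the shifts $\gamma_{i}$. This gives the asserted finite direct sum decomposition. The main obstacle is bookkeeping the grading throughout: one must check that every elementary operation in the reduction can be chosen degree-preserving. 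This is precisely where the special structure of $\mathbf{k}[t]$ helps, since homogeneity forces each matrix entry to be a monomial and reduces all divisibility tests to comparisons of exponents, so no inhomogeneous combination is ever required.
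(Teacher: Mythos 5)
Your proof is correct and follows the route the paper itself points to: the paper gives no proof of this theorem (it is quoted from Zomorodian--Carlsson), remarking only that $\mathbf{k}[t]$ is a PID and that "the proof of this decomposition mainly depends on the structure theorem of finitely generated modules over a PID," which is precisely the structure theorem your argument establishes in its graded form. Your graded Smith-normal-form argument, with the observation that homogeneity forces every matrix entry to be a monomial $a\,t^{m}$ so that all row and column operations can be taken degree-preserving, correctly supplies the graded refinement (degree-shifted free and torsion summands) that the bare citation glosses over.
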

Note that $\mathbf{k}[t]$ is a principal ideal domain (PID). The proof of this decomposition mainly depends on the structure theorem of finitely generated modules over a PID.

Dually, for $i\leq j$, the \emph{$(i,j)$-persistent cohomology} is defined by
\begin{equation*}
 H^{\ast}_{i,j}=\mathrm{im}(H^{\ast}(\mathcal{K}_{j};\mathbf{k})\rightarrow H^{\ast}(\mathcal{K}_{i};\mathbf{k})).
\end{equation*}
Let $H^{\ast}_{i}=H^{\ast}(\mathcal{K}_{i};\mathbf{k})$ and $\mathbf{H}^{\sharp}=\bigoplus\limits_{i\in \mathbb{Z}}H^{\ast}_{i}$. The morphism $x:H^{\ast}(\mathcal{K}_{i};\mathbf{k})\rightarrow H^{\ast}(\mathcal{K}_{i-1};\mathbf{k})$ induces a morphism $x: \mathbf{H}^{\sharp}\rightarrow \mathbf{H}^{\sharp}$ of degree $-1$. Then, for any $f(x)\in \mathbf{k}[x]$, we have a morphism
\begin{equation*}
  f(x):\mathbf{H}^{\sharp}\rightarrow \mathbf{H}^{\sharp},
\end{equation*}
which makes $\mathbf{H}^{\sharp}$ a graded right $\mathbf{k}[x]$-module  given by
\begin{equation*}
  \mathbf{H}^{\sharp}\times \mathbf{k}[x]\rightarrow \mathbf{H}^{\sharp},\quad (\alpha,f(x))\mapsto f(x)(\alpha).
\end{equation*}
Similarly, $\mathbf{H}^{\sharp}$ has a decomposition if it is a finitely generated $\mathbf{k}[x]$-module. In Section \ref{section:module}, we will study the module structures of persistent homology for group grading.

%\subsection{Some other persistences}

\subsection{The monoid ring}
In this section, we introduce the topological monoids \cite{bourbaki1966elements} which will be used to deal with the non-discrete group graded structure of the modules.

\begin{lemma}\label{lemma:topology}
Let $S,T$ be monoids and $\phi:S\rightarrow T$ be a morphism of monoids. If $S$ is a topological monoid, then the image $\phi(S)$ is a topological monoid with the quotient topology induced by $\phi$.
\end{lemma}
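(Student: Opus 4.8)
The plan is to establish the result by transporting the given topological structure on $S$ across the monoid morphism $\phi$ and verifying that the quotient topology is compatible with the monoid operation on $\phi(S)$. First I would equip $\phi(S)$ with the quotient topology, that is, declare $U\subseteq \phi(S)$ open if and only if $\phi^{-1}(U)$ is open in $S$. This is the finest topology making $\phi:S\rightarrow\phi(S)$ continuous, so continuity of $\phi$ as a map onto its image comes for free. It remains only to check that the multiplication map $m_{\phi(S)}:\phi(S)\times\phi(S)\rightarrow\phi(S)$ is continuous with respect to the product topology, which is exactly the axiom needed for $\phi(S)$ to be a topological monoid.

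The key step is to relate $m_{\phi(S)}$ back to the continuous multiplication $m_S:S\times S\rightarrow S$. Since $\phi$ is a monoid homomorphism we have the commuting square $\phi\circ m_S = m_{\phi(S)}\circ(\phi\times\phi)$. Given an open set $V\subseteq\phi(S)$, I would compute $(\phi\times\phi)^{-1}\bigl(m_{\phi(S)}^{-1}(V)\bigr) = m_S^{-1}\bigl(\phi^{-1}(V)\bigr)$; the right-hand side is open because $\phi^{-1}(V)$ is open in $S$ by definition of the quotient topology and $m_S$ is continuous. The subtle point is that openness of the preimage under $\phi\times\phi$ does not immediately yield openness of $m_{\phi(S)}^{-1}(V)$ in the product topology on $\phi(S)\times\phi(S)$, because the product of two quotient maps need not be a quotient map in general.

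The hard part will be precisely this last implication: I expect the main obstacle to be showing that $\phi\times\phi:S\times S\rightarrow\phi(S)\times\phi(S)$ is itself a quotient map, so that openness of $(\phi\times\phi)^{-1}\bigl(m_{\phi(S)}^{-1}(V)\bigr)$ forces $m_{\phi(S)}^{-1}(V)$ to be open. One clean route is to observe that $\phi:S\rightarrow\phi(S)$ is a surjective continuous map; if it is additionally open (or closed), then the product of two open maps is open and hence a quotient map, and the argument closes immediately. I would therefore first verify that $\phi$ is an open map onto $\phi(S)$ — using either a concrete description of the interval topology on $S$ or the fact that, in the intended applications, $\phi$ is induced by an order-preserving group quotient and so sends basic open intervals to intervals. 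If openness of $\phi$ cannot be assumed in full generality, a fallback is to pass to the category of compactly generated spaces, where products of quotient maps are always quotient maps, and note that the monoids arising in this paper sit inside that convenient category.

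Once the quotient map property of $\phi\times\phi$ is secured, the continuity of $m_{\phi(S)}$ is automatic, the identity element $\phi(e)$ is visibly a two-sided unit inherited from $S$, and we conclude that $\phi(S)$ is a topological monoid with the quotient topology, completing the proof of Lemma~\ref{lemma:topology}.
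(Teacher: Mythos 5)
Your proposal takes a genuinely different route from the paper, and in doing so it puts its finger on exactly the point that the paper's own proof passes over. The paper's argument is almost entirely algebraic: it defines the congruence $x\mathcal{R}y$ iff $\phi(x)=\phi(y)$, checks that $S/\mathcal{R}$ is a monoid and that the induced map $\bar{\phi}:S/\mathcal{R}\rightarrow\phi(S)$ is a bijective morphism of monoids, endows $\phi(S)$ with the quotient topology, and then concludes with the single sentence ``Since $S$ is a topological monoid, $\phi(S)$ is a topological monoid.'' The one genuinely topological assertion of the lemma --- continuity of the multiplication $\phi(S)\times\phi(S)\rightarrow\phi(S)$ in the quotient topology --- is never verified there. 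You verify it head-on: from the commuting square $\phi\circ m_{S}=m_{\phi(S)}\circ(\phi\times\phi)$ you get that $(\phi\times\phi)^{-1}\bigl(m_{\phi(S)}^{-1}(V)\bigr)$ is open for every open $V\subseteq\phi(S)$, and you correctly observe that this forces $m_{\phi(S)}^{-1}(V)$ to be open only if $\phi\times\phi$ is itself a quotient map --- which is not automatic, since products of quotient maps need not be quotient maps. That is the crux. For topological groups the issue is invisible because the projection onto a quotient group is an open map and products of open surjections are quotient maps; for a monoid congruence no such openness is available in general, which is precisely why compactness or convenient-category hypotheses appear in the topological-semigroup literature.

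That said, your proposal does not close the gap either: the two repairs you sketch --- proving $\phi$ is open onto its image, or passing to compactly generated spaces --- are both conditional and unexecuted, and openness of $\phi$ can genuinely fail for an arbitrary monoid morphism out of an arbitrary topological monoid, which is the generality in which the lemma is stated. So what you have is a correct reduction of the lemma to the claim that $\phi\times\phi$ is a quotient map, not a proof; but the same hole sits, unacknowledged, at the end of the paper's proof, so your version is strictly more honest about where the difficulty lies. To finish the argument in the setting where the lemma is actually used (Section 4, where $S$ carries the interval topology of an ordered abelian group and $\phi(x)=L_{x}$), the concrete remaining task is to show that this particular $\phi$ is open onto its image, or alternatively to impose a hypothesis such as $S$ and $\phi(S)$ being locally compact Hausdorff, under which $\phi\times\phi=(\mathrm{id}\times\phi)\circ(\phi\times\mathrm{id})$ is a composite of quotient maps and hence a quotient map; either addition would turn your outline into a complete proof, and would also repair the paper's.
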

\begin{proof}
Define an equivalence relation $\mathcal{R}$ on $S$ by $x\mathcal{R}y$ if $\phi(x)=\phi(y)$. Then $\mathcal{R}$ is a  congruence relation for the product on $S$ since $x\mathcal{R}y,x'\mathcal{R}y'$ imply $(xx')\mathcal{R}(yy')$. Thus the quotient $S/\mathcal{R}$ is a monoid \cite{hungerford1974categories}. The map \begin{equation*}
  \bar{\phi}:S/\mathcal{R}\rightarrow T
\end{equation*}
induced by $\phi$ is defined as follows. For any $\bar{x}\in S$, we choose an element $x$ in the class $\bar{x}$ and define $\bar{\phi}(\bar{x})=\phi(x)$.
It can be directly verified that the map is well defined and a morphism of monoids. Then $\bar{\phi}:S/\mathcal{R}\rightarrow \phi(S)$ is a surjection. Given any pair $(K,i:K\to S/\mathcal{R})$ with $\bar{\phi}\circ i(a)=e_{T}$ for all $a\in K$, we have $\bar{\phi}\circ i(a)=\phi(e_{S})$.
\begin{equation*}
  \xymatrix{
 K\ar@{->}[r]^{i}\ar@{.>}[dr]_{1}& S/\mathcal{R}\ar@{->}[d]^{\bar{\phi}}\\
 &\phi(S)
 }
\end{equation*}
It follows that $i(a)=\bar{e}_{S}$ in $S/\mathcal{R}$, where $\bar{e}_{S}$ is the congruence class of $e_{S}$. One has that $\bar{\phi}$ is a monomorphism. Thus $\bar{\phi}:S/\mathcal{R}\rightarrow \phi(S)$ is a bijection. We endow $\phi(S)$ with a quotient topology induced by $\phi$. Since $S$ is a topological monoid, $\phi(S)$ is a topological monoid.
\end{proof}
\begin{remark}
Let $\mathbf{Monoid}$ be the category of monoids with objects given by monoids and morphisms given by morphisms of monoids. Here, a morphism of monoids is a map $\phi:S\rightarrow T$ satisfying
\begin{equation*}
  \phi(x)\phi(y)=\phi(xy),\quad \phi(e_{S})=\phi(e_{T}).
\end{equation*}
The kernel of $\phi$ is the kernel of $\phi$ in the category $\mathbf{Monoid}$. For other related researches on monoids, the readers can refer to \cite{fiedorowicz1984classifying,rhodes1989kernel}.
\end{remark}
Let $R$ be a commutative ring with unit, and let $S$ be a  monoid. We will recall the construction of the monoid ring \cite{gilmer1984commutative} given by $S$ over $R$. Let $R[S]$ be the set of maps from $S$ to $R$ of finite support in $S$. Recall that a map $\psi$ is of finite support if
the set
\begin{equation*}
  \mathrm{supp}(\psi)=\{z\in S|\psi(z)\neq 0\}
\end{equation*}
is finite. Then $R[S]$ is a ring  with addition
\begin{equation*}
  (\lambda\cdot\phi+ \mu\cdot\varphi)(x)=\lambda\phi(x)+\mu\varphi(x),\quad \lambda,\mu\in R,x\in S
\end{equation*}
and multiplication
\begin{equation*}
  (\phi\cdot \varphi)(x)=\sum_{x_{1}x_{2}=x}\phi(x_{1})\varphi(x_{2}),\quad x\in S.
\end{equation*}
It can be verified $(\phi\cdot \varphi)\cdot \psi=\phi\cdot (\varphi\cdot \psi)$.
\begin{remark}\label{remark:monoid}
Another equivalent definition of the monoid ring is given by the free module $\bigoplus\limits_{x\in S}Rx$ with multiplication given by the multiplication on $S$.  Indeed, let $\phi_{x}(z)=\left\{
                                                                   \begin{array}{ll}
                                                                     1, & \hbox{$z=x$;} \\
                                                                     0, & \hbox{otherwise.}
                                                                   \end{array}
                                                                 \right.
$ It can be verified that the map $\bigoplus\limits_{x\in S}Rx\rightarrow R[S],x\rightarrow \phi_{x}$ gives an isomorphism of rings.
\end{remark}

A monoid $S$ is \emph{finitely generated} if there is a finite set that generates $S$.

\begin{lemma}\label{lemma:noetherian}
Let $S,T$ be monoids and $\phi:S\rightarrow T$ be a morphism of monoids. If $S$ is finitely generated and $R$ is noetherian, then the monoid ring $R[\phi(S)]$ is noetherian.
\end{lemma}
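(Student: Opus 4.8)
The plan is to realize $R[\phi(S)]$ as a homomorphic image of a polynomial ring over $R$ and then invoke the Hilbert basis theorem together with the fact that quotients of noetherian rings are noetherian.

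First I would use $\phi$ to transport generators. If $S$ is generated as a monoid by a finite set $\{s_{1},\dots,s_{n}\}$, then since $\phi$ respects products and the unit, the image $\phi(S)$ is generated by $\{\phi(s_{1}),\dots,\phi(s_{n})\}$; in particular $\phi(S)$ is itself a finitely generated monoid. Thus the only role of $\phi$ is to supply a finite generating set, and the statement reduces to showing that $R[M]$ is noetherian whenever $M$ is a finitely generated commutative monoid and $R$ is noetherian. In the setting of this paper, where $S$ is a submonoid of the abelian group $G$, the image $\phi(S)$ is commutative, which is exactly what the next step requires.

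Next I would build a surjection from a free object. Let $\mathbb{N}^{n}$ denote the free commutative monoid on $n$ generators, and define a monoid morphism $\pi:\mathbb{N}^{n}\rightarrow\phi(S)$ by sending the $i$-th generator to $\phi(s_{i})$ and extending multiplicatively; because the $\phi(s_{i})$ generate the commutative monoid $\phi(S)$, every element is a product of generators and $\pi$ is surjective. Using the free-module description of the monoid ring (Remark \ref{remark:monoid}), $\pi$ induces a ring homomorphism $R[\pi]:R[\mathbb{N}^{n}]\rightarrow R[\phi(S)]$ by $\sum_{m}r_{m}m\mapsto\sum_{m}r_{m}\pi(m)$; this is well defined and multiplicative because $\pi$ preserves products, and it is surjective because the elements $\pi(m)$ span $R[\phi(S)]$ over $R$. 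Finally I would identify $R[\mathbb{N}^{n}]$ with the polynomial ring $R[x_{1},\dots,x_{n}]$.

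To conclude, since $R$ is noetherian, iterating the Hilbert basis theorem $n$ times shows $R[x_{1},\dots,x_{n}]$ is noetherian, and a homomorphic image of a noetherian ring is noetherian; hence $R[\phi(S)]$ is noetherian. The only delicate point is the commutativity invoked when passing to $\mathbb{N}^{n}$: for a genuinely noncommutative monoid the analogous free object would give the free associative algebra $R\langle x_{1},\dots,x_{n}\rangle$, which fails to be noetherian for $n\geq 2$, so the reduction to a polynomial ring genuinely relies on $\phi(S)$ being commutative (equivalently, on $G$ being abelian in the applications). Everything else — the transport of generators, the functoriality and surjectivity of $R[\pi]$, and the quotient argument — is routine.
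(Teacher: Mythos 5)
Your proof is correct, and its first step coincides with the paper's: both transport the finite generating set $\{s_{1},\dots,s_{n}\}$ of $S$ to the finite generating set $\{\phi(s_{1}),\dots,\phi(s_{n})\}$ of $\phi(S)$. Where the paper then closes the argument in one line by citing Theorem 7.7 of Gilmer's \emph{Commutative Semigroup Rings}, you instead prove that result: the monoid surjection $\mathbb{N}^{n}\twoheadrightarrow \phi(S)$ induces, by functoriality of the monoid ring, a ring surjection $R[x_{1},\dots,x_{n}]\cong R[\mathbb{N}^{n}]\twoheadrightarrow R[\phi(S)]$, and the Hilbert basis theorem together with stability of the noetherian property under quotients finishes the job. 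The trade-off is clear: the paper's proof is shorter, while yours is self-contained and, more importantly, makes visible a hypothesis that the citation conceals. Gilmer's theorem concerns \emph{commutative} semigroup rings, so the lemma as literally stated for arbitrary monoids $S,T$ is in fact false: taking $\phi$ to be the identity map of the free monoid on two generators gives $R[\phi(S)]=R\langle x_{1},x_{2}\rangle$, the free associative algebra, which is not noetherian. Your explicit remark that the reduction to a polynomial ring genuinely needs $\phi(S)$ commutative is therefore not a defect of your argument but a correction to the paper's statement (and to its later applications, where the lemma is invoked without checking commutativity). The only nitpick is your parenthetical ``equivalently, on $G$ being abelian'': commutativity of $\phi(S)$ already follows from commutativity of $S$ (or of $T$), and does not require the ambient group $G$ to be abelian.
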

\begin{proof}
If $S$ is finitely generated, there exists a finite set $X=\{x_{1},\dots,x_{n}\}$ generates $S$. Then $\phi(x_{1}),\dots,\phi(x_{n})$ are generators of $\phi(S)$. By \cite[Theorem 7.7]{gilmer1984commutative}, the monoid ring $R[\phi(S)]$ is noetherian.
\end{proof}

\section{The Cayley-persistence module}\label{section:module}

\subsection{Cayley grading and Cayley-persistence objects}
Let $G$ be a group. A ring $R$ is \emph{$G$-graded} \cite{bahturin2001group} if there is a decomposition of $R$ as an additive group into the direct sum of subgroups
\begin{equation*}
  R=\bigoplus_{g\in G}R_{g}
\end{equation*}
such that $R_{g}R_{h}\subseteq R_{gh}$ for any $g,h\in G$.  A (left) $R$-module $M$ is $G$-graded if there is a direct sum decomposition
\begin{equation*}
  M=\bigoplus_{g\in G}M_{g}
\end{equation*}
 of $M$ such that $R_{g}M_{h}\subseteq M_{gh}$.
\begin{example}
\begin{itemize}
  \item[$(i)$] The usual $\mathbb{Z}^{n}$-graded ring is a group graded ring with group $G=\mathbb{Z}^{n}$.
  \item[$(ii)$] A $\mathbb{Z}/2$-graded ring is a group graded ring with group $G=\mathbb{Z}/2$.
  \item[$(iii)$] Let $R$ be a commutative ring with unit, then $R[G]$ is a group ring. There is a natural $G$-grading on $R[G]$.
\end{itemize}
\end{example}
A  digraph $D=(V,E)$ can be regarded as a category with
\begin{itemize}
  \item[($i$)] objects: the vertices in $V$;
  \item[($ii$)] morphisms: for every $x,y$ in $V$,
  \begin{equation*}
    \mathrm{Hom}(x,y)=\left\{
               \begin{array}{ll}
                \{f_{xy}:x\rightarrow y\}, & \hbox{if there exists a directed path from $x$ to $y$;} \\
                 \emptyset, & \hbox{otherwise.}
               \end{array}
             \right.
  \end{equation*}
\end{itemize}
We denote $\mathrm{cat}(D)$ the category given by the digraph $D$. Let $G$ be a group and $S$ be a subset of $G$. We have a Cayley digraph $\mathrm{Cay(}G,S)$.
\begin{lemma}\label{lemma:monoid}
Let $\langle S \rangle$ be the semigroup generated by $S$ in $G$. Then $\mathrm{cat}(\mathrm{Cay}(G,S))=\mathrm{cat}(\mathrm{Cay}(G,\langle S \rangle))$.
\end{lemma}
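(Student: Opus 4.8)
The plan is to prove the two categories are equal by checking that they agree on objects, on morphisms, and on composition. Both Cayley digraphs are built on the same vertex set $G$, so $\mathrm{cat}(\mathrm{Cay}(G,S))$ and $\mathrm{cat}(\mathrm{Cay}(G,\langle S\rangle))$ have identical objects, and the entire content of the lemma lies in comparing their morphisms. By the definition of $\mathrm{cat}(D)$, each Hom-set is either empty or the singleton $\{f_{xy}\}$, with its (non)emptiness governed solely by the existence of a directed path from $x$ to $y$. Hence both categories are \emph{thin}: once the Hom-sets are shown to coincide, composition is forced and equality of categories is automatic. So everything reduces to showing that the two digraphs have the same reachability relation.

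First I would translate ``directed path'' into algebra. A directed path from $a$ to $b$ in $\mathrm{Cay}(G,S)$ is a finite sequence $a=v_{0},v_{1},\dots,v_{n}=b$ of vertices with each consecutive pair an arc, i.e.\ $v_{i}v_{i-1}^{-1}\in S$ for $1\le i\le n$. Writing $s_{i}:=v_{i}v_{i-1}^{-1}\in S$ gives $v_{i}=s_{i}v_{i-1}$, and telescoping yields $b=s_{n}s_{n-1}\cdots s_{1}\,a$, so that $ba^{-1}=s_{n}\cdots s_{1}$ is a product of elements of $S$; conversely, any factorization of $ba^{-1}$ into elements of $S$ reconstructs such a path. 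Therefore there is a directed path from $a$ to $b$ in $\mathrm{Cay}(G,S)$ if and only if $ba^{-1}\in\langle S\rangle$, the set of finite products of elements of $S$. Applying the same translation to $\mathrm{Cay}(G,\langle S\rangle)$ and using that $\langle S\rangle$ is already closed under products, so that $\langle\langle S\rangle\rangle=\langle S\rangle$, shows that a \emph{single} arc already exists from $a$ to $b$ precisely when $ba^{-1}\in\langle S\rangle$, and hence the reachability criterion there is again $ba^{-1}\in\langle S\rangle$. The two reachability relations thus coincide, so for every pair $(a,b)$ the Hom-sets agree, and by the thinness observation the two categories are equal.

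The main point to handle with care is the paper's non-standard convention $ba^{-1}\in S$ for arcs (flagged in the footnote): it fixes the left-to-right order in $b=s_{n}\cdots s_{1}a$, and this order must be tracked consistently so that the factorization genuinely lands in $\langle S\rangle$ rather than in a formally different product set. A secondary bookkeeping issue is the identity morphisms, i.e.\ the length-zero (or loop) paths at each vertex; these match on both sides as soon as $e\in\langle S\rangle$, which holds since $\langle S\rangle$ is taken to contain the identity. Beyond these conventions, the argument is essentially the single observation that reachability in a Cayley digraph is controlled by the sub(semi)group generated by the connection set, so enlarging $S$ to $\langle S\rangle$ leaves every path relation, and therefore the whole category, unchanged.
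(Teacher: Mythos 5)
Your proposal is correct and takes essentially the same route as the paper: the paper also reduces everything to the observation that a morphism $a\to b$ in $\mathrm{cat}(\mathrm{Cay}(G,\langle S\rangle))$ means $ba^{-1}=s_{1}\cdots s_{n}$ with $s_{i}\in S$, and then realizes this factorization as the directed path $a\rightarrow s_{n}a\rightarrow s_{n-1}s_{n}a\rightarrow\cdots\rightarrow b$ in $\mathrm{Cay}(G,S)$, the reverse inclusion being immediate from $S\subseteq\langle S\rangle$. Your additional bookkeeping (thinness forcing composition, the ordering $ba^{-1}=s_{n}\cdots s_{1}$ under the paper's arc convention, identity morphisms) only makes explicit what the paper leaves implicit.
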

\begin{proof}
Obviously, $\mathrm{cat}(\mathrm{Cay}(G,S))$ is a subcategory of $\mathrm{cat}(\mathrm{Cay}(G,\langle S \rangle))$ with objects given by $G$. For a morphism $f:a\rightarrow b$ in $\mathrm{cat}(\mathrm{Cay}(G,\langle S \rangle))$, we have $ba^{-1}\in \langle S \rangle$. It follows that $ba^{-1}=s_{1}s_{2}\cdots s_{n}$ for some $s_{1},\dots,s_{n}\in S$. Thus we have a directed path
\begin{equation*}
  a\rightarrow s_{n}a \rightarrow s_{n-1}s_{n}a \rightarrow\cdots \rightarrow s_{2}\cdots s_{n-1}s_{n}a \rightarrow b
\end{equation*}
in $\mathrm{Cay}(G,S)$. It follows that $f\in \mathrm{cat}(\mathrm{Cay}(G,S))$.
\end{proof}

From now on, we always assume that $S$ is a monoid with the identity element $e$.
\begin{definition}
Let $\mathrm{Cay}(G,S)$ be a Cayley digraph, and let $\mathfrak{C}$ be a category. A \emph{Cayley-persistence object} is a functor $\mathcal{F}:\mathrm{cat}(\mathrm{Cay}(G,S))\rightarrow \mathfrak{C}$ from the category  $\mathrm{cat}(\mathrm{Cay}(G,S))$ to the category $\mathfrak{C}$. Dually, the \emph{Cayley-copersistence object} is a contravariant functor $\mathcal{F}:\mathrm{cat}(\mathrm{Cay}(G,S))\rightarrow \mathfrak{C}$ from the category  $\mathrm{cat}(\mathrm{Cay}(G,S))$ to the category $\mathfrak{C}$.
\end{definition}
The Cayley-persistence object $\mathcal{F}:\mathrm{cat}(\mathrm{Cay}(G,S))\rightarrow \mathfrak{C}$ can be also written as follows.
\begin{enumerate}
  \item[($i$)] $\mathcal{F}_{a}\in \mathfrak{C}$ for $a\in G$. For $ba^{-1}\in S$, there is a morphism $f_{a,b}:\mathcal{F}_{a}\rightarrow \mathcal{F}_{b}$ in $\mathfrak{C}$.
  \item[($ii$)] For $ba^{-1},cb^{-1}\in S$, we have $f_{b,c}\circ f_{a,b}=f_{a,c}$.
\end{enumerate}
Let $\mathcal{F}:\mathrm{cat}(\mathrm{Cay}(G,S))\rightarrow \mathfrak{C}$ be a Cayley-persistence object. Given $x\in S$, we have a $G$-graded morphism $l_{x}:a\rightarrow xa$ for each $a\in G$. Then we have $l_{x}l_{y}=l_{xy}$, which implies that
\begin{equation*}
  \mathcal{F}(l_{x})\mathcal{F}(l_{y})=\mathcal{F}(l_{xy})
\end{equation*}
by the functorial property. Let $\mathcal{L}_{S}$ be the monoid given by the elements $\mathcal{F}(l_{x}),x\in S$ with the multiplication $\mathcal{F}(l_{x})\cdot\mathcal{F}(l_{y})=\mathcal{F}(l_{x})\mathcal{F}(l_{y})$ and the identity element $\mathcal{F}(l_{e})$. Then the monoid ring $R_{\mathcal{F}}=\mathbf{k}[\mathcal{L}_{S}]$ is a $G$-graded ring with the grading given by $\deg \mathcal{F}(l_{x})=x$. Let $M_{\mathcal{F}}=\bigoplus\limits_{x\in G}\bigoplus\limits_{\alpha\in \mathcal{F}_{x}}\mathbf{k}\alpha$ be a $\mathbf{k}$-linear space generated by the elements in $\mathcal{F}_{x},x\in G$. The elements in $\mathcal{F}_{x}$ are endowed with a $G$-grading $x$. Moreover, we have a direct sum decomposition
\begin{equation*}
  M_{\mathcal{F}}=\bigoplus_{x\in G}\mathbf{k}[\mathcal{F}_{x}],
\end{equation*}
where $\mathbf{k}[\mathcal{F}_{x}]$ is the $\mathbf{k}$-linear space generated by the elements in $\mathcal{F}_{x}$. Note that
\begin{equation*}
  \mathcal{F}(l_{x})\mathcal{F}_{y}\subseteq \mathcal{F}_{xy},
\end{equation*}
one has that $M_{\mathcal{F}}$ is a $G$-graded $R_{\mathcal{F}}$-module. In a word, we can obtain a group graded module from a Cayley-persistence persistence object.

\begin{example}\label{exam:Cayley}
\begin{itemize}
  \item[$(i)$] Let $e_{1},\dots,e_{n}$ be a standard basis of  $\mathbb{R}^{n}$, where $e_{i}=(0,\dots,0, \overset{i}{1},0,\dots,0)$. Let $S=\mathbb{Z}_{\geq 0}^{n}$ be the monoid generated by $e_{1},\dots,e_{n}$. Then $(\mathbb{Z}^{n},S)$ is a Cayley digraph. A functor $\mathcal{F}:\mathrm{cat}(\mathrm{Cay}(G,S))\rightarrow \mathbf{Sets}$ from $\mathrm{cat}(\mathrm{Cay}(G,S))$ to the category of sets gives a multi-graded structure on sets. Let
      \begin{equation*}
      l_{e_{i}}:\mathbb{Z}^{n}\rightarrow \mathbb{Z}^{n},\quad v\mapsto e_{i}+v.
      \end{equation*}
   Consider the morphism $L_{i}=\mathcal{F}(l_{e_{i}}):M_{\mathcal{F}}\rightarrow M_{\mathcal{F}}$
  given by $L_{i}m=\mathcal{F}(l_{e_{i}})(m)\in \mathcal{F}_{e_{i}+x}$ for $m\in \mathcal{F}_{x}$. Since $L_{i}L_{j}=L_{j}L_{i}$, one has that $\mathbf{k}[L_{1},\dots,L_{n}]$ is a polynomial ring and $M_{\mathcal{F}}$ is a $\mathbb{Z}^{n}$-graded $\mathbf{k}[L_{1},\dots,L_{n}]$-module.

  \item[$(ii)$]  Let $G=S=\mathbb{Z}/p$. Then $(\mathbb{Z}/p,\mathbb{Z}/p)$ is a Cayley digraph. A functor $$\mathcal{F}:\mathrm{cat}(\mathrm{Cay}(\mathbb{Z}/p,\mathbb{Z}/p))\rightarrow \mathbf{Top}$$ from the category $\mathrm{cat}(\mathrm{Cay}(\mathbb{Z}/p,\mathbb{Z}/p))$ to the category of topological spaces gives a family of topological spaces $\mathcal{F}_{i},i=0,1,\dots,p-1$ which are homeomorphic to each other. Note that $Z/p$ gives a group action on $\mathcal{F}_{\mathbb{Z}/p}=\{\mathcal{F}_i\}_{i\in \mathbb{Z}/p}$ and $\mathrm{Aut}(\mathcal{F}_{\mathbb{Z}/p})=\mathbb{Z}/p$. Consider the map $\mathcal{F}(l_{j}):M_{\mathcal{F}}\rightarrow M_{\mathcal{F}}$ obtained by the homeomorphisms $\mathcal{F}(l_{j}):\mathcal{F}_{i}\rightarrow \mathcal{F}_{i+j}$ for $i\in \mathbb{Z}/p$. Let $\mathcal{L}_{\mathbb{Z}/p}$ be the monoid generated by $\mathcal{F}(l_{0}),\mathcal{F}(l_{j}),\dots,\mathcal{F}(l_{p-1})$. Then $M_{\mathcal{F}}=\bigoplus\limits_{i=0}^{p-1}\mathbf{k}[\mathcal{F}_{i}]$ is a $\mathbb{Z}/p$-graded  $\mathbf{k}[\mathcal{L}_{\mathbb{Z}/p}]$-module. Here, $\mathbf{k}[\mathcal{F}_{i}]$ is a $\mathbf{k}$-linear space generated by the elements in $\mathcal{F}_{i}$.
  \item[$(iii)$]  Let $G=F[x_{1},\dots,x_{n}]$ be a free group generated by $x_{1},\dots,x_{n}$. Let $S=F^{+}[x_{1},\dots,x_{n}]$ be the free monoid generated by  $x_{1},\dots,x_{n}$. Then $(G,S)$ is a Cayley digraph. Let $\mathcal{F}:\mathrm{cat}(\mathrm{Cay}(G,S))\rightarrow \mathfrak{C}$ be a Cayley-persistence object. Denote
 \begin{equation*}
  l_{u}:F[x_{1},\dots,x_{n}]\rightarrow F[x_{1},\dots,x_{n}],\quad l_{u}v=uv.
 \end{equation*}
 One has $\mathcal{F}(l_{u})\mathcal{F}(l_v)=\mathcal{F}(l_{uv})$  for $u,v\in S$. Then the monoid $\mathcal{L}_{S}$ is generated by $\mathcal{F}(l_{x_{1}}),\mathcal{F}(l_{x_{2}}),\dots,\mathcal{F}(l_{x_{n}})$. Moreover, $M_{\mathcal{F}}=\bigoplus\limits_{x\in G}\mathbf{k}[\mathcal{F}_{x}]$ is an $F[x_{1},\dots,x_{n}]$-graded $\mathbf{k}[\mathcal{L}_{S}]$-module.
\end{itemize}
\end{example}

\subsection{The structure of $M_{\mathcal{F}}$}
With the notions before, in this section, we give the structure of $M_{\mathcal{F}}$ as an $R_{\mathcal{F}}$-module. The module structure is essentially from the monoid action of $\mathcal{L}_{S}$ on $M_{\mathcal{F}}$.

\begin{theorem}\label{thm:module_decomposition}
Let $\mathcal{F}:\mathrm{cat}(\mathrm{Cay}(G,S))\rightarrow \mathfrak{C}$ be a Cayley-persistence object.
If $M_{\mathcal{F}}$ is a finitely generated $R_{\mathcal{F}}$-module, then we have a finite direct sum decomposition
\begin{equation*}%\label{main_decomposition}
  M_{\mathcal{F}}\cong\bigoplus_{i=1}^{k} R_{\mathcal{F}}\cdot e_{i}^{x_{i}}\oplus \left(\bigoplus_{j=1}^{l}R_{\mathcal{F}}\cdot\varepsilon_{j}^{y_{j}}\right)/N
\end{equation*}
for some $k,l$, where $e_{i}^{x_{i}}\in \mathcal{F}_{x_{i}},\varepsilon_{j}^{y_{j}}\in \mathcal{F}_{y_{j}}$ and $N$ is an $R_{\mathcal{F}}$-module generated by the elements of the form $\mathcal{F}(l_{y})\varepsilon_{t}^{y_{t}}-\mathcal{F}(l_{yy_{t}y_{s}^{-1}})\varepsilon_{s}^{y_{s}}$ for some $1\leq s,t\leq l,y\in S$. Moreover, if $S$ is a finitely generated monoid, then $N$ is a finitely generated $R_{\mathcal{F}}$-module.
\end{theorem}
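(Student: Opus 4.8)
The plan is to realize $M_{\mathcal{F}}$ as the $\mathbf{k}$-linearization of a monoid set and to read the decomposition off the combinatorics of the $\mathcal{L}_S$-action on basis elements. The first observation is that the $R_{\mathcal{F}}$-action permutes basis elements: since each $\mathcal{F}(l_x)$ is (the underlying map of) a morphism of $\mathfrak{C}$, it carries a basis element $\alpha\in\mathcal{F}_a$ to the single basis element $\mathcal{F}(l_x)(\alpha)\in\mathcal{F}_{xa}$. Writing $X=\coprod_{x\in G}\mathcal{F}_x$, this makes $X$ an $\mathcal{L}_S$-set with $M_{\mathcal{F}}=\mathbf{k}[X]$ (cf. Remark~\ref{remark:monoid}). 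As $M_{\mathcal{F}}$ is $G$-graded and finitely generated I may choose a finite homogeneous generating set, and, expanding each generator into the basis elements occurring in it, I may take the generators to be honest basis elements $w_1,\dots,w_n$ with $w_i\in\mathcal{F}_{z_i}$. Finite generation then reads $X=\bigcup_{i=1}^n \mathcal{L}_S\cdot w_i$.

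Next I would decompose $X$ into its connected components for the relation generated by $\alpha\sim\mathcal{F}(l_x)\alpha$; write $X=\coprod_{\lambda\in\Lambda}C_\lambda$. Since the action preserves components, $M_{\mathcal{F}}=\bigoplus_{\lambda\in\Lambda}\mathbf{k}[C_\lambda]$ is an $R_{\mathcal{F}}$-module direct sum, and as each component meets $\{w_1,\dots,w_n\}$ the index set $\Lambda$ is finite. A key simplification comes from the grading: for a single generator the orbit map $\mathcal{L}_S\to X$, $\gamma\mapsto\gamma(w_i)$, is injective, because $\gamma(w_i)$ and $\gamma'(w_i)$ lie in gradings $\deg(\gamma)\,z_i$ and $\deg(\gamma')\,z_i$, so coincidence forces $\deg(\gamma)=\deg(\gamma')$ and hence $\gamma=\gamma'$. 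Consequently a component containing a single generator is one free orbit, giving $\mathbf{k}[C_\lambda]\cong R_{\mathcal{F}}\cdot e_i$; these are the free summands $\bigoplus_{i=1}^k R_{\mathcal{F}}\cdot e_i^{x_i}$. Letting $\varepsilon_1,\dots,\varepsilon_l$ be the generators lying in the remaining (multi-generator) components $\Lambda_1$, the map $\pi:\bigoplus_{j=1}^l R_{\mathcal{F}}\cdot f_j\to\bigoplus_{\lambda\in\Lambda_1}\mathbf{k}[C_\lambda]$, $f_j\mapsto\varepsilon_j$, is surjective, exhibiting this part of $M_{\mathcal{F}}$ as $(\bigoplus_{j=1}^l R_{\mathcal{F}}\cdot\varepsilon_j^{y_j})/N$ with $N=\ker\pi$.

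The heart of the proof, and the step I expect to be the main obstacle, is the identification of $N$. The point is that $\pi$ is the $\mathbf{k}$-linearization of the set map sending the basis element $\gamma\cdot f_j$ to $\gamma(\varepsilon_j)\in X$. The kernel of the linearization of any set map is spanned by the differences of basis vectors sharing a target, so $N$ is $\mathbf{k}$-spanned, and hence generated as an $R_{\mathcal{F}}$-module, by the collision relations $\mathcal{F}(l_x)\varepsilon_t-\mathcal{F}(l_{x'})\varepsilon_s$ with $\mathcal{F}(l_x)(\varepsilon_t)=\mathcal{F}(l_{x'})(\varepsilon_s)$. Comparing gradings, the two sides lie in $\mathcal{F}_{xy_t}$ and $\mathcal{F}_{x'y_s}$, so equality forces $xy_t=x'y_s$, i.e. $x'=xy_ty_s^{-1}$; setting $y=x$ puts every generator of $N$ in the stated form $\mathcal{F}(l_y)\varepsilon_t^{y_t}-\mathcal{F}(l_{yy_ty_s^{-1}})\varepsilon_s^{y_s}$. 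The delicate points to verify are that $yy_ty_s^{-1}$ genuinely lies in $S$ whenever such a collision occurs (so that $\mathcal{F}(l_{yy_ty_s^{-1}})$ is defined) and that the comparison is taken in $\mathcal{L}_S$, so that the relations are correctly expressed over $R_{\mathcal{F}}$.

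Finally, for the ``moreover'' I would apply Lemma~\ref{lemma:noetherian} to the monoid morphism $\phi:S\to\mathcal{L}_S$, $x\mapsto\mathcal{F}(l_x)$: since $S$ is finitely generated and $\mathbf{k}$ is a field, hence noetherian, the monoid ring $R_{\mathcal{F}}=\mathbf{k}[\mathcal{L}_S]$ is noetherian. Then $N$, a submodule of the finitely generated free module $\bigoplus_{j=1}^l R_{\mathcal{F}}\cdot f_j$, is finitely generated. Combining this with the fact that $N$ is generated by the collision relations above, together with the elementary observation that a finitely generated module generated by a family of elements is already generated by finitely many of them, shows that $N$ is generated by finitely many elements of the required form, completing the proof.
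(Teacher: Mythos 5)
Your proposal is correct, and it proves exactly the stated decomposition, but it organizes the combinatorics differently from the paper. Both arguments share the same skeleton: refine a finite generating set to basis elements $w_i\in\mathcal{F}_{z_i}$, observe that the basis $X=\coprod_{x\in G}\mathcal{F}_x$ is an $\mathcal{L}_S$-set covered by the orbits $\mathcal{L}_S\cdot w_i$, use the $G$-grading plus cancellation in $G$ to see that orbit maps are injective (the paper's ``degree reasons''), and get finiteness of $N$ from Lemma \ref{lemma:noetherian} exactly as the paper does. The difference lies in how the free/torsion split is produced. The paper passes through the group $\mathcal{L}_S^{\ast}$ of invertible elements: it chooses one representative per $\mathcal{L}_S^{\ast}$-orbit in $E\setminus\mathcal{L}_S^{+}E$ and declares a representative free when its $\mathcal{L}_S^{+}$-forward orbit misses all other forward orbits. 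You instead split $X$ into connected components of the equivalence relation generated by $\alpha\sim\mathcal{F}(l_x)\alpha$ and take the single-generator components as the free part. The two recipes can output different (equally valid) decompositions: if two generators differ by an invertible element of $S$, the paper merges them into one free generator, while you keep both as $\varepsilon$'s linked by a relation in $N$; since the theorem only asserts existence of some such decomposition, both are fine. Your route buys two things: it avoids the $\mathcal{L}_S^{\ast}$/$\mathcal{L}_S^{+}$ bookkeeping entirely, and your identification of $N$ --- the kernel of the $\mathbf{k}$-linearization of a set map is spanned by differences of basis vectors with equal image --- is a sharper, fully rigorous version of the paper's informal assertion that ``the only $\mathbf{k}$-linear relationship'' among the translates occurs at collisions of the $\beta_j^{y_j}$. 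Finally, the point you flag as delicate is in fact automatic: in any collision $\mathcal{F}(l_y)\varepsilon_t=\mathcal{F}(l_{x'})\varepsilon_s$ the element $x'$ lies in $S$ by construction (the $\mathbf{k}$-basis of the free module is indexed by $\mathcal{L}_S$), and the grading comparison then identifies $x'=yy_ty_s^{-1}$ in $G$, so $\mathcal{F}(l_{yy_ty_s^{-1}})$ is defined and the relation has the required form.
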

\begin{proof}
Since $M_{\mathcal{F}}$ is a finitely generated $R_{\mathcal{F}}$-module, there exists a finite generating set of $M_{\mathcal{F}}$. Let
\begin{equation*}
  S_{gen}=\{s_{1},\dots,s_{p}\},\quad s_{1},\dots,s_{p}\in M_{\mathcal{F}}
\end{equation*}
be a finite generating set. Since the elements in $\mathcal{F}_{x},x\in G$ give a Hamel basis of the $\mathbf{k}$-linear space $M_{\mathcal{F}}$, we can find  a $\mathbf{k}$-linear basis $m_{1}^{x_{1}},\dots,m_{n}^{x_{n}}$ for $S_{gen}$, where $m_{i}^{x_{i}}\in \mathcal{F}_{x_{i}}$ for $i=1,\dots,n$.
One has
\begin{equation*}
  M_{\mathcal{F}}\subseteq R_{\mathcal{F}}m_{1}^{x_{1}}+\cdots+R_{\mathcal{F}}m_{n}^{x_{n}}.
\end{equation*}
Considering the $a$-graded component of $M_{\mathcal{F}}$, we have
\begin{equation*}
  M_{\mathcal{F}}^{a}\subseteq \sum_{ax_{i}^{-1}\in S} \mathbf{k}\cdot \mathcal{F}(l_{ax_{i}^{-1}})m_{i}^{x_{i}}.
\end{equation*}
This shows that $M_{\mathcal{F}}^{a}$ is a finite dimensional $\mathbf{k}$-linear space. Let $E=\bigcup\limits_{a\in G}E_{a}$ be a $G$-graded set, where $E_{a}=\{\mathcal{F}(l_{ax_{i}^{-1}})m_{i}^{x_{i}}\}_{ax_{i}^{-1}\in S}$ is a finite set. Then $M_{\mathcal{F}}^{a}\subseteq \mathbf{k}[E_{a}]$, where $\mathbf{k}[E_{a}]$ is the $\mathbf{k}$-linear space generated by $E_{a}$. For each element $\mathcal{F}(l_{ax_{i}^{-1}})m_{i}^{x_{i}}\in E_{a}$, one has $ax_{i}^{-1}\in S$ and
\begin{equation*}
  \mathcal{F}(l_{z})\mathcal{F}(l_{ax_{i}^{-1}})m_{i}^{x_{i}}=\mathcal{F}(l_{zax_{i}^{-1}})m_{i}^{x_{i}}\in E_{za}
\end{equation*}
since $zax_{i}^{-1}\in S$. Let $\mathcal{L}_{S}$ be the monoid given by the elements $\mathcal{F}(l_{x}),x\in S$.
Thus there is a monoid action of $\mathcal{L}_S$ on $E$ given by
\begin{equation*}
 \mathcal{F}(l_{x})\cdot e_{a} =\mathcal{F}(l_{x})e_{a},\quad x\in S,e_{a}\in E.
\end{equation*}
Let $\mathcal{L}_{S}^{\ast}$ be the set given by all the elements $\mathcal{F}(l_{x})$ such that $x,x^{-1}\in S$. It can be verified $\mathcal{L}_{S}^{\ast}$ is a group.
Let $\mathcal{L}_{S}^{+}$ be the complementary set of $\mathcal{L}_{S}^{\ast}$ in $\mathcal{L}_{S}$. It is obvious that $\mathcal{L}_{S}^{+}$ is a semigroup. We assert that there is a group action of $\mathcal{L}_{S}^{\ast}$ on $E\backslash\mathcal{L}_{S}^{+}E$. Indeed, if $e_{a}\in E\backslash\mathcal{L}_{S}^{+}E$ and the element $x\in S$ is invertible, it suffices to show
\begin{equation*}
  \mathcal{F}(l_{x})e_{a}\in E\backslash\mathcal{L}_{S}^{+}E.
\end{equation*}
Assume that $\mathcal{F}(l_{x})e_{a}\in \mathcal{L}_{S}^{+}E$. Then $e_{a}=\mathcal{F}(l_{x^{-1}})\mathcal{F}(l_{x})e_{a}\in \mathcal{L}_{S}^{+}E$, a contradiction. Moreover, each element in $E\backslash\mathcal{L}_{S}^{+}E$ must be of the form $\mathcal{F}(l_{z})m_{i}^{x_{i}}$ for some $m_{i}^{x_{i}}$ and invertible $z$ in $S$, otherwise, $z$ would be not invertible in $S$ and $\mathcal{F}(l_{z})m_{i}^{x_{i}}\in \mathcal{L}_{S}^{+}E$.
Now, there is a decomposition of $E\backslash\mathcal{L}_{S}^{+}E$ into different orbits of $\mathcal{L}_{S}^{\ast}$-action. Each orbit is given by $\mathcal{L}_{S}^{\ast}m_{i}^{x_{i}}$ for some $m_{i}^{x_{i}}$.
For each orbit in $E\backslash\mathcal{L}_{S}^{+}E$, we choose an element $m_{i}^{x_{i}}$ in $E\backslash\mathcal{L}_{S}^{+}E$. Let $B$ be the set of the chosen elements. Then $B$ is finite with the number of elements
\begin{equation*}
  |B|\leq n.
\end{equation*}
In particular, if the identity element is the unique invertible element in $S$, then $|B|=|E\backslash\mathcal{L}_{S}^{+}E|$. Let
\begin{equation*}
  B_{free}=\{b\in B|\mathcal{L}_{S}^{+}b\cap \mathcal{L}_{S}^{+}(B\backslash\{b\})=\emptyset\},\quad B_{tor}=B\backslash B_{free}.
\end{equation*}
We write  $k=|B_{free}|$ and $l=|B_{tor}|$. Then we list the elements in $B$ as
\begin{equation*}
  b_{1}^{x_{1}},\dots,b_{k}^{x_{k}}\in B_{free},\quad \beta_{1}^{y_{1}},\dots,\beta_{l}^{y_{l}}\in B_{tor}.
\end{equation*}
Consider the morphism
\begin{equation*}
  \pi:\bigoplus_{i=1}^{k} R_{\mathcal{F}}\cdot e_{i}^{x_{i}}\oplus \bigoplus_{j=1}^{l}R_{\mathcal{F}}\cdot\varepsilon_{j}^{y_{j}}\longrightarrow M_{\mathcal{F}}
\end{equation*}
from a free $R_{\mathcal{F}}$-mudule to $M_{\mathcal{F}}$ given by
\begin{equation*}
  \pi(e_{i}^{x_{i}})=b_{i}^{x_{i}},\quad  \pi(\varepsilon_{j}^{x_{j}})=\beta_{j}^{x_{j}},\quad i=1,\dots,k,j=1,\dots,l.
\end{equation*}
By the definition of orbit space, we have $E\backslash\mathcal{L}_{S}^{+}E=\mathcal{L}_{S}^{\ast}B$. Note that
\begin{equation*}
  E\subseteq \mathcal{L}_{S}(E\backslash\mathcal{L}_{S}^{+}E)\subseteq E,
\end{equation*}
we obtain $\mathcal{L}_{S}\mathcal{L}_{S}^{\ast}B=E$, thus we have $\mathcal{L}_{S}B=E$. It follows that
\begin{equation*}
  M_{\mathcal{F}}\subseteq \mathbf{k}[E]= \mathbf{k}[\mathcal{L}_{S}B]\subseteq R_{\mathcal{F}}[B].
\end{equation*}
Here, $R_{\mathcal{F}}[B]$ denotes the $R_{\mathcal{F}}$-modules generated by the elements in $B$, that is, $R_{\mathcal{F}}[B]=\sum\limits_{b\in B}R_{\mathcal{F}}b$. Hence, each element in $M_{\mathcal{F}}$ can be written in the form
\begin{equation*}
 m= \sum_{i=1}^{k}\mathcal{F}(l_{u_{i}})b_{i}^{x_{i}}+\sum_{j=1}^{l}\mathcal{F}(l_{v_{j}})\beta_{j}^{y_{j}},\quad u_{i},v_{j}\in S.
\end{equation*}
Then $m$ has a preimage $\sum_{i=1}^{k}\mathcal{F}(l_{u_{i}})e_{i}^{x_{i}}+\sum_{j=1}^{l}\mathcal{F}(l_{v_{j}})\varepsilon_{j}^{y_{j}}$ in $\bigoplus_{i=1}^{k} R_{\mathcal{F}}\cdot e_{i}^{x_{i}}\oplus \bigoplus_{j=1}^{l}R_{\mathcal{F}}\cdot\varepsilon_{j}^{y_{j}}$, which shows that $\pi$ is a surjection. Next, we will show that $\mathcal{F}(l_{y})b_{t}^{x_{t}}$ is different from all the elements of the following forms
\begin{equation*}
 \mathcal{F}(l_{x})b_{i}^{x_{i}}, \mathcal{F}(l_{x})\beta_{j}^{x_{j}},\quad (i,x)\neq (t,y).
\end{equation*}
Suppose that $\mathcal{F}(l_{y})b_{t}^{x_{t}}=\mathcal{F}(l_{z})c_{s}^{x_{s}}$, where $c_{s}^{x_{s}}$ denotes $b_{i}^{x_{i}}$ or $\beta_{j}^{y_{j}}$.
\begin{itemize}
  \item[($i$)] When one of $y,z$ is invertible in $S$. We may assume $z$ is invertible. Then
  \begin{equation*}
  \mathcal{F}(l_{z^{-1}y})b_{t}^{x_{t}}=c_{s}^{x_{s}}\in B\subseteq E\backslash\mathcal{L}_{S}^{+}E,
  \end{equation*}
which implies that $z^{-1}y$ is invertible and $b_{t}^{x_{t}},c_{s}^{x_{s}}$ are in the same orbit of $\mathcal{L}_{S}^{\ast}$-action on $E\backslash\mathcal{L}_{S}^{+}E$. Thus we have $b_{t}^{x_{t}}=c_{s}^{x_{s}}$.
  \item[($ii$)] When $y,z$ are not invertible in $S$. Since $\mathcal{L}_{S}^{+}b_{t}^{x_{t}}\cap \mathcal{L}_{S}^{+}(B\backslash\{b_{t}^{x_{t}}\})=\emptyset$, one has $b_{t}^{x_{t}}=c_{s}^{x_{s}}$.

\end{itemize}
If $\mathcal{F}(l_{y})b_{t}^{x_{t}}=\mathcal{F}(l_{z})b_{t}^{x_{t}}$, then $yx_{t}=zx_{t}$ by the degree reasons. It follows that $y=z$ by the multiplication elimination law of groups. In a word, we obtain
\begin{equation*}
  \pi\left(\bigoplus_{i=1}^{k} R_{\mathcal{F}}\cdot e_{i}^{x_{i}}\oplus \bigoplus_{j=1}^{l}R_{\mathcal{F}}\cdot\varepsilon_{j}^{y_{j}}\right)=\bigoplus_{i=1}^{k} R_{\mathcal{F}}\cdot b_{i}^{x_{i}}\oplus \pi\left(\bigoplus_{j=1}^{l}R_{\mathcal{F}}\cdot\varepsilon_{j}^{y_{j}}\right).
\end{equation*}
Note that the only $\mathbf{k}$-linear relationship of the elements
\begin{equation*}
 \mathcal{F}(l_{x})b_{i}^{x_{i}}, \mathcal{F}(l_{x})\beta_{j}^{y_{j}},\quad i=1,\dots,k,j=1,\dots,l,x\in S
\end{equation*}
is appeared in $\mathcal{F}(l_{x})\beta_{j}^{y_{j}},j=1,\dots,l,x\in S$ whenever $\mathcal{F}(l_{y})\beta_{t}^{y_{t}}=\mathcal{F}(l_{z})\beta_{s}^{y_{s}}$ for some $1\leq s,t\leq l,y,z\in S$. By the degree reasons, one has $z=yy_{t}y_{s}^{-1}$. Thus the kernel of $\pi$ is generated by
\begin{equation*}
  \mathcal{F}(l_{y})\varepsilon_{t}^{y_{t}}-\mathcal{F}(l_{yy_{t}y_{s}^{-1}})\varepsilon_{s}^{y_{s}},
\end{equation*}
where $s,t,y$ are taken such that $\mathcal{F}(l_{y})\beta_{t}^{y_{t}}=\mathcal{F}(l_{yy_{t}y_{s}^{-1}})\beta_{s}^{y_{s}}$. Denote $N$ the $R_{\mathcal{F}}$-module generated by $\{\mathcal{F}(l_{y})\varepsilon_{t}^{y_{t}}-\mathcal{F}(l_{yy_{t}y_{s}^{-1}})\varepsilon_{s}^{y_{s}}\}_{s,t,y}$ for all the indexes $s,t,y$ satisfying $\mathcal{F}(l_{y})\beta_{t}^{y_{t}}=\mathcal{F}(l_{yy_{t}y_{s}^{-1}})\beta_{s}^{y_{s}}$. Then we have an $R_{\mathcal{F}}$-module isomorphism
\begin{equation*}
  \bigoplus_{i=1}^{k} R_{\mathcal{F}}\cdot e_{i}^{x_{i}}\oplus \left(\bigoplus_{j=1}^{l}R_{\mathcal{F}}\cdot\varepsilon_{j}^{y_{j}}\right)/N\stackrel{\cong}{\longrightarrow} M_{\mathcal{F}}.
\end{equation*}
If $S$ is finitely generated, by Lemma \ref{lemma:noetherian}, the monoid ring $\mathbf{k}[\mathcal{L}_{S}]$ is noetherian. Thus $N$ is finitely generated submodule of the free $\mathcal{R}_{\mathcal{F}}$-module $\bigoplus_{j=1}^{l}R_{\mathcal{F}}\cdot\varepsilon_{j}^{y_{j}}$.
\end{proof}

\begin{remark}
The condition ``finitely generated'' is very important and interesting. Indeed, the condition that $M_{\mathcal{F}}$ is a finitely generated $R_{\mathcal{F}}$-module ensures the existence of the ``born time'' of the generators. The condition that $S$ is a finitely generated monoid makes it possible for us to obtain the finiteness of the time of disappearance and meeting of generators, that is, $N$ is a finitely generated $R_{\mathcal{F}}$-module.
\end{remark}

\begin{remark}\label{remark:meet}
Note that $e_{i}^{x_{i}}\in \mathcal{F}_{x_{1}},\varepsilon_{j}^{y_{j}}\in \mathcal{F}_{y_{j}}$ represent the elements coming into being at $x_{i},y_{j}$, respectively. A generator $\mathcal{F}(l_{y})\varepsilon_{t}^{y_{t}}-\mathcal{F}(l_{yy_{t}y_{s}^{-1}})\varepsilon_{s}^{y_{s}}$ of $N$ shows the information that two elements $\varepsilon_{t}^{y_{t}},\varepsilon_{s}^{y_{s}}$ become the same one at the time $yy_{t}$. We denote $y_{st}=yy_{t}$. Then the information of $N$ can be represented by all the triples $(\varepsilon_{s}^{y_{s}},\varepsilon_{t}^{y_{t}},y_{st})$, which means the two generators appearing at $y_{s},y_{t}$ and meeting at $y_{st}$.
\end{remark}
\begin{example}\label{example:finitely}
Example \ref{exam:Cayley}(i) continued. In view of Hilbert's basis theorem, the polynomial ring $k[L_{1},\dots,L_{n}]$ is noetherian, the $\mathcal{R}_{\mathcal{F}}$-module generated by $\{\mathcal{F}(l_{y})\varepsilon_{t}^{y_{t}}-\mathcal{F}(l_{yy_{t}y_{s}^{-1}})\varepsilon_{s}^{y_{s}}\}_{s,t,y}$ in Theorem \ref{thm:module_decomposition} is finitely generated. However, if $S$ is not finitely generated, $N$ does not have to be finitely generated even if $N$ is abelian. For example, let $\mathcal{F}:(\mathbb{R}^{2},\mathbb{R}_{\geq 0}^{2})\rightarrow \mathbf{Sets}$ be a Cayley-persistence set, where $\mathbb{R}_{\geq 0}=\{x\in \mathbb{R} |x\geq 0\}$. We set $X_{v}=\{v\}$ and $X_{w}=\{w,w'\}$.
\begin{figure}[H]
\centering
    \begin{tikzpicture}[scale=1.5]
        \draw[step=.5cm,gray,ultra thin, dashed] (-0.9 ,-0.9) grid (1.9,1.9);%
        \begin{scope}[thick]

            \draw[->] (-1,0) -- (2,0);
            \draw[->] (0,-1) -- (0,2);
        \end{scope}
\draw[domain=0.5:2,thick] plot(\x,{1/\x})node[right]{$y=\frac{4}{x}$};
\draw[thick](0.5,0.5)--(2.05,0.5);
\draw[thick](0.5,0.5)--(0.5,2.1);
%\draw[thick](0.8,1.25)--(2.1,1.25);
\node [font=\fontsize{8}{6}] (node001) at (0.1,-0.1){$O$};
\node [font=\fontsize{8}{6}] (node001) at (2,-0.1){$x$};
\node [font=\fontsize{8}{6}] (node001) at (0.5,-0.1){$1$};
\node [font=\fontsize{8}{6}] (node001) at (1,-0.1){$2$};
\node [font=\fontsize{8}{6}] (node001) at (1.5,-0.1){$3$};
\node [font=\fontsize{8}{6}] (node001) at (-0.5,-0.1){$-1$};

\node [font=\fontsize{8}{6}] (node001) at (0.1,2){$y$};
\node [font=\fontsize{8}{6}] (node001) at (-0.5,-0.5){$\emptyset$};
\node [font=\fontsize{8}{6}] (node001) at (-0.5,0.5){$\emptyset$};
\node [font=\fontsize{8}{6}] (node001) at (0.5,-0.5){$\emptyset$};
\node [font=\fontsize{8}{6}] (node001) at (0.9,0.7){$C_{2}$};
\node [font=\fontsize{8}{6}] (node001) at (0.45,0.25){$\mathbb{R}_{\geq 0}^{2}\backslash C_{1}$};
\node [font=\fontsize{8}{6}] (node001) at (1.2,1.2){$C_{1}\backslash C_{2}$};
        \end{tikzpicture}\qquad
            \begin{tikzpicture}[scale=1.5]
        \draw[step=.5cm,gray,ultra thin, dashed] (-0.9 ,-0.9) grid (1.9,1.9);%
        \begin{scope}[thick]

            \draw[->] (-1,0) -- (2,0);
            \draw[->] (0,-1) -- (0,2);
        \end{scope}
\draw[domain=0.5:2,thick] plot(\x,{1/\x})node[right]{$y=\frac{4}{x}$};
\draw[thick](0.5,0.5)--(2.05,0.5);
\draw[thick](0.5,0.5)--(0.5,2.1);
\draw[black!60,fill=black!10](2.05,1.25)--(0.8,1.25)--(0.8,2.1)--(2.05,2.1)--(2.05,1.25);\draw[very thick,black!0](0.8,2.1)--(2.05,2.1)--(2.05,1.25);
\node [font=\fontsize{8}{6}] (node001) at (0.7,1.15){$Q$};\node [font=\fontsize{8}{6}] (node001) at (1.3,1.7){$R_{\mathcal{F}}\cdot e_{w'}^{Q}$};

\node [font=\fontsize{8}{6}] (node001) at (0.1,-0.1){$O$};
\node [font=\fontsize{8}{6}] (node001) at (2,-0.1){$x$};
\node [font=\fontsize{8}{6}] (node001) at (0.5,-0.1){$1$};
\node [font=\fontsize{8}{6}] (node001) at (1,-0.1){$2$};
\node [font=\fontsize{8}{6}] (node001) at (1.5,-0.1){$3$};
\node [font=\fontsize{8}{6}] (node001) at (-0.5,-0.1){$-1$};

\node [font=\fontsize{8}{6}] (node001) at (0.1,2){$y$};
\node [font=\fontsize{8}{6}] (node001) at (-0.5,-0.5){$\emptyset$};
\node [font=\fontsize{8}{6}] (node001) at (-0.5,0.5){$\emptyset$};
\node [font=\fontsize{8}{6}] (node001) at (0.5,-0.5){$\emptyset$};
\node [font=\fontsize{8}{6}] (node001) at (0.9,0.7){$w,w'$};
\node [font=\fontsize{8}{6}] (node001) at (0.35,0.25){$v$};
\node [font=\fontsize{8}{6}] (node001) at (1.2,1.1){$w'$};
        \end{tikzpicture}
\end{figure}
Let $C_{1}=\{(x,y)|x,y\geq 1\},C_{2}=\{(x,y)|xy\leq 4,1\leq x\leq 4\}$. The functor $\mathcal{F}$ is given by
\begin{equation*}
  \mathcal{F}_{P}=\left\{
                \begin{array}{ll}
                  X_{v}, & \hbox{$P\in \mathbb{R}_{\geq 0}^{2}\backslash C_{1}$;} \\
                  X_{w}, & \hbox{$P\in C_{1}$;} \\
                  \emptyset, & \hbox{otherwise.}
                \end{array}
              \right.
\end{equation*}
and
\begin{equation*}
  \mathcal{F}_{P\to Q}=\left\{
                \begin{array}{ll}
                  \mathrm{id}, & \hbox{$P,Q\in \mathbb{R}_{\geq 0}^{2}\backslash C_{1}$ or $P,Q\in C_{2}$;} \\
                  c_{w}, & \hbox{$P\in \mathbb{R}_{\geq 0}^{2}\backslash C_{1}$ and $Q\in C_{2}$;} \\
                  c_{w'}, & \hbox{$P\in \mathbb{R}_{\geq 0}^{2}$ and $Q\in C_{1}\backslash C_{2}$;} \\
                  \emptyset, & \hbox{otherwise.}
                \end{array}
              \right.
\end{equation*}
Here, $c_{w},c_{w'}$ are constant maps.

The generator $v$ begins to appear at $O=(0,0)$ while $w,w'$ begins to appear at $(1,1)$. The generators $v$ and $w$ begin to meet at $(1,1)$. The generators $w$ and $w'$ begin to meet at the points on $\{(x,y)|y=4/x,1\leq x\leq 4\}$. More precisely, for $Q\in C_{1}\backslash C_{2}$, we have
\begin{equation*}
  \mathcal{F}_{(0,0)\to Q}v=\mathcal{F}_{(1,1)\to Q}w=\mathcal{F}_{(1,1)\to Q}w'=w'.
\end{equation*}
A straightforward calculation shows that
\begin{equation*}
  M_{\mathcal{F}}\cong \frac{R_{\mathcal{F}}\cdot e^{(0,0)}_{v}\oplus  R_{\mathcal{F}}\cdot e^{(1,1)}_{w}\oplus  R_{\mathcal{F}}\cdot e^{(1,1)}_{w'}}{N},
\end{equation*}
where $N$ is an $R_{\mathcal{F}}$-module generated by $\mathcal{F}_{(0,0)\to (1,1)}e_{v}^{(0,0)}-e_{w}^{(1,1)}$, $\mathcal{F}_{(0,0)\to Q}e_{v}^{(0,0)}-\mathcal{F}_{(1,1)\to Q}e_{w'}^{(1,1)}$ and $\mathcal{F}_{(1,1)\to Q}e_{w}^{(1,1)}-\mathcal{F}_{(1,1)\to Q}e_{w'}^{(1,1)}$ for $Q$ on $\{(x,y)|y=1/x,1\leq x\leq 4\}$. Here, $R_{\mathcal{F}}$ is a monoid ring over $\mathbf{k}$ generated by
\begin{equation*}
  \mathcal{F}(l_{(x,y)}): \bigoplus_{P\in \mathbb{R}^{2}}\mathcal{F}_{P}\rightarrow \bigoplus_{P\in \mathbb{R}^{2}}\mathcal{F}_{P+(x,y)},\quad (x,y)\in \mathbb{R}^{2}.
\end{equation*}
This shows that $N$ is not a finitely generated $R_{\mathcal{F}}$-module.
%
%It is worth noting that the category $\mathbf{V}$ can be replaced by a category of set $\mathcal{V}$ with objects $\{v,0\},\{w,0\},\emptyset$. Then the module structure is of form
%\begin{equation*}
%  \frac{R_{\mathcal{F}}\cdot e^{(0,0)}_{v}\oplus R_{\mathcal{F}}\cdot  e^{(1,1)}_{w}}{N}.
%\end{equation*}
%There is no need to assume that $\mathcal{F}$ is a persistence module to obtain the module structure of $M_{\mathcal{F}}$.
\end{example}

The following example shows that the module structure of $M_{\mathcal{F}}$ contains the information of all the $1$-parameter persistence objects obtained by the filtrations parameterized by the Cayley grading.
\begin{example}
Example \ref{example:finitely} continued. Note that a monotone non-decreasing continuous function $y=f(x)$ in $\mathbb{R}^{2}$ gives a filtration of sets. Indeed, let $\mathcal{G}:(\mathbb{R},\mathbb{R}_{\geq 0})\rightarrow \mathbf{Sets}$ be a Cayley persistence set given by
\begin{equation*}
  \mathcal{G}_{x}=\mathcal{F}_{P},\quad P=(x,f(x))
\end{equation*}
and
\begin{equation*}
  \mathcal{G}_{x\to x'}=\mathcal{F}_{P\to Q},\quad P=(x,f(x)),Q=(x',f(x')).
\end{equation*}
Then the persistence set $\mathcal{G}$ is the usual $1$-parameter persistence set. Considering the two functions $y=0$ and $y=x$, we have the following barcode diagrams.
\begin{figure}[H]
\centering
    \begin{tikzpicture}[scale=1.6]
        \draw[step=.5cm,gray,ultra thin, dashed] (-0.2,-0.2) grid (1.9,1.4);%
        \begin{scope}[thick]

            \draw[->] (-0.2,0) -- (2,0);
            \draw[] (0,-0.2) -- (0,1.5);
        \end{scope}
         \draw[-,very thick,black!60] (0,0.6) -- (2,0.6);
\node [font=\fontsize{8}{6}] (node001) at (0.1,-0.1){$O$};
\node [font=\fontsize{8}{6}] (node001) at (2,-0.1){$x$};
\node [font=\fontsize{8}{6}] (node001) at (-0.15,0.6){$e_{v}^{0}$};

\node [font=\fontsize{8}{6}] (node001) at (0.5,-0.1){$1$};
\node [font=\fontsize{8}{6}] (node001) at (1,-0.1){$2$};
\node [font=\fontsize{8}{6}] (node001) at (1.5,-0.1){$3$};
\node [font=\fontsize{8}{6}] (node001) at (0.5,1.4){$y=0$};
        \end{tikzpicture}\qquad\qquad
    \begin{tikzpicture}[scale=1.6]
        \draw[step=.5cm,gray,ultra thin, dashed] (-0.2,-0.2) grid (1.9,1.4);%
        \begin{scope}[thick]

            \draw[->] (-0.2,0) -- (2,0);
            \draw[] (0,-0.2) -- (0,1.5);

        \end{scope}
         \draw[-,very  thick,black!60] (0,0.6) -- (2,0.6);
         \draw[-,very thick,black!60] (0.5,0.9) -- (1,0.9);
         \draw[-,dashed,black!100] (1,0.6) -- (1,0.9);
%         \draw[-,very thick,black!80] (1,0.6) -- (2,0.6);
%         \draw[-,very thick,black!80] (1,0.9) -- (2,0.9);
\node [font=\fontsize{8}{6}] (node001) at (0.1,-0.1){$O$};
\node [font=\fontsize{8}{6}] (node001) at (2,-0.1){$x$};
\node [font=\fontsize{8}{6}] (node001) at (-0.15,0.6){$e^{0}_{v}$};
\node [font=\fontsize{8}{6}] (node001) at (-0.35,0.9){$e^{1}_{w}-e^{1}_{w'}$};

\node [font=\fontsize{8}{6}] (node001) at (0.5,-0.1){$1$};
\node [font=\fontsize{8}{6}] (node001) at (1,-0.1){$2$};
\node [font=\fontsize{8}{6}] (node001) at (1.5,-0.1){$3$};
\node [font=\fontsize{8}{6}] (node001) at (0.5,1.4){$y=x$};
        \end{tikzpicture}
\caption{The barcodes of persistence sets obtained by $y=0$ and $y=x$.}
\end{figure}
It is shown that $e^{0}_{v}$ is born at $x=0$ and persists along the $x$-axis in the left figure. In the right figure, $e^{0}_{v}$ and $e^{1}_{w}$ begin to meet at $x=1$, that is, $e^{1}_{w}=\mathcal{F}_{l_{1}}e^{0}_{v}$. The generators $e^{1}_{w}$ and $e^{1}_{w'}$ begin to meet at $x=2$. If we take $e^{0}_{v},e^{1}_{w}-e^{1}_{w'}$ as generators, then the barcode diagram performs like the usual barcode diagram that $e^{1}_{w}-e^{1}_{w'}$ comes into being at $x=1$ and disappears at $x=2$.
\end{example}

Consider the case that $\mathfrak{C}$ is the category of $\mathbf{k}$-linear spaces. %Let $m_{\mathcal{F}}\in\mathcal{F}_{G}=\bigcup\limits_{x\in G}\mathcal{F}_{x}$ and $\lambda\in \mathbf{k}$.
%We identify $(\lambda m)_{\mathcal{F}}\in \mathcal{F}_{G}$ with $\lambda m_{\mathcal{F}}\in M_{\mathcal{F}}$, in particular, we have $0_{\mathcal{F}}=0$ in $M_{\mathcal{F}}$.
%Then the $\mathbf{k}$-linear space $\mathcal{F}_{G}$ can be identified with $M_{\mathcal{F}}=\mathbf{k}[\mathcal{F}_{G}]$.
Let $\mathcal{F}_{G}=\bigoplus\limits_{x\in G}\mathcal{F}_{x}$. Then $\mathcal{F}_{G}$ itself is an $R_{\mathcal{F}}$-module given by
\begin{equation*}
 R_{\mathcal{F}}\times \mathcal{F}_{G}\rightarrow \mathcal{F}_{G},\quad (\mathcal{F}(l_{x}),\alpha)\mapsto \mathcal{F}(l_{x})\alpha
\end{equation*}
for $\alpha\in \mathcal{F}_{G}$.
\begin{corollary}\label{corollary:decomposition}
Let $\mathcal{F}:\mathrm{cat}(\mathrm{Cay}(G,S))\rightarrow \mathbf{Vec}_{\mathbf{k}}$ be a Cayley-persistence $\mathbf{k}$-linear space.
If $\mathcal{F}_{G}$ is a finitely generated $R_{\mathcal{F}}$-module, then we have a finite direct sum decomposition
\begin{equation*}
   \mathcal{F}_{G} \cong \bigoplus_{i=1}^{k} R_{\mathcal{F}}\cdot e_{i}^{x_{i}}\oplus \left(\bigoplus_{j=1}^{l}R_{\mathcal{F}}\cdot\varepsilon_{j}^{y_{j}}\right)/N
\end{equation*}
for some $k,l$, where $e_{i}^{x_{i}}\in \mathcal{F}_{x_{i}},\varepsilon_{j}^{y_{j}}\in \mathcal{F}_{y_{j}}$ and $N$ is an $R_{\mathcal{F}}$-module generated by the elements of the form $\mathcal{F}(l_{y})\varepsilon_{t}^{y_{t}}-\mathcal{F}(l_{yy_{t}y_{s}^{-1}})\varepsilon_{s}^{y_{s}}$ or $\mathcal{F}(l_{y})\varepsilon_{t}^{y_{t}}$ for some $1\leq s,t\leq l,y\in S$. Moreover, if $S$ is a finitely generated monoid, then $N$ is a finitely generated $R_{\mathcal{F}}$-module.
\end{corollary}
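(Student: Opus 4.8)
The plan is to run the proof of Theorem~\ref{thm:module_decomposition} essentially verbatim, now with $\mathcal{F}_G=\bigoplus_{x\in G}\mathcal{F}_x$ playing the role of $M_{\mathcal{F}}$. The three ingredients that the earlier argument actually used --- the $G$-grading, the monoid action of $\mathcal{L}_S$ on the graded pieces, and finite-dimensionality of each graded component --- are all available here: after choosing finitely many homogeneous generators $m_1^{x_1},\dots,m_n^{x_n}$ of the finitely generated module $\mathcal{F}_G$, the degree-$a$ component $\mathcal{F}_a$ is spanned over $\mathbf{k}$ by $\{\mathcal{F}(l_{ax_{i}^{-1}})m_i^{x_i}:ax_{i}^{-1}\in S\}$ and is therefore finite-dimensional. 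First I would form the graded spanning set $E=\bigcup_{a}E_a$ exactly as before, let $\mathcal{L}_S$ act on it, decompose $E\setminus\mathcal{L}_{S}^{+}E$ into $\mathcal{L}_{S}^{\ast}$-orbits, and split a set of orbit representatives into a free part $B_{free}$ and a torsion part $B_{tor}$. This produces the free generators $e_i^{x_i}$, the torsion generators $\varepsilon_j^{y_j}$, and a surjection $\pi$ from the free module onto $\mathcal{F}_G$, precisely as in Theorem~\ref{thm:module_decomposition}.

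The one structural difference from the set-valued case is that in $\mathbf{Vec}_{\mathbf{k}}$ the structure maps $\mathcal{F}(l_x)$ are $\mathbf{k}$-linear and may have nontrivial kernel, whereas a map of sets can never send an element to ``nothing.'' Consequently, when tracking the $R_{\mathcal{F}}$-orbit of a torsion generator $\varepsilon_t^{y_t}$, two phenomena now feed $\ker\pi$: as before, two generators may become identified, $\mathcal{F}(l_y)\beta_t^{y_t}=\mathcal{F}(l_z)\beta_s^{y_s}$ with $z=yy_ty_s^{-1}$ forced by the degree count, giving the merging relation $\mathcal{F}(l_y)\varepsilon_t^{y_t}-\mathcal{F}(l_{yy_ty_s^{-1}})\varepsilon_s^{y_s}$; but additionally a generator may die, $\mathcal{F}(l_y)\beta_t^{y_t}=0$, which has no set-theoretic analogue and produces the new single-term relation $\mathcal{F}(l_y)\varepsilon_t^{y_t}$. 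Collecting both, $\ker\pi=N$ is generated by elements of the two advertised forms, yielding $\mathcal{F}_G\cong\bigoplus_{i=1}^{k}R_{\mathcal{F}}e_i^{x_i}\oplus(\bigoplus_{j=1}^{l}R_{\mathcal{F}}\varepsilon_j^{y_j})/N$. The finiteness of $N$ when $S$ is finitely generated follows exactly as in the theorem: $\mathbf{k}[\mathcal{L}_S]$ is noetherian by Lemma~\ref{lemma:noetherian}, so the submodule $N$ of a free module of finite rank is finitely generated.

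The delicate point, and the step on which I would spend the most care, is verifying that $\ker\pi$ really is generated by only these two elementary types of relation. In the set-valued setting this was automatic, since distinct formal basis elements are linearly independent, so every $\mathbf{k}$-linear dependence among the spanning elements is forced to be the coincidence of a single pair. In $\mathbf{Vec}_{\mathbf{k}}$ one must rule out or absorb a genuine multi-term dependence $\sum_i c_i\,\mathcal{F}(l_{ay_{i}^{-1}})\varepsilon_i=0$ occurring in some graded piece $\mathcal{F}_a$, and I expect this to be the main obstacle. The approach I would take is to process the orbit representatives compatibly with the $\mathcal{L}_S$-action, recording at each stage a basis of the new part of $\mathcal{F}_a$ so that each incoming spanning vector either stays independent (free), coincides with a previously recorded image (a type-A merging), or already lies in the recorded span and hence dies (a type-B relation) after a triangular change of the $\varepsilon_j$. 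The crux is that such a change of generators is graded-admissible only within a single birth-degree, so one must use the monoid structure of $\mathcal{L}_S$ and the grading to check that these local reductions are globally consistent; when $G$ additionally carries a compatible order this is the Zomorodian--Carlsson reduction, and for a general grading group it is exactly the place where the argument requires the most attention.
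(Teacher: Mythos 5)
Your route is genuinely different from the paper's, and it is worth saying how. The paper does \emph{not} re-run the proof of Theorem~\ref{thm:module_decomposition}; it reduces to it: after choosing homogeneous generators $m_{i}^{x_{i}}$ of $\mathcal{F}_{G}$, it forms the finite sets $X_{a}=\{\mathcal{F}(l_{ax_{i}^{-1}})m_{i}^{x_{i}}\}_{ax_{i}^{-1}\in S}$, assembles them into a Cayley-persistence \emph{set} $\widetilde{\mathcal{F}}$, asserts that $\rho:\mathcal{F}_{G}\rightarrow M_{\widetilde{\mathcal{F}}}$, $m_{i}^{x_{i}}\mapsto m_{i}^{x_{i}}$, is an isomorphism of $R_{\mathcal{F}}$-modules, and then quotes the theorem. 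You instead redo the orbit analysis inside $\mathbf{Vec}_{\mathbf{k}}$, and your diagnosis of where this becomes hard is exactly right: structure maps can kill vectors (whence the new one-term relations), and several translates of distinct generators can participate in a single multi-term linear dependence, which has no set-theoretic counterpart. The problem is that your proof stops precisely there. The assertion that $\ker\pi$ is generated by two-term merging relations and one-term death relations \emph{is} the corollary; the ``triangular change of generators'' is only gestured at, and you concede it is unresolved for a general grading group. That is a genuine gap at the central step, not a deferrable technicality.

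Moreover, the gap cannot be closed in the stated generality, so no amount of care in your reduction will finish it. Take $G=\mathbb{Z}^{2}$, $S=\mathbb{Z}_{\geq 0}^{2}$, $R_{\mathcal{F}}=\mathbf{k}[L_{1},L_{2}]$ (write $s=L_{1}$, $t=L_{2}$), and let $\mathcal{F}$ be the functor associated with the graded module on three generators $u,v,w$ of degrees $(0,0)$, $(1,-1)$, $(-1,1)$ subject to the single homogeneous relation
\begin{equation*}
  st\,u+t^{2}\,v+s^{2}\,w=0,
\end{equation*}
all of whose terms lie in degree $(1,1)$; this $\mathcal{F}_{G}$ is finitely generated over $R_{\mathcal{F}}$. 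Any module of the shape claimed in the corollary admits a homogeneous $\mathbf{k}$-basis $\mathcal{B}$ --- the surviving images of the translates $\mathcal{F}(l_{y})e_{i}$, $\mathcal{F}(l_{y})\varepsilon_{j}$ --- such that every $\mathcal{F}(l_{x})$ maps $\mathcal{B}$ into $\mathcal{B}\cup\{0\}$, and the isomorphism, like $\pi$ in Theorem~\ref{thm:module_decomposition}, preserves degrees. In the example the components in the three minimal degrees are one-dimensional, so $\mathcal{B}$ must contain nonzero multiples of $u$, $v$, $w$; applying $st$, $t^{2}$, $s^{2}$ then forces three vectors of the two-dimensional degree-$(1,1)$ component to be basis elements, and these are pairwise distinct and nonzero because every nonzero multiple of the relation has all three coordinates nonzero. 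Three distinct members of a two-element basis is absurd, so no choice of generators makes $\ker\pi$ of the advertised form. You are in good company --- the paper's own proof buries the same difficulty in the unproved claim that $\rho$ is a well-defined injection, and on this example $M_{\widetilde{\mathcal{F}}}$ has dimension $3$ in degree $(1,1)$ while $\mathcal{F}_{G}$ has dimension $2$ there, so $\rho$ is not an isomorphism either --- but a correct argument must either close this step or add hypotheses (comparable birth degrees, or the totally ordered setting of Theorem~\ref{thm:decomclousure}), and your proposal does neither.
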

\begin{proof}
Since $\mathcal{F}_{G}$ is a finitely generated $R_{\mathcal{F}}$-module, there exists a finite generating set $S_{gen}$ of $\mathcal{F}_{G}$. Let $f^{x}_{1},\dots,f^{x}_{n_{x}}$ be the basis of the $\mathbf{k}$-linear space $\mathcal{F}_{x}$. Then $f^{x}_{i_{x}},x\in G,1\leq i_{x}\leq n_{x}$ gives a Hamel basis of the $\mathbf{k}$-linear space $\mathcal{F}_{G}$. Thus we can find a $\mathbf{k}$-linear basis $m_{1}^{x_{1}},\dots,m_{n}^{x_{n}}$ for $S_{gen}$, where $m_{i}^{x_{i}}\in \mathcal{F}_{x_{i}}$ for $i=1,\dots,n$.
One has
\begin{equation*}
  \mathcal{F}_{G}\subseteq R_{\mathcal{F}}m_{1}^{x_{1}}+\cdots+R_{\mathcal{F}}m_{n}^{x_{n}}.
\end{equation*}
Let $X_{a}=\{\mathcal{F}(l_{ax_{i}^{-1}})m_{i}^{x_{i}}\}_{ax_{i}^{-1}\in S}$ be a finite set. Consider the functor $\widetilde{\mathcal{F}}$ with $\widetilde{\mathcal{F}}_{a}=X_{a}$ and morphism given by
\begin{equation*}
  \widetilde{\mathcal{F}}(l_{z})=\mathcal{F}(l_{z}):X_{a}\rightarrow X_{za},\quad \mathcal{F}(l_{ax_{i}^{-1}})m_{i}^{x_{i}}\mapsto \mathcal{F}(l_{zax_{i}^{-1}})m_{i}^{x_{i}}.
\end{equation*}
It can be directly verified that $\widetilde{\mathcal{F}}:\mathrm{cat}(\mathrm{Cay}(G,S))\rightarrow \mathbf{Sets}$ is a Cayley-persistence set. We will show the morphism
\begin{equation*}
  \rho:\mathcal{F}_{G}\rightarrow M_{\widetilde{\mathcal{F}}}=\bigoplus_{x\in G}\bigoplus_{\alpha\in \widetilde{\mathcal{F}}(x)} \mathbf{k}\alpha
\end{equation*}
given by $\rho(m_{i}^{x_{i}})=m_{i}^{x_{i}}$ is an isomorphism of $R_{\mathcal{F}}$-modules. It is obvious that $\rho$ is a morphism of $R_{\mathcal{F}}$-modules and it is an injection. Let $m\in M_{\widetilde{\mathcal{F}}}$. Note that $\rho$ is a map of $\mathbf{k}$-linear spaces and $\mathcal{F}_{G}\subseteq R_{\mathcal{F}}m_{1}^{x_{1}}+\cdots+R_{\mathcal{F}}m_{n}^{x_{n}}$. Thus we need only consider the case that $m=\mathcal{F}(l_{ax_{i}^{-1}})m_{i}^{x_{i}}$ for some $m_{i}$ and $a\in G$. Then the pre-image of $m$ in $\mathcal{F}_{G}$ is $\mathcal{F}(l_{ax_{i}^{-1}})m_{i}^{x_{i}}$. The surjectivity follows. By Theorem \ref{thm:module_decomposition}, we have the desired result.
\end{proof}

\begin{example}
In our view, a generator is ``dead'' when it meets a null space or zero element. For example, let $\mathcal{F}:(\mathbb{R}^{2},\mathbb{R}^{2}_{+})\rightarrow \mathbf{Vec}_{\mathbf{k}}$ be a Cayley-persistence vector space, where $\mathbf{Vec}_{\mathbf{k}}$ is a category  of vector spaces. Let $\mathbf{V}_{v}=\mathbf{k}v\oplus \mathbf{k}v'$, $\mathbf{V}_{w}=\mathbf{k}w\oplus \mathbf{k}w'$, and let $\mathbf{0}$ be the null space.
\begin{figure}[H]
\centering
    \begin{tikzpicture}[scale=1.6]
        \draw[step=.5cm,gray,ultra thin, dashed] (-0.4 ,-0.4) grid (2.4,1.9);%
        \begin{scope}[thick]

            \draw[->] (-0.5,0) -- (2.5,0);
            \draw[->] (0,-0.5) -- (0,2);
        \end{scope}

\draw[thick](0,1)--(2.5,1);
\draw[thick](1,0)--(1,2);
\draw[thick](1,0)--(2.5,0);
\draw[thick](0,1)--(0,2);
\draw[thick](2,0)--(2,1);

\node [font=\fontsize{8}{6}] (node001) at (0.1,-0.1){$O$};
\node [font=\fontsize{8}{6}] (node001) at (2.5,-0.1){$x$};
\node [font=\fontsize{8}{6}] (node001) at (0.5,-0.1){$1$};
\node [font=\fontsize{8}{6}] (node001) at (1,-0.1){$2$};
\node [font=\fontsize{8}{6}] (node001) at (1.5,-0.1){$3$};
\node [font=\fontsize{8}{6}] (node001) at (2,-0.1){$4$};
\node [font=\fontsize{8}{6}] (node001) at (-0.5,-0.1){$-1$};

\node [font=\fontsize{8}{6}] (node001) at (0.1,2){$y$};
\node [font=\fontsize{8}{6}] (node001) at (-0.3,-0.3){$\emptyset$};
\node [font=\fontsize{8}{6}] (node001) at (-0.3,1){$\emptyset$};
\node [font=\fontsize{8}{6}] (node001) at (1,-0.3){$\emptyset$};
\node [font=\fontsize{8}{6}] (node001) at (0.5,0.5){$C_{1}$};
\node [font=\fontsize{8}{6}] (node001) at (1.5,0.5){$C_{2}$};
\node [font=\fontsize{8}{6}] (node001) at (0.5,1.5){$C_{3}$};
\node [font=\fontsize{8}{6}] (node001) at (1.5,1.5){$C_{4}$};
\node [font=\fontsize{8}{6}] (node001) at (2.3,0.5){$C_{5}$};
%\node [font=\fontsize{8}{6}] (node001) at (0.35,0.25){$\mathbb{R}_{\geq 0}^{2}\backslash C_{1}$};
        \end{tikzpicture}\qquad\qquad
    \begin{tikzpicture}[scale=1.6]
        \draw[step=.5cm,gray,ultra thin, dashed] (-0.4 ,-0.4) grid (2.4,1.9);%
        \begin{scope}[thick]

            \draw[->] (-0.5,0) -- (2.5,0);
            \draw[->] (0,-0.5) -- (0,2);
        \end{scope}

\draw[thick](0,1)--(2.5,1);
\draw[thick](1,0)--(1,2);
\draw[thick](1,0)--(2.5,0);
\draw[thick](0,1)--(0,2);\draw[thick](2,0)--(2,1);
\node [font=\fontsize{8}{6}] (node001) at (0.1,-0.1){$O$};
\node [font=\fontsize{8}{6}] (node001) at (2.5,-0.1){$x$};
\node [font=\fontsize{8}{6}] (node001) at (0.5,-0.1){$1$};
\node [font=\fontsize{8}{6}] (node001) at (1,-0.1){$2$};
\node [font=\fontsize{8}{6}] (node001) at (1.5,-0.1){$3$};
\node [font=\fontsize{8}{6}] (node001) at (2,-0.1){$4$};
\node [font=\fontsize{8}{6}] (node001) at (-0.5,-0.1){$-1$};

\node [font=\fontsize{8}{6}] (node001) at (0.1,2){$y$};
\node [font=\fontsize{8}{6}] (node001) at (-0.3,-0.3){$\emptyset$};
\node [font=\fontsize{8}{6}] (node001) at (-0.3,1){$\emptyset$};
\node [font=\fontsize{8}{6}] (node001) at (1,-0.3){$\emptyset$};
\node [font=\fontsize{8}{6}] (node001) at (0.5,0.5){$\mathbf{0}$};
\node [font=\fontsize{8}{6}] (node001) at (1.5,0.5){$v,v'$};
\node [font=\fontsize{8}{6}] (node001) at (0.5,1.5){$w,w'$};
\node [font=\fontsize{8}{6}] (node001) at (1.5,1.5){$\mathbf{0}$};
\node [font=\fontsize{8}{6}] (node001) at (2.3,0.5){$u$};
%\node [font=\fontsize{8}{6}] (node001) at (0.35,0.25){$\mathbb{R}_{\geq 0}^{2}\backslash C_{1}$};
        \end{tikzpicture}
\end{figure}
Let $C_{1}=\{(x,y)|0\leq x,y< 2\}$, $C_{2}=\{(x,y)|2\leq x<4,0\leq y< 2\}$, $C_{3}=\{(x,y)|0\leq x<2,y\geq 2\}$, $C_{4}=\{(x,y)|0\leq x,y\leq 2\}$, $C_{5}=\{(x,y)|x\geq 4,0\leq y< 2\}$.
The functor $\mathcal{F}$ is given by
\begin{equation*}
  \mathcal{F}_{P}=\left\{
                \begin{array}{ll}
                  \mathbf{V}_{v}, & \hbox{$P\in C_{2}$;} \\
                  \mathbf{V}_{w}, & \hbox{$P\in C_{3}$;} \\
                  \mathbf{V}_{u}, & \hbox{$P\in C_{5}$;} \\
                  \mathbf{0}, & \hbox{$P\in C_{1}\cup C_{4}$;} \\
                  \emptyset, & \hbox{otherwise.}
                \end{array}
              \right.
\end{equation*}
and
\begin{equation*}
  \mathcal{F}_{P\to Q}=\left\{
                \begin{array}{ll}
                  \mathrm{id}, & \hbox{$P,Q\in C_{2},C_{3}$;} \\
                  \varphi, & \hbox{$P\in C_{2},Q\in C_{5}$;} \\
                   0, & \hbox{$P\in C_{1}$ or $Q\in C_{4}$;} \\
                  \emptyset, & \hbox{otherwise.}
                \end{array}
              \right.
\end{equation*}
Here, $\varphi:\mathbf{V}_{v}\rightarrow \mathbf{k}u$ is given by $\varphi(v)=\varphi(v')=u$.
Then $w,w'$ begin to appear at $(0,2)$ and disappear at $(2,2)$ while $v,v'$ begin to appear at $(2,0)$, meet at $(4,0)$ and disappear at $(2,2)$.
A straightforward calculation shows that
\begin{equation*}
   \mathcal{F}_{G}\cong \frac{R_{\mathcal{F}}\cdot e^{(2,0)}_{v}\oplus R_{\mathcal{F}}\cdot e^{(2,0)}_{v'}\oplus R_{\mathcal{F}}\cdot e^{(0,2)}_{w}\oplus R_{\mathcal{F}}\cdot e^{(0,2)}_{w'}\oplus R_{\mathcal{F}}\cdot e^{(4,0)}_{u}}{N},
\end{equation*}
where $N$ is an $R_{\mathcal{F}}$-module generated by
\begin{equation*}
  \mathcal{F}_{l_{(0,2)}}e_{v}^{(2,0)},\mathcal{F}_{l_{(0,2)}}e_{v'}^{(2,0)},\mathcal{F}_{l_{(2,0)}}e_{w}^{(0,2)},\mathcal{F}_{(l_{(2,0)}}e_{w'}^{(0,2)},\mathcal{F}_{l_{(2,0)}}e_{v}^{(2,0)}-e_{u}^{(4,0)}, \mathcal{F}_{l_{(2,0)}}e_{v'}^{(2,0)}-e_{u}^{(4,0)}.
\end{equation*}
By reduction, we get
\begin{equation*}
   \mathcal{F}_{G}\cong \frac{R_{\mathcal{F}}\cdot e^{(2,0)}_{v}\oplus R_{\mathcal{F}}\cdot e^{(2,0)}_{v'}\oplus R_{\mathcal{F}}\cdot e^{(0,2)}_{w}\oplus R_{\mathcal{F}}\cdot e^{(0,2)}_{w'}}{N'},
\end{equation*}
where
\begin{equation*}
\begin{split}
  N'= & R_{\mathcal{F}}\cdot\mathcal{F}(l_{(0,2)})e_{v}^{(2,0)}\oplus R_{\mathcal{F}}\cdot\mathcal{F}(l_{(0,2)})e_{v'}^{(2,0)}\oplus R_{\mathcal{F}}\cdot\mathcal{F}(l_{(2,0)})e_{w}^{(0,2)}\\
    &\oplus R_{\mathcal{F}}\cdot \mathcal{F}(l_{(2,0)})e_{w'}^{(0,2)}\oplus R_{\mathcal{F}}\cdot \mathcal{F}(l_{(2,0)})(e_{v}^{(2,0)}-e_{v'}^{(2,0)}).
\end{split}
\end{equation*}
The numerator indicates the information of appearance of generators while the denominator contains the information of disappearance and meeting of generators.

For the two 1-parameter filtrations given by $y=0$ and $y=x^{3}$, we have the following barcode diagrams.
\begin{figure}[H]
\centering
    \begin{tikzpicture}[scale=1.6]
        \draw[step=.5cm,gray,ultra thin, dashed] (-0.2,-0.2) grid (2.4,1.4);%
        \begin{scope}[thick]

            \draw[->] (-0.2,0) -- (2.5,0);
            \draw[] (0,-0.2) -- (0,1.5);
        \end{scope}
         \draw[-,thick,black!60] (1,0.6) -- (2.5,0.6);
         \draw[-,thick,black!60] (1,0.9) -- (2,0.9);

\node [font=\fontsize{8}{6}] (node001) at (0.1,-0.1){$O$};
\node [font=\fontsize{8}{6}] (node001) at (2.5,-0.1){$x$};
\node [font=\fontsize{8}{6}] (node001) at (-0.15,0.6){$v$};
\node [font=\fontsize{8}{6}] (node001) at (-0.25,0.9){$v-v'$};
\node [font=\fontsize{8}{6}] (node001) at (0.5,-0.1){$1$};
\node [font=\fontsize{8}{6}] (node001) at (1,-0.1){$2$};
\node [font=\fontsize{8}{6}] (node001) at (1.5,-0.1){$3$};
\node [font=\fontsize{8}{6}] (node001) at (2,-0.1){$4$};
        \end{tikzpicture}\qquad\qquad
    \begin{tikzpicture}[scale=1.6]
        \draw[step=.5cm,gray,ultra thin, dashed] (-0.2,-0.2) grid (2.4,1.4);%
        \begin{scope}[thick]

            \draw[->] (-0.2,0) -- (2.5,0);
            \draw[] (0,-0.2) -- (0,1.5);

        \end{scope}
         \draw[-, thick,black!60] (0.63,0.6) -- (1,0.6);
%         \draw[dashed,black!60] (0.71,0.6) -- (0.71,0);
         \draw[-, thick,black!60] (0.63,0.9) -- (1,0.9);
         \draw[dashed,black!60] (0.63,0.9) -- (0.63,0);
\node [font=\fontsize{8}{6}] (node001) at (0.1,-0.1){$O$};
\node [font=\fontsize{8}{6}] (node001) at (2.5,-0.1){$x$};
\node [font=\fontsize{8}{6}] (node001) at (-0.15,0.6){$w$};\node [font=\fontsize{8}{6}] (node001) at (-0.15,0.9){$w'$};
\node [font=\fontsize{8}{6}] (node001) at (0.5,-0.1){$1$};
\node [font=\fontsize{8}{6}] (node001) at (0.63,-0.1){$\sqrt[3]{2}$};
\node [font=\fontsize{8}{6}] (node001) at (1,-0.1){$2$};
\node [font=\fontsize{8}{6}] (node001) at (1.5,-0.1){$3$};
\node [font=\fontsize{8}{6}] (node001) at (2,-0.1){$4$};
        \end{tikzpicture}
\caption{The barcodes of persistence sets obtained by $y=0$ and $y=x^{3}$.}
\end{figure}
By using the representation introduced in Remark \ref{remark:meet}, we have a collection of triples
\begin{equation*}
  (e_{v}^{(2,0)},\mathbf{0},(2,2)),(e_{v'}^{(2,0)},\mathbf{0},(2,2)),(e_{w}^{(0,2)},\mathbf{0},(2,2)),(e_{w'}^{(0,2)},\mathbf{0},(2,2)),(e_{v}^{(2,0)},e_{v'}^{(2,0)},(4,0)).
\end{equation*}
This gives a finite set to represent the information of infinite many 1-parameter filtrations in the parameter space. This also shows us the survival spaces rather than some survival intervals of the generators.
\end{example}

\subsection{The Cayley persistent homology}
As an application of the module structure of $M_{\mathcal{F}}$, we introduce the Cayley persistent homology in this section. The simplicial complexes considered are always assumed to be finite.
Unless otherwise specified, we always use the reduced homology for convenience. We will first study when a Cayley persistent homology is a finitely generated module.

From now on, let $S\subseteq G$ be a monoid such that the identity element $e$ is the unique invertible element in $S$. Then the group $G$ can be regarded as a poset with partial order given by $a\leq b$ if $ba^{-1}\in S$.
\begin{definition}
A Cayley persistence object $\mathcal{F}:\mathrm{cat}(\mathrm{Cay}(G,S))\rightarrow \mathfrak{C}$ is called \emph{noetherian} if every ordered subset $X\subseteq G$ has an element $x$ such that $\mathcal{F}(l_{z}):\mathcal{F}_{x}\to \mathcal{F}_{zx}$ is the identity morphism for all $z\in S$.
%there is a subset $X\subseteq G$ such that $SX=X$ and $\mathcal{F}(l_{z}):\mathcal{F}_{x}\to \mathcal{F}_{zx}$ is the identity morphism for any $z\in S,x\in X$.

A Cayley persistence object $\mathcal{F}:\mathrm{cat}(\mathrm{Cay}(G,S))\rightarrow \mathfrak{C}$ is called \emph{lower bounded} if there exists an element $a\in G$ such that $\mathcal{F}_{x}=\emptyset$ unless $x\geq a$.
\end{definition}
Generally, $\mathcal{F}$ does not have to be a finitely generated $R_{\mathcal{F}}$-module even if $\mathcal{F}_{a}$ is finite for all $a\in G$.
\begin{theorem}\label{theorem:finitely}
Let $\mathcal{F}:\mathrm{cat}(\mathrm{Cay}(G,S))\rightarrow \mathfrak{C}$ be a  noetherian and lower bounded Cayley-persistence simplicial complex. If $S$ is a finitely generated  commutative monoid, then $M_{\mathcal{F}}$ is a finitely generated $R_{\mathcal{F}}$-module.
\end{theorem}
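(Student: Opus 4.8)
The plan is to exhibit an explicit finite generating set for $M_{\mathcal{F}}$, consisting of the \emph{births}: those simplices $\alpha\in\mathcal{F}_x$ that do not lie in the image of $\mathcal{F}(l_z)\colon\mathcal{F}_{z^{-1}x}\to\mathcal{F}_x$ for any $z\in S\setminus\{e\}$. Two things must be shown: (I) the births generate $M_{\mathcal{F}}$ as an $R_{\mathcal{F}}$-module, and (II) there are only finitely many of them. Granting both, the finitely many births form a finite generating set, so $M_{\mathcal{F}}$ is a finitely generated $R_{\mathcal{F}}$-module. By Lemma~\ref{lemma:noetherian} the ring $R_{\mathcal{F}}=\mathbf{k}[\mathcal{L}_S]$ is itself noetherian; this is not logically needed for the conclusion but guarantees the resulting presentation is well behaved (and would give an alternative route, via realizing $M_{\mathcal{F}}$ as a submodule of a finite free module).

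For (I), I would first record that the divisibility order on the finitely generated commutative monoid $S$ is well-founded: since $e$ is the unique invertible element, $S$ admits a grading $\deg\colon S\to\mathbb{Z}_{\geq 0}$ that is strictly positive on $S\setminus\{e\}$, so a properly descending chain of divisors has strictly decreasing degree and must terminate. Now take any homogeneous $\alpha\in\mathcal{F}_x$. By lower boundedness every grade carrying a nonempty fibre satisfies $x\geq a$, and if $\alpha$ is not a birth then $\alpha=\mathcal{F}(l_z)\beta$ with $\beta\in\mathcal{F}_{z^{-1}x}$, $z\neq e$, and $z^{-1}x<x$. Iterating and using well-foundedness, the descent halts at a birth, so $\alpha\in R_{\mathcal{F}}\cdot(\text{births})$; since the simplices across all fibres form a $\mathbf{k}$-basis of $M_{\mathcal{F}}$, the births generate.

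For (II), the crux, I would argue by contradiction. Each $\mathcal{F}_x$ is a finite simplicial complex, so infinitely many births would occur at infinitely many distinct grades; writing these grades as $as$ with $s\in S$ (lower boundedness), one obtains an infinite subset $A\subseteq S$ of birth parameters. Because $S$ is a finitely generated commutative monoid, its divisibility order is a well-partial-order (generalized Dickson's lemma: every subset has only finitely many minimal elements, and the order is well-founded), so $A$ contains an infinite strictly ascending chain $s_1<s_2<\cdots$; since $S$ is commutative this lifts to $x_1<x_2<\cdots$ with $x_i=as_i$, an infinite ordered subset $X\subseteq G$ carrying a birth at every $x_i$. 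Applying the \emph{noetherian} hypothesis to $X$ yields some $x_j\in X$ with $\mathcal{F}(l_z)\colon\mathcal{F}_{x_j}\to\mathcal{F}_{zx_j}$ the identity for all $z\in S$. Taking $z=x_{j+1}x_j^{-1}\in S\setminus\{e\}$ forces $\mathcal{F}_{x_{j+1}}=\mathcal{F}_{x_j}$ with the structure map the identity, so every simplex of $\mathcal{F}_{x_{j+1}}$ already comes from $x_j$ and none is a birth, contradicting the birth at $x_{j+1}$. Hence the births are finite in number.

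The main obstacle is step (II): the interplay between the combinatorics of $S$ and the stabilization hypothesis. One must know that divisibility on a finitely generated commutative monoid is a well-partial-order in order to convert ``infinitely many births'' into a single infinite ascending chain on which the \emph{noetherian} condition can be brought to bear. Both the well-foundedness used in (I) and the no-infinite-antichain part used in (II) rest on the finite generation and pointedness of $S$, so isolating and citing the generalized Dickson property for finitely generated commutative monoids is the technical heart; the remaining bookkeeping (that births generate, and that each fibre is finite) is routine.
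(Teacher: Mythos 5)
Your proof is correct, and it takes a genuinely different route from the paper's. The paper argues constructively: it applies the noetherian hypothesis to the $p$ chains $\{s_i^ka\}_{k\geq 0}$ attached to a finite generating set $\{s_1,\dots,s_p\}$ of $S$, extracts stabilization exponents $k_1,\dots,k_p$, and notes that any grade $x=\prod_i s_i^{m_i}a$ with some $m_i>k_i$ satisfies $x\geq s_i^{k_i}a$, so that $\mathcal{F}_x$ receives an identity structure map from $\mathcal{F}_{s_i^{k_i}a}$; consequently every simplex lies in the $R_{\mathcal{F}}$-span of the finitely many finite fibres over the box $\{\prod_i s_i^{l_i}a \mid l_i\leq k_i\}$. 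That argument needs nothing beyond the surjection $\mathbb{N}^p\twoheadrightarrow S$ and exhibits an explicit finite region carrying a generating set. You instead take the births as a canonical generating set, prove generation by well-founded descent, and prove finiteness of births by contradiction, converting infinitely many birth grades into an infinite ascending chain via the well-partial-order property of divisibility on $S$ (Dickson's lemma lifted along $\mathbb{N}^p\twoheadrightarrow S$) and then invoking the noetherian hypothesis on that single chain. Your route is non-constructive and imports more monoid combinatorics, but it buys a conceptually meaningful conclusion---the set of birth events is itself finite---and cleanly isolates the roles of the two hypotheses. Minor repairs: grades should be written $x=sa$, not $as$ (the paper's order is $a\leq x$ iff $xa^{-1}\in S$, and conjugation by $a$ need not preserve $S$); antisymmetry of divisibility, used when you upgrade a weakly ascending subsequence of distinct elements to a strictly ascending chain, should be recorded as following from cancellativity ($S\subseteq G$) together with triviality of units; and the strictly positive grading you assert, while true for such monoids, is a nontrivial fact (one must first kill torsion via the group of fractions) and is in fact unnecessary, since well-foundedness already follows from the well-partial-order property you cite: an infinite strictly descending chain would yield $i<j$ with $x_i\leq x_j<x_i$, contradicting antisymmetry.
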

\begin{proof}
Since $S$ is a finitely generated  commutative monoid, there exists a finite set $S_{gen}=\{s_{1},\dots,s_{p}\}$ generating $S$.
As $\mathcal{F}$ is lower bounded, there exists an element $a\in G$ such that $\mathcal{F}_{x}=\emptyset$ unless $a\leq x$.  Let $X_{i}=\{a,s_{i}a,\dots,s_{i}^{k}a,\dots\}$. Since $\mathcal{F}$ is noetherian, there exist integers $k_{1},k_{2},\dots,k_{p}$ such that
\begin{equation*}
  \mathcal{F}(l_{z}):\mathcal{F}_{s_{i}^{k_{i}}a}\to \mathcal{F}_{zs_{i}^{k_{i}}a},\quad i=1,\dots,p
\end{equation*}
are the identity morphisms for all $z\in S$. Let $b=\prod\limits_{i=1}^{p}s_{i}^{k_{i}}a$ and
\begin{equation*}
  B=\{x\in S|a\leq x\leq b\}=\{\prod\limits_{i=1}^{p}s_{i}^{l_{i}}a|l_{1}\leq k_{1},l_{2}\leq k_{2},\dots,l_{p}\leq k_{p}\}.
\end{equation*}
Then the morphism
\begin{equation*}
  \mathcal{F}_{s_{i}^{k_{i}}a}\to \mathcal{F}_{b},\quad i=1,\dots,p
\end{equation*}
is an identity map for all $1\leq i\leq p$. It follows that $\mathcal{F}_{s_{i}^{k_{i}}a}=\mathcal{F}_{s_{j}^{k_{j}}a}$ for any $1\leq i,j\leq p$. This shows that $\mathcal{F}_{x}$ is either $\mathcal{F}_{b}$ or $\emptyset$ for $x\notin B$. By the above construction, the number of elements in $B$ is finite. Note that
\begin{equation*}
  M_{\mathcal{F}}\subseteq \sum\limits_{l_{1}\leq k_{1},l_{2}\leq k_{2},\dots,l_{p}\leq k_{p}}R_{\mathcal{F}}\cdot \mathcal{F}_{(\prod\limits_{i=1}^{p}s_{i}^{l_{i}}a)},
\end{equation*}
we have that $M_{\mathcal{F}}$ is a finitely generated $R_{\mathcal{F}}$-module.
\end{proof}

Let $\mathcal{K}:\mathrm{cat}(\mathrm{Cay}(\mathbb{Z}^{n},\mathbb{Z}_{\geq 0}^{n}))\rightarrow \mathbf{Simp}$ be a Cayley-persistence simplicial complex.  By abuse of notations, let $H_{\ast}(-;\mathbf{k}):\mathbf{Simp}\rightarrow \mathbf{Vec}_{\mathbf{k}}$ be the functor given by the reduced homology.
Then $H_{\ast}(\mathcal{K};\mathbf{k}):\mathrm{cat}(\mathrm{Cay}(\mathbb{Z}^{n},\mathbb{Z}_{\geq 0}^{n}))\rightarrow \mathbf{Vec}_{\mathbf{k}}$
is a Cayley-persistence $\mathbf{k}$-linear space (or a Cayley-persistence module). Indeed, for $a,b\in \mathbb{Z}^{n}$, we have a morphism of simplicial complexes
\begin{equation*}
  \mathcal{K}_{a,b}: \mathcal{K}_{a}\rightarrow  \mathcal{K}_{b}.
\end{equation*}
It induces a morphism of $\mathbf{k}$-linear spaces
\begin{equation*}
  H_{\ast}(\mathcal{K}_{x,y};\mathbf{k}): H_{\ast}(\mathcal{K}_{x};\mathbf{k})\rightarrow  H_{\ast}(\mathcal{K}_{y};\mathbf{k}).
\end{equation*}
The \emph{$(a,b)$-Cayley persistent homology} is defined by
\begin{equation*}
  \mathrm{im}(H_{\ast}(\mathcal{K}_{x,y};\mathbf{k}))=\mathrm{im}(H_{\ast}(\mathcal{K}_{x};\mathbf{k})\rightarrow  H_{\ast}(\mathcal{K}_{y};\mathbf{k})).
\end{equation*}
We now denote $\mathbf{H}(\mathcal{K})=(H_{\ast}(\mathcal{K}_{a};\mathbf{k}))_{a\in \mathbb{Z}^{n}}$.
By Example \ref{exam:Cayley}, the ring $R_{H_{\ast}(\mathcal{K};\mathbf{k})}=\mathbf{k}[L_{1},\dots,L_{n}]$ is a polynomial ring of $n$ variables.
Here, $L_{i}=H_{\ast}(\mathcal{K}(l_{e_{i}});\mathbf{k}):\mathbf{H}(\mathcal{K})\rightarrow \mathbf{H}(\mathcal{K})$, where $e_{i}=(0,\dots,0, \overset{i}{1},0\dots,0)$.
\begin{theorem}\label{theorem:multi}
Let $\mathcal{K}:\mathrm{cat}(\mathrm{Cay}(\mathbb{Z}^{n},\mathbb{Z}_{\geq 0}^{n}))\rightarrow \mathbf{Simp}$ be a lower bounded and neotherian Cayley-persistence simplicial complex. Then we have a finite direct sum decomposition
\begin{equation*}
   \mathbf{H}(\mathcal{K}) \cong \bigoplus_{i=1}^{k} \mathbf{k}[L_{1},\dots,L_{n}]\cdot e_{i}^{x_{i}}\oplus \left(\bigoplus_{j=1}^{l}\mathbf{k}[L_{1},\dots,L_{n}]\cdot\varepsilon_{j}^{y_{j}}\right)/N
\end{equation*}
for some $k,l$, where $e_{i}^{x_{i}}\in \mathcal{K}_{x_{i}},\varepsilon_{j}^{y_{j}}\in \mathcal{K}_{y_{j}}$ and $N$ is a finitely generated $\mathbf{k}[L_{1},\dots,L_{n}]$-module generated by the elements of the form
\begin{equation*}
  L_{1}^{z_{1}}L_{2}^{z_{2}}\cdots L_{n}^{z_{n}}\varepsilon_{t}^{y_{t}}-L_{1}^{w_{1}}L_{2}^{w_{2}}\cdots L_{n}^{w_{n}}\varepsilon_{s}^{y_{s}}
\end{equation*}
for some $s,t$ such that $z+y_{t}=w+y_{s}$. Here $z=(z_{1},\dots,z_{n}),w=(w_{1},\dots,w_{n})$ are points in $\mathbb{Z}_{\geq 0}^{n}$.
\end{theorem}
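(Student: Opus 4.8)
The plan is to identify $\mathbf{H}(\mathcal{K})$ with the module $\mathcal{F}_{G}$ of a Cayley-persistence $\mathbf{k}$-linear space and then to chain Theorem \ref{theorem:finitely} with Corollary \ref{corollary:decomposition}. First I would set $\mathcal{F}=H_{\ast}(\mathcal{K};\mathbf{k})$, the composite of $\mathcal{K}$ with the reduced homology functor $H_{\ast}(-;\mathbf{k}):\mathbf{Simp}\to\mathbf{Vec}_{\mathbf{k}}$. By functoriality this is a Cayley-persistence $\mathbf{k}$-linear space on $\mathrm{cat}(\mathrm{Cay}(\mathbb{Z}^{n},\mathbb{Z}_{\geq 0}^{n}))$ with $\mathcal{F}_{G}=\mathbf{H}(\mathcal{K})$ and, as recorded before the statement, $R_{\mathcal{F}}=\mathbf{k}[L_{1},\dots,L_{n}]$. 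Since $S=\mathbb{Z}_{\geq 0}^{n}$ is generated by $e_{1},\dots,e_{n}$ and is commutative, the hypotheses on $S$ demanded by Theorem \ref{theorem:finitely} and Lemma \ref{lemma:noetherian} are already in force, and since every simplicial complex in sight is finite each $\mathcal{F}_{x}=H_{\ast}(\mathcal{K}_{x};\mathbf{k})$ is finite dimensional.

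The first substantive step is to push the noetherian and lower bounded hypotheses from $\mathcal{K}$ to $\mathcal{F}$. For the noetherian condition, on any ordered subset $X\subseteq\mathbb{Z}^{n}$ I take the stabilization point $x$ provided for $\mathcal{K}$, so that $\mathcal{K}_{x}\to\mathcal{K}_{zx}$ is the identity for every $z\in S$; as $H_{\ast}(-;\mathbf{k})$ carries identities to identities, $\mathcal{F}(l_{z}):\mathcal{F}_{x}\to\mathcal{F}_{zx}$ is again the identity, so $\mathcal{F}$ is noetherian. For lower boundedness, $\mathcal{K}_{x}$ is the empty complex unless $x\geq a$; adopting the convention that the reduced homology of the empty complex is the zero space, $\mathcal{F}_{x}=0$ unless $x\geq a$, so $\mathcal{F}$ is lower bounded.

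With these checks done, Theorem \ref{theorem:finitely}, applied to the noetherian and lower bounded Cayley-persistence $\mathbf{k}$-linear space $\mathcal{F}$ over the finitely generated commutative monoid $\mathbb{Z}_{\geq 0}^{n}$, shows that $\mathbf{H}(\mathcal{K})=\mathcal{F}_{G}$ is a finitely generated $\mathbf{k}[L_{1},\dots,L_{n}]$-module; here I use that the stabilization-box argument of that theorem applies verbatim to the $\mathbf{Vec}_{\mathbf{k}}$-valued $\mathcal{F}$, the finite dimensionality of the $\mathcal{F}_{x}$ guaranteeing that the module is generated by the finite-dimensional space sitting over the finite box. Corollary \ref{corollary:decomposition} then supplies the decomposition into free summands $R_{\mathcal{F}}\cdot e_{i}^{x_{i}}$ and a torsion part $(\bigoplus_{j}R_{\mathcal{F}}\cdot\varepsilon_{j}^{y_{j}})/N$, with $N$ generated by relations $\mathcal{F}(l_{y})\varepsilon_{t}^{y_{t}}-\mathcal{F}(l_{yy_{t}y_{s}^{-1}})\varepsilon_{s}^{y_{s}}$, the death relations $\mathcal{F}(l_{y})\varepsilon_{t}^{y_{t}}$ appearing as the degenerate case. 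The final step is bookkeeping: writing $y=(z_{1},\dots,z_{n})\in\mathbb{Z}_{\geq 0}^{n}$ gives $\mathcal{F}(l_{y})=L_{1}^{z_{1}}\cdots L_{n}^{z_{n}}$, and putting $w=y+y_{t}-y_{s}$ gives $\mathcal{F}(l_{yy_{t}y_{s}^{-1}})=L_{1}^{w_{1}}\cdots L_{n}^{w_{n}}$, so each relation becomes $L_{1}^{z_{1}}\cdots L_{n}^{z_{n}}\varepsilon_{t}^{y_{t}}-L_{1}^{w_{1}}\cdots L_{n}^{w_{n}}\varepsilon_{s}^{y_{s}}$, and the equality of $G$-gradings of the two terms is exactly the constraint $z+y_{t}=w+y_{s}$. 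Finally, by Lemma \ref{lemma:noetherian} (equivalently Hilbert's basis theorem, as in Example \ref{example:finitely}) the ring $\mathbf{k}[L_{1},\dots,L_{n}]$ is noetherian, so the submodule $N$ of the finitely generated free module $\bigoplus_{j}R_{\mathcal{F}}\cdot\varepsilon_{j}^{y_{j}}$ is finitely generated.

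The step I expect to be the main obstacle is the transfer of hypotheses: verifying carefully that noetherianity and lower boundedness genuinely descend to homology --- in particular pinning down the reduced-homology convention for the empty complex so that lower boundedness survives --- and observing that Theorem \ref{theorem:finitely}, though phrased for simplicial complexes, uses only the stabilization and vanishing behavior and therefore applies to the $\mathbf{Vec}_{\mathbf{k}}$-valued object $\mathcal{F}$. Once finite generation is secured, the remainder is a direct citation of Corollary \ref{corollary:decomposition} together with the multi-index rewriting.
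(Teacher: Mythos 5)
Your proposal is correct and takes essentially the same route as the paper: the paper's own proof is exactly the two citations you chain together, namely Theorem \ref{theorem:finitely} (together with the proof of Corollary \ref{corollary:decomposition}) to get that $\mathbf{H}(\mathcal{K})$ is a finitely generated $\mathbf{k}[L_{1},\dots,L_{n}]$-module, followed by Corollary \ref{corollary:decomposition} for the decomposition. The details you supply --- transferring the noetherian and lower bounded hypotheses from $\mathcal{K}$ to its homology, noting that the stabilization-box argument applies to the $\mathbf{Vec}_{\mathbf{k}}$-valued functor, and the multi-index rewriting of the relations in $N$ --- are precisely what the paper's two-line proof leaves implicit.
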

\begin{proof}
By Theorem \ref{theorem:finitely} and the proof of Corollary \ref{corollary:decomposition}, $\mathbf{H}(\mathcal{K})$ is a finitely generated $\mathbf{k}[L_{1},\dots,L_{n}]$-module. By Corollary \ref{corollary:decomposition}, we have the desired result.
\end{proof}
In practical application, the filtration of simplicial complexes always begins at a given parameter and becomes stable at a finite parameter. The beginning parameter indicates that $\mathcal{K}:\mathrm{cat}(\mathrm{Cay}(\mathbb{Z}^{n},\mathbb{Z}_{\geq 0}^{n}))\rightarrow \mathbf{Simp}$ is lower bounded, while ``stable'' implies that $\mathcal{K}$ is noetherian.
\begin{example}
Let $V$ be a weighted data-set of points in a Euclidean space, that is, a finite point set $V=\{x_{1},x_{2},\dots,x_{n}\}$ in a Euclidean space with a weight function $w:V\rightarrow \mathbb{R}$. Then we have a Vietoris-Rips complex \cite{carlsson2006algebraic} derived from a distance $\delta$. More precisely, a Vietoris-Rips complex is an abstract simplicial complex  whose $p$-simplices correspond to the sets of $(p+1)$ points which are pairwise within distance $\delta$. Let $V_{a}=\{x\in V|w(x)\leq a\}$. Then we have a Vietoris-Rips complex $\mathcal{K}_{a,\delta}$ for given $a\in \mathbb{R}$ and $\delta\geq 0$. Since $V$ is finite, we can find finite many real numbers
\begin{equation*}
  a_{0},a_{1},\dots,a_{k},\quad \delta_{0},\delta_{1},\dots,\delta_{l}
\end{equation*}
such that the collection $\mathcal{K}_{a_{s},\delta_{t}},0\leq s\leq k,0\leq t\leq l$ contains the information of $\mathcal{K}_{a,\delta},a\geq 0,\delta\in \mathbb{R}$.
It is obvious that $\mathcal{K}:\mathrm{cat}(\mathrm{Cay}(\mathbb{Z}^{2},\mathbb{Z}_{\geq 0}^{2}))\rightarrow \mathbf{Simp}$ given by
\begin{equation*}
  \mathcal{K}(s,t)=\left\{
                     \begin{array}{ll}
                       \emptyset, & \hbox{$s< k$ or $t< 0$;} \\
                       \mathcal{K}_{a_{s},\delta_{t}}, & \hbox{$0\leq s\leq k,0\leq t\leq l$;} \\
                       \mathcal{K}_{a_{k},\delta_{l}}, & \hbox{$s>k$ or $t>l$}
                     \end{array}
                   \right.
\end{equation*}
is a lower bounded  and noetherian persistence simplicial complex. This gives the usual $2$-parameter persistent homology.
\begin{figure}[H]
\centering
    \begin{tikzpicture}[scale=1.6]
        \draw[step=1cm,gray,ultra thin, dashed] (-0.2,-0.2) grid (2.4,1.4);%
        \begin{scope}[thick]

            \draw[->] (-0.2,0) -- (2.5,0);
            \draw[->] (0,-0.2) -- (0,1.5);
        \end{scope}

\draw[fill=black!100](0,0) circle(1pt);
\draw[fill=black!100](2,0) circle(1pt);
\draw[fill=black!100](0,1) circle(1pt);
\draw[fill=black!100](2,1) circle(1pt);

\node [font=\fontsize{8}{6}] (node001) at (0.1,-0.1){$O$};
\node [font=\fontsize{8}{6}] (node001) at (2.5,-0.1){$x$};
\node [font=\fontsize{8}{6}] (node001) at (-0.15,1.5){$y$};
\node [font=\fontsize{8}{6}] (node001) at (0.4,0.15){$w(P_{1})=1$};
\node [font=\fontsize{8}{6}] (node001) at (2.15,0.15){$w(P_{2})=2$};
\node [font=\fontsize{8}{6}] (node001) at (0.4,1.15){$w(P_{3})=3$};
\node [font=\fontsize{8}{6}] (node001) at (2.15,1.15){$w(P_{4})=1$};

 \end{tikzpicture}
\end{figure}
For example, let $V=\{x_{1},x_{2},x_{3},x_{4}\}$, where $x_{1}=(0,0),x_{2}=(2,0),x_{3}=(0,1),x_{4}=(2,1)$ are points in $\mathbb{R}^{2}$. Consider the weight function $w:V\rightarrow \mathbb{R}$ given by
\begin{equation*}
  w(P_{1})=1,\quad  w(P_{2})=2,\quad  w(P_{3})=3,\quad  w(P_{4})=1.
\end{equation*}
Then we have a $2$-parameter filtration given as the following diagram.
\begin{figure}[H]
\centering
    \begin{tikzpicture}[scale=2]

        \begin{scope}[thick]
        \draw[step=1cm,black!40] (-0.5,0) grid (4,3.5);%
            \draw[black!40] (-0.5,3.5) -- (4,3.5);
         \draw[-,black!40] (-0.5,0) -- (-0.5,3.5);
         \draw[-,black!40] (-0.5,3.5) -- (0,3);
        \end{scope}

\node [font=\fontsize{8}{6}] (node001) at (-0.3,3.15){$s$};
\node [font=\fontsize{8}{6}] (node001) at (-0.2,3.35){$t$};
\node [font=\fontsize{8}{6}] (node001) at (-0.25,2.5){$s_{0}=1$};
\node [font=\fontsize{8}{6}] (node001) at (-0.25,1.5){$s_{1}=2$};
\node [font=\fontsize{8}{6}] (node001) at (-0.25,0.5){$s_{2}=3$};
\node [font=\fontsize{8}{6}] (node001) at (0.5,3.25){$t_{0}=0$};
\node [font=\fontsize{8}{6}] (node001) at (1.5,3.25){$t_{1}=1$};
\node [font=\fontsize{8}{6}] (node001) at (2.5,3.25){$t_{2}=2$};
\node [font=\fontsize{8}{6}] (node001) at (3.5,3.25){$t_{3}=\sqrt{5}$};

%%%%%%%%%%%%%%%%%%%%%%%%%%%%%%%%%%%%%%%%%%%%%%%%%%%%%%%%%%%%%%%%%%%%%%%%%

\draw[fill=black!100](0.1,0.3) circle(0.5pt);\draw[fill=black!100](0.9,0.3) circle(0.5pt);\draw[fill=black!100](0.1,0.7) circle(0.5pt);\draw[fill=black!100](0.9,0.7) circle(0.5pt);
\node [font=\fontsize{8}{6}] (node001) at (0.3,0.7){$u^{(2,0)}$};

\draw[fill=black!100](1.1,0.3) circle(0.5pt);\draw[fill=black!100](1.9,0.3) circle(0.5pt);\draw[fill=black!100](1.1,0.7) circle(0.5pt);\draw[fill=black!100](1.9,0.7) circle(0.5pt);
\draw[-,black!100] (1.1,0.3) -- (1.1,0.7);\draw[-,black!100] (1.9,0.3) -- (1.9,0.7);

\draw[fill=black!100](2.1,0.3) circle(0.5pt);\draw[fill=black!100](2.9,0.3) circle(0.5pt);\draw[fill=black!100](2.1,0.7) circle(0.5pt);\draw[fill=black!100](2.9,0.7) circle(0.5pt);
\draw[-,black!100] (2.1,0.3) -- (2.1,0.7);\draw[-,black!100] (2.9,0.3) -- (2.9,0.7);\draw[-,black!100] (2.1,0.3) -- (2.9,0.3);\draw[-,black!100] (2.1,0.7) -- (2.9,0.7);
\node [font=\fontsize{8}{6}] (node001) at (2.5,0.5){$e^{(2,2)}$};

\draw[fill=black!100](3.1,0.3) circle(0.5pt);\draw[fill=black!100](3.9,0.3) circle(0.5pt);\draw[fill=black!100](3.1,0.7) circle(0.5pt);\draw[fill=black!100](3.9,0.7) circle(0.5pt);
\draw[-,black!100] (3.1,0.3) -- (3.1,0.7);\draw[-,black!100] (3.9,0.3) -- (3.9,0.7);\draw[-,black!100] (3.1,0.3) -- (3.9,0.3);\draw[-,black!100] (3.1,0.7) -- (3.9,0.7);
\draw[fill=black!30](3.1,0.3) -- (3.1,0.7)-- (3.9,0.7)--(3.9,0.3) -- (3.1,0.3);\draw[-,black!100] (3.9,0.3)-- (3.1,0.7);\draw[-,black!100]  (3.1,0.3) -- (3.9,0.7);

%%%%%%%%%%%%%%%%%%%%%%%%%%%%%%%%%%%%%%%%%%%%%%%%%%%%%%%%%%%%%%%%%%%%%%%%%
\draw[fill=black!100](0.1,1.3) circle(0.5pt);\draw[fill=black!100](0.9,1.3) circle(0.5pt);\draw[fill=black!100](0.9,1.7) circle(0.5pt);
\node [font=\fontsize{8}{6}] (node001) at (0.7,1.2){$w^{(1,0)}$};

\draw[fill=black!100](1.1,1.3) circle(0.5pt);\draw[fill=black!100](1.9,1.3) circle(0.5pt);\draw[fill=black!100](1.9,1.7) circle(0.5pt);
\draw[-,black!100] (1.9,1.3) -- (1.9,1.7);

\draw[fill=black!100](2.1,1.3) circle(0.5pt);\draw[fill=black!100](2.9,1.3) circle(0.5pt);\draw[fill=black!100](2.9,1.7) circle(0.5pt);
\draw[-,black!100] (2.9,1.3) -- (2.9,1.7);\draw[-,black!100] (2.1,1.3) -- (2.9,1.3);

\draw[fill=black!100](3.1,1.3) circle(0.5pt);\draw[fill=black!100](3.9,1.3) circle(0.5pt);\draw[fill=black!100](3.9,1.7) circle(0.5pt);
\draw[-,black!100] (3.9,1.3) -- (3.9,1.7);\draw[-,black!100] (3.1,1.3) -- (3.9,1.7);\draw[-,black!100] (3.1,1.3) -- (3.9,1.3);
\draw[fill=black!30](3.9,1.3) -- (3.9,1.7)--(3.1,1.3)--(3.9,1.3);

%%%%%%%%%%%%%%%%%%%%%%%%%%%%%%%%%%%%%%%%%%%%%%%%%%%%%%%%%%%%%%%%%%%%%%%%%%
\draw[fill=black!100](0.1,2.3) circle(0.5pt);\draw[fill=black!100](0.9,2.7) circle(0.5pt);
\node [font=\fontsize{8}{6}] (node001) at (0.5,2.2){\footnotesize{Base point}};\node [font=\fontsize{8}{6}] (node001) at (0.8,2.8){$v^{(0,0)}$};

\draw[fill=black!100](1.1,2.3) circle(0.5pt);\draw[fill=black!100](1.9,2.7) circle(0.5pt);

\draw[fill=black!100](2.1,2.3) circle(0.5pt);\draw[fill=black!100](2.9,2.7) circle(0.5pt);

\draw[fill=black!100](3.1,2.3) circle(0.5pt);\draw[fill=black!100](3.9,2.7) circle(0.5pt);
\draw[-,black!100] (3.1,2.3) -- (3.9,2.7);
\end{tikzpicture}
\end{figure}
By Theorem \ref{theorem:multi}, we have a finite direct sum decomposition
\begin{equation*}
   \mathbf{H}(\mathcal{K}) \cong \left( \mathbf{k}[L_{1},L_{2}]\cdot v^{(0,0)}\oplus\mathbf{k}[L_{1},L_{2}]\cdot w^{(1,0)}\oplus\mathbf{k}[L_{1},L_{2}]\cdot u^{(2,0)}\oplus\mathbf{k}[L_{1},L_{2}]\cdot e^{(2,2)}\right)/N,
\end{equation*}
where $N$ is a $\mathbf{k}[L_{1},L_{2}]$-module generated by
\begin{equation*}
  L_{2}^{3}v^{(0,0)},\quad L_{2}w^{(1,0)}-L_{1}L_{2}v^{(0,0)},\quad L_{2}^{2}w^{(1,0)},\quad L_{2}u^{(2,0)},\quad L_{2}e^{(2,2)}.
\end{equation*}
Here, $L_{1}=H_{\ast}(\mathcal{K}(l_{(1,0)});\mathbf{k})$ and $L_{2}=H_{\ast}(\mathcal{K}(l_{(0,1)});\mathbf{k})$.
\end{example}

\section{The algebraic structure on Cayley-persistence}\label{section:algebra}
\subsection{Persistent cohomology algebra}
Let $\mathcal{K}:\mathrm{cat}(\mathrm{Cay}(G,S))\rightarrow \mathbf{Simp}$ be a Cayley-persistence simplicial complex such that the homology $H_{\ast}(\mathcal{K}_{a};\mathbf{k})$ is of finite dimension  for all $a\in G$. In this section, the cohomology considered is also reduced. Then we have a Cayley-copersistence module
\begin{equation*}
  H^{\ast}(\mathcal{K};\mathbf{k}):\mathrm{cat}(\mathrm{Cay}(G,S))\rightarrow \mathbf{Vec}_{\mathbf{k}},
\end{equation*}
that is, $H^{\ast}(\mathcal{K}_{a};\mathbf{k})$ is a finite dimensional linear space  for $a\in G$ and $H^{\ast}(\mathcal{K}_{a,b};\mathbf{k})H^{\ast}(\mathcal{K}_{b,c};\mathbf{k})=H^{\ast}(\mathcal{K}_{a,c};\mathbf{k})$ for all $ba^{-1},cb^{-1}\in S$. Here, $H^{\ast}(\mathcal{K}_{a,b};\mathbf{k})$ is the morphism $H^{\ast}(\mathcal{K}_{b};\mathbf{k})\rightarrow H^{\ast}(\mathcal{K}_{a};\mathbf{k})$ induced by $a\rightarrow b$. For convenience, we denote $H^{\ast}_{a}=H^{\ast}(\mathcal{K}_{a};\mathbf{k})$. Let
\begin{equation*}
  L_{x}=H^{\ast}(\mathcal{K}_{a,xa};\mathbf{k}):H^{\ast}_{xa}\rightarrow H^{\ast}_{a},\quad a\in G,x\in S.
\end{equation*}
We can regard $L_{x}$ as a $G$-graded map on $\bigoplus\limits_{a\in G}H^{\ast}_{a}$.
Now, for each $a\in G$, there is a cup product given by
\begin{equation*}
  \cup:H_{a}^{p}\cup H_{a}^{q}\rightarrow H_{a}^{p+q}.
\end{equation*}
We will introduce the persistent product space.
\begin{definition}
For $a\in G,x\in S$, the \emph{$(a,xa)$-persistent cup-space} is defined to be
\begin{equation*}
  \mathrm{im}(H_{xa}^{\ast}\times H_{xa}^{\ast}\stackrel{\cup}{\rightarrow} H_{xa}^{\ast}\stackrel{L_{x}}{\rightarrow} H_{a}^{\ast}),
\end{equation*}
denoted by $P^{\ast}_{a,xa}$. Here, $L_{x}:H_{xa}^{\ast}\rightarrow H_{a}^{\ast}$ is induced by $a\rightarrow xa$.
\end{definition}
In particular, when $x=e$ is the identity element, we denote $P^{\ast}_{a}=P^{\ast}_{a,a}$ and call it the cup-space at $a$. It is obvious that $P^{\ast}_{a}=H_{a}^{\ast}\cup H_{a}^{\ast}$.
Note that the cup product on $H^{\ast}_{a}$ induces a cup product on $H^{\ast}_{a,xa}$. The following result gives a relationship between persistent cup-space and persistent cohomology.
\begin{proposition}
$P^{\ast}_{a,xa}=H^{\ast}_{a,xa}\cup H^{\ast}_{a,xa}$, where $H^{\ast}_{a,xa}$ is the $(a,xa)$-persistent cohomology.
\end{proposition}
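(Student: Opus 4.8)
The plan is to reduce the statement to the single structural fact that the map $L_x = H^{\ast}(\mathcal{K}_{a,xa};\mathbf{k}):H^{\ast}_{xa}\rightarrow H^{\ast}_{a}$ induced by the morphism $\mathcal{K}_{a}\rightarrow \mathcal{K}_{xa}$ is a homomorphism of graded $\mathbf{k}$-algebras for the cup product, i.e. $L_{x}(\alpha\cup\beta)=L_{x}(\alpha)\cup L_{x}(\beta)$ for all $\alpha,\beta\in H^{\ast}_{xa}$. This naturality of the cup product is the only nontrivial input; it is the standard statement that the cohomology functor $H^{\ast}(-;\mathbf{k}):\mathbf{Simp}\rightarrow \mathbf{Vec}_{\mathbf{k}}$ takes values in graded-commutative $\mathbf{k}$-algebras and that maps induced by morphisms of simplicial complexes respect this ring structure. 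Throughout I read both $P^{\ast}_{a,xa}$ and $H^{\ast}_{a,xa}\cup H^{\ast}_{a,xa}$ as the $\mathbf{k}$-subspaces spanned by the indicated products, so that it suffices to match generators.

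First I would prove $P^{\ast}_{a,xa}\subseteq H^{\ast}_{a,xa}\cup H^{\ast}_{a,xa}$. By definition $P^{\ast}_{a,xa}$ is spanned by the elements $L_{x}(\alpha\cup\beta)$ with $\alpha,\beta\in H^{\ast}_{xa}$. Applying the ring-homomorphism property rewrites each such generator as $L_{x}(\alpha)\cup L_{x}(\beta)$. Since $L_{x}(\alpha),L_{x}(\beta)\in \mathrm{im}(L_{x})=H^{\ast}_{a,xa}$, every spanning element of $P^{\ast}_{a,xa}$ is a cup product of two elements of $H^{\ast}_{a,xa}$, which gives the inclusion.

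Next I would establish the reverse inclusion $H^{\ast}_{a,xa}\cup H^{\ast}_{a,xa}\subseteq P^{\ast}_{a,xa}$. The space $H^{\ast}_{a,xa}\cup H^{\ast}_{a,xa}$ is spanned by products $u\cup v$ with $u,v\in H^{\ast}_{a,xa}=\mathrm{im}(L_{x})$. Because $u$ and $v$ lie in the image of $L_{x}$, I may choose preimages $\alpha,\beta\in H^{\ast}_{xa}$ with $L_{x}(\alpha)=u$ and $L_{x}(\beta)=v$. Then $u\cup v=L_{x}(\alpha)\cup L_{x}(\beta)=L_{x}(\alpha\cup\beta)$, and the right-hand side is by definition an element of $P^{\ast}_{a,xa}$. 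Combining the two inclusions yields the asserted equality.

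The argument is entirely formal once naturality is in hand, so the only place demanding any care is the ring-homomorphism property of $L_{x}$; everything else is the harmless bookkeeping of writing each image element as $L_{x}$ of a chosen preimage and passing products through $L_{x}$. No finiteness, grading, or monoid hypothesis beyond those already standing is required, and the same proof works verbatim for each bidegree $H^{p}_{a,xa}\cup H^{q}_{a,xa}$.
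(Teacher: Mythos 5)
Your proof is correct and follows essentially the same route as the paper: the paper also rests entirely on the naturality of the cup product (citing Hatcher, Proposition 3.10, which is exactly your ring-homomorphism property of $L_{x}$) and then concludes $P^{\ast}_{a,xa}=L_{x}(H^{\ast}_{xa}\cup H^{\ast}_{xa})=(L_{x}H^{\ast}_{xa})\cup(L_{x}H^{\ast}_{xa})=H^{\ast}_{a,xa}\cup H^{\ast}_{a,xa}$ in one line. Your two explicit inclusions are just an unpacking of that chain of equalities, with the mild bonus of being careful that both sides are read as spans of products.
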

\begin{proof}
By \cite[Proposition 3.10]{Hatcher}, we have a commutative diagram
\begin{equation*}
\xymatrix{
  H^{\ast}_{xa}\times H^{\ast}_{xa}\ar@{->}[r]^{\cup}\ar@{->}[d]_{(L_{x},L_{x})}& H^{\ast}_{xa}\ar@{->}[d]^{L_{x}} \\
   H^{\ast}_{a}\times H^{\ast}_{a}\ar@{->}[r]^{\cup} & H^{\ast}_{a}.
}
\end{equation*}
It follows that $P^{\ast}_{a,xa}=L_{x}(H^{\ast}_{xa}\cup H^{\ast}_{xa})= (L_{x}H^{\ast}_{xa})\cup(L_{x}H^{\ast}_{xa})=H^{\ast}_{a,xa}\cup H^{\ast}_{a,xa}$.
\end{proof}
\begin{definition}
For $a\in G,x,y\in S$, the \emph{$(a,xa\cup ya)$-persistent cup-space} is defined to be
\begin{equation*}
  \mathrm{im}(H_{xa}^{\ast}\times H_{ya}^{\ast}\stackrel{(L_{x},L_{y})}{\longrightarrow} H_{a}^{\ast}\times H_{a}^{\ast}\stackrel{\cup}{\longrightarrow} H_{a}^{\ast}),
\end{equation*}
denoted by $P^{\ast}_{a,xa\cup ya}$. For $\beta\in H_{xa}^{\ast}$ and $\gamma\in H_{ya}^{\ast}$, the \emph{cup product of $\beta,\gamma$ at $a$} is defined by
\begin{equation*}
  \beta\cup_{a}\gamma=(L_{x}\beta)\cup (L_{y}\gamma).
\end{equation*}
\end{definition}
By definition, we have $P^{\ast}_{a,xa\cup ya}=H^{\ast}_{a,xa}\cup H^{\ast}_{a,ya}$.
It is obvious that $P^{\ast}_{a,xa\cup xa}=P^{\ast}_{a,xa}$. Now, for $a\in G$, we set
\begin{equation*}
  \mathbf{H}_{a}=\bigoplus\limits_{x\in S}H^{\ast}_{xa},\quad \mathbf{P}_{a}=\sum_{x,y\in S} H_{xa}^{\ast}\cup_{a}H_{ya}^{\ast}.
\end{equation*}
\begin{proposition}
For $a\in G$, $(\mathbf{H}_{a},\cup_{a})$ is a graded commutative $\mathbf{k}$-algebra. Moreover, we have $\mathbf{P}_{a}=\mathbf{H}_{a}\cup_{a} \mathbf{H}_{a}$.
\end{proposition}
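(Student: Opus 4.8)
The plan is to verify directly the defining properties of a graded commutative $\mathbf{k}$-algebra for $\cup_{a}$ and then to deduce the ``moreover'' clause from bilinearity. Throughout, the relevant grading is the cohomological degree: for homogeneous $\alpha\in H^{p}_{xa}$ and $\beta\in H^{q}_{ya}$ one has $\alpha\cup_{a}\beta=(L_{x}\alpha)\cup(L_{y}\beta)\in H^{p+q}_{a}$, since each $L_{x}=H^{\ast}(\mathcal{K}_{a,xa};\mathbf{k})$ is induced by a morphism of simplicial complexes and hence preserves cohomological degree. Because $G$ is a group, the map $x\mapsto xa$ is injective, so a homogeneous element of $\mathbf{H}_{a}=\bigoplus_{x\in S}H^{\ast}_{xa}$ unambiguously determines its index $x$ and thus the transition map $L_{x}$ to be applied; extending the displayed formula $\mathbf{k}$-bilinearly over the direct sum therefore yields a well-defined bilinear product. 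The key structural observation is that every product lands in the single summand $H^{\ast}_{a}=H^{\ast}_{ea}$ indexed by the identity $e\in S$, so $\cup_{a}$ is closed on $\mathbf{H}_{a}$.

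First I would check graded commutativity on homogeneous elements. For $\alpha\in H^{p}_{xa}$ and $\beta\in H^{q}_{ya}$, graded commutativity of the ordinary cup product on $H^{\ast}_{a}$, together with degree-preservation of $L_{x},L_{y}$, gives
\begin{equation*}
  \alpha\cup_{a}\beta=(L_{x}\alpha)\cup(L_{y}\beta)=(-1)^{pq}(L_{y}\beta)\cup(L_{x}\alpha)=(-1)^{pq}\,\beta\cup_{a}\alpha,
\end{equation*}
which extends bilinearly to all of $\mathbf{H}_{a}$.

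For associativity the essential input is $L_{e}=H^{\ast}(\mathcal{K}_{a,a};\mathbf{k})=\mathrm{id}$, which holds because $\mathcal{K}_{a,a}$ is the identity morphism. Given homogeneous $\alpha\in H^{\ast}_{xa}$, $\beta\in H^{\ast}_{ya}$, $\gamma\in H^{\ast}_{za}$, the element $\alpha\cup_{a}\beta$ already lies in the $e$-component, so re-entering it into $\cup_{a}$ applies $L_{e}=\mathrm{id}$ and yields $(\alpha\cup_{a}\beta)\cup_{a}\gamma=\big((L_{x}\alpha)\cup(L_{y}\beta)\big)\cup(L_{z}\gamma)$; symmetrically $\alpha\cup_{a}(\beta\cup_{a}\gamma)=(L_{x}\alpha)\cup\big((L_{y}\beta)\cup(L_{z}\gamma)\big)$, and the two agree by associativity of the cup product on $H^{\ast}_{a}$. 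Extending bilinearly proves associativity in general. Together with the previous step this shows $(\mathbf{H}_{a},\cup_{a})$ is a graded commutative $\mathbf{k}$-algebra; since the product is valued in $H^{\ast}_{a}$ it carries no two-sided unit, so ``algebra'' is to be read in the non-unital sense (as is natural for reduced cohomology). The ``moreover'' identity is then formal: writing $u=\sum_{x}u_{x}$ and $v=\sum_{y}v_{y}$ in $\mathbf{H}_{a}$ with $u_{x}\in H^{\ast}_{xa}$, $v_{y}\in H^{\ast}_{ya}$, bilinearity gives $u\cup_{a}v=\sum_{x,y}u_{x}\cup_{a}v_{y}$, so the span of all products equals $\sum_{x,y}H^{\ast}_{xa}\cup_{a}H^{\ast}_{ya}=\mathbf{P}_{a}$.

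I expect the only step requiring genuine care to be associativity, precisely because the product collapses everything into $H^{\ast}_{ea}$ rather than into the $S$-graded piece one might naively anticipate; the verification works exactly because $L_{e}=\mathrm{id}$, so the outer multiplication does not re-apply a nontrivial transition map. Graded commutativity and the ``moreover'' clause are immediate once degree-preservation of the $L_{x}$ and the direct-sum decomposition of $\mathbf{H}_{a}$ are in hand.
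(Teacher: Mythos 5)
Your proof is correct and follows essentially the same route as the paper: the paper simply asserts that associativity and graded commutativity of $\cup_{a}$ are inherited from the ordinary cup product, and then obtains $\mathbf{P}_{a}=\mathbf{H}_{a}\cup_{a}\mathbf{H}_{a}$ by expanding elements bilinearly over the decomposition $\mathbf{H}_{a}=\bigoplus_{x\in S}H^{\ast}_{xa}$, exactly as you do. Your extra observation that associativity rests on $L_{e}=\mathrm{id}$ (so that feeding a product, which lands in the $e$-indexed component $H^{\ast}_{a}$, back into $\cup_{a}$ applies no nontrivial transition map) is a useful detail that the paper leaves implicit.
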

\begin{proof}
Note that $\cup_{a}$ is indeed a product on $\mathbf{H}_{a}$ with the associated law and graded commutativity inherited from the cup product. It is obvious that $\mathbf{P}_{a}\subseteq\mathbf{H}_{a}\cup_{a} \mathbf{H}_{a}$. For any $\beta=\beta_{1}+\cdots+\beta_{k},\gamma=\gamma_{1}+\cdots+\gamma_{l}\in \mathbf{H}_{a}$, we have
\begin{equation*}
  \beta\cup_{a}\gamma=\sum_{i=1}^{k}\sum_{j=1}^{l}\beta_{i}\cup_{a}\gamma_{j}\in \sum_{x,y\in S} H_{xa}^{\ast}\cup_{a}H_{ya}^{\ast}.
\end{equation*}
It follows that $\mathbf{H}_{a}\cup_{a} \mathbf{H}_{a}\subseteq \mathbf{P}_{a}$.
\end{proof}

The structure of cohomology ring or algebra is also useful in applications as a topological feature. For $a,b\in G$ with $a^{-1}b\in S$, the $(a,b)$-persistent \emph{cup-number} of $\mathcal{K}$ is defined to be
\begin{equation*}
  c_{a,b}(\mathcal{K})=\dim P^{\ast}_{a,b}(\mathcal{K};\mathbf{k}).
\end{equation*}
The persistent cup-number is also a topological invariant which has the potential for application. In the next section, we will further consider the richer algebraic structure of the persistence cohomology algebra.

\subsection{The product on persistence module}\label{Persistence algebra over module}

From now on, let $S\subseteq G$ be a monoid such that the identity element $e$ is the unique invertible element in $S$. Then the group $G$ can be regarded as a poset with partial order given by $a\leq b$ if $ba^{-1}\in S$. The category $\mathrm{cat}(\mathrm{Cay}(G,S))$ is exactly the category given by the poset $G$. In this section, we assume the poset $G$ has finite product \cite{barr1999category,awodey2010category}, for example, a lattice.

\begin{lemma}
For objects $b,c\in \mathrm{cat}(\mathrm{Cay}(G,S))$, the product $b\times c$ is unique. Moreover, we have $b\times c=c\times b$.
\end{lemma}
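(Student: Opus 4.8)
The plan is to identify the categorical product in $\mathrm{cat}(\mathrm{Cay}(G,S))$ with the greatest lower bound (meet) of $b$ and $c$ in the poset $G$, and then to read off both uniqueness and commutativity from the antisymmetry of the partial order. Recall from the description of the category that there is a morphism $a\to a'$ precisely when $a'a^{-1}\in S$, i.e.\ when $a\le a'$, and that there is at most one such morphism. Hence an object $p$ equipped with projections $p\to b$ and $p\to c$ is exactly a lower bound of $\{b,c\}$, i.e.\ $p\le b$ and $p\le c$. The universal property of the product then says precisely that every lower bound $x$ of $\{b,c\}$ (an object with morphisms $x\to b$ and $x\to c$) admits a morphism $x\to p$, which is automatically unique and amounts to $x\le p$. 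In other words, $b\times c$ is nothing but the greatest lower bound of $b$ and $c$ in $G$, whose existence is guaranteed by the standing assumption that the poset $G$ has finite products.

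The key step, and the only place where the hypothesis on $S$ is used, is the antisymmetry of $\le$. Suppose $p\le p'$ and $p'\le p$. Then $p'p^{-1}\in S$ and $p(p')^{-1}=(p'p^{-1})^{-1}\in S$, so $p'p^{-1}$ is an invertible element of $S$ whose inverse also lies in $S$. Since $e$ is the unique invertible element of $S$, we conclude $p'p^{-1}=e$, that is, $p=p'$. Thus $\le$ is a genuine partial order on $G$, so any two objects that are mutually comparable coincide.

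With antisymmetry established, both assertions follow at once. If $p$ and $p'$ are two products of $b$ and $c$, then each is a greatest lower bound of $\{b,c\}$; since $p$ is a lower bound and $p'$ is greatest among such, $p\le p'$, and symmetrically $p'\le p$, whence $p=p'$ by antisymmetry. (Equivalently, products are unique up to isomorphism in any category, and in a poset category an isomorphism forces equality.) For commutativity, note that the defining data of a product of $b$ and $c$ is symmetric in the two factors: an object is a greatest lower bound of $\{b,c\}$ if and only if it is a greatest lower bound of $\{c,b\}$, as these are the same subset of $G$. Therefore $b\times c=c\times b$. I anticipate no real obstacle here; the content is the standard identification of products in a poset category with meets, the one structural input being the antisymmetry argument that converts uniqueness up to isomorphism into uniqueness on the nose.
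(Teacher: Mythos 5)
Your proof is correct and takes essentially the same approach as the paper: the paper likewise obtains uniqueness from morphisms in both directions, yielding $x,y\in S$ with $xy=yx=e$ and concluding $x=y=e$ because $e$ is the unique invertible element of $S$ --- precisely your antisymmetry step. Your packaging via greatest lower bounds makes commutativity immediate from the symmetry of the set $\{b,c\}$, whereas the paper reruns the universal-property argument for $c\times b$ versus $b\times c$, but this is only a presentational difference.
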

\begin{proof}
Recall that the product in a category is unique up to isomorphism.
\begin{equation*}
  \xymatrix{
  b\times c\ar@{->}[r] \ar@{->}[d] & b\\
  c   & a\ar@{->}[l]\ar@{->}[u]\ar@{.>}[ul]\\
  }
\end{equation*}
If $a$ is also the product of $b$ and $c$. Then we have morphisms $b\times c\rightarrow a$ and $a\rightarrow b\times c$. It follows that
\begin{equation*}
  b\times c=xa,\quad a=y(b\times c),\quad x,y\in S.
\end{equation*}
This implies that $xy=yx=1$, which shows $x=y=1$ since $x,y\in S$. Thus the product $b\times c$ is unique.

On the other hand, by the universal property, there exist elements $x,y\in S$ such that
\begin{equation*}
  c\times b=x(b\times c),\quad b\times c=y(c\times b).
\end{equation*}
A straightforward calculation shows $b\times c=c\times b$.
\end{proof}
The above lemma says that $b\times c$ is the unique maximal element $\leq b,c$ under the partial order.
\begin{example}
Recall the Cayley digraph $(\mathbb{Z}^{n},\mathbb{Z}_{\geq 0}^{n})$ considered in Example \ref{exam:Cayley}($i$). Let
\begin{equation*}
  x=(x_{1},\dots,x_{n}),y=(y_{1},\dots,y_{n})\in \mathbb{Z}^{n}.
\end{equation*}
Note that $(0,\dots,0)$ is the unique invertible element in $\mathbb{Z}_{\geq 0}^{n}$, one has
\begin{equation*}
  x\times y=(\min(x_{1},y_{1}),\dots,\min(x_{n},y_{n})).
\end{equation*}
For a more intuitive interpretation, consider the Cayley digraph $(\mathbb{R}^{2},\mathbb{R}^{2}_{+})$, where $\mathbb{R}_{\geq 0}=\{x\in \mathbb{R} |x\geq 0\}$.
\begin{figure}[H]
\centering
    \begin{tikzpicture}[scale=1.6]
        %\clip (-0.1,-0.2) rectangle (1.1,1.51);
        \draw[step=.5cm,gray,ultra thin, dashed] (-1.4,-0.9) grid (1.4,1.4);%
%        \foreach \x in {-1cm, -0.5cm, 0,5.cm, 1cm} \draw (\x, -1pt) -- (\x, 1pt);
%        \foreach \y in {-1cm, -0.5cm, 0.5cm, 1cm} \draw (-1pt, \y) -- (1pt, \y);
        \begin{scope}[thick]

            \draw[->] (-1.5,0) -- (1.5,0);
            \draw[->] (0,-1) -- (0,1.5);
        \end{scope}
%        \draw[red,very thick] (30:1cm) -- +(0,-0.5);
%        \draw[blue,very thick]  (30:1cm) ++(0,-0.5) -- (0,0);
%        \draw[orange, very thick] (1,0) -- (1, {tan(30)});

           \draw[->,thick,dashed,black!60] (1.1,0.3) -- (1.1,-1);
           \draw[->,thick,dashed,black!60] (1.1,0.3) -- (-1.3,0.3);
           \draw[->,thick,dashed,black!60] (-0.2,0.9) -- (-0.2,-1);
           \draw[->,thick,dashed,black!60] (-0.2,0.9) -- (-1.3,0.9);
\draw[fill=black!100](1.1,0.3) circle(1pt);
\draw[fill=black!100](-0.2,0.9) circle(1pt);
\draw[fill=black!100](-0.2,0.3) circle(1pt);

\node [font=\fontsize{8}{6}] (node001) at (0.1,-0.1){$O$};
\node [font=\fontsize{8}{6}] (node001) at (1.5,-0.1){$x$};
\node [font=\fontsize{8}{6}] (node001) at (0.1,1.5){$y$};
\node [font=\fontsize{8}{6}] (node001) at (-0.3,1){$b$};
\node [font=\fontsize{8}{6}] (node001) at (1.2,0.4){$c$};
\node [font=\fontsize{8}{6}] (node001) at (-0.4,0.2){$b\times c$};
        \end{tikzpicture}
\end{figure}
We take $b=(-0.2,0.9), c=(1.1,0.3)$, then $b\times c=(-0.2,0.3)$. Obviously, one has
\begin{equation*}
  b(b\times c)^{-1}=(0,0.6)\in \mathbb{R}_{\geq 0}^{2},\quad c(b\times c)^{-1}=(1.3,0)\in \mathbb{R}_{\geq 0}^{2}.
\end{equation*}
Let $(\mathbb{R}^{2},S)$ be another Cayley digraph with $S=\langle\mathbb{R}^{2}_{+},(-1,0)\rangle$. Here, $S$ is an abelian monoid generated by $\mathbb{R}^{2}_{+}$ and $(-1,0)$. By a direct calculation, we have $S=\mathbb{R}\times \mathbb{R}_{\geq 0}$. It follows that
\begin{equation*}
  b\times c=(x,0.3),\quad x\in \mathbb{R},
\end{equation*}
which is unique up to isomorphism $\rho_{x,y}:(x,0.3)\rightarrow (y,0.3)$.
\end{example}

\begin{definition}
For $b,c\in G$, the \emph{persistence-cup product} of $\beta\in H_{b}^{\ast}$ and $\gamma\in H_{c}^{\ast}$ is defined by
\begin{equation*}
  \beta\cdot\gamma=\beta\cup_{b\times c}\gamma.
\end{equation*}
\end{definition}
Indeed, the persistence-cup product defined above is a product.
\begin{lemma}\label{lemma:associated}
Let $\beta\in H_{b}^{\ast},\gamma\in H^{\ast}_{c},\delta\in H^{\ast}_{d}$. Then we have
\begin{equation*}
  (\beta\cdot\gamma)\cdot \delta=\beta\cdot(\gamma\cdot \delta).
\end{equation*}
\end{lemma}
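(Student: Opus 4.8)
The plan is to show that both groupings reduce to one and the same triple cup product computed in a single cohomology group $H^{\ast}_{m}$, where $m$ is the infimum of $b,c,d$ under the partial order on $G$. First I would record that categorical products are associative up to canonical isomorphism, and that in the poset category $\mathrm{cat}(\mathrm{Cay}(G,S))$ every isomorphism is an identity: if $a\cong a'$ then $a\le a'$ and $a'\le a$, so $a'a^{-1}$ and $aa'^{-1}=(a'a^{-1})^{-1}$ both lie in $S$, and since $e$ is the unique invertible element of $S$ this forces $a=a'$. Hence $(b\times c)\times d=b\times(c\times d)$ is a genuine equality in $G$; call this element $m$, so that $m\le b,\,c,\,d$. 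By the preceding lemma the binary product is unique and symmetric, which is all I need for the bookkeeping below.

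Next I would unwind the definitions. Write $n=b\times c$; since $n\le b,c$ we have $bn^{-1},cn^{-1}\in S$ with $b=(bn^{-1})n$ and $c=(cn^{-1})n$, so the definition of $\cup_{n}$ gives $\beta\cdot\gamma=(L_{bn^{-1}}\beta)\cup(L_{cn^{-1}}\gamma)\in H^{\ast}_{n}$. Because $n\times d=m$, applying the definition once more yields
\[
(\beta\cdot\gamma)\cdot\delta=L_{nm^{-1}}\big(L_{bn^{-1}}\beta\cup L_{cn^{-1}}\gamma\big)\cup L_{dm^{-1}}\delta .
\]
Two standard facts now finish the reduction. The map $L_{nm^{-1}}\colon H^{\ast}_{n}\to H^{\ast}_{m}$ is induced by a map of simplicial complexes, hence a ring homomorphism for the cup product (the naturality already invoked via \cite[Proposition 3.10]{Hatcher}), so it distributes over $\cup$. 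And the functoriality $H^{\ast}(\mathcal{K}_{a,b})H^{\ast}(\mathcal{K}_{b,c})=H^{\ast}(\mathcal{K}_{a,c})$ of the copersistence module gives $L_{nm^{-1}}\circ L_{bn^{-1}}=L_{bm^{-1}}$, since the composite corresponds to $(bn^{-1})(nm^{-1})=bm^{-1}$ where the factor $n^{-1}n$ cancels, so no commutativity of $G$ is needed; likewise $L_{nm^{-1}}\circ L_{cn^{-1}}=L_{cm^{-1}}$. Combining these,
\[
(\beta\cdot\gamma)\cdot\delta=(L_{bm^{-1}}\beta)\cup(L_{cm^{-1}}\gamma)\cup(L_{dm^{-1}}\delta)\in H^{\ast}_{m}.
\]

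Running the identical computation on the other grouping, with $n'=c\times d$ and $b\times n'=m$, produces exactly the same expression $(L_{bm^{-1}}\beta)\cup(L_{cm^{-1}}\gamma)\cup(L_{dm^{-1}}\delta)$; since the ordinary cup product on $H^{\ast}_{m}$ is associative, this triple product is unambiguous and the two groupings agree, proving $(\beta\cdot\gamma)\cdot\delta=\beta\cdot(\gamma\cdot\delta)$. The only genuinely delicate point — and the one I would state most carefully — is the collapse of the iterated meet to the single element $m$ together with the composition identity $L_{nm^{-1}}\circ L_{bn^{-1}}=L_{bm^{-1}}$; once these are in place, ring-homomorphism naturality and associativity of the ordinary cup product do the rest. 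As the persistence-cup product is bilinear, extending from homogeneous $\beta,\gamma,\delta$ to arbitrary elements is immediate.
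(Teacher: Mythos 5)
Your proof is correct and follows essentially the same route as the paper's: both arguments push $\beta,\gamma,\delta$ down to $H^{\ast}_{b\times c\times d}$ using the fact that the maps $L$ are ring homomorphisms for the cup product (Hatcher's naturality) together with the composition law $L_{nm^{-1}}\circ L_{bn^{-1}}=L_{bm^{-1}}$, and then invoke associativity of the ordinary cup product there; the paper packages this as a commutative diagram while you unwind it element-wise. Your explicit verification that $(b\times c)\times d=b\times(c\times d)$ holds on the nose (because isomorphisms in this poset category are identities) is a point the paper leaves implicit in its notation $b\times c\times d$, but it is a detail within the same argument, not a different approach.
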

\begin{proof}
For convenience, we denote $\phi_{b}^{c}=L_{b(b\times c)^{-1}}:H^{\ast}_{b}\rightarrow H^{\ast}_{b\times c}$. We will prove
\begin{equation*}
  (\beta\cdot\gamma)\cdot \delta=(\beta\cup_{b\times c\times d}\gamma) \cup_{b\times c\times d} \delta.
\end{equation*}
Consider the following diagram.
%\begin{equation*}
%  \xymatrix{
%  H^{\ast}_{b}\times H^{\ast}_{c}\times H^{\ast}_{d}\ar@{->}[d]^{(\phi_{b}^{c},\phi_{c}^{b},\mathrm{id})}&&&&\\
%  H^{\ast}_{b\times c}\times H^{\ast}_{b\times c}\times H^{\ast}_{d}\ar@{->}[rr]^{(\mathrm{id},\mathrm{id},\phi_{d}^{b\times c})}\ar@{->}[d]^{(\cup,\mathrm{id})}
% &&H^{\ast}_{b\times c}\times H^{\ast}_{b\times c}\times H^{\ast}_{b\times c\times d}\ar@{->}[rr]^{(\phi_{b\times c}^{d},\phi_{b\times c}^{d},\mathrm{id})}\ar@{->}[d]
% &&H^{\ast}_{b\times c\times d}\times H^{\ast}_{b\times c\times d}\times H^{\ast}_{b\times c\times d}\ar@{->}[d]\\
%  H^{\ast}_{b\times c}\times H^{\ast}_{d}\ar@{->}[rr]&&H^{\ast}_{b\times c}\times H^{\ast}_{b\times c\times d}\ar@{->}[rr]&&H^{\ast}_{b\times c\times d}\times H^{\ast}_{b\times c\times d}\ar@{->}[d]\\
% &&&&H^{\ast}_{b\times c\times d}.
% }
%\end{equation*}
\begin{equation*}
  \xymatrix{
  H^{\ast}_{b}\times H^{\ast}_{c}\times H^{\ast}_{d}\ar@{->}[d]^{(\phi_{b}^{c},\phi_{c}^{b},\mathrm{id})}&&\\
  H^{\ast}_{b\times c}\times H^{\ast}_{b\times c}\times H^{\ast}_{d}\ar@{->}[r]^{(\mathrm{id},\mathrm{id},\phi_{d}^{b\times c})}\ar@{->}[d]^{(\cup,\mathrm{id})}
 &H^{\ast}_{b\times c}\times H^{\ast}_{b\times c}\times H^{\ast}_{b\times c\times d}\ar@{->}[r]^{(\phi_{b\times c}^{d},\phi_{b\times c}^{d},\mathrm{id})}\ar@{->}[d]^{(\cup,\mathrm{id})}
 &H^{\ast}_{b\times c\times d}\times H^{\ast}_{b\times c\times d}\times H^{\ast}_{b\times c\times d}\ar@{->}[d]^{(\cup,\mathrm{id})}\\
  H^{\ast}_{b\times c}\times H^{\ast}_{d}\ar@{->}[r]^{(\mathrm{id},\phi_{d}^{b\times c})}&H^{\ast}_{b\times c}\times H^{\ast}_{b\times c\times d}\ar@{->}[r]^{(\phi_{b\times c}^{d},\mathrm{id})}&H^{\ast}_{b\times c\times d}\times H^{\ast}_{b\times c\times d}\ar@{->}[d]^{\cup}\\
 &&H^{\ast}_{b\times c\times d}.
 }
\end{equation*}
This first square is commutative by definition. The second square is commutative by \cite[Proposition 3.10]{Hatcher}. Hence, by a straightforward calculation, we obtain
\begin{equation*}
  \begin{split}
    (\beta\cdot\gamma)\cdot \delta=  & \cup\circ  (\phi^{d}_{b\times c},\mathrm{id}) \circ  (\mathrm{id},\phi_{d}^{b\times c})  \circ (\cup,\mathrm{id}) \circ(\phi_{b}^{c},\phi_{c}^{b},\mathrm{id}) \\
     = & \cup\circ (\cup,\mathrm{id})\circ  (\phi^{d}_{b\times c},\phi^{d}_{b\times c},\mathrm{id}) \circ  (\mathrm{id},\mathrm{id},\phi_{d}^{b\times c})   \circ(\phi_{b}^{c},\phi_{c}^{b},\mathrm{id})\\
     =& (\beta\cup_{b\times c\times d}\gamma )\cup_{b\times c\times d} \delta.
  \end{split}
\end{equation*}
Similarly, we have $\beta\cdot(\gamma\cdot \delta)=\beta\cup_{b\times c\times d}(\gamma \cup_{b\times c\times d} \delta)$. But $(\beta\cup_{b\times c\times d}\gamma )\cup_{b\times c\times d} \delta=\beta\cup_{b\times c\times d}(\gamma \cup_{b\times c\times d} \delta)$, which gives the desired result.
\end{proof}

Let $\mathbf{H}=\bigoplus\limits_{a\in G}H^{\ast}_{a}$. Then $\mathbf{H}$ is a $\mathbf{k}$-algebra with respect to the persistence-cup product.
However, the persistence-cup product on $\mathbf{H}$ is not so friendly to us:
\begin{itemize}
  \item[$(i)$] As a $\mathbf{k}$-algebra, $\mathbf{H}$ can be an infinite-dimensional algebra even if $\mathbf{H}$ is finitely generated as a persistence module. Moreover, it is difficult for us to study the persistence structure on cohomology since the poset category $\mathrm{cat}(\mathrm{Cay}(G,S))$ may not have a terminal object.
  \item[$(ii)$] There may be too many redundant multiplications on $\mathbf{H}$.
For example, consider the $\mathrm{Cay}(\mathbb{Z}\times \mathbb{Z},\mathbb{Z}_{\geq 0}\times \mathbb{Z}_{\geq 0})$-grading.
Let $\beta\in H^{\ast}_{(0,1)}$ and $\gamma\in H^{\ast}_{(N,0)}$ for $N>>0$. Assume that we have a nontrivial product
\begin{equation*}
  \beta\cup_{(0,1)\times (N,0)} \gamma= \beta\cup_{(0,0)} \gamma.
\end{equation*}
For $0<k<N$, we have
\begin{equation*}
  H^{\ast}_{(0,1)}\times H^{\ast}_{(N,0)}\stackrel{(\mathrm{id},L_{(N-k,0)})}{\longrightarrow} H^{\ast}_{(0,1)}\times H^{\ast}_{(k,0)}\stackrel{(L_{(0,1)},L_{(k,0)})}{\longrightarrow} H^{\ast}_{(0,0)}\times H^{\ast}_{(0,0)}\stackrel{\cup}{\longrightarrow} H^{\ast}_{(0,0)}.
\end{equation*}
It follows that
\begin{equation*}
  \cup\circ (L_{(0,1)},L_{(k,0)})\circ (\mathrm{id},L_{(N-k,0)})=\cup\circ (L_{(0,1)},L_{(N-k.0)}),
\end{equation*}
which shows that $\beta\cup_{(0,0)}\gamma=\beta\cup_{(0,0)}L_{(N-k,0)}\gamma$. Thus there are too many multiplications $\beta\cup_{(0,0)}L_{(N-k,0)}\gamma,k=1,2,\cdots,N$ which are essentially the same.
\end{itemize}

\begin{theorem}\label{theorem:representation}
Let $G$ be an abelian group, and let $\mathrm{End}_{\mathbf{k}}(\mathbf{H})$ be the monoid of $\mathbf{k}$-algebra endomorphisms $\mathbf{H}\rightarrow \mathbf{H}$. Then we have a morphism of monoids $L:S\rightarrow \mathrm{End}_{\mathbf{k}}(\mathbf{H}),x\mapsto L_{x}$.
\end{theorem}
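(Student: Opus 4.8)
The plan is to verify two things: first, that for each $x\in S$ the map $L_x$ is a well-defined $\mathbf{k}$-algebra endomorphism of $(\mathbf{H},\cdot)$; and second, that $x\mapsto L_x$ respects identities and composition. I would begin by recording that $L_x$ acts as a graded map on $\mathbf{H}=\bigoplus_{a\in G}H^\ast_a$, sending $H^\ast_c$ into $H^\ast_{x^{-1}c}$: since $x\in S$ we have $c(x^{-1}c)^{-1}=x\in S$, so the arrow $x^{-1}c\to c$ exists and $L_x|_{H^\ast_c}=H^\ast(\mathcal{K}_{x^{-1}c,c};\mathbf{k})$; extending linearly over the direct sum gives $L_x:\mathbf{H}\to\mathbf{H}$. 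The monoid-morphism part is then immediate from the contravariant functoriality recorded earlier: $L_e=\mathrm{id}$ because $\mathcal{K}_{c,c}$ is the identity, and for $x,y\in S$ the composite $L_x\circ L_y$ restricted to $H^\ast_c$ equals $H^\ast(\mathcal{K}_{(xy)^{-1}c,\,y^{-1}c})\circ H^\ast(\mathcal{K}_{y^{-1}c,\,c})=H^\ast(\mathcal{K}_{(xy)^{-1}c,\,c})=L_{xy}$, with commutativity of $G$ making the order irrelevant.

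The substantive step is that $L_x$ preserves the persistence-cup product. Here the key input is that left translation $\tau_g:a\mapsto ga$ is an order-automorphism of the poset $(G,\le)$: since $G$ is abelian, $(gb)(ga)^{-1}=ba^{-1}$, so $a\le b\iff ga\le gb$. Consequently $\tau_g$ preserves greatest lower bounds, and because $b\times c$ is exactly the meet of $\{b,c\}$ one obtains the grading identity $g(b\times c)=(gb)\times(gc)$; taking $g=x^{-1}$ shows that the two candidate elements $L_x(\beta\cdot\gamma)$ and $(L_x\beta)\cdot(L_x\gamma)$ live in the same graded piece $H^\ast_{x^{-1}(b\times c)}$.

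With gradings matched, I would unwind both sides. Writing $d=b\times c$, the left side is $L_x\bigl((L_{bd^{-1}}\beta)\cup(L_{cd^{-1}}\gamma)\bigr)$, while the right side, after computing $(x^{-1}b)(x^{-1}d)^{-1}=bd^{-1}$ and likewise $(x^{-1}c)(x^{-1}d)^{-1}=cd^{-1}$, equals $(L_{bd^{-1}}L_x\beta)\cup(L_{cd^{-1}}L_x\gamma)$. Using the monoid relation $L_{bd^{-1}}L_x=L_xL_{bd^{-1}}$, valid since $G$ is abelian, the right side becomes $(L_xL_{bd^{-1}}\beta)\cup(L_xL_{cd^{-1}}\gamma)$, so the whole claim reduces to $L_x(u\cup v)=(L_xu)\cup(L_xv)$ for $u,v\in H^\ast_d$. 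But $L_x:H^\ast_d\to H^\ast_{x^{-1}d}$ is the map induced on cohomology by the simplicial map $\mathcal{K}_{x^{-1}d}\to\mathcal{K}_d$, and induced maps are ring homomorphisms for the cup product by \cite[Proposition 3.10]{Hatcher}; this yields the identity and finishes the proof.

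I expect the main obstacle to be the bookkeeping in the second and third steps: one must confirm that translating all gradings by $x^{-1}$ carries the meet $b\times c$ to the meet of the translated gradings and that the structure maps $L_{bd^{-1}},L_{cd^{-1}}$ defining the persistence-cup product transform correctly. Both hinge on the abelian hypothesis, first through order-invariance of translation and then through the commutation $L_xL_u=L_uL_x$. Once these are in place, the multiplicativity of $L_x$ is just naturality of the cup product, and the monoid-morphism property is pure functoriality.
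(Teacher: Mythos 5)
Your proposal is correct and follows essentially the same route as the paper: both reduce the statement to the multiplicativity $L_{x}(\beta\cdot\gamma)=(L_{x}\beta)\cdot(L_{x}\gamma)$, both hinge on the grading identity $(x^{-1}b)\times(x^{-1}c)=x^{-1}(b\times c)$, then commute the structure maps $\phi$ past $L_{x}$ using abelianness, and finally invoke \cite[Proposition 3.10]{Hatcher} for naturality of the cup product. The only differences are cosmetic: you justify the grading identity by observing that translation is an order-automorphism preserving meets, whereas the paper runs the universal-property diagram chase (forcing the connecting elements $y,z\in S$ to be trivial), and you additionally spell out the unit and composition axioms of the monoid morphism, which the paper treats as immediate from functoriality.
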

\begin{proof}
It suffices to prove
\begin{equation*}
  L_{x}(\beta\cdot\gamma)=(L_{x}\beta)\cdot(L_{x}\gamma).
\end{equation*}
Firstly, we will prove $(x^{-1}b)\times (x^{-1}c)=x^{-1}(b\times c)$ for $b,c\in G,x\in S$. Consider the following two diagrams.
\begin{equation*}
  \xymatrix{
  x^{-1}b\times x^{-1}c\ar@{->}[d]\ar@{->}[r]&x^{-1}c\\
  x^{-1}b&x^{-1}(b\times c)\ar@{->}[l]\ar@{.>}[lu]\ar@{->}[u]
  }\quad\quad
\xymatrix{
  x(x^{-1}b\times x^{-1}c)\ar@{->}[r]\ar@{.>}[rd]\ar@{->}[d]&c\\
   b&b\times c\ar@{->}[l]\ar@{->}[u]
}
\end{equation*}
The dashed arrows are given by the universal property of product. Thus there exist elements $y,z\in S$ such that
\begin{equation*}
  (x^{-1}b\times x^{-1}c)=yx^{-1}(b\times c),\quad b\times c=zx(x^{-1}b\times x^{-1}c),
\end{equation*}
which implies that $yx^{-1}zx=1$ and $zxyx^{-1}=1$. Since $G$ is abelian, one has $y=z=1$ as $y,z\in S$. This shows that $(x^{-1}b)\times (x^{-1}c)=x^{-1}(b\times c)$.
\begin{equation*}
  \xymatrix{
 H_{b}^{\ast}\times H_{c}^{\ast}\ar@{->}[rrr]^{(\phi_{b}^{c},\phi_{c}^{b})}\ar@{->}[d]^{(L_{x},L_{x})}&&&H_{b\times c}^{\ast}\times H_{b\times c}^{\ast}\ar@{->}[r]^{\cup}\ar@{->}[d]^{(L_{x},L_{x})}& H_{b\times c}^{\ast}\ar@{->}[d]^{L_{x}}\\
 H_{x^{-1}b}^{\ast}\times H_{x^{-1}c}^{\ast}\ar@{->}[rrr]^{(\phi_{x^{-1}b}^{x^{-1}c},\phi_{x^{-1}c}^{x^{-1}b})}&&&H_{x^{-1}(b\times c)}^{\ast}\times H_{x^{-1}(b\times c)}^{\ast}\ar@{->}[r]^{\cup}&  H_{x^{-1}(b\times c)}^{\ast}
 }
\end{equation*}
The first square is commutative since
\begin{equation*}
  \begin{split}
    (\phi_{x^{-1}b}^{x^{-1}c},\phi_{x^{-1}c}^{x^{-1}b})\circ (L_{x},L_{x})= &(L_{x^{-1}b(x^{-1}b\times x^{-1}c)^{-1}}, L_{x^{-1}c(x^{-1}b\times x^{-1}c)^{-1}})\circ (L_{x},L_{x})\\
     = & (L_{b(x^{-1}b\times x^{-1}c)^{-1}}, L_{c(x^{-1}b\times x^{-1}c)^{-1}})\\
     = & (L_{b[x^{-1}(b\times c)]^{-1}}, L_{c[x^{-1}(b\times c)]^{-1}})\\
     = & (L_{b(b\times c)^{-1}x}, L_{c(b\times c)^{-1}x})\\
     = & (L_{x},L_{x})\circ (L_{b(b\times c)^{-1}},L_{c(b\times c)^{-1}})\\
     = & (L_{x},L_{x})\circ (\phi_{b}^{c},\phi_{b}^{c}).
  \end{split}
\end{equation*}
The second square is commutative by \cite[Proposition 3.10]{Hatcher}. Thus we have
\begin{equation*}
  \begin{split}
    L_{x}(\beta\cdot\gamma)= & L_{x}\circ\cup\circ (\phi_{b}^{c},\phi_{b}^{c}) \\
      =&\cup \circ(\phi_{x^{-1}b}^{x^{-1}c},\phi_{x^{-1}c}^{x^{-1}b})\circ (L_{x},L_{x})\\
      =&(L_{x}\beta)\cdot(L_{x}\gamma).
  \end{split}
\end{equation*}
This completes the proof.
\end{proof}
The above theorem says that the persistence of $\mathbf{H}$ reveals the monoid structure of $S$.
In the next section, we will show the multiplication on $\mathbf{H}$ as a persistence module.
\subsection{Twisted algbera}
Let $\mathbb{L}_{S}$ be the set of elements $L_{x},x\in S$.
For cohomology morphisms, the composition is given by
\begin{equation*}
  L_{x}\circ L_{y}=L_{yx}.
\end{equation*}
We define a product on $\mathbb{L}_{S}$ by
\begin{equation*}
  L_{x}\cdot L_{y}=L_{y}\circ L_{x}.
\end{equation*}
Then $\mathbb{L}_{S}$ is a monoid and the monoid ring $\mathbf{k}[\mathbb{L}_{S}]$ is a $G$-graded ring with the grading given by $\deg L_{x}=x$. Regard $\mathbf{H}$ as a right $\mathbf{k}[\mathbb{L}_{S}]$-module given by
\begin{equation*}
  \mathbf{H}\times \mathbb{L}_{S}\rightarrow \mathbf{H},\quad (\alpha,L_{x})\mapsto \alpha\cdot L_{x}=L_{x}\alpha.
\end{equation*}
Give a $G$-graded on $\mathbf{H}$ by assigning grading to elements in $H^{\ast}_{a}$ as
\begin{equation*}
  \deg \alpha=a^{-1},\quad \alpha\in H^{\ast}_{a},a\in G.
\end{equation*}
\begin{lemma}
The $\mathbf{k}$-linear space $\mathbf{H}$ is a $G$-graded right $\mathbf{k}[\mathbb{L}_{S}]$-module.
\end{lemma}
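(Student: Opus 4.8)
The plan is to verify the two defining conditions of a $G$-graded right module directly: first, that the prescribed assignment $\alpha\cdot L_{x}=L_{x}\alpha$, extended $\mathbf{k}$-bilinearly to $\mathbf{k}[\mathbb{L}_{S}]$, is a genuine unital right action on $\mathbf{H}$; and second, that this action respects the gradings in the sense that $\mathbf{H}_{g}\cdot(\mathbf{k}[\mathbb{L}_{S}])_{h}\subseteq \mathbf{H}_{gh}$. Since the monoid structure on $\mathbb{L}_{S}$ and the $G$-grading on $\mathbf{k}[\mathbb{L}_{S}]$ are already established above, only these two checks remain.

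First I would record how each generator acts on homogeneous pieces. Because $L_{x}=H^{\ast}(\mathcal{K}_{c,xc};\mathbf{k})$ is induced by the morphism $c\to xc$, viewing it as a $G$-graded endomorphism of $\mathbf{H}=\bigoplus_{a}H^{\ast}_{a}$ it carries $H^{\ast}_{a}$ into $H^{\ast}_{x^{-1}a}$ (take $c=x^{-1}a$, so that $xc=a$; note $x^{-1}a\leq a$ since $a(x^{-1}a)^{-1}=x\in S$). In particular $\alpha\cdot L_{x}=L_{x}\alpha$ is well defined on all of $\mathbf{H}$ and lands in $\mathbf{H}$, so the action does not leave the module.

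Next I would check the right-action axioms. Using the functoriality relation $L_{x}\circ L_{y}=L_{yx}$ recorded earlier together with the definition $L_{x}\cdot L_{y}=L_{y}\circ L_{x}$, I would compute $(\alpha\cdot L_{x})\cdot L_{y}=L_{y}(L_{x}\alpha)=(L_{y}\circ L_{x})\alpha=L_{xy}\alpha=\alpha\cdot(L_{x}\cdot L_{y})$, and observe $\alpha\cdot L_{e}=L_{e}\alpha=\alpha$ since $L_{e}$ is induced by the identity morphism. Extending by $\mathbf{k}$-bilinearity then gives a unital right $\mathbf{k}[\mathbb{L}_{S}]$-module structure; this part is entirely routine once the composition convention is pinned down.

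The only genuinely delicate point is the grading compatibility, so that is where I would focus. The subtlety is the inverse appearing in $\deg\alpha=a^{-1}$ for $\alpha\in H^{\ast}_{a}$, which must conspire with the index-lowering behaviour of $L_{x}$ to reproduce the covariant law $\mathbf{H}_{g}\cdot(\mathbf{k}[\mathbb{L}_{S}])_{h}\subseteq\mathbf{H}_{gh}$ of a right module. Concretely, if $\alpha$ is homogeneous of degree $g$, then $\alpha\in H^{\ast}_{g^{-1}}$, and $L_{x}$ maps $H^{\ast}_{g^{-1}}$ into $H^{\ast}_{x^{-1}g^{-1}}$, whose elements have degree $(x^{-1}g^{-1})^{-1}=gx$; since $L_{x}$ is precisely the degree-$x$ homogeneous generator of $\mathbf{k}[\mathbb{L}_{S}]$, this is exactly the required inclusion, and linearity extends it to all homogeneous elements of the monoid ring. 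I expect the main obstacle to be purely notational bookkeeping, keeping the order of $g$ and $x$ and the various inverses straight, rather than any conceptual difficulty; I would also note that the key identity $(x^{-1}g^{-1})^{-1}=gx$ does not require $G$ to be abelian, so the statement in fact holds verbatim for an arbitrary group $G$.
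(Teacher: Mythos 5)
Your proposal is correct and takes essentially the same approach as the paper: the right-action axiom is verified through the composition convention $L_{x}\circ L_{y}=L_{yx}$ (the paper computes $\alpha\cdot L_{x}\cdot L_{y}=L_{y}L_{x}\alpha=L_{xy}\alpha=\alpha\cdot L_{xy}$ for $\alpha\in H^{\ast}_{xya}$), and the grading check is the same inverse bookkeeping, the paper's identity $(xa)^{-1}x=a^{-1}$ being your $(x^{-1}g^{-1})^{-1}=gx$ in different notation. Your closing remark that commutativity of $G$ is not needed is also consistent with the paper's argument, which likewise never uses that $G$ is abelian in this proof.
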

\begin{proof}
Consider the morphisms $H_{xya}^{\ast}\stackrel{L_{x}}{\rightarrow} H_{ya}^{\ast}  \stackrel{L_{y}}{\rightarrow}  H_{a}^{\ast}$.
For $\alpha\in H_{xya}^{\ast}$, we have
\begin{equation*}
  \alpha \cdot L_{x}\cdot L_{y}=L_{y}L_{x}\alpha=L_{xy}\alpha=\alpha \cdot L_{xy}.
\end{equation*}
On the other hand, note that $H^{\ast}_{xa}\cdot L_{x}\subseteq H^{\ast}_{a}$ and $(xa)^{-1}x=a^{-1}$. Hence, $\mathbf{H}$ is a $G$-graded right $\mathbf{k}[\mathbb{L}_{S}]$-module.
\end{proof}

Note that $\mathbf{H}$ does not have to be a $\mathbf{k}[\mathbb{L}_{S}]$-algebra. To describe the multiplication on $\mathbf{H}$ as a persistence module, we introduce the twisted algebra. There are different variants of twisted algebra with the twistor evolved from the multiplication, the differential or the associative law of an algebra.
\begin{definition}
Let $R$ be a commutative ring with unit. An $R$-module $A$ is an \emph{$R$-twisted algebra} if there is a twisted multiplication $\cdot:A\times A \rightarrow A$ and a bilinear function $f:R\times R\rightarrow R$ satisfying
\begin{enumerate}
  \item[$(i)$] $\alpha\cdot(\beta\cdot\gamma)=(\alpha\cdot\beta)\cdot\gamma$ for any $\alpha,\beta,\gamma\in A$.
  \item[$(ii)$] $(\lambda \alpha)\cdot(\mu \beta)=f(\lambda ,\mu)(\alpha \cdot \beta)$ for any $\lambda,\mu\in R,\alpha,\beta\in A$.
\end{enumerate}
Let $R=\bigoplus\limits_{a\in G}R_{a}$ be a $G$-graded ring. A \emph{$G$-graded $R$-twisted algebra} $A=\bigoplus\limits_{a\in G}A_{a}$ is a $G$-graded $R$-module with twisted multiplication $\cdot$ and bilinear functions $f_{a,b}:R\times R\rightarrow R,a,b\in G$ satisfying
\begin{enumerate}
  \item[$(i)$] $\alpha\cdot(\beta\cdot\gamma)=(\alpha\cdot\beta)\cdot\gamma$ for any $\alpha,\beta,\gamma\in A$.
  \item[$(ii)$] $(\lambda \alpha)\cdot(\mu \beta)=f_{a,b}(\lambda ,\mu)(\alpha \cdot \beta)$ for any $\lambda,\mu\in R,\alpha\in A_{a},\beta\in A_{b}$.
\end{enumerate}
\end{definition}
Specially, if we take $f(\lambda,\mu)=\lambda\mu$ as the ring multiplication, then the $R$-twisted algebra is the usual $R$-algebra. Now, we will show that $\mathbf{H}$ is a $G$-graded $\mathbf{k}[\mathbb{L}_{S}]$-twisted algebra with the twisted multiplication determining the algebra structure of $\mathbf{H}$.

\begin{theorem}\label{theorem:twist}
Let $G$ be an abelian group.
The persistence-cup product on $\mathbf{H}$ is uniquely determined by the persistence-cup product of $\mathbf{k}[\mathbb{L}_{S}]$-module generators. Moreover, we have
\begin{itemize}
  \item[$(i)$] $(\mathbf{H},\cdot)$ is a $G$-graded $\mathbf{k}[\mathbb{L}_{S}]$-twisted algebra.
  \item[$(ii)$] Let $\mathbf{P}=\bigcup\limits_{a,b\in G} H^{\ast}_{b}\cdot H^{\ast}_{c}$. Then $\mathbf{P}$ is a right $\mathbf{k}[\mathbb{L}_{S}]$-module and $\mathbf{P}=\mathbf{H}\cdot \mathbf{H}$.
\end{itemize}
\end{theorem}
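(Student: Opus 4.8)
The plan is to verify the two defining properties of a $G$-graded $\mathbf{k}[\mathbb{L}_{S}]$-twisted algebra directly and then read off the remaining assertions. First I would record the easy structural facts: the persistence-cup product $\cdot$ is $\mathbf{k}$-bilinear, since it is assembled from the cup product together with the $\mathbf{k}$-linear maps $L_{x}$; it is associative by Lemma \ref{lemma:associated}; and $\mathbf{k}[\mathbb{L}_{S}]$ is commutative because $S\subseteq G$ is abelian, so the right-module structure may be read as a bimodule and the left-scalar convention in the definition of a twisted algebra is matched automatically. Because every operation in sight is $\mathbf{k}$-bilinear, I would reduce all verifications to homogeneous elements $\alpha\in H^{\ast}_{b}$, $\beta\in H^{\ast}_{c}$ and to the monoid generators $L_{x},L_{y}$ of $R=\mathbf{k}[\mathbb{L}_{S}]$, extending bilinearly afterwards.

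The heart of the argument is a single computation producing the twistor. Using $\alpha\cdot L_{x}=L_{x}\alpha\in H^{\ast}_{x^{-1}b}$ and the definition of $\cdot$, I would write $(\alpha\cdot L_{x})\cdot(\beta\cdot L_{y})=L_{(x^{-1}b)m^{-1}}(L_{x}\alpha)\cup L_{(y^{-1}c)m^{-1}}(L_{y}\beta)$, where $m=(x^{-1}b)\times(y^{-1}c)$. Collapsing the compositions via $L_{u}\circ L_{v}=L_{vu}$ turns the two factors into $L_{bm^{-1}}\alpha$ and $L_{cm^{-1}}\beta$. Since $m\le x^{-1}b\le b$ and $m\le y^{-1}c\le c$, the meet obeys $m\le b\times c$, so $w:=(b\times c)m^{-1}\in S$; then, because $L_{w}$ commutes with cup products by \cite[Proposition 3.10]{Hatcher} and $L_{w}\circ L_{b(b\times c)^{-1}}=L_{bm^{-1}}$, I obtain $(\alpha\cdot L_{x})\cdot(\beta\cdot L_{y})=L_{w}(\alpha\cdot\beta)=(\alpha\cdot\beta)\cdot L_{w}$. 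Setting $f_{b^{-1},c^{-1}}(L_{x},L_{y}):=L_{w}$ and extending bilinearly yields the twisting relation, which with associativity proves part (i). The clause that the product is determined by the products of module generators is then immediate: expanding $\alpha=\sum_{i}g_{i}\cdot r_{i}$ and $\beta=\sum_{j}g_{j}\cdot s_{j}$ and applying bilinearity and the twisting relation gives $\alpha\cdot\beta=\sum_{i,j}(g_{i}\cdot g_{j})\cdot f(r_{i},s_{j})$.

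For part (ii) I would interpret $\mathbf{P}$ as the sum $\sum_{b,c\in G}H^{\ast}_{b}\cdot H^{\ast}_{c}$ of these subspaces, exactly as for the cup-space $\mathbf{P}_{a}$ earlier. Closure under the right action reduces, by bilinearity, to $(\alpha\cdot\beta)\cdot L_{x}$; Theorem \ref{theorem:representation} gives $(\alpha\cdot\beta)\cdot L_{x}=L_{x}(\alpha\cdot\beta)=(L_{x}\alpha)\cdot(L_{x}\beta)\in H^{\ast}_{x^{-1}b}\cdot H^{\ast}_{x^{-1}c}\subseteq\mathbf{P}$, so $\mathbf{P}$ is a right $\mathbf{k}[\mathbb{L}_{S}]$-submodule. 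The identity $\mathbf{P}=\mathbf{H}\cdot\mathbf{H}$ follows by distributing a product of arbitrary elements over their homogeneous components.

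The main obstacle is the bookkeeping in the central computation: pinning down the twistor $w=(b\times c)[(x^{-1}b)\times(y^{-1}c)]^{-1}$, checking $m\le b\times c$ so that $w\in S$ and hence $L_{w}\in\mathbb{L}_{S}$, and confirming that $w$ depends only on the gradings $b,c$ and on the indices $x,y$ (and not on $\alpha,\beta$) so that $f_{a,b}$ is a well-defined bilinear function. The technical engine throughout is the naturality of the cup product, i.e. the commutativity of each $L_{z}$ with $\cup$ from \cite[Proposition 3.10]{Hatcher}, precisely as in Lemma \ref{lemma:associated} and Theorem \ref{theorem:representation}; once that is in place the degenerate cases where $\alpha\cdot\beta=0$ are automatic, since both sides of the twisting relation then vanish.
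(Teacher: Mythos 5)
Your proposal is correct and follows essentially the same route as the paper: the central computation identifying the twistor $L_{w}$ with $w=(b\times c)\left[(x^{-1}b)\times(y^{-1}c)\right]^{-1}$ is exactly the paper's argument, with your order-theoretic step $m\leq b,\, m\leq c \Rightarrow m\leq b\times c$ being just a rephrasing of the paper's appeal to the universal property of the product, and both rely on Lemma \ref{lemma:associated} and the naturality of the cup product. Your handling of part $(ii)$ via Theorem \ref{theorem:representation} makes the closure of $\mathbf{P}$ under the right action slightly more explicit than the paper's one-line remark, but it is the same idea.
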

\begin{proof}
For given $\alpha\in H_{a},\beta\in H_{b},x,y\in S$, we will first prove that there exists an element $z\in S$ such that
\begin{equation*}
  (\alpha \cdot L_{x})\cdot (\beta\cdot L_{y})=(\alpha\cdot\beta)\cdot L_{z}.
\end{equation*}
Consider the following diagram. By the universal property of $a\times b$, we have a morphism $x^{-1}a\times y^{-1}b\to a\times b$. This shows that $a\times b=z(x^{-1}a\times y^{-1}b)$ for some $z\in S$.
\begin{equation*}
  \xymatrix{
  &a&\\
  x^{-1}a\ar@{->}[ru]&a\times b\ar@{->}[u]\ar@{->}[r]&b\\
  x^{-1}a\times y^{-1}b\ar@{.>}[ru]\ar@{->}[u]\ar@{->}[r]&y^{-1}b\ar@{->}[ru]&
  }
\end{equation*}
Thus we have
\begin{equation*}
  \begin{split}
    (\alpha \cdot L_{x})\cdot (\beta\cdot L_{y})= &  (\alpha \cdot L_{x})\cup_{x^{-1}a\times y^{-1}b} (\beta\cdot L_{y}) \\
     =& (\alpha \cdot L_{a(x^{-1}a\times y^{-1}b)^{-1}})\cup (\beta\cdot L_{b(x^{-1}a\times y^{-1}b)^{-1}})\\
     =& (\alpha \cdot L_{a(a\times b)^{-1}z})\cup (\beta\cdot L_{b(a\times b)^{-1}z})\\
     =& (\alpha \cdot\beta)\cdot L_{z}.
  \end{split}
\end{equation*}
This shows that the $\mathbf{k}$-algebra structure of $\mathbf{H}$ can be determined by the persistence-cup product on the $\mathbf{k}[\mathbb{L}_{S}]$-module structure of $\mathbf{H}$.

Assume that $\lambda=\sum\limits_{x\in G}c_{x}L_{x},\mu=\sum\limits_{y\in G}c'_{y}L_{y}\in \mathbf{k}[\mathbb{L}_{S}]$, where $c_{x},c'_{y}\in \mathbf{k}$ for all $x,y\in G$. Let
\begin{equation*}
  f_{a,b}(\lambda ,\mu)=\sum\limits_{x,y\in G} c_{x}c'_{y}L_{(a\times b)(x^{-1}a\times y^{-1}b)^{-1}}.
\end{equation*}
Then we have $(\alpha\lambda )\cdot (\beta\mu )=(\alpha\cdot\beta)f_{a,b}(\lambda ,\mu)$. By Lemma \ref{lemma:associated}, $\mathbf{H}$ is a $G$-graded $\mathbf{k}[\mathbb{L}_{S}]$-twisted algebra. Since $\mathbf{H}$ is a right $\mathbf{k}[\mathbb{L}_{S}]$-module, $\mathbf{P}$ is also a right $\mathbf{k}[\mathbb{L}_{S}]$-module. A direct calculation shows $\mathbf{P}=\mathbf{H}\cdot \mathbf{H}$, which completes the proof.
\end{proof}

\begin{remark}
Note that the elements in $\mathbf{H}$ have another grading given by the cohomology dimension. The persistence-cup product also inherits the graded commutativity of the cup product, i.e.,
\begin{equation*}
  \alpha\cdot \beta=(-1)^{ij}\beta\cdot \alpha,\quad \alpha\in H_{a}^{i},\beta\in H_{b}^{j}.
\end{equation*}
\end{remark}

\subsection{Persistence algebra on ordered groups}\label{subsection:ordergroup}
In this part, the poset category $\mathrm{cat}(\mathrm{Cay}(G,S))$ is assumed to be totally ordered, that is, for any $a,b\in G$, there exists a non-identity element $x\in S$ such that $b=xa$ or $a=xb$. In this case, we say the Cayley grading is totally ordered. For example, $(\mathbb{Z},\mathbb{Z}_{\geq 0})$ and $(\mathbb{R},\mathbb{R}_{\geq 0})$ give the totally ordered grading, which are corresponding to the usual 1-parameter persistence theory.

Recall that a \emph{left-ordered group}\cite{levi1942ordered,clay2016ordered} $(G,\leq)$ is a group $G$ equipped with an order $\leq$ such that $a\leq b$ implies $ca\leq cb$ for all $a,b,c\in G$. Dually, $(G,\leq)$ is \emph{a right-ordered group} if $a\leq b$ implies $ac\leq bc$ for all $a,b,c\in G$. We say that $(G,\leq)$ is \emph{(bi-)ordered group} if it is left-orderable and right-orderable. For a bi-ordered group, a subset $G^{+}=\{a\in G|e\leq a\}$ is a \emph{positive cone}. Here, $e$ denotes the identity element in $G$. In general, a group $G$ is not always an ordered group. In this section, our persistence theory will be built on ordered groups.

\begin{lemma}
Let $G$ be an abelian group. The poset category $\mathrm{cat}(\mathrm{Cay}(G,S))$ is assumed to be totally ordered. Then $(G,\leq)$ is a bi-ordered group with positive cone $S$. Here, the bi-order $\leq$ is given by $a\leq b$ if there exists an element $x\in S$ such that $b=xa$
\end{lemma}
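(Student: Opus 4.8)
The plan is to check the defining axioms of a bi-ordered group directly against the properties already imposed on $S$, namely that $S$ is a submonoid of $G$ in which $e$ is the unique invertible element, together with the standing totality hypothesis. First I would verify that the relation $a\le b \iff ba^{-1}\in S$ is a partial order. Reflexivity is immediate since $aa^{-1}=e\in S$. For transitivity, if $ba^{-1}\in S$ and $cb^{-1}\in S$, then their product $(cb^{-1})(ba^{-1})=ca^{-1}$ lies in $S$ because $S$ is closed under multiplication, so $a\le c$. Antisymmetry is the step where the hypothesis on $S$ really enters: if $ba^{-1}\in S$ and $ab^{-1}=(ba^{-1})^{-1}\in S$, then $ba^{-1}$ is an invertible element of $S$, hence equals $e$ by assumption, giving $a=b$.

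Next I would record that totality is exactly the totally-ordered hypothesis: for $a\neq b$, the existence of a non-identity $x\in S$ with $b=xa$ or $a=xb$ says precisely that $ba^{-1}\in S$ or $ab^{-1}\in S$, i.e. $a\le b$ or $b\le a$; the case $a=b$ is covered by reflexivity. Then I would check bi-invariance. Because $G$ is abelian, left and right translations agree, so it suffices to note that for any $c\in G$, the inequality $a\le b$ gives $(cb)(ca)^{-1}=cba^{-1}c^{-1}=ba^{-1}\in S$, whence $ca\le cb$, and likewise $ac\le bc$. Thus $(G,\le)$ is a bi-ordered group.

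Finally I would identify the positive cone. By definition $G^{+}=\{a\in G \mid e\le a\}=\{a\in G\mid ae^{-1}=a\in S\}=S$, and the three cone axioms follow from the three properties already used: $S\cdot S\subseteq S$ (submonoid), $S\cap S^{-1}=\{e\}$ (unique invertible element), and $G=S\cup S^{-1}$ (totality). The only genuinely delicate point is antisymmetry, equivalently the condition $S\cap S^{-1}=\{e\}$, which is exactly where the standing assumption that $e$ is the unique invertible element of $S$ is indispensable; everything else is a formal consequence of $S$ being an abelian submonoid together with totality. A minor bookkeeping subtlety is reconciling the word ``non-identity'' in the totality hypothesis with the reflexive case $a=b$, which I would handle by treating equality separately.
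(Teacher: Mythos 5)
Your proof is correct and takes essentially the same route as the paper's: a direct verification of the axioms, with bi-invariance deduced from commutativity of $G$ (if $a\leq b$ then $ba^{-1}\in S$ is unchanged by conjugation, so $ca\leq cb$ and $ac\leq bc$) and the positive cone identified as $G^{+}=\{a\in G \mid e\leq a\}=S$ by definition. The only difference is one of completeness: the paper compresses the order axioms into ``It can be verified that $\leq$ is indeed an order,'' whereas you spell out reflexivity, transitivity, antisymmetry (correctly pinpointing that antisymmetry is where the hypothesis that $e$ is the unique invertible element of $S$ is used), and the reconciliation of totality with the reflexive case.
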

\begin{proof}
It can be verified that $\leq$ is indeed an order. If $a\leq b$, then we will have $b=xa$. Since $G$ is abelian, it follows that $cb=xca$. Thus one obtains $ca\leq cb$. Let $G^{+}=\{a\in G|e\leq a\}$. By definition, $a\in G^{+}$ if and only if $a\in S$.
\end{proof}
Ordered groups are important objects of study in the fields of algebra, number theory, combinatorics and topology, etc.

\begin{example}
\begin{enumerate}
  \item[$(i)$] In addition to $\mathbb{Z}$, $\mathbb{R}$ and $\mathbb{Q}$, the algebraic number field $\mathbb{A}$ and the ring $\mathbb{Z}[1,\sqrt{2}]$ are also bi-ordered groups with the order from reals numbers. Moreover, an additive subgroup of $\mathbb{R}$ is bi-orderable.
  \item[$(ii)$] Free groups are bi-orderable.
  \item[$(iii)$] An ordering braid group $\mathcal{B}_{n}$ is a group with generators $\sigma_{1},\dots,\sigma_{n-1}$ and relations
\begin{eqnarray*}
% \nonumber to remove numbering (before each equation)
  \sigma_{i}\sigma_{j} &=& \sigma_{j}\sigma_{i},\quad |i-j|>1 \\
  \sigma_{i}\sigma_{i+1}\sigma_{i} &=& \sigma_{i+1}\sigma_{i}\sigma_{i+1},\quad 1\leq i\leq n-2.
\end{eqnarray*}
Then $\mathcal{B}_{n}$ is left-orderable \cite{clay2016ordered}.
\end{enumerate}
\end{example}

Let $S_{+}=S\setminus\{e\}$. It is worth noting that $G$ is the disjoint union of $S_{+}$, $S_{+}^{-1}$ and $\{e\}$. Now, we will identify the totally ordered Cayley digraphs $\mathrm{cat}(\mathrm{Cay}(G,S))$ with the ordered group $(G,\leq)$ and frequently use the order $\leq$ for convenience.

From now on, for a Cayley digraph $(G,S)$, the set $S$ considered is assumed to be a Hausdorff topological monoid with the topology generated by the open sets of the following forms
\begin{equation*}
  \{x|a<x<b\},\{x|a<x\},\{x|x<b\},\quad a,b\in S.
\end{equation*}
Here, $a<x$ means that $a\leq x$ but $a\neq x$. By Lemma \ref{lemma:topology}, the morphism of monoids
\begin{equation*}
  \phi:S\rightarrow \mathcal{R}_{S},\quad  x\mapsto L_{x}
\end{equation*}
induces a topology on $\mathcal{R}_{S}$. Moreover, $\mathcal{R}_{S}$ is also a topological monoid. Since $\phi$ is a quotient map, $V\subseteq \mathcal{R}_{S}$ is open if and only if $\phi^{-1}(V)$ is open. Equivalently, $V\subseteq \mathcal{R}_{S}$ is closed if and only if $\phi^{-1}(V)$ is closed.
\begin{lemma}\label{lemma:closure}
Let $U$ be a subset of $S$. Then we have $\phi(\overline{U})=\overline{\phi(U)}$.
\end{lemma}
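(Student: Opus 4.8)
The plan is to show that $\phi$ is a closed map: the asserted identity $\phi(\overline U)=\overline{\phi(U)}$ for every $U$ is equivalent to closedness of $\phi$, and that is the substantive content. One inclusion is immediate, since $\phi$ is a quotient map and hence continuous, and every continuous map satisfies $\phi(\overline U)\subseteq\overline{\phi(U)}$. So the real work is the reverse inclusion $\overline{\phi(U)}\subseteq\phi(\overline U)$, and for this it suffices to prove that $\phi(\overline U)$ is closed in $\mathcal{R}_S$: it already contains $\phi(U)$, so once it is closed it must contain $\overline{\phi(U)}$.

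To prove $\phi(\overline U)$ is closed I would invoke the characterization of the quotient topology recorded just before the lemma, namely that a set is closed in $\mathcal{R}_S$ exactly when its $\phi$-preimage is closed in $S$. Thus it is enough to show that the saturation $\phi^{-1}\phi(\overline U)$ — the union of $\overline U$ with every congruence class $\phi^{-1}(L_x)$ that it meets — is closed in the order topology of $S$. The first key step is therefore to describe these congruence classes. Using the $G$-grading $\deg L_x=x$ on $\mathbf{k}[\mathbb{L}_S]$ (equivalently, the fact that $L_x$ shifts the grading of $\mathbf{H}$ by $x$), one sees that $L_x=L_y$ with $x\neq y$ is impossible unless both maps vanish, so the only nontrivial class is $Z=\{x\in S:L_x=0\}$. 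Moreover, since $L_x=0$ and $y\ge x$ force $L_y=L_{yx^{-1}}\circ L_x=0$ (as $G$ is abelian), the set $Z$ is an order up-set, in particular order-convex.

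With this structure the saturation simplifies: $\phi^{-1}\phi(\overline U)$ equals $\overline U$ when $\overline U\cap Z=\emptyset$ and equals $\overline U\cup Z$ otherwise, because every fiber other than $Z$ is a singleton. In the first case it is closed by hypothesis, and in the second it is a union of two closed sets as soon as one knows $Z$ itself is closed. The closedness of $Z$ is exactly where I expect the main obstacle to lie, and it is also where the copersistence structure, rather than pure point-set topology, enters. The mechanism I would use is that for each interval summand $[b,d)$ of $\mathbf{H}$ the induced map $L_x$ is nonzero precisely when $x<d-b$, a \emph{strict} inequality coming from the half-open ``death means vanishing'' convention; hence $L_x=0$ holds iff $x\ge W$, where $W=\sup(d-b)$ is the supremum of the interval widths. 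Consequently $Z=\{x\in S:x\ge W\}$ is a closed up-ray (or is empty), independently of whether the supremum is attained. Once $Z$ is seen to be closed, the saturation of any closed set is closed, $\phi$ is a closed map, and the reverse inclusion — and with it the lemma — follows.
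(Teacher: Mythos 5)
Your architecture is sound up to its final step, and it correctly isolates where the real content lies. The continuity inclusion $\phi(\overline U)\subseteq\overline{\phi(U)}$ is indeed immediate (the paper proves this same inclusion by an explicit contradiction argument with the open set $V=\mathcal{R}_S\setminus\overline{\phi(U)}$). Your fiber analysis is also correct: by the $G$-grading, $L_x=L_y$ with $x\neq y$ forces both maps to vanish, so every fiber of $\phi$ over a nonzero element is a singleton, the only possibly nontrivial fiber is $Z=\{x\in S\mid L_x=0\}$, and the saturation of $\overline U$ is either $\overline U$ or $\overline U\cup Z$. Thus the lemma reduces exactly to the closedness of $Z$. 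Note that this is precisely the point the paper's own proof does not argue: it simply asserts ``since $\phi(\overline U)$ is closed'' and proves only the continuity inclusion in detail, so you went further than the paper here.

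The genuine gap is your justification that $Z$ is closed. First, it is circular: the interval decomposition of $\mathbf H$ into summands is not a hypothesis of this lemma, and the decomposition theorem of this section (Theorem \ref{thm:decomclousure}) is proved \emph{using} this lemma, so it cannot be invoked. Second, and more seriously, the half-open ``$[b,d)$'' convention you rely on is unwarranted, and it is exactly where the argument breaks: survival sets can be closed at the death end. Concretely, take $G=\mathbb{R}$, $S=\mathbb{R}_{\geq 0}$, and a filtration with $\mathcal K_a$ a point for $a<0$, a two-point complex for $0\leq a\leq 1$, and an edge (contractible) for $a>1$; its reduced cohomology $\mathbf H$ is one-dimensional exactly on $[0,1]$ with identity transition maps there, and is even cyclic over $\mathbf{k}[\mathbb{L}_S]$. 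Then $L_x\neq 0$ precisely for $x\leq 1$, so $Z=\{x\mid x>1\}$ is an up-set that is \emph{not} closed, $\{0\}$ is not closed in $\mathcal{R}_S$ (its closure is $\{L_1,0\}$), and $\phi$ is not a closed map; the identity of the lemma itself fails for $U=\{2\}$, since $\phi(\overline U)=\{0\}$ while $\overline{\phi(U)}=\{L_1,0\}$. So the step ``$Z$ is a closed up-ray independently of whether the supremum is attained'' is false in general, and no argument from the stated hypotheses alone can complete your proof: closedness of $Z$ (equivalently, that the zero map is a closed point of $\mathcal{R}_S$) is an additional assumption, one that the paper's proof also silently makes.
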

\begin{proof}
Note that $\phi(U)\subseteq \phi(\overline{U})$. Since $\phi(\overline{U})$ is closed, we have $\overline{\phi(U)}\subseteq \phi(\overline{U})$. On the other hand, suppose that $\phi(\overline{U})\not\subseteq\overline{\phi(U)}$. Then there exists an element $x\in \overline{U}$ such that $\phi(x)\notin \overline{\phi(U)}$. Since $S$ is Hausdorff, there is an open neighborhood $V$ of $\phi(x)$ such that $V\cap \phi(U)=\emptyset$. Note that $\phi^{-1}(V)$ is open, then $\phi^{-1}(V)$ is an open neighborhood of $x$ and $\phi^{-1}(V)\cap U=\emptyset$.
This contradicts the fact $x\in \overline{U}$, which leads to $\phi(\overline{U})\subseteq \overline{\phi(U)}$.
\end{proof}

\begin{lemma}\label{lemma:inf}
Let $G$ be an ordered group, and let $S$ be the positive cone of $G$.
Suppose $I$ is a closed subset of $S$ such that $SI=I$. Then there exists an element $a\in I$ such that
\begin{equation*}
  I=\{b\in S|a\leq b\}=Sa.
\end{equation*}
\end{lemma}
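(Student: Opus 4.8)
The statement bundles three assertions: that $I$ has a least element $a$, that $I$ equals the up-set $\{b\in S\mid a\le b\}$, and that this up-set equals $Sa$. The last is purely order-theoretic and costs nothing: since $S$ is the positive cone of $G$, one has $a\le b$ iff $ba^{-1}\in S$, i.e. iff $b\in Sa$, so $\{b\in S\mid a\le b\}=Sa$ for any $a$. The hypothesis $SI=I$ records exactly that $I$ is upward closed: for $s\in S$, $x\in I$ we have $sx\ge x$ with $sx\in I$, and conversely every $b\ge x$ is $(bx^{-1})x$ with $bx^{-1}\in S$. Hence the whole content of the lemma is the production of a minimum of $I$: once $a=\min I$ lies in $I$, the inclusion $I\subseteq\{b\mid a\le b\}=Sa$ is immediate, while $Sa\subseteq SI=I$ gives the reverse.

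The plan is to set $a=\inf I$, the greatest lower bound of $I$ in $G$, which exists because $I$ is nonempty and bounded below by $e=\min S$. The crux is to show $a\in I$, and this is precisely where closedness enters. Let $U$ be any basic open neighborhood of $a$. By the definition of the topology, $U$ contains an interval of one of the forms $\{x\mid c<x<d\}$, $\{x\mid c<x\}$, or $\{x\mid x<d\}$ that itself contains $a$. In each case the upper endpoint satisfies $d>a$ (or is absent), so by the defining property of the infimum there is $x\in I$ with $a\le x<d$; moreover $x\ge a>c$ forces $x>c$, so $x$ lies in the chosen interval, hence in $U$. Thus every neighborhood of $a$ meets $I$, giving $a\in\overline{I}=I$. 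With $a\in I$ a lower bound of $I$, we get $a=\min I$, and combining with the first paragraph yields $I=\{b\in S\mid a\le b\}=Sa$.

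The step I expect to carry the real weight is the existence and attainment of $\inf I$. Attainment is the genuinely topological point, and it is handled cleanly by closedness through the neighborhood argument above; equivalently, one notes that $S\setminus I$ is open and downward closed, so $a$ cannot belong to $S\setminus I$, since an open down-set containing $a$ would also contain points exceeding $a$ and contradict $a=\inf I$. The existence of the greatest lower bound is where the order structure of $G$ must be invoked, and it is the subtlety to watch: closedness alone secures $\inf I\in I$ only once the infimum is known to exist in $G$, which is guaranteed by the completeness of the ordering in the settings of interest. I would therefore organize the final write-up so that the order-theoretic reductions of the first paragraph are dispatched first, leaving the single nontrivial claim $\inf I\in I$ to be proved by the neighborhood argument.
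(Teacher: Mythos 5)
Your proof is correct and follows essentially the same route as the paper's: both set $a=\inf I$ (whose existence both you and the paper merely assert, since it genuinely requires order-completeness of $G$ --- a gap you at least flag explicitly), then use closedness of $I$ to obtain $a\in I$, and the hypothesis $SI=I$ together with the greatest-lower-bound property to conclude $I=\{b\in S\mid a\le b\}=Sa$. Your neighborhood argument that every basic open set containing $a$ meets $I$ is in fact a cleaner rendering of the attainment step than the paper's case split ($a\in I$ versus $a\notin I$, with the sandwich $I_a\setminus\{a\}\subseteq I\subseteq I_a$ closed off by closedness of $I$); the only blemish is your parenthetical ``equivalently'' remark, since an open down-set containing $a$ need not contain points exceeding $a$ in a non-dense order (e.g.\ $S=\mathbb{Z}_{\geq 0}$), but nothing in your main argument rests on it.
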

\begin{proof}
Obviously, the identity element $e$ is the lower bound of $S$ with respect to the order $\leq$. Then there is an infimum of $I$ and we denote $a=\inf(I)$. If $a\in I$, then for any $b\in S,a<b$, we have $b\in I$.
It follows that $\{b\in S|a\leq b\}\subseteq I$. Note that $a$ is the infimum of $I$, we obtain $I=\{b\in S|a\leq b\}\subseteq I$.
Suppose $a\notin I$. Then, for any $b\in S,a<b$, there exists an element $a'\in S$ such that $a'<b$. Otherwise, $b$ is the infimum of $I$. Thus we have $b\in S$ since $SI=I$. It follows that
\begin{equation*}
  \{c\in S|a<c\}\subseteq I.
\end{equation*}
Note that $I_{a}=\{c\in S|a\leq c\}$ is a closed subset of $S$ containing $I$. We have
\begin{equation*}
 I_{a}\backslash\{a\}=\{c\in S|a<c\}\subseteq I\subseteq I_{a}.
\end{equation*}
It follows that $I_{a}=I$. We complete the proof.
\end{proof}

\begin{theorem}\label{thm:decomclousure}
Let $G$ be an ordered abelian group. Let $\mathbf{H}$ be a finitely generated $\mathbf{k}[\mathbb{L}_{S}]$-module.
Then we have a finite direct sum decomposition
\begin{equation*}
   \mathbf{H}\cong \bigoplus_{i=1}^{k}  e^{i}_{x_{i}} \cdot \mathbf{k}[\mathbb{L}_{S}]\oplus \bigoplus_{j=1}^{l}\frac{\varepsilon^{j}_{y_{j}}\cdot \mathbf{k}[\mathbb{L}_{S}]}{\mathbf{k}[I_{j}]}
\end{equation*}
for some $k,l$. Here, $\overline{I_{j}}$ is the closure of $I_{j}$ and  $\mathbf{k}[\overline{I_{j}}]=\varepsilon^{j}_{y_{j}}\cdot L_{w_{j}}\cdot R$ for some $w_{j}\in S,j=1,\dots,l$.
\end{theorem}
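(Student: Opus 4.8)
The plan is to combine the general decomposition of finitely generated Cayley-persistence modules with the order-theoretic and topological structure of the positive cone $S$. Write $R=\mathbf{k}[\mathbb{L}_S]$. Since $\mathbf{H}$ is a finitely generated right $R$-module, I would first apply the right-module version of Theorem~\ref{thm:module_decomposition} (equivalently Corollary~\ref{corollary:decomposition}) to obtain a surjection from a free module $\bigoplus_{i=1}^{k} e^i_{x_i}\cdot R\oplus\bigoplus_{j=1}^{l}\varepsilon^j_{y_j}\cdot R$ onto $\mathbf{H}$, whose kernel $N$ is generated by meeting relations $\varepsilon^j_{y_j}\cdot L_y-\varepsilon^{j'}_{y_{j'}}\cdot L_{y'}$ and death relations $\varepsilon^j_{y_j}\cdot L_y$. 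Here the $e^i_{x_i}$ are the generators that survive for all larger parameters (the free summands) and the $\varepsilon^j_{y_j}$ are those that eventually meet another generator or die.

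The key step is to use the total order on $G$ to realise $N$ as a direct sum of per-generator relation submodules. I would mimic the elder-rule pairing of Zomorodian and Carlsson: whenever two torsion generators meet, the total order on $G$ singles out one of them to survive past the meeting point, and a change of generating set absorbs the other into it, replacing each meeting relation by a death relation of a single generator. After this reduction, setting \[ I_j=\{x\in S\mid \varepsilon^j_{y_j}\cdot L_x=0\ \text{in}\ \mathbf{H}\}\subseteq S, \] I expect $N=\bigoplus_{j=1}^{l}\mathbf{k}[I_j]$ with $\mathbf{k}[I_j]=\mathrm{span}_{\mathbf{k}}\{\varepsilon^j_{y_j}\cdot L_x\mid x\in I_j\}$, whence \[ \mathbf{H}\cong\bigoplus_{i=1}^{k} e^i_{x_i}\cdot R\oplus\bigoplus_{j=1}^{l}\frac{\varepsilon^j_{y_j}\cdot R}{\mathbf{k}[I_j]}. \] Because $\varepsilon^j_{y_j}\cdot L_x\cdot L_z=\varepsilon^j_{y_j}\cdot L_{xz}$, each $I_j$ satisfies $SI_j=I_j$.

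It remains to identify $\mathbf{k}[\overline{I_j}]$. I would pass to the closure $\overline{I_j}$ in the topological monoid $S$. Continuity of each left translation $x\mapsto zx$ gives $z\,\overline{I_j}\subseteq\overline{zI_j}\subseteq\overline{I_j}$, so $\overline{I_j}$ is closed and satisfies $S\overline{I_j}=\overline{I_j}$. Lemma~\ref{lemma:inf} then produces $w_j=\inf(I_j)\in S$ with $\overline{I_j}=Sw_j=\{x\in S\mid w_j\le x\}$, and Lemma~\ref{lemma:closure} (applied to $\phi\colon S\to\mathbb{L}_S$) shows this closure is compatible with the topology transported to $\mathbb{L}_S$. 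Since $G$ is abelian, $Sw_j=w_jS$, so \[ \mathbf{k}[\overline{I_j}]=\mathrm{span}_{\mathbf{k}}\{\varepsilon^j_{y_j}\cdot L_x\mid x\ge w_j\}=\varepsilon^j_{y_j}\cdot L_{w_j}\cdot R, \] which is the asserted principal form.

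The hard part, I expect, is the middle step: eliminating the meeting relations so that $N$ becomes an honest direct sum $\bigoplus_j\mathbf{k}[I_j]$. When $S$ is not finitely generated, $N$ need not be a finitely generated $R$-module, so the naive finite Gaussian elimination behind the classical Zomorodian--Carlsson argument is unavailable; the total order must instead be used to organise possibly infinitely many relations into a single up-set $I_j$ per generator, and one must verify that no relation links distinct $\varepsilon^j$'s after the change of basis. The accompanying subtlety that $I_j$ itself may fail to be closed---so that $\mathbf{k}[I_j]$ is genuinely non-finitely-generated, consistent with the remark following the statement---is then absorbed cleanly by replacing $I_j$ with $\overline{I_j}$ and invoking Lemma~\ref{lemma:inf}.
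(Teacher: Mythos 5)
Your plan follows the same route as the paper's proof: start from the right-module version of Corollary \ref{corollary:decomposition}, use the total order and a change of generating set to replace the meeting relations by per-generator death relations, and then identify the closure of each death set via Lemma \ref{lemma:closure} and Lemma \ref{lemma:inf} as a principal up-set $Sw_{j}$, giving $\mathbf{k}[\overline{I_{j}}]=\varepsilon^{j}_{y_{j}}\cdot L_{w_{j}}\cdot\mathbf{k}[\mathbb{L}_{S}]$. Your endgame, including the verification that $\overline{I_{j}}$ is closed and $S$-invariant so that Lemma \ref{lemma:inf} applies, matches the paper exactly.

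The genuine gap is the step you yourself flag: you never explain how to carry out the ``elder-rule'' elimination when $N$ is not a finitely generated $\mathbf{k}[\mathbb{L}_{S}]$-module, and in your proposed order of operations (eliminate first, take closures later) it is unclear how to even begin, since there is no finite set of relations to manipulate. The paper resolves this by reversing the order: close first, eliminate second. Because $\mathbf{H}$ is finitely generated there are only finitely many torsion generators $\varepsilon^{1}_{y_{1}},\dots,\varepsilon^{l}_{y_{l}}$, hence only finitely many pairs $(s,t)$ with $1\leq t<s\leq l+1$; for each pair one collects the parameter set $I_{st}=\{y\in S\mid\pi(\omega_{st}^{y})=0\}$ of the relation $\omega_{st}^{y}=\varepsilon^{t}_{y_{t}}\cdot L_{y}-\varepsilon^{s}_{y_{s}}\cdot L_{y_{s}y_{t}^{-1}y}$ (or $\omega_{l+1,t}^{y}=\varepsilon^{t}_{y_{t}}\cdot L_{y}$), and Lemma \ref{lemma:inf} applied to $\overline{I_{st}}$ gives $\overline{I_{st}}=Sa_{st}$. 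Consequently the closed relation module $\mathbf{k}[\overline{\mathcal{U}}]$ is generated by the finitely many elements $\omega_{st}^{a_{st}}$, and only at this point does finite Gaussian elimination become available: whenever two of these generators share the same lower index $t_{i}=t_{j}$, one of them is rewritten as a relation between $\varepsilon^{s_{i}}$ and $\varepsilon^{s_{j}}$, whose indices are strictly larger than $t_{i}$, so the process terminates with all lower indices distinct; the change of basis $\tilde{\varepsilon}^{t_{i}}_{y_{t_{i}}}=\omega_{s_{i}t_{i}}^{1}$ then decouples the generators into the direct sum you want. So the missing idea is not a cleverer pairing rule but the observation that passing to closures \emph{before} eliminating converts the possibly non-finitely-generated relation module into a finitely generated one---one principal up-set per pair of generators---after which the classical Zomorodian--Carlsson elimination applies verbatim; your quotient is then correctly stated with $\mathbf{k}[I_{j}]$ rather than $\mathbf{k}[\overline{I_{j}}]$, since the actual death set need not be closed.
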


\begin{proof}
By Corollary \ref{corollary:decomposition}, we have a finite direct sum decomposition for the version of right module
\begin{equation}\label{proof:decomposition}
   \mathbf{H}\cong \bigoplus_{i=1}^{k}  e^{i}_{x_{i}} \cdot \mathbf{k}[\mathbb{L}_{S}]\oplus \left(\left(\bigoplus_{j=1}^{l}\varepsilon^{j}_{y_{j}}\cdot \mathbf{k}[\mathbb{L}_{S}]\right)/N\right)
\end{equation}
for some $k,l$, where $e^{i}_{x_{i}}\in H^{\ast}_{x_{i}},\varepsilon^{j}_{y_{j}}\in H^{\ast}_{y_{j}}$ and $N$ is a right $\mathbf{k}[\mathbb{L}_{S}]$-module generated by the elements of the form $\varepsilon^{t}_{y_{t}}\cdot L_{y}-\varepsilon^{s}_{y_{s}}\cdot L_{y_{s}y_{t}^{-1}y}$ or $\varepsilon^{t}_{y_{t}}\cdot L_{y}$ for some $1\leq s,t\leq l,y\in S$.
We may assume that $y_{1}\leq y_{2}\leq \cdots\leq y_{l}$. Let
\begin{equation*}
  \omega_{st}^{y}=\varepsilon^{t}_{y_{t}}\cdot L_{y}-\varepsilon^{s}_{y_{s}}\cdot L_{y_{s}y_{t}^{-1}y},\quad 1\leq t<s\leq l,y\in S
\end{equation*}
and
\begin{equation*}
  \omega_{l+1,t}^{y}=\varepsilon^{t}_{y_{t}}\cdot L_{y},\quad 1\leq t\leq l,y\in S.
\end{equation*}
We denote
\begin{equation*}
  I_{st}=\{y\in S|\pi(\omega_{st}^{y})=0\},\quad 1\leq  t<s \leq l+1,
\end{equation*}
where $\pi$ is defined in the proof of Theorem \ref{thm:module_decomposition}. Note that $I_{st}$ can be an empty set for some $s,t$.
Let $\mathcal{R}_{I_{st}}=\{L_{y},y\in I_{st}\}$. By Lemma \ref{lemma:closure}, we have that
\begin{equation*}
  \overline{\mathcal{R}_{I_{st}}}=\mathcal{R}_{\overline{I_{st}}}.
\end{equation*}
Let $\mathcal{U}=\bigcup\limits_{s,t}\mathcal{R}_{I_{st}}$. It follows that
\begin{equation*}
  \overline{\mathcal{U}}=\overline{\bigcup\limits_{s,t}\mathcal{R}_{I_{st}}}=\bigcup\limits_{s,t}\overline{\mathcal{R}_{I_{st}}}=\bigcup\limits_{s,t}\mathcal{R}_{\overline{I_{st}}}.
\end{equation*}
One has
\begin{equation*}
  \mathbf{k}[\overline{\mathcal{U}}]=\mathbf{k}[\bigcup\limits_{s,t}\mathcal{R}_{\overline{I_{st}}}]=\sum\limits_{s,t}\mathbf{k}[\mathcal{R}_{\overline{I_{st}}}].
\end{equation*}
If $I_{st}$ is nonempty, by Lemma \ref{lemma:inf}, we have
\begin{equation*}
  \overline{I_{st}}=\{b\in S|a_{st}\leq b\}=Sa_{st}.
\end{equation*}
for some $a_{st}\in S$. It follows that $\mathbf{k}[\overline{\mathcal{U}}]$ is a finitely generated right $\mathbf{k}[\mathbb{L}_{S}]$-module. Moreover, the generating set is given by the elements of the following form
\begin{equation*}
  \omega_{s_{1}t_{1}}^{a_{s_{1}t_{1}}},\omega_{s_{2}t_{2}}^{a_{s_{2}t_{2}}},\dots,\omega_{s_{m}t_{m}}^{a_{s_{m}t_{m}}},
\end{equation*}
where $1\leq  t_{i}<s_{i} \leq l+1$ for $1\leq i\leq m$. We will prove the generating set can be chosen such that $t_{i}\neq t_{j}$ for any $1\leq i,j\leq m$. Suppose that $t_{i}=t_{j}$. We may assume that $a_{s_{i}t_{i}}\geq a_{s_{j}t_{j}}$ and set $a_{s_{i}t_{i}}=z a_{s_{j}t_{j}}$.
%Then we have
%\begin{equation*}
%  \omega_{s_{i}t_{i}}^{a_{s_{i}t_{i}}}-\omega_{s_{j}t_{j}}^{a_{s_{j}t_{j}}}\cdot L_{z}= (\varepsilon_{y_{s_{j}}}^{s_{j}}\cdot L_{y_{s_{j}}y_{t_{j}}^{-1}}-\varepsilon_{y_{s_{i}}}^{s_{i}}\cdot L_{y_{s_{i}}y_{t_{i}}^{-1}})\cdot L_{a_{s_{i}t_{i}}}.
%\end{equation*}
If $y_{s_{i}}\geq y_{s_{j}}$, we have
\begin{equation*}
  \omega_{s_{i}t_{i}}^{a_{s_{i}t_{i}}}-\omega_{s_{j}t_{j}}^{a_{s_{j}t_{j}}}\cdot L_{z}=(\varepsilon_{y_{s_{j}}}^{s_{j}}-\varepsilon_{y_{s_{i}}}^{s_{i}}\cdot L_{y_{s_{i}} y_{s_{j}}^{-1}})\cdot L_{y_{s_{j}}y_{t_{j}}^{-1}a_{s_{i}t_{i}}}=\omega_{s_{j}s_{i}}^{y_{s_{j}}y_{t_{j}}^{-1}a_{s_{i}t_{i}}}.
\end{equation*}
Thus the elements
\begin{equation*}
  \omega_{s_{1}t_{1}}^{a_{s_{1}t_{1}}},\dots,\omega_{s_{i-1}t_{i-1}}^{a_{s_{i-1}t_{i-1}}},\omega_{s_{j}s_{i}}^{y_{s_{j}}y_{t_{j}}^{-1}a_{s_{i}t_{i}}},\omega_{s_{i+1}t_{i+1}}^{a_{s_{i+1}t_{i+1}}},\dots,\omega_{s_{m}t_{m}}^{a_{s_{m}t_{m}}}
\end{equation*}
consist a generating set of $\mathbf{k}[\overline{\mathcal{U}}]$.
If $y_{s_{i}}\leq y_{s_{j}}$, the set
\begin{equation*}
  \omega_{s_{1}t_{1}}^{a_{s_{1}t_{1}}},\dots,\omega_{s_{j-1}t_{j-1}}^{a_{s_{j-1}t_{j-1}}},\omega_{s_{i}s_{j}}^{y_{s_{i}}y_{t_{i}}^{-1}a_{s_{j}t_{j}}},\omega_{s_{j+1}t_{j+1}}^{a_{s_{j+1}t_{j+1}}},\dots,\omega_{s_{m}t_{m}}^{a_{s_{m}t_{m}}}
\end{equation*}
is a generating set of $\mathbf{k}[\overline{\mathcal{U}}]$. Note that $s_{i},s_{j}> t_{i}=t_{j}$. By finite steps of the above progress, we can reduce the finite generating set of $\mathbf{k}[\overline{\mathcal{U}}]$ as $\mathbf{k}[\mathbb{L}_{S}]$-module to the generating set
\begin{equation*}
  \omega_{s_{1}t_{1}}^{a_{s_{1}t_{1}}},\omega_{s_{2}t_{2}}^{a_{s_{2}t_{2}}},\dots,\omega_{s_{m}t_{m}}^{a_{s_{m}t_{m}}},
\end{equation*}
such that $t_{i}\neq t_{j}$ for any $1\leq i,j\leq m$. Now, for $1\leq t\leq l$, let
\begin{equation*}
  \tilde{\varepsilon}_{y_{t}}^{t}=\left\{
                                    \begin{array}{ll}
                                      \omega_{s_{i}t_{i}}^{1}, & \hbox{$t=t_{i}$ for some $i$;} \\
                                      \varepsilon_{y_{t}}^{t}, & \hbox{otherwise.}
                                    \end{array}
                                  \right.
\end{equation*}
Then $\{\tilde{\varepsilon}^{j}_{y_{j}}\}_{1\leq j\leq l}$ is also a $\mathbf{k}[\mathbb{L}_{S}]$-module basis for $\bigoplus\limits_{j=1}^{l}\varepsilon^{j}_{y_{j}}\cdot \mathbf{k}[\mathbb{L}_{S}]$. Moreover, we have
\begin{equation*}
  \left(\bigoplus_{j=1}^{l}\varepsilon^{j}_{y_{j}}\cdot \mathbf{k}[\mathbb{L}_{S}]\right)/N\cong  \left(\bigoplus_{j=1}^{l}\tilde{\varepsilon}^{j}_{y_{j}}\cdot \mathbf{k}[\mathbb{L}_{S}]\right)/\mathbf{k}[\mathcal{U}].
\end{equation*}
Here, $\mathbf{k}[\overline{\mathcal{U}}]$ is generated by
\begin{equation*}
  \tilde{\varepsilon}^{t_{1}}_{y_{t_{1}}}\cdot L_{a_{s_{1}t_{1}}},\tilde{\varepsilon}^{t_{2}}_{y_{t_{2}}}\cdot L_{a_{s_{2}t_{2}}},\dots,\tilde{\varepsilon}^{t_{m}}_{y_{t_{m}}}\cdot L_{a_{s_{m}t_{m}}}.
\end{equation*}
Note that $\mathbf{k}[\mathcal{U}]$ is a submodule of $\bigoplus\limits_{i=1}^{m} \tilde{\varepsilon}^{t_{i}}_{y_{t_{i}}}$. By the isomorphism (\ref{proof:decomposition}), we have
\begin{equation}
   \mathbf{H}\cong \bigoplus_{i=1}^{k}  e^{i}_{x_{i}} \cdot \mathbf{k}[\mathbb{L}_{S}]\oplus \bigoplus_{j\neq t_{1},\dots,t_{m}}\tilde{\varepsilon}^{j}_{y_{j}}\cdot \mathbf{k}[\mathbb{L}_{S}]\oplus \left(\left(\bigoplus_{i=1}^{m}\tilde{\varepsilon}^{t_{i}}_{y_{t_{i}}}\cdot \mathbf{k}[\mathbb{L}_{S}]\right)/\mathbf{k}[\mathcal{U}]\right).
\end{equation}
Rewriting the above isomorphism, we obtain the desired result.
\end{proof}

In particular, if $S\cong \mathbb{N}$ as monoid, the interval topology on $S$ is identified with a discrete topology. We have the following corollary.
\begin{corollary}[\cite{Zomorodian2005}]\label{corollary:posetgroup}
Let $G$ be an ordered abelian group. Let $\mathbf{H}$ be a finitely generated $\mathbf{k}[\mathbb{L}_{S}]$-module. If the monoid $S$ is isomorphic to $\mathbb{N}$, then we have
\begin{equation*}
   \mathbf{H}\cong \left(\bigoplus_{i=1}^{k}  e^{i}_{x_{i}} \cdot \mathbf{k}[\mathbb{L}_{S}]\right)\oplus \left(\bigoplus_{j=1}^{l}\frac{\varepsilon^{j}_{y_{j}}\cdot \mathbf{k}[\mathbb{L}_{S}]}{\varepsilon^{j}_{y_{j}}\cdot L_{w_{j}}\cdot \mathbf{k}[\mathbb{L}_{S}]}\right)
\end{equation*}
for some $k,l$ and $w_{j}\in S,j=1,\dots,l$.
\end{corollary}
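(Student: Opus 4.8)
The plan is to deduce this corollary directly from Theorem \ref{thm:decomclousure} by showing that the interval topology on $S$ collapses to the discrete topology once $S\cong\mathbb{N}$. Theorem \ref{thm:decomclousure} already supplies a decomposition
\begin{equation*}
  \mathbf{H}\cong \bigoplus_{i=1}^{k} e^{i}_{x_{i}}\cdot\mathbf{k}[\mathbb{L}_{S}]\oplus\bigoplus_{j=1}^{l}\frac{\varepsilon^{j}_{y_{j}}\cdot\mathbf{k}[\mathbb{L}_{S}]}{\mathbf{k}[I_{j}]},
\end{equation*}
in which the closure $\overline{I_{j}}$ is principal, namely $\mathbf{k}[\overline{I_{j}}]=\varepsilon^{j}_{y_{j}}\cdot L_{w_{j}}\cdot\mathbf{k}[\mathbb{L}_{S}]$ for suitable $w_{j}\in S$. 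Thus the only gap between this and the asserted statement is the possible discrepancy between the actual denominator $\mathbf{k}[I_{j}]$ and its closed version $\mathbf{k}[\overline{I_{j}}]$, and I will close it by proving $I_{j}=\overline{I_{j}}$.

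First I would pin down the interval topology on $S\cong\mathbb{N}$. Writing the monoid operation on $\mathbb{N}$ additively, the order generating the interval topology is the algebraic order $m\leq n$ iff $n=m+k$ for some $k$, and this is exactly the restriction to $S$ of the partial order on $G$. The generating open sets $\{x\mid a<x<b\}$, $\{x\mid a<x\}$, $\{x\mid x<b\}$ already isolate every point: for $n\geq 1$ one has $\{n\}=\{x\mid n-1<x<n+1\}$, and $\{0\}=\{x\mid x<1\}$. Hence every singleton is open, the interval topology is discrete, and consequently every subset of $S$ is closed. In particular each relation set $I_{j}$ constructed in the proof of Theorem \ref{thm:decomclousure} satisfies $I_{j}=\overline{I_{j}}$.

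With discreteness in hand the conclusion is a direct substitution. Since $I_{j}=\overline{I_{j}}$ we obtain $\mathbf{k}[I_{j}]=\mathbf{k}[\overline{I_{j}}]$, and Theorem \ref{thm:decomclousure} identifies the right-hand side as the cyclic submodule $\varepsilon^{j}_{y_{j}}\cdot L_{w_{j}}\cdot\mathbf{k}[\mathbb{L}_{S}]$. Feeding this equality into the displayed decomposition replaces each denominator $\mathbf{k}[I_{j}]$ by $\varepsilon^{j}_{y_{j}}\cdot L_{w_{j}}\cdot\mathbf{k}[\mathbb{L}_{S}]$, which is precisely the form claimed in the corollary, and recovers the classical structure theorem of \cite{Zomorodian2005}.

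The step I expect to carry the real weight is the verification that the abstract monoid isomorphism $S\cong\mathbb{N}$ transports the interval topology to the genuine order topology of $\mathbb{N}$, so that the singleton computation above is legitimate rather than yielding some coarser topology. This is settled by observing that a monoid isomorphism automatically preserves the algebraic order ($x\leq y$ iff $yx^{-1}\in S$ iff the image relation holds), so it is an order isomorphism and therefore a homeomorphism for the interval topologies; the discreteness computation then applies verbatim. Apart from this bookkeeping there is no genuine analytic or homological obstacle, since everything else is the substitution described above.
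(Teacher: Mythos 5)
Your proposal is correct and follows essentially the same route as the paper: the paper justifies this corollary in one line by remarking that when $S\cong\mathbb{N}$ the interval topology on $S$ is discrete, so every $I_{j}$ is closed and the denominator $\mathbf{k}[I_{j}]$ in Theorem \ref{thm:decomclousure} becomes $\mathbf{k}[\overline{I_{j}}]=\varepsilon^{j}_{y_{j}}\cdot L_{w_{j}}\cdot\mathbf{k}[\mathbb{L}_{S}]$. Your write-up simply makes explicit the two details the paper leaves tacit (singletons are open in the interval topology of $\mathbb{N}$, and a monoid isomorphism transports the divisibility order, hence the topology), both of which you verify correctly.
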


\begin{remark}
The above corollary is essentially the classic result of A. Zomorodian and G. Carlsson for the case that  $G=\mathbb{Z}$ \cite[Theorem 2.1]{Zomorodian2005}. Suppose that $G=\mathbb{Z}[1,\sqrt{2}],S=\mathbb{Z}[1,\sqrt{2}]^{+}$. It is worth noting that the interval topology on $\mathbb{Z}[1,\sqrt{2}]^{+}$ is not the discrete topology. Thus we do not always have $\mathbf{k}[I_{j}]=\varepsilon^{j}_{y_{j}}\cdot L_{w_{j}}\cdot R$ to reduce the decomposition in Theorem \ref{thm:decomclousure}.
\end{remark}
Corollary \ref{corollary:posetgroup} says that $\mathbf{H}=\mathbf{F}\oplus \mathbf{T}$, where $\mathbf{F}$ is the free component and $\mathbf{T}$ is corresponding to the torsion part. For cohomology, the free generator $e^{i}_{x_{i}}$ denotes the generator  come into existence at $-\infty$ and survives to the parameter $x_{i}$. And the torsion $\varepsilon^{j}_{y_{j}}$ denotes the generator appeared at time $w_{j}^{-1}y_{j}$ and  persists until the parameter $y_{j}$.
\begin{example}
The persistent cohomology is indeed a copersistence abelian group. It is  difficult to understand the time from $-\infty$ in the practical sense. One interpretation is that the persistent cohomology prefers to describe the evolutionary process of progressive smaller space. For example, let $\{K_{i}\}_{i\in \mathbb{Z}_{\geq 0}}$ be a family of simplicial complexes such that
$f_{ij}:K_{j}\hookrightarrow K_{i}$ is an inclusion for $i\leq  j$. Then we have a morphism
\begin{equation*}
  f_{i,j}^{\ast}:H^{\ast}(K_{i})\rightarrow H^{\ast}(K_{j}),\quad i\leq j.
\end{equation*}
The $(i,j)$-persistent cohomology $H^{\ast}_{i,j}$ is defined to be $\mathrm{im} (f^{\ast}_{i,j})$. In this example, we change the direction of time to interpret the persistent cohomology, that is, as the parameter $t$ becomes larger from a given parameter $t_{0}$ to $+\infty$, the simplicial complex $K_{t}$ becomes smaller.
\end{example}
For the integrity and strictness, we outline the persistent cohomology theory. Firstly, we restate the definition of $\mathcal{P}$-interval given by A. Zomorodian and G. Carlsson for the copersistent version. We consider the case that $S$ is a finitely generated monoid.
\begin{definition}[$\mathcal{P}$-interval]
Let $\widehat{G}$ be the completion and cocompletion of  $\mathrm{cat}(\mathrm{Cay}(G,S))$ with the initial object $-\infty$ and terminal object $\infty$.
A \emph{$\mathcal{P}$-interval} $(a, b)$ is  an ordered
pair
\begin{equation*}
  a\leq b,\quad a,b\in \widehat{G}.
\end{equation*}
\end{definition}
We can endow the generators $e^{i}_{x_{i}}$ and $\varepsilon^{j}_{y_{j}}$ with $\mathcal{P}$-intervals as
\begin{equation*}
  (-\infty,x_{i}),\quad  (w_{j}^{-1}y_{j},y_{j}),
\end{equation*}
respectively. We define $Q(x,y)=(L_{x}\cdot \mathbf{k}[\mathbb{L}_{S}])/(L_{y}\cdot \mathbf{k}[\mathbb{L}_{S}])$. By Corollary \ref{corollary:posetgroup}, there is a bijection between the finite sets of $\mathcal{P}$-intervals and the finitely generated graded modules over $\mathbf{k}[\mathbb{L}_{S}]$.
The $\mathcal{P}$-intervals $(-\infty ,x_{i}),(w_{j}^{-1}y_{j},y_{j})$ form the \emph{barcode} of $\mathbf{H}$.
We say that a $\mathcal{P}$-interval $(x,y)\supseteq (x',y')$ if $x\leq x'$ and $y'\leq y$. The fundamental characterization of barcodes for cohomology is as follows.
\begin{theorem}[\cite{Zomorodian2005}]\label{theorem:carlsson2}
Let $\mathbf{H}^{n}=\bigoplus\limits_{a\in G}H^{n}_{a}$. Then the rank of $H^{n}_{a,b}$ is equal to the number of  $\mathcal{P}$-intervals  in the barcode of $\mathbf{H}^{n}$ containing $(a,b)$.
\end{theorem}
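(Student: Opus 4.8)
The plan is to reduce the statement to the explicit cyclic decomposition of $\mathbf{H}^{n}$ supplied by Corollary \ref{corollary:posetgroup} and then read off the rank of $H^{n}_{a,b}$ one summand at a time, following the classical Zomorodian--Carlsson argument adapted to the cohomological (right-module) setting. First I would note that $\mathbf{H}^{n}=\bigoplus_{a\in G}H^{n}_{a}$ is itself a $\mathbf{k}[\mathbb{L}_{S}]$-module, since each $L_{x}$ preserves cohomological degree, so Corollary \ref{corollary:posetgroup} applies verbatim to $\mathbf{H}^{n}$ and gives
\[
\mathbf{H}^{n}\cong \bigoplus_{i=1}^{k} e^{i}_{x_{i}}\cdot \mathbf{k}[\mathbb{L}_{S}]\ \oplus\ \bigoplus_{j=1}^{l}\frac{\varepsilon^{j}_{y_{j}}\cdot \mathbf{k}[\mathbb{L}_{S}]}{\varepsilon^{j}_{y_{j}}\cdot L_{w_{j}}\cdot \mathbf{k}[\mathbb{L}_{S}]},
\]
with associated $\mathcal{P}$-intervals $(-\infty,x_{i})$ and $(w_{j}^{-1}y_{j},y_{j})$. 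Because $S\cong \mathbb{N}$, the ring $\mathbf{k}[\mathbb{L}_{S}]$ has at most one-dimensional homogeneous component in each grading, so every cyclic summand above is at most one-dimensional in each $G$-grading.

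The key observation is that, for $a\leq b$, the persistent cohomology is exactly the image of $L_{ba^{-1}}\colon H^{n}_{b}\to H^{n}_{a}$, which is the $(b,a)$-graded component of the module endomorphism ``right multiplication by $L_{ba^{-1}}$''. Such an endomorphism preserves the direct sum decomposition, hence is block diagonal with respect to the summands; therefore $\dim_{\mathbf{k}}H^{n}_{a,b}$ is the sum over all summands of the rank of the restricted map, and since every graded piece is at most one-dimensional each restriction has rank $0$ or $1$. Thus the whole problem collapses to deciding, summand by summand, when that rank equals $1$.

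The core computation is then routine. For a free summand $e^{i}_{x_{i}}\cdot\mathbf{k}[\mathbb{L}_{S}]$, its component at grading $b$ is nonzero iff $b\leq x_{i}$, and as there are no relations the map carries the generator to $L_{x_{i}a^{-1}}e^{i}_{x_{i}}$ at grading $a$, which is nonzero once $a\leq x_{i}$; since $a\leq b$ the only surviving condition is $b\leq x_{i}$, i.e. exactly $(-\infty,x_{i})\supseteq(a,b)$. For a torsion summand the quotient is nonzero precisely on gradings $c$ with $w_{j}^{-1}y_{j}<c\leq y_{j}$, and $L_{ba^{-1}}$ sends the generator at $b$ to the class of $L_{y_{j}a^{-1}}\varepsilon^{j}_{y_{j}}$ at grading $a$, an isomorphism of one-dimensional spaces exactly when both $a$ and $b$ lie in that range, which (using $a\leq b\leq y_{j}$) reduces to $w_{j}^{-1}y_{j}\leq a$ and $b\leq y_{j}$, i.e. exactly $(w_{j}^{-1}y_{j},y_{j})\supseteq(a,b)$. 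Summing these indicator contributions over all summands yields that $\dim H^{n}_{a,b}$ equals the number of $\mathcal{P}$-intervals of $\mathbf{H}^{n}$ containing $(a,b)$.

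The main obstacle I anticipate is bookkeeping rather than conceptual depth. One must keep the inverse gradings ($\deg\alpha=a^{-1}$ for $\alpha\in H^{\ast}_{a}$) straight when locating the grading in which a product $L_{x}\varepsilon^{j}_{y_{j}}$ lives, and, more delicately, one must fix a single half-open convention so that the boundary grading $a=w_{j}^{-1}y_{j}$ is assigned consistently to the living range of the torsion quotient and to the containment relation $\supseteq$. Since the living range of each torsion summand is $(w_{j}^{-1}y_{j},y_{j}]$, matching the convention encoded by the $\mathcal{P}$-interval notation to this range dissolves the strict-versus-nonstrict ambiguity at the left endpoint, after which the per-summand count agrees with interval containment exactly; the remainder is the additivity of rank over the direct sum, which is immediate.
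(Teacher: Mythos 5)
Your proposal is correct and follows essentially the same route as the paper: both arguments invoke the cyclic decomposition of Corollary \ref{corollary:posetgroup}, identify each free or torsion summand with its $\mathcal{P}$-interval, and count, summand by summand, which generators survive from $a$ to $b$. The paper's proof is just a two-sentence sketch of this correspondence, whereas you spell out the block-diagonality of right multiplication by $L_{ba^{-1}}$, the one-dimensionality of graded pieces, and the endpoint convention, which the paper leaves implicit.
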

\begin{proof}
A $\mathcal{P}$-interval $(u,v)\supseteq (a,b)$ gives a direct sum component $e_{v}\cdot L_{vu^{-1}}\cdot R$ of $\mathbf{H}^{n}$, which represents a generator in $\mathbf{H}^{n}$ coming into existence at $u$ and disappearing at $v$. Thus the generators  survived from the time $a$ to the time $b$ are one-one corresponding to the intervals containing $(a,b)$.
\end{proof}

Now, we consider the case that $S$ dose not have to be a finitely generated monoid. We will use that notation ``survival space'' which is essential from the idea of $\mathcal{P}$-interval.
For an element $\alpha\in \mathbf{H}$, let $\alpha(x)$ denote the $x$-degree component of the Cayley graded right $\mathbf{k}[\mathbb{L}_{S}]$-module $\alpha\cdot \mathbf{k}[\mathbb{L}_{S}]$. Then we have $\alpha=\bigoplus\limits_{x\in G}\alpha(x)$.
The \emph{survival space} of $\alpha$ is defined as
\begin{equation*}
  I(\alpha)=\{x|\alpha(x)\neq 0,x\in G\}.
\end{equation*}
\begin{lemma}
$I(e^{i}_{x_{i}})=\{x|a\leq x_{i}\}$ and  $\overline{I(\varepsilon^{j}_{y_{j}})}=\{x|w_{j}y_{j}\leq x\leq y_{j}\}$.
\end{lemma}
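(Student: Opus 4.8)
The plan is to unwind the definition of the survival space through the cyclic right $\mathbf{k}[\mathbb{L}_{S}]$-module generated by each basis element, and then translate the multiplicative indexing by $S$ into the order on $G$. Recall that the right action carries $H^{\ast}_{b}$ into $H^{\ast}_{z^{-1}b}$ via $\alpha\cdot L_{z}=L_{z}\alpha$, since $L_{z}\colon H^{\ast}_{za}\to H^{\ast}_{a}$. Hence the cyclic module $\alpha\cdot\mathbf{k}[\mathbb{L}_{S}]$ is the $\mathbf{k}$-span of $\{L_{z}\alpha\}_{z\in S}$, and its degree-$x$ component $\alpha(x)$ is nonzero precisely when $x=z^{-1}b$ for some $z\in S$ with $L_{z}\alpha$ surviving in the relevant summand. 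Thus computing $I(\alpha)$ reduces to deciding, for each $z\in S$, whether $L_{z}\alpha$ is nonzero, and then recording the degrees $z^{-1}b$.

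First I would treat the free generator $e^{i}_{x_{i}}\in H^{\ast}_{x_{i}}$. Because it spans a free summand $e^{i}_{x_{i}}\cdot\mathbf{k}[\mathbb{L}_{S}]\cong\mathbf{k}[\mathbb{L}_{S}]$ and the $L_{z}$ form a $\mathbf{k}$-basis of the monoid ring, every $L_{z}e^{i}_{x_{i}}$ is nonzero and lies in the single Cayley-degree $z^{-1}x_{i}$. Therefore $I(e^{i}_{x_{i}})=\{z^{-1}x_{i}\mid z\in S\}$. Since $S$ is the positive cone, $z\in S$ is equivalent to $z\geq e$, hence to $z^{-1}x_{i}\leq x_{i}$; conversely any $x\leq x_{i}$ arises this way with $z=x_{i}x^{-1}\in S$. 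This gives $I(e^{i}_{x_{i}})=\{x\mid x\leq x_{i}\}$.

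For the torsion generator $\varepsilon^{j}_{y_{j}}\in H^{\ast}_{y_{j}}$ the cyclic module is $\varepsilon^{j}_{y_{j}}\cdot\mathbf{k}[\mathbb{L}_{S}]/\mathbf{k}[I_{j}]$, so $L_{z}\varepsilon^{j}_{y_{j}}$ survives exactly for $z\in S\setminus I_{j}$. The key input is the identification $\mathbf{k}[\overline{I_{j}}]=\varepsilon^{j}_{y_{j}}\cdot L_{w_{j}}\cdot R$ from Theorem \ref{thm:decomclousure}: the closed death-set is the principal right ideal on $L_{w_{j}}$. Since $\overline{I_{j}}$ is closed and absorbs multiplication by $S$, Lemma \ref{lemma:inf} yields $\overline{I_{j}}=Sw_{j}=\{z\in S\mid z\geq w_{j}\}$, so that $w_{j}=\inf(\overline{I_{j}})$. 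The surviving indices are then, up to closure, $\{z\in S\mid z<w_{j}\}$, and the corresponding degrees $z^{-1}y_{j}$ sweep out the half-open interval $(w_{j}^{-1}y_{j},\,y_{j}]$: at $z=e$ the degree is $y_{j}$, and as $z$ approaches $w_{j}$ the degree tends to $w_{j}^{-1}y_{j}$. Passing to closures and using Lemma \ref{lemma:closure} to commute closure with the quotient map $\phi$ gives $\overline{I(\varepsilon^{j}_{y_{j}})}=\{x\mid w_{j}^{-1}y_{j}\leq x\leq y_{j}\}$.

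The main obstacle is the bookkeeping at the lower endpoint: the survival space is only a half-open interval, because $w_{j}^{-1}y_{j}$ corresponds to the first index $z=w_{j}$ at which the class dies, so the closed formula is genuinely a statement about $\overline{I(\varepsilon^{j}_{y_{j}})}$ rather than $I(\varepsilon^{j}_{y_{j}})$ itself. Making this precise forces a clean separation of the death-set $I_{j}$ from its closure $\overline{I_{j}}$, handled by Lemma \ref{lemma:inf}, together with the faithful transport of closures between $S$, $\mathbb{L}_{S}$, and the degree set $\{x\mid x\leq y_{j}\}$ along the order-reversing bijection $z\mapsto z^{-1}y_{j}$, handled by Lemma \ref{lemma:closure}. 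It is exactly the abelian and totally ordered hypotheses on $G$ that make this map an order anti-isomorphism onto an interval, guaranteeing that closures correspond as claimed.
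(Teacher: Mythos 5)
Your proof is correct and takes essentially the same route as the paper: the paper's entire proof of this lemma is the single sentence ``It is a direct result of Theorem~\ref{thm:decomclousure}'', and your argument is precisely the detailed unwinding of that theorem --- the free cyclic summands give the ray $\{x\mid x\leq x_{i}\}$ because every $e^{i}_{x_{i}}\cdot L_{z}$ is nonzero of degree $z^{-1}x_{i}$, and the torsion summands give the interval determined by $\overline{I_{j}}=Sw_{j}$ via Lemma~\ref{lemma:inf}, with Lemma~\ref{lemma:closure} transporting closures, exactly the toolkit the paper's citation implicitly invokes. Two remarks: your conclusion silently corrects what are evidently typos in the paper's statement (the set $\{x\mid a\leq x_{i}\}$ should read $\{x\mid x\leq x_{i}\}$, and $w_{j}y_{j}$ should read $w_{j}^{-1}y_{j}$, as the paper's own $\mathcal{P}$-interval discussion after Corollary~\ref{corollary:posetgroup} confirms), and your closure step at the left endpoint tacitly assumes the order is dense --- when $S\cong\mathbb{N}$ the survival set $\{x\mid w_{j}^{-1}y_{j}<x\leq y_{j}\}$ is already closed in the discrete interval topology and its closure does not pick up $w_{j}^{-1}y_{j}$, a boundary case that the lemma's statement and the paper's one-line proof gloss over as well.
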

\begin{proof}
It is a direct result of Theorem \ref{thm:decomclousure}.
\end{proof}
For convenience, we rewrite the generators $e_{x_{1}}^{1},\dots,e_{x_{k}}^{k},\varepsilon_{y_{1}}^{1},\dots,\varepsilon_{y_{l}}^{l}$ of $\mathbf{H}$ as $\alpha^{1},\alpha^{2},\dots,\alpha^{q}$.
By Theorem \ref{thm:decomclousure}, we can write
\begin{equation*}
  \overline{I(\alpha^{i})}=[a_{i},b_{i}],\quad i=1,\dots,q,
\end{equation*}
where $a_{i},b_{i}\in \widehat{G}$.
\begin{lemma}\label{lemma:interval}
Let $\alpha=\sum\limits_{i\in J}\lambda_{i}\alpha^{i}\cdot L_{z_{i}}$, where $J$ is a finite index set and $\lambda_{i}\neq 0\in \mathbf{k}, L_{z_{i}}\in \mathbf{k}[\mathbb{L}_{S}]$.
Then we obtain $I(\alpha)=\bigcup\limits_{i\in J}I(\alpha^{i})$.
\end{lemma}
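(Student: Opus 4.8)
The plan is to read $I(\alpha)$ off directly from the direct sum decomposition of $\mathbf{H}$ furnished by Theorem \ref{thm:decomclousure}. Once that decomposition is fixed, each generator $\alpha^{i}$ spans its own cyclic direct summand (a free copy of $\mathbf{k}[\mathbb{L}_{S}]$ for the $e^{i}_{x_{i}}$, a quotient $\varepsilon^{j}_{y_{j}}\cdot\mathbf{k}[\mathbb{L}_{S}]/\mathbf{k}[I_{j}]$ for the torsion generators), and $\mathbf{H}$ is an honest \emph{internal} direct sum of these. Since $I(\beta)$ is by definition the set of gradings $x\in G$ at which the degree-$x$ component $\beta(x)$ of the cyclic module $\beta\cdot\mathbf{k}[\mathbb{L}_{S}]$ is non-zero, computing $I(\alpha)$ amounts to testing, grading by grading, whether $\alpha\cdot L_{w}$ has a non-zero degree-$x$ part for some $L_{w}\in\mathbf{k}[\mathbb{L}_{S}]$.

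First I would expand, using the right-module structure and the composition rule $L_{z_{i}}\cdot L_{w}=L_{z_{i}w}$,
\[
\alpha\cdot L_{w}=\sum_{i\in J}\lambda_{i}\,\alpha^{i}\cdot L_{z_{i}w},
\]
and note that the $i$-th summand of this expression lives in the $i$-th cyclic piece $\alpha^{i}\cdot\mathbf{k}[\mathbb{L}_{S}]$. Because the decomposition of Theorem \ref{thm:decomclousure} is a direct sum, the degree-$x$ component of $\alpha\cdot\mathbf{k}[\mathbb{L}_{S}]$ splits as the direct sum of the degree-$x$ components of the individual shifted generators $\lambda_{i}\,(\alpha^{i}\cdot L_{z_{i}})\cdot\mathbf{k}[\mathbb{L}_{S}]$. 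Hence $\alpha(x)\neq 0$ precisely when $\lambda_{i}(\alpha^{i}\cdot L_{z_{i}})(x)\neq 0$ for at least one $i\in J$; as each $\lambda_{i}\neq 0$ in the field $\mathbf{k}$, the scalar is harmless, so this occurs exactly when $x$ lies in the survival space of some $\alpha^{i}$ in the list. Collecting over all $x$ delivers both inclusions and hence the equality $I(\alpha)=\bigcup_{i\in J}I(\alpha^{i})$.

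The step I expect to be the main obstacle is ruling out cross-cancellation: a priori, for a fixed grading $x$ several terms $\lambda_{i}\,\alpha^{i}\cdot L_{z_{i}w}$ could be simultaneously non-zero and conspire to cancel, which would shrink $I(\alpha)$ below the union. This is exactly where the internal direct sum structure is indispensable, since distinct summands meet only in $0$, a sum of non-zero homogeneous elements sitting in distinct summands can never vanish, and conversely the vanishing of $\alpha(x)$ forces every summand to vanish. A secondary point to verify is the behaviour inside each torsion summand $\varepsilon^{j}_{y_{j}}\cdot\mathbf{k}[\mathbb{L}_{S}]/\mathbf{k}[I_{j}]$: there $\alpha^{i}\cdot L_{z_{i}w}$ dies once $z_{i}w$ enters the annihilating set, so one records only the region where the shifted generator $\alpha^{i}\cdot L_{z_{i}}$ genuinely survives; this region is precisely the part of $I(\alpha^{i})$ reachable from $\deg(\alpha^{i}\cdot L_{z_{i}})$, and it is captured by the interval description $\overline{I(\alpha^{i})}=[a_{i},b_{i}]$ in $\widehat{G}$. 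With these two observations in hand the grading-by-grading comparison is routine and the lemma follows.
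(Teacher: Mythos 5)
Your proposal takes the same route as the paper's own proof: expand $\alpha\cdot L_{w}=\sum_{i\in J}\lambda_{i}\,\alpha^{i}\cdot L_{z_{i}w}$ and use the direct-sum decomposition of Theorem \ref{thm:decomclousure} to rule out cancellation among terms lying in distinct cyclic summands. That part is sound (modulo the harmless imprecision that the degree-$x$ component of $\alpha\cdot\mathbf{k}[\mathbb{L}_{S}]$ is a line sitting inside, not equal to, the direct sum of the components of the pieces), and what it actually establishes is
\[
I(\alpha)\;=\;\bigcup_{i\in J} I\bigl(\alpha^{i}\cdot L_{z_{i}}\bigr).
\]

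The gap is the final step, where you replace $I(\alpha^{i}\cdot L_{z_{i}})$ by $I(\alpha^{i})$, asserting that nonvanishing ``occurs exactly when $x$ lies in the survival space of some $\alpha^{i}$.'' Only the inclusion $I(\alpha^{i}\cdot L_{z_{i}})\subseteq I(\alpha^{i})$ is automatic; the reverse inclusion fails in general, because right multiplication by $L_{z_{i}}$ pushes the generator down to a smaller grading, and the cyclic module it generates can never reach back up to the gradings strictly between $z_{i}^{-1}b_{i}$ and $b_{i}$ on which $\alpha^{i}$ itself still survives. Concretely, for $G=\mathbb{Z}$, $S=\mathbb{Z}_{\geq 0}$ and a free generator $e\in H^{\ast}_{5}$, one has $I(e)=\{x\mid x\leq 5\}$ but $I(e\cdot L_{2})=\{x\mid x\leq 3\}$, so for $\alpha=e\cdot L_{2}$ the claimed equality $I(\alpha)=I(e)$ fails on the window $3<x\leq 5$. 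Your own closing paragraph records exactly this phenomenon (the shifted generator survives only on ``the part of $I(\alpha^{i})$ reachable from $\deg(\alpha^{i}\cdot L_{z_{i}})$''), yet then declares the lemma proved, leaving the inclusion $\bigcup_{i}I(\alpha^{i})\subseteq I(\alpha)$ with no argument at all. You should know that the paper's proof makes the same silent leap, in the guise of the identity $\alpha^{i}(z_{i}x)\cdot L_{z_{i}}=\alpha^{i}(x)$, which fails on the same window; so your attempt faithfully mirrors the published argument, but neither of them proves the statement as written --- what both arguments genuinely yield is the truncated equality $I(\alpha)=\bigcup_{i\in J}I(\alpha^{i}\cdot L_{z_{i}})$, which coincides with $\bigcup_{i\in J}I(\alpha^{i})$ only when it is intersected with the set of gradings lying below that of $\alpha$ (equivalently, only when all $z_{i}=e$).
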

\begin{proof}
If $x\in \bigcup\limits_{i\in J}I(\alpha^{i})$, then $x\in I(\alpha^{i})$ for some $i$. Suppose that $\alpha(x)=0$. It follows that
\begin{equation*}
  \alpha(x)=\sum\limits_{i\in J}\lambda_{i}\alpha^{i}(z_{i}x)\cdot L_{z_{i}}=\sum\limits_{i\in J}\lambda_{i}\alpha^{i}(x)=0.
\end{equation*}
Thus we have that $\alpha^{i}(x)=0$ by the linear independence. This contradicts the fact that $x\in I(\alpha^{i})$.
Conversely, if $x\notin \bigcup\limits_{i\in J}I(\alpha^{i})$, then $\alpha^{i}(x)=0$ for all $i$. Hence one has $\alpha(x)=0$.
\end{proof}

By Theorem \ref{theorem:twist}, the persistence-cup product on the $G$-graded $\mathbf{k}[\mathbb{L}_{S}]$-twisted algebra $\mathbf{H}$ is determined by the product on the basis. The following theorem says that the persistence-cup product can read some persistent information of a cup product element. For a subset $U\in G$, we denote
\begin{equation*}
  \sup(U)=\coprod\limits_{x\in U}x,\quad\inf(U)=\prod\limits_{x\in U}x
\end{equation*}
the supremum and infimum of $U$ with respect to the order $\leq$ in $\hat{G}$, respectively.
\begin{theorem}\label{theorem:main}
Let $G$ be an ordered abelian group. If $\alpha^{i}\cdot\alpha^{j}$ is nontrivial, then
\begin{equation*}
  \min(b_{i},b_{j})\leq \sup(I(\alpha^{i}\cdot\alpha^{j})),\quad \max(a_{i},a_{j})\leq \inf(I(\alpha^{i}\cdot\alpha^{j})).
\end{equation*}
\end{theorem}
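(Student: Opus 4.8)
The plan is to reduce the theorem to a single pointwise identity for the persistence-cup product of two homogeneous generators. Write $b_i,b_j$ for the cohomology indices of $\alpha^i,\alpha^j$, so that $\alpha^i\in H^*_{b_i}$ and $\alpha^j\in H^*_{b_j}$; these indices coincide with $\sup I(\alpha^i)$ and $\sup I(\alpha^j)$ because $\alpha^i\cdot L_z\in H^*_{z^{-1}b_i}$ runs over all indices $\le b_i$ as $z\in S$, the top index $b_i$ being occupied by $\alpha^i\neq 0$ itself. Since $G$ is totally ordered the categorical product is the meet, $b_i\times b_j=\min(b_i,b_j)=:m$, and by definition of the persistence-cup product $\gamma:=\alpha^i\cdot\alpha^j=(L_{b_im^{-1}}\alpha^i)\cup(L_{b_jm^{-1}}\alpha^j)\in H^*_m$, where $\cup$ is the ordinary cup product of $H^*_m$.

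The key step I would carry out first is the identity $\gamma(x)=\alpha^i(x)\cup\alpha^j(x)$ for every $x\le m$. For such $x$ the $H^*_x$-component of $\gamma\cdot\mathbf{k}[\mathbb{L}_{S}]$ is $\gamma(x)=L_{mx^{-1}}\gamma$. Applying $L_{mx^{-1}}$ to the displayed expression for $\gamma$ and using that $L_{mx^{-1}}=H^*(\mathcal{K}_{x,m};\mathbf{k})$ is a ring homomorphism for the cup product (naturality, \cite[Proposition 3.10]{Hatcher}) together with the composition law $L_{mx^{-1}}\circ L_{b_im^{-1}}=L_{b_ix^{-1}}$, I obtain $\gamma(x)=(L_{b_ix^{-1}}\alpha^i)\cup(L_{b_jx^{-1}}\alpha^j)$. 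Finally $L_{b_ix^{-1}}\alpha^i=\alpha^i\cdot L_{b_ix^{-1}}$ is exactly the $H^*_x$-component $\alpha^i(x)$ (note $b_ix^{-1}\in S$ since $x\le m\le b_i$), and likewise for $j$, which gives the identity. Equivalently, one may invoke Theorem \ref{theorem:representation} to write $L_{mx^{-1}}(\alpha^i\cdot\alpha^j)=(L_{mx^{-1}}\alpha^i)\cdot(L_{mx^{-1}}\alpha^j)$ and then unwind this persistence-cup product back down to $H^*_x$.

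Both inequalities then follow immediately. If $\gamma(x)\neq 0$ then $\alpha^i(x)\neq 0$ and $\alpha^j(x)\neq 0$, so $x\in I(\alpha^i)\cap I(\alpha^j)$; hence $I(\gamma)\subseteq I(\alpha^i)\cap I(\alpha^j)$. Because passing to the closure leaves the infimum unchanged, $\inf I(\alpha^i\cdot\alpha^j)\ge\max(\inf I(\alpha^i),\inf I(\alpha^j))=\max(a_i,a_j)$, which is the second inequality. For the first, the hypothesis that $\alpha^i\cdot\alpha^j$ is nontrivial says $\gamma\neq 0$ in $H^*_m$; since $\gamma(m)=\gamma$, we get $m\in I(\gamma)$, whence $\sup I(\alpha^i\cdot\alpha^j)\ge m=\min(b_i,b_j)$.

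The principal obstacle is establishing the pointwise identity cleanly, since it demands careful bookkeeping of the multiplicative cohomology indices (the several maps carry inverses such as $mx^{-1}$, $b_im^{-1}$ and $b_ix^{-1}$) and rests on the naturality of the cup product, which is what makes pushing the class $\gamma$ down to $H^*_x$ agree with cupping the pushed-down factors. Once the identity is in place the containment $I(\gamma)\subseteq I(\alpha^i)\cap I(\alpha^j)$ and the non-vanishing at $m$ are routine, leaving only the elementary observations that closure preserves $\inf$ and $\sup$ and that $b_i\times b_j=\min(b_i,b_j)$ in a totally ordered group.
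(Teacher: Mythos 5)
Your proof is correct, but it takes a genuinely different route from the paper's. The paper works inside its decomposition machinery: it expands $\alpha^{i}\cdot\alpha^{j}=\sum_{k\in J}\lambda_{k}\alpha^{k}\cdot L_{z_{k}}$ in the module generators, invokes Lemma \ref{lemma:interval} to identify $I(\alpha^{i}\cdot\alpha^{j})$ with $\bigcup_{k\in J}I(\alpha^{k})$, and obtains the upper inequality by comparing degrees ($\deg(\alpha^{i}\cdot\alpha^{j})=z_{k}^{-1}b_{k}$, whence $\sup I(\alpha^{i}\cdot\alpha^{j})=\min(b_{i},b_{j})\coprod_{k}z_{k}$); for the lower inequality it uses the twisted-algebra relation $(\alpha^{i}\cdot L_{x})\cdot\alpha^{j}=(\alpha^{i}\cdot\alpha^{j})\cdot L_{(x^{-1}b_{i}\times b_{j})^{-1}(b_{i}\times b_{j})}$ from Theorem \ref{theorem:twist} together with a case analysis on whether $I(\alpha^{i})$ contains its infimum. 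You instead prove the pointwise identity $\gamma(x)=\alpha^{i}(x)\cup\alpha^{j}(x)$ for all $x\leq\min(b_{i},b_{j})$, which follows from naturality of the cup product under $L_{mx^{-1}}$ (\cite[Proposition 3.10]{Hatcher}, the same fact the paper uses elsewhere) and the composition law $L_{mx^{-1}}\circ L_{b_{i}m^{-1}}=L_{b_{i}x^{-1}}$; from it you read off both the containment $I(\alpha^{i}\cdot\alpha^{j})\subseteq I(\alpha^{i})\cap I(\alpha^{j})$, which gives the lower bound with no case analysis, and the membership $\min(b_{i},b_{j})\in I(\alpha^{i}\cdot\alpha^{j})$, which gives the upper bound and in fact shows the supremum is attained. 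Your argument is more elementary and slightly stronger as a statement about supports: it bypasses Lemma \ref{lemma:interval}, the twisted-algebra relation, and even the finite generation of $\mathbf{H}$, needing only that $\overline{I(\alpha^{i})}=[a_{i},b_{i}]$ so that $\inf I(\alpha^{i})=a_{i}$. What it does not provide, and what the paper's expansion in generators does, is the expression of $I(\alpha^{i}\cdot\alpha^{j})$ as a union of generator supports, which is what ties the product back to the barcode decomposition of Theorem \ref{thm:decomclousure}.
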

\begin{proof}
$(i)$ Suppose that
\begin{equation*}
  \alpha^{i}\cdot\alpha^{j}=\sum\limits_{k\in J}\lambda_{k}\alpha^{k}\cdot L_{z_{k}},\quad\lambda_{k}\in \mathbf{k}, z_{k}\in S
\end{equation*}
for some finite index set $J$. By Lemma \ref{lemma:interval}, we have $I(\alpha^{i}\cdot\alpha^{j})=\bigcup\limits_{i\in J}I(\alpha^{i})$.
Note that the grading of $\alpha^{i}\cdot\alpha^{j}$ is
\begin{equation*}
  \deg (\alpha^{i}\cdot\alpha^{j})=b_{i}\times b_{j}=\min(b_{i},b_{j})=z_{k}^{-1}b_{k}
\end{equation*}
for any $k\in J$. We have that
\begin{equation*}
  \sup(I(\alpha^{i}\cdot\alpha^{j}))=\coprod_{k\in J}b_{k}=\coprod_{k\in J}\min(b_{i},b_{j}) z_{k}=\min(b_{i},b_{j})\coprod_{k\in J} z_{k}.
\end{equation*}
Since $J$ is finite, we have $\min(b_{i},b_{j})^{-1}\sup(I(\alpha^{i}\cdot\alpha^{j}))=\coprod\limits_{k\in J} z_{k}\in S$. It follows that
\begin{equation*}
  \min(b_{i},b_{j})\leq \sup(I(\alpha^{i}\cdot\alpha^{j})).
\end{equation*}

$(ii)$ Suppose that $\alpha^{i}\cdot L_{x}=0$. By the proof of Theorem \ref{theorem:twist}, we have
\begin{equation*}
  0=(\alpha^{i}\cdot L_{x})\cdot \alpha^{j}=(\alpha^{i}\cdot\alpha^{j})\cdot L_{(x^{-1}b_{i}\times b_{j})^{-1}(b_{i}\times b_{j})}.
\end{equation*}
It follows that $(x^{-1}b_{i}\times b_{j})^{-1}(b_{i}\times b_{j})> c_{k}^{-1}z_{k}^{-1}b_{k}=c_{k}^{-1}(b_{i}\times b_{j})$ for any $c_{k}\in I(\alpha_{k})$. One has that $x^{-1}b_{i}\times b_{j}<c_{k}$ for any $c_{k}\in I(\alpha_{k})$.

We first consider the case $I(\alpha^{i})=\{x\in G| a_{i}<x\leq b_{i}\}$. Then we have $\alpha^{i}\cdot L_{a_{i}^{-1}b_{i}}=0$.
It follows that $a_{i}\times b_{j}<c_{k}$ for any $c_{k}\in I(\alpha^{k})$. Hence, we obtain $a_{i}\times b_{j}\leq a_{k}$ for any $k\in J$.  Note that $\alpha^{i}\cdot\alpha^{j}$ is nontrivial. We have $a_{i}\times b_{j}=a_{i}$, which shows that $a_{i}\leq a_{k}$ for any $k\in J$. It follows that $a_{i}\leq \prod\limits_{k\in J} a_{k}=\inf (I(\alpha^{i}\cdot\alpha^{j}))$.
Now, we consider the case $I(\alpha^{i})=\{x\in G| a_{i}\leq x\leq b_{i}\}$. Then we have $\alpha^{i}\cdot L_{x}=0$ for any $x>a_{i}^{-1}b_{i}$.
It follows that $x^{-1}b_{i}\times b_{j}<c_{k}$ for any $c_{k}\in I(\alpha^{k})$. Since $x^{-1}b_{i}<a_{i}$, we have $x^{-1}b_{i}<c_{k}$ for any $c_{k}\in I(\alpha^{k})$ and $x>a_{i}^{-1}b_{i}$.
It follows that $a_{i}\leq c_{k}$ for any $c_{k}\in I(\alpha^{k})$, which leads to $a_{i}\leq \inf (I(\alpha^{i}\cdot\alpha^{j}))$.
Similarly, we have $a_{j}\leq \inf (I(\alpha^{i}\cdot\alpha^{j}))$. Hence, we obtain
\begin{equation*}
  \max(a_{i},a_{j})\leq \inf (I(\alpha^{i}\cdot\alpha^{j})).
\end{equation*}
The desired result follows.
%
%for any $c\in I(\alpha^{i}\cdot\alpha^{j})$. Hence, we obtain that $x^{-1}b_{i}\times b_{j}<c$ for any $c\in I(\alpha^{i}\cdot\alpha^{j})$. We first consider the case $I(\alpha^{i})=\{x\in G| a_{i}<x\leq b_{i}\}$. Then we have $\alpha^{i}\cdot L_{a_{i}^{-1}b_{i}}=0$.
%It follows that $a_{i}\times b_{j}<c$ for any $c\in I(\alpha^{i}\cdot\alpha^{j})$. Thus one have $\inf (I(\alpha^{i}\cdot\alpha^{j}))\geq a_{i}\times b_{j}$. Note that $\alpha^{i}\cdot\alpha^{j}$ is nontrivial. We have $a_{i}\times b_{j}=a_{i}$. It follows that $\inf (I(\alpha^{i}\cdot\alpha^{j}))\geq a_{i}$.
%Now, we consider the case $I(\alpha^{i})=\{x\in G| a_{i}\leq x\leq b_{i}\}$. Then we have $\alpha^{i}\cdot L_{x}=0$ for any $x>a_{i}^{-1}b_{i}$.
%It follows that $x^{-1}b_{i}\times b_{j}<c$ for any $c\in I(\alpha^{i}\cdot\alpha^{j})$. Since $x^{-1}b_{i}<a_{i}$, we have $x^{-1}b_{i}<c$ for any $c\in I(\alpha^{i}\cdot\alpha^{j})$ and $x>a_{i}^{-1}b_{i}$.
%Hence, one has that $\inf(I(\alpha^{i}\cdot\alpha^{j}))\geq a_{i}$.
%Similarly, we have $\inf (I(\alpha^{i}\cdot\alpha^{j}))\geq a_{j}$. Hence, we obtain
%\begin{equation*}
%  \inf (I(\alpha^{i}\cdot\alpha^{j}))\geq \max(a_{i},a_{j}).
%\end{equation*}
%The desired result follows.
\end{proof}

\begin{example}
Consider the filtrations of torus as follows.
\begin{itemize}
  \item[$(i)$] Grow from a rectangle to a cylinder and then to a torus.
   \begin{figure}[H]
\centering
  \subfigure[$\mathcal{K}_{0}$]{
\begin{minipage}[t]{0.30\textwidth}
\centering
\includegraphics[width=1.6in]{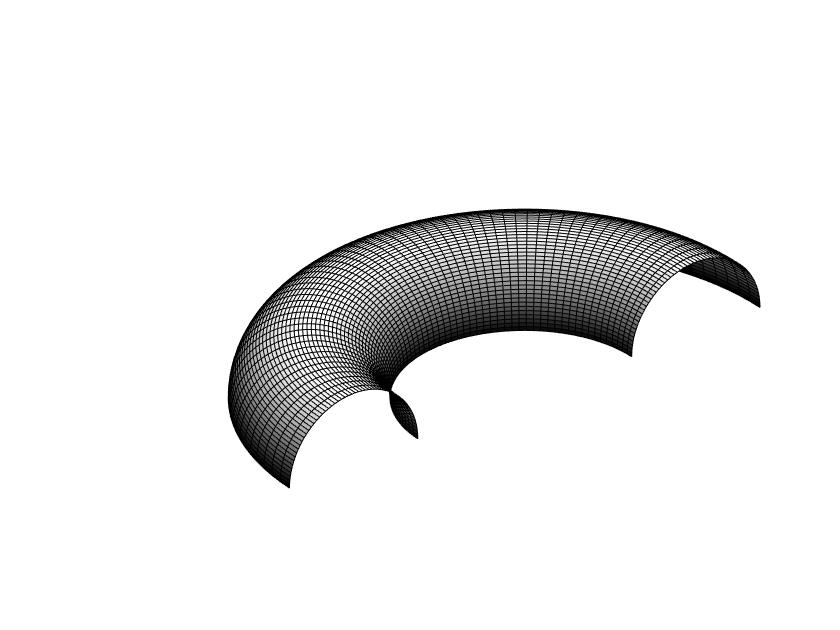}
\end{minipage}
}  \subfigure[$\mathcal{K}_{1}$]{
\begin{minipage}[t]{0.30\textwidth}
\centering
\includegraphics[width=1.6in]{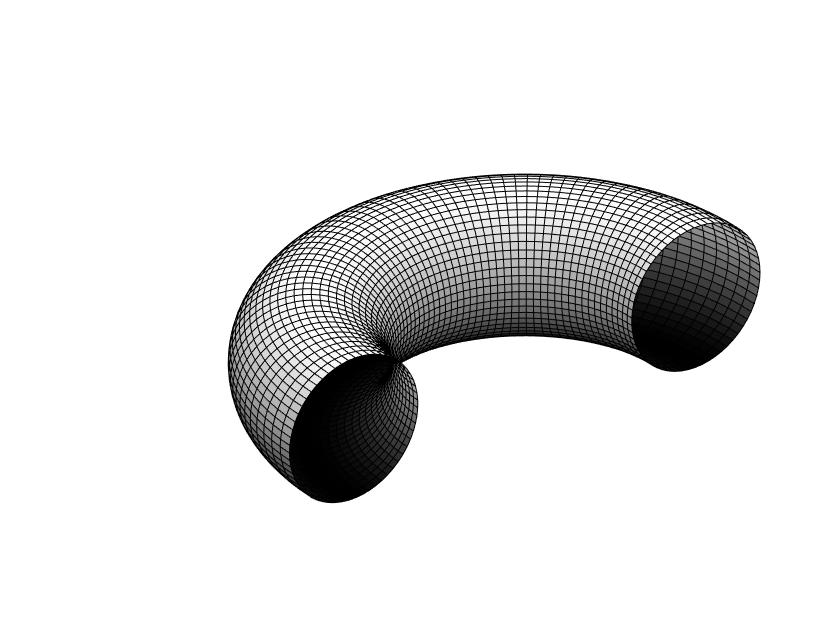}
\end{minipage}
}  \subfigure[$\mathcal{K}_{2}$]{
\begin{minipage}[t]{0.30\textwidth}
\centering
\includegraphics[width=1.6in]{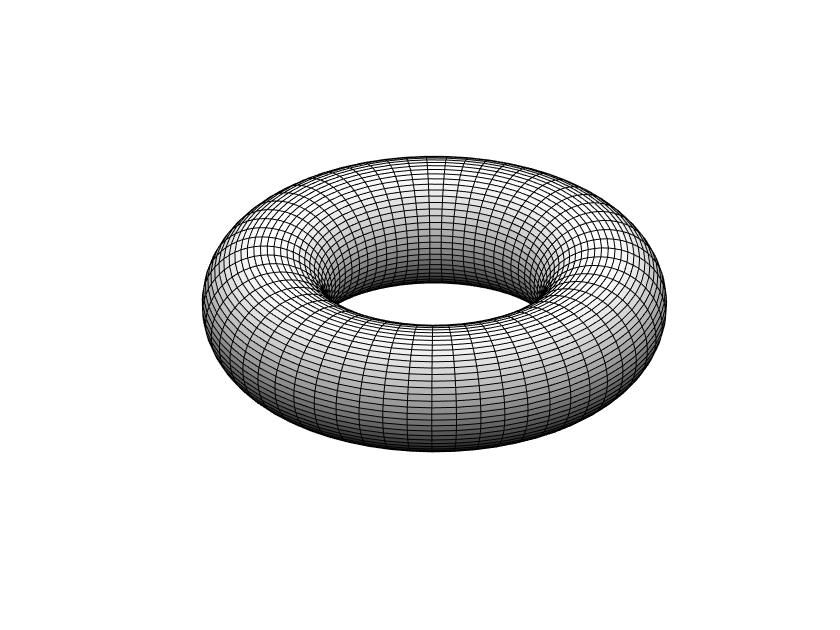}
\end{minipage}
}
%\caption{The filtration of $\mathcal{K}$.}
\end{figure}
We set $\mathcal{K}_{0}\simeq pt$, $\mathcal{K}_{1}\simeq S^{1}$, $\mathcal{K}_{2}\simeq S^{1}\times S^{1}$, and $\mathcal{K}_{3}\simeq S^{1}\times S^{1}$.
  \item[$(ii)$] Grow from a rectangle to an incomplete torus, and finally come into a torus.
   \begin{figure}[H]
\centering
  \subfigure[$\mathcal{K}'_{0}$]{
\begin{minipage}[t]{0.30\textwidth}
\centering
\includegraphics[width=1.6in]{tori2.jpg}
\end{minipage}
}  \subfigure[$\mathcal{K}'_{1}$]{
\begin{minipage}[t]{0.30\textwidth}
\centering
\includegraphics[width=1.6in]{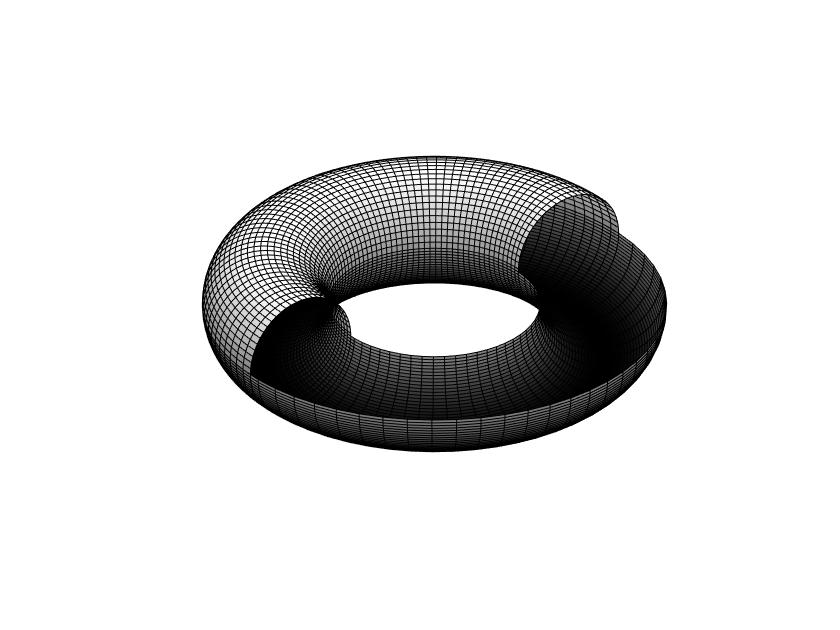}
\end{minipage}
}  \subfigure[$\mathcal{K}'_{2}$]{
\begin{minipage}[t]{0.30\textwidth}
\centering
\includegraphics[width=1.6in]{tori1.jpg}
\end{minipage}
}
%\caption{The filtration of $\mathcal{K}'$.}
\end{figure}
We set $\mathcal{K}'_{0}\simeq pt$, $\mathcal{K}'_{1}\simeq S^{1}\vee S^{1}$, $\mathcal{K}'_{2}\simeq S^{1}\times S^{1}$, and $\mathcal{K}'_{3}\simeq S^{1}\times S^{1}$.
  \item[$(iii)$] Start with a torus and gradually fill the horizontal and vertical circles.
   \begin{figure}[H]
\centering
  \subfigure[$\mathcal{K}''_{0}$]{
\begin{minipage}[t]{0.30\textwidth}
\centering
\includegraphics[width=1.6in]{tori1.jpg}
\end{minipage}
}  \subfigure[$\mathcal{K}''_{1}$]{
\begin{minipage}[t]{0.30\textwidth}
\centering
\includegraphics[width=1.6in]{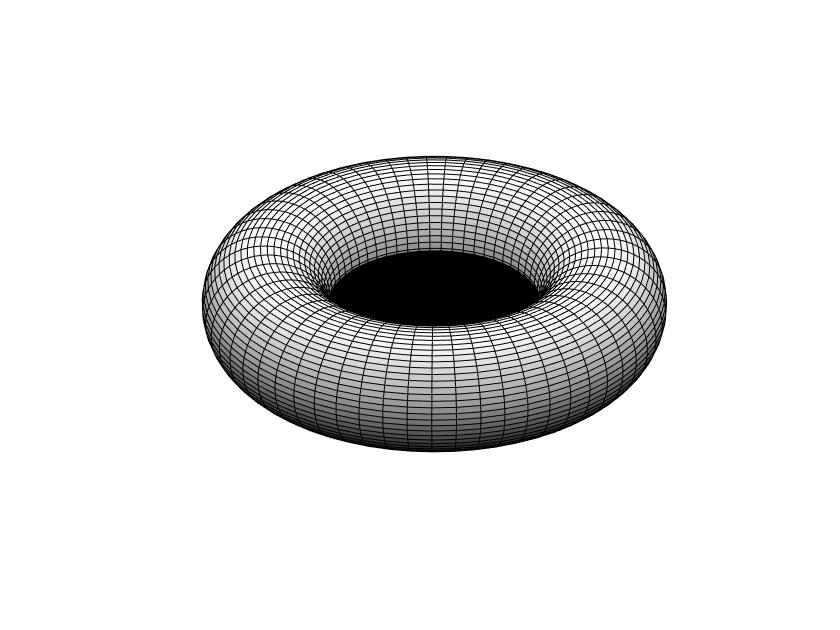}
\end{minipage}
}  \subfigure[$\mathcal{K}''_{2}$]{
\begin{minipage}[t]{0.30\textwidth}
\centering
\includegraphics[width=1.6in]{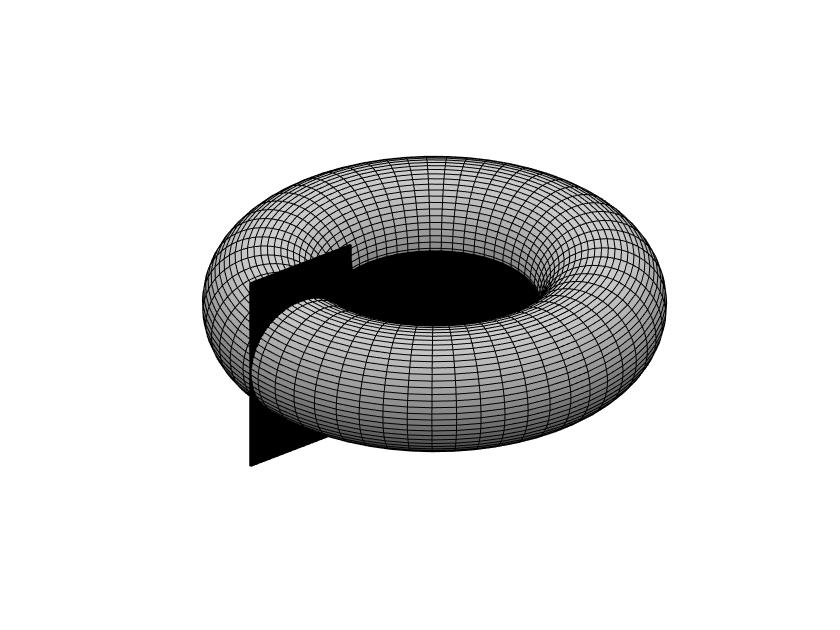}
\end{minipage}
}
%\caption{The filtration of $\mathcal{K}''$.}
\end{figure}
We set $\mathcal{K}''_{0}\simeq S^{1}\times S^{1}$, $\mathcal{K}''_{1}\simeq S^{1}\vee S^{2}$, $\mathcal{K}''_{2}\simeq S^{2}$, and $K''_{3}\simeq S^{2}$.
\end{itemize}
\begin{figure}[H]
\centering
\subfigure[The barcode of $H^{\ast}(\mathcal{K}$)]{
    \begin{tikzpicture}[scale=1.6]
        \draw[step=.5cm,gray,ultra thin, dashed] (-0.2,-0.2) grid (1.9,1.4);%
        \begin{scope}[thick]

            \draw[->] (-0.2,0) -- (2,0);
            \draw[] (0,-0.2) -- (0,1.5);
        \end{scope}
         \draw[-,thick,black!60] (0.5,0.4) -- (1.5,0.4);
         \draw[-,thick,black!60] (1,0.7) -- (1.5,0.7);
         \draw[-,thick,black!60] (1,1.1) -- (1.5,1.1);

\node [font=\fontsize{8}{6}] (node001) at (0.1,-0.1){$O$};
\node [font=\fontsize{8}{6}] (node001) at (2,-0.1){$t$};
\node [font=\fontsize{8}{6}] (node001) at (-0.15,0.4){$\alpha^{1}$};
\node [font=\fontsize{8}{6}] (node001) at (-0.15,0.7){$\alpha^{2}$};
\node [font=\fontsize{8}{6}] (node001) at (0.6,1.2){$\alpha^{1}\cdot\alpha^{2}$};

\node [font=\fontsize{8}{6}] (node001) at (0.5,-0.1){$1$};
\node [font=\fontsize{8}{6}] (node001) at (1,-0.1){$2$};
\node [font=\fontsize{8}{6}] (node001) at (1.5,-0.1){$3$};
        \end{tikzpicture}}\qquad
\subfigure[The barcode of $H^{\ast}(\mathcal{K}'$)]{
    \begin{tikzpicture}[scale=1.6]
        \draw[step=.5cm,gray,ultra thin, dashed] (-0.2,-0.2) grid (1.9,1.4);%
        \begin{scope}[thick]

            \draw[->] (-0.2,0) -- (2,0);
            \draw[] (0,-0.2) -- (0,1.5);
        \end{scope}
         \draw[-,thick,black!60] (0.5,0.4) -- (1.5,0.4);
         \draw[-,thick,black!60] (0.5,0.7) -- (1.5,0.7);
         \draw[-,thick,black!60] (1,1.1) -- (1.5,1.1);

\node [font=\fontsize{8}{6}] (node001) at (0.1,-0.1){$O$};
\node [font=\fontsize{8}{6}] (node001) at (2,-0.1){$t$};
\node [font=\fontsize{8}{6}] (node001) at (0.3,0.4){$\alpha^{1}$};
\node [font=\fontsize{8}{6}] (node001) at (0.3,0.7){$\alpha^{2}$};
\node [font=\fontsize{8}{6}] (node001) at (0.6,1.2){$\alpha^{1}\cdot\alpha^{2}$};

\node [font=\fontsize{8}{6}] (node001) at (0.5,-0.1){$1$};
\node [font=\fontsize{8}{6}] (node001) at (1,-0.1){$2$};
\node [font=\fontsize{8}{6}] (node001) at (1.5,-0.1){$3$};
        \end{tikzpicture}}\qquad
\subfigure[The barcode of $H^{\ast}(\mathcal{K}''$)]{
    \begin{tikzpicture}[scale=1.6]
        \draw[step=.5cm,gray,ultra thin, dashed] (-0.2,-0.2) grid (1.9,1.4);%
        \begin{scope}[thick]

            \draw[->] (-0.2,0) -- (2,0);
            \draw[] (0,-0.2) -- (0,1.5);
        \end{scope}
         \draw[-,thick,black!60] (0,0.4) -- (1,0.4);
         \draw[-,thick,black!60] (0,0.7) -- (0.5,0.7);
         \draw[-,thick,black!60] (0,1.1) -- (1.5,1.1);

\node [font=\fontsize{8}{6}] (node001) at (0.1,-0.1){$O$};
\node [font=\fontsize{8}{6}] (node001) at (2,-0.1){$t$};
\node [font=\fontsize{8}{6}] (node001) at (-0.15,0.4){$\alpha^{1}$};
\node [font=\fontsize{8}{6}] (node001) at (-0.15,0.7){$\alpha^{2}$};
\node [font=\fontsize{8}{6}] (node001) at (1.5,1.2){$\alpha^{1}\cdot\alpha^{2}$};

\node [font=\fontsize{8}{6}] (node001) at (0.5,-0.1){$1$};
\node [font=\fontsize{8}{6}] (node001) at (1,-0.1){$2$};
\node [font=\fontsize{8}{6}] (node001) at (1.5,-0.1){$3$};
        \end{tikzpicture}}
\caption{The persistence-cup products on tori of different filtrations.}
\end{figure}
Let $t_{b},t_{d}$ denote the birth time and death time, respectively. Then the above examples satisfy
\begin{equation*}
  \max(t_{b}(\alpha^{1}),t_{b}(\alpha^{2}))\leq t_{b}(\alpha^{1}\cdot\alpha^{2}),\quad \min(t_{d}(\alpha^{1}),t_{d}(\alpha^{2}))\leq t_{d}(\alpha^{1}\cdot\alpha^{2}),
\end{equation*}
which are consistent with the results in Theorem \ref{theorem:main}.
\end{example}

\section{Persistence products on manifolds}\label{section:duality}
For a further application of persistence algebra, we study the persistence-cup product and persistence-cap product on manifolds. Moreover, we introduce the persistence Poincar\'{e} duality which is derived from the usual  Poincar\'{e} duality theorem.
In this section, the ground field is $\mathbf{k}$ and all the manifolds considered are assumed to be compact orientable $n$-manifold without boundary.
\subsection{The persistence-cap product}
Let $G$ be an abelian group, and  let $S\subseteq G$ be a monoid such that the identity element $e$ is the unique invertible element in $S$. Let $\mathcal{M}:\mathrm{cat}(\mathrm{Cay}(G,S))\rightarrow \mathbf{Mani}$ be a Cayley-persistence manifold. The homology and cohomology considered are unreduced in this section.
Then we obtain the persistent homology
\begin{equation*}
  H_{\ast}^{a,b}=\mathrm{im}(f_{\ast}^{a,b}:H_{\ast}(\mathcal{M}_{a})\rightarrow H_{\ast}(\mathcal{M}_{b})),\quad a,b\in G
\end{equation*}
and the persistent cohomology
\begin{equation*}
  H^{\ast}_{a,b}=\mathrm{im}(f^{\ast}_{a,b}:H^{\ast}(\mathcal{M}_{b})\rightarrow H^{\ast}(\mathcal{M}_{a})),\quad a,b\in G.
\end{equation*}
For convenience, we denote $H_{\ast}^{a}=H_{\ast}(\mathcal{M}_{a})$ and $H^{\ast}_{a}=H^{\ast}(\mathcal{M}_{a})$.

\begin{proposition}\label{proposition:isomorphism}
For $p\geq 0$, we have $\mathbf{k}$-linear isomorphisms
\begin{equation*}
  H_{b}^{p}\cong H_{a,b}^{p}\oplus (H^{b}_{p}/H^{a,b}_{p})^{\sharp},\quad H^{a}_{p}\cong H_{p}^{a,b}\oplus (H^{p}_{a}/H^{p}_{a,b})^{\sharp}.
\end{equation*}
Here, $(-)^{\sharp}=\mathrm{Hom}(-,\mathbf{k})$.
\end{proposition}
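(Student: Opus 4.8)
The plan is to reduce both statements to finite-dimensional linear algebra of a single map and its transpose, using the universal coefficient theorem to pass between homology and cohomology. Write $V = H^b_p = H_p(\mathcal{M}_b)$ and $W = H^a_p = H_p(\mathcal{M}_a)$, and let $f = f^{a,b}_\ast \colon W \to V$ be the map induced by $\mathcal{M}_a \to \mathcal{M}_b$. Since the manifolds are compact, $V$ and $W$ are finite dimensional over $\mathbf{k}$. The first step is to invoke the universal coefficient theorem with field coefficients, which gives natural isomorphisms $H^p(\mathcal{M}_c;\mathbf{k}) \cong \mathrm{Hom}(H_p(\mathcal{M}_c;\mathbf{k}),\mathbf{k}) = (H^c_p)^{\sharp}$ for $c=a,b$. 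Naturality of the UCT with respect to the induced maps means that, under these identifications, the cohomology map $f^{\ast}_{a,b}\colon H^p_b \to H^p_a$ is precisely the linear transpose $f^{\sharp}\colon V^{\sharp} \to W^{\sharp}$. Consequently the persistent cohomology $H^p_{a,b}=\mathrm{im}(f^{\ast}_{a,b})$ is identified with $\mathrm{im}(f^{\sharp})$, while the persistent homology $H^{a,b}_p=\mathrm{im}(f)$ sits inside $V=H^b_p$.

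Next I would record the standard facts about transposes over a field: $\mathrm{im}(f^{\sharp}) = (\ker f)^{\circ}$ and $\ker(f^{\sharp}) = (\mathrm{im}\,f)^{\circ}$, where $(-)^{\circ}$ is the annihilator, together with the canonical isomorphisms $(V/U)^{\sharp}\cong U^{\circ}$ and $W^{\sharp}/U^{\circ}\cong U^{\sharp}$ for a subspace $U$. For the first isomorphism of the proposition, split the short exact sequence $0 \to \ker(f^{\sharp}) \to V^{\sharp} \to \mathrm{im}(f^{\sharp}) \to 0$ to obtain $V^{\sharp}\cong \ker(f^{\sharp})\oplus\mathrm{im}(f^{\sharp})$; then $\ker(f^{\sharp}) = (\mathrm{im}\,f)^{\circ}\cong (V/\mathrm{im}\,f)^{\sharp} = (H^b_p/H^{a,b}_p)^{\sharp}$ and $\mathrm{im}(f^{\sharp}) = H^p_{a,b}$, which yields $H^p_b \cong H^p_{a,b}\oplus (H^b_p/H^{a,b}_p)^{\sharp}$.

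For the second isomorphism, split $f$ itself via $0 \to \ker f \to W \to \mathrm{im}\,f \to 0$, giving $W\cong \ker f \oplus \mathrm{im}\,f$ with $\mathrm{im}\,f = H^{a,b}_p$. It then remains to identify $\ker f$ with the stated dual: using $\mathrm{im}(f^{\sharp}) = (\ker f)^{\circ}$ one gets $H^p_a/H^p_{a,b} = W^{\sharp}/\mathrm{im}(f^{\sharp}) = W^{\sharp}/(\ker f)^{\circ}\cong (\ker f)^{\sharp}$, and dualizing once more (finite dimensionality gives $(\ker f)^{\sharp\sharp}\cong \ker f$) produces $(H^p_a/H^p_{a,b})^{\sharp}\cong \ker f$, whence $H^a_p \cong H^{a,b}_p \oplus (H^p_a/H^p_{a,b})^{\sharp}$. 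The only genuinely delicate point is the opening step: one must be sure the universal coefficient isomorphisms are natural, so that the image of the induced cohomology map is honestly the image of the transpose and the substitution of $\mathrm{im}(f^{\sharp})$ for $H^p_{a,b}$ is legitimate; once that is in place, every remaining step is routine linear algebra, valid because all the vector spaces involved are finite dimensional.
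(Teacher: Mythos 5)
Your proof is correct and follows essentially the same route as the paper: both arguments use naturality of the universal coefficient theorem over a field to identify $f^{*}_{a,b}$ with the transpose $(f^{a,b}_{p})^{\sharp}$, derive the key identifications $(H^{b}_{p}/H^{a,b}_{p})^{\sharp}\cong \ker f^{p}_{a,b}$ and $(\ker f^{a,b}_{p})^{\sharp}\cong H^{p}_{a}/H^{p}_{a,b}$, and then split the resulting short exact sequences of finite-dimensional vector spaces. The paper packages this via a dualized four-term exact sequence and a commutative ladder, while you use the annihilator identities $\mathrm{im}(f^{\sharp})=(\ker f)^{\circ}$ and $\ker(f^{\sharp})=(\mathrm{im}\,f)^{\circ}$, but this is only a difference in presentation, not in substance.
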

\begin{proof}
Consider the following diagram.
\begin{equation*}
  \xymatrix{
  0\ar@{->}[r]&(H_{p}^{b}/H_{p}^{a,b})^{\sharp}\ar@{->}[r]\ar@{->}[d]&  (H_{p}^{b})^{\sharp}\ar@{->}[r]^{(f^{a,b}_{p})^{\sharp}}\ar@{->}[d]^{\cong}&   (H_{p}^{a})^{\sharp}\ar@{->}[r]\ar@{->}[d]^{\cong}& (\ker f_{p}^{a,b})^{\sharp}\ar@{->}[r]\ar@{->}[d]&0\\
   0\ar@{->}[r]&\ker f_{a,b}^{p} \ar@{->}[r]&H_{b}^{p} \ar@{->}[r]^{f_{a,b}^{p}}&H_{a}^{p} \ar@{->}[r]& H_{a}^{p}/H_{a,b}^{p}\ar@{->}[r]&0
  }
\end{equation*}
The first horizontal sequence is exact since $\mathrm{Hom}(-,\mathbf{k})$ is left exact and $\mathbf{k}$ is a field. Note that the second and third vertical arrows are isomorphism, one has
\begin{equation}\label{equation:isomorphism}
  (H^{b}_{p}/H^{a,b}_{p})^{\sharp}\cong \ker f_{a,b}^{p} ,\quad (\ker f^{a,b}_{p})^{\sharp}\cong H_{a}^{p}/H_{a,b}^{p}.
\end{equation}
Thus we have a short exact sequence
\begin{equation*}
  0\rightarrow (H^{b}_{p}/H^{a,b}_{p})^{\sharp}\rightarrow H_{b}^{p} \rightarrow H_{a,b}^{p} \rightarrow 0,
\end{equation*}
which is also split as $\mathbf{k}$-linear spaces. This gives $H_{b}^{p}\cong H_{a,b}^{p}\oplus (H^{b}_{p}/H^{a,b}_{p})^{\sharp}$. Similarly, we have $H^{a}_{p}\cong H_{p}^{a,b}\oplus (H^{p}_{a}/H^{p}_{a,b})^{\sharp}$.
\end{proof}

\begin{remark}
This result is also applicable for a persistence CW-complex of finite type over the field $\mathbf{k}$.
\end{remark}
Let $\beta_{a,b}^{p}=\dim H_{a,b}^{p},\beta^{a,b}_{p}=\dim H^{a,b}_{p}$ denote the Betti numbers corresponding to homology and cohomology, respectively.
\begin{corollary}\label{corollary:Betti}
$\beta_{a,b}^{p}=\beta^{a,b}_{p}$.
\end{corollary}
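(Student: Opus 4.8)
The plan is to obtain the equality simply by passing to dimensions in the first isomorphism of Proposition \ref{proposition:isomorphism}. Before doing so, I would record the two elementary facts that drive the computation. Since each $\mathcal{M}_{a}$ is a compact manifold, all of the spaces $H_{p}(\mathcal{M}_{a};\mathbf{k})$ and $H^{p}(\mathcal{M}_{a};\mathbf{k})$ are finite-dimensional, so the universal coefficient theorem over the field $\mathbf{k}$ yields $\dim H^{p}_{b}=\dim H^{b}_{p}$; and for any finite-dimensional $\mathbf{k}$-space $V$ one has $\dim V^{\sharp}=\dim V$. These are exactly the ingredients already used to construct the vertical isomorphisms in the proof of Proposition \ref{proposition:isomorphism}, so no new input is needed.

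Next I would take dimensions in the isomorphism $H_{b}^{p}\cong H_{a,b}^{p}\oplus (H^{b}_{p}/H^{a,b}_{p})^{\sharp}$. Using $\dim(H^{b}_{p}/H^{a,b}_{p})=\dim H^{b}_{p}-\dim H^{a,b}_{p}$ together with $\dim V^{\sharp}=\dim V$, this reads
\[
  \dim H_{b}^{p}=\dim H_{a,b}^{p}+\dim H^{b}_{p}-\dim H^{a,b}_{p}.
\]
Since $\dim H_{b}^{p}=\dim H^{b}_{p}$ by the universal coefficient theorem, the term $\dim H^{b}_{p}$ cancels and I am left with $\dim H_{a,b}^{p}=\dim H^{a,b}_{p}$, which is precisely the asserted identity $\beta_{a,b}^{p}=\beta^{a,b}_{p}$. (One could equally well feed the isomorphisms (\ref{equation:isomorphism}) into the two short exact sequences, but the single dimension count above is the most economical route.)

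There is essentially no obstacle here; the only point requiring care is the finite-dimensionality of all of the groups involved, which is guaranteed by the compactness of the manifolds, and, by the preceding Remark, also for a persistence CW-complex of finite type over $\mathbf{k}$. As a conceptual cross-check I would note that the same equality follows directly from duality: by naturality of the universal coefficient theorem the persistent-cohomology map $H^{p}_{b}\to H^{p}_{a}$ is the $\mathbf{k}$-linear transpose of the persistent-homology map $H_{p}^{a}\to H_{p}^{b}$, and a linear map between finite-dimensional spaces has the same rank as its transpose, so $\dim H_{a,b}^{p}=\mathrm{rank}\,(H^{p}_{b}\to H^{p}_{a})=\mathrm{rank}\,(H_{p}^{a}\to H_{p}^{b})=\dim H^{a,b}_{p}$. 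This alternative makes transparent that the corollary is really the statement that persistent homology and persistent cohomology have equal ranks over a field.
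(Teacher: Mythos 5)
Your proposal is correct and takes essentially the same route as the paper: the paper's proof of Corollary \ref{corollary:Betti} is precisely the dimension count in the first isomorphism of Proposition \ref{proposition:isomorphism}, which you have simply written out explicitly using $\dim V^{\sharp}=\dim V$ and $\dim H^{p}_{b}=\dim H^{b}_{p}$ over the field $\mathbf{k}$. Your closing rank-of-the-transpose argument is a valid alternative the paper does not spell out, but it is not needed.
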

\begin{proof}
It is obtained by a straightforward calculation of the dimensions of linear spaces.
\end{proof}
This shows that computing persistent homology and persistent cohomology are consistent in some sense, which is essentially the universal coefficient theorem of the persistent version.

Now, to study the persistence-cap product, we need to study the dual space in the persistence case. For $a\in G,x\in S$, define the morphism
\begin{equation*}
  H_{xa}^{\ast}\times H^{a}_{\ast}\rightarrow \mathbf{k}
\end{equation*}
given by $\langle\beta,\Phi \rangle=(\beta\cdot L_{x})(\Phi)$ for $\beta\in H_{xa}^{\ast}$ and $\Phi\in H^{a}_{\ast}$. Note that the map
\begin{equation*}
  L^{x}:H_{\ast}^{a}\rightarrow H_{\ast}^{xa}
\end{equation*}
induced by $\mathcal{M}_{a}\rightarrow \mathcal{M}_{xa}$ is the adjoint operator of $L_{x}$. Moreover, we have $(\beta\cdot L_{x})(\Phi)=\beta(L^{x}\Phi)$.
Let $\mathbb{L}^{S}$ be the monoid given by the elements $L^{x},x\in S$, and let $\mathbb{L}_{S}$ be the monoid given by the elements $L_{x},x\in S$. Then the homology $\{H_{\ast}^{a}\}_{a\in G}$ is a left $\mathbf{k}[\mathbb{L}^{S}]$-module while the cohomology $\{H^{\ast}_{a}\}_{a\in G}$ is a right $\mathbf{k}[\mathbb{L}_{S}]$-module. The cohomology $\{H^{\ast}_{a}\}_{a\in G}$ is an adjoint module of $\{H_{\ast}^{a}\}_{a\in G}$.

\begin{definition}
For $p\leq q$ and $a\in G,x\in S$, the \emph{persistence-cap product} $\vec{\cap}:H^{a}_{q}\times H^{p}_{xa}\rightarrow H_{q-p}^{a}$ is defined by
\begin{equation*}
  \Phi\vec{\cap} \beta=\Phi\cap (\beta\cdot L_{x})\quad \Phi\in H^{a}_{q},\beta\in H^{p}_{xa}.
\end{equation*}
\end{definition}
Let $\omega^{a}\in H_{n}^{a}$ be the fundamental class for $a\in G$. Let $\beta\in H_{xa}^{\ast},x\in S$.
We have
\begin{equation*}
  \omega^{a}\vec{\cap}\beta=\omega^{a}\cap (\beta\cdot L_{x})=D(\beta\cdot L_{x}).
\end{equation*}
Here, $D:H^{k}_{a}\rightarrow H_{n-k}^{a}$ is defined by $D(\alpha)=\omega^{a}\cap \alpha$ for all $0\leq k\leq n$.
Similarly, the persistence-cap product is dual to the persistence-cup product.
\begin{lemma}
For $a,b,c\in G$ satisfying $ba^{-1},ca^{-1}\in S$, let $\beta\in H_{b}^{\ast},\gamma\in H_{c}^{\ast}$ and $\Phi\in H^{a}_{\ast}$. Then we have
\begin{equation*}
  \langle\beta\cdot \gamma,\Phi \rangle=\langle\gamma, \Phi \vec{\cap}\beta\rangle.
\end{equation*}
\end{lemma}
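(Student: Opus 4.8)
The plan is to unwind both sides of the claimed identity until each is expressed entirely in terms of the Kronecker pairing, cup product, and cap product of the \emph{single} manifold $\mathcal{M}_a$, and then to invoke the classical chain-level adjunction between the cup and cap products. Since $ba^{-1},ca^{-1}\in S$ we have $a\le b$ and $a\le c$, so by the universal property of the product $a\le b\times c$. Writing $m=b\times c$, I would fix once and for all the elements $x=ba^{-1}$, $y'=ca^{-1}$, $x'=ma^{-1}$, $x_{1}=bm^{-1}$, $x_{2}=cm^{-1}$, all of which lie in $S$ (the last two because $m\le b$ and $m\le c$), and track these indices throughout. In the notation of Lemma \ref{lemma:associated} one has $\phi_{b}^{c}=L_{x_{1}}$ and $\phi_{c}^{b}=L_{x_{2}}$.

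For the left-hand side, I would use the definition of the pairing to write $\langle\beta\cdot\gamma,\Phi\rangle=(L_{x'}(\beta\cdot\gamma))(\Phi)$, a Kronecker evaluation in $\mathcal{M}_{a}$, where by the definition of the persistence-cup product via $\cup_{m}$ we have $\beta\cdot\gamma=(L_{x_{1}}\beta)\cup(L_{x_{2}}\gamma)\in H^{\ast}_{m}$. Applying naturality of the cup product under the induced map $L_{x'}$ (\cite[Proposition 3.10]{Hatcher}) gives
\begin{equation*}
  L_{x'}\bigl((L_{x_{1}}\beta)\cup(L_{x_{2}}\gamma)\bigr)=(L_{x'}L_{x_{1}}\beta)\cup(L_{x'}L_{x_{2}}\gamma).
\end{equation*}
Here the composition law $L_{u}\circ L_{v}=L_{vu}$ together with cancellation in the group gives $L_{x'}L_{x_{1}}=L_{x_{1}x'}=L_{bm^{-1}ma^{-1}}=L_{ba^{-1}}=L_{x}$ and likewise $L_{x'}L_{x_{2}}=L_{ca^{-1}}=L_{y'}$, so the left-hand side becomes $((L_{x}\beta)\cup(L_{y'}\gamma))(\Phi)$.

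For the right-hand side, the definition of the persistence-cap product gives $\Phi\,\vec{\cap}\,\beta=\Phi\cap(L_{x}\beta)$ in $\mathcal{M}_{a}$, and the definition of the pairing gives $\langle\gamma,\Phi\,\vec{\cap}\,\beta\rangle=(L_{y'}\gamma)(\Phi\cap(L_{x}\beta))$. Thus both sides now live entirely in $H^{\ast}(\mathcal{M}_{a})$ and $H_{\ast}(\mathcal{M}_{a})$, and the identity reduces to the classical relation $\langle\varphi\cup\psi,\sigma\rangle=\langle\psi,\sigma\cap\varphi\rangle$ (the chain-level formula $\psi(\sigma\cap\varphi)=(\varphi\cup\psi)(\sigma)$) applied with $\varphi=L_{x}\beta$, $\psi=L_{y'}\gamma$ and $\sigma=\Phi$.

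The only place demanding genuine care is the identification $L_{x'}L_{x_{1}}=L_{x}$ and $L_{x'}L_{x_{2}}=L_{y'}$: one must combine the reversed composition law for the $L$'s correctly and verify that the exponents $x_{1}x'=ba^{-1}$ and $x_{2}x'=ca^{-1}$ indeed land in $S$, so that every map appearing is defined and so that $\beta\cdot\gamma\in H^{\ast}_{m}$ may legitimately be paired with $\Phi\in H^{a}_{\ast}$ (which needs $a\le m$). Once this grading bookkeeping is settled and both sides are pushed into $\mathcal{M}_{a}$, no real obstacle remains: the equality is exactly the standard cup–cap adjunction, with naturality of the cup product supplying the reduction.
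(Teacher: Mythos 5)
Your proof is correct and takes essentially the same route as the paper: both arguments unwind the pairing, the persistence-cup product and the persistence-cap product into the single manifold $\mathcal{M}_{a}$ using naturality of the cup product together with the composition law $L_{u}\circ L_{v}=L_{vu}$, and then conclude by the classical adjunction $(\varphi\cup\psi)(\sigma)=\psi(\sigma\cap\varphi)$. The only difference is cosmetic: the paper computes from the right-hand side and compresses the identification $(\beta\cdot L_{ba^{-1}})\cup(\gamma\cdot L_{ca^{-1}})=(\beta\cdot\gamma)\cdot L_{(b\times c)a^{-1}}$ into a single line, whereas you start from the left-hand side and make that bookkeeping explicit.
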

\begin{proof}
By a straightforward computation, we have
\begin{equation*}
  \begin{split}
    \langle\gamma, \Phi \vec{\cap}\beta\rangle=& \langle\gamma,\Phi\cap (\beta\cdot L_{ba^{-1}})\rangle\\
      =&   (\gamma\cdot L_{ca^{-1}})(\Phi\cap (\beta\cdot L_{ba^{-1}}))\\
      =& ((\beta\cdot L_{ba^{-1}})\cup (\gamma\cdot L_{ca^{-1}}))(\Phi)\\
%      =& \left(((\beta\cdot L_{b(b\times c)^{-1}})\cup (\gamma\cdot L_{c(b\times c)^{-1}}))\cdot L_{(b\times c)a^{-1}}\right)(\Phi)\\
      =& \left((\beta\cdot\gamma)\cdot L_{(b\times c)a^{-1}}\right)(\Phi)\\
      =& \langle\beta\cdot \gamma,\Phi\rangle.
  \end{split}
\end{equation*}
Here, we use the fact that $(\alpha'\cup\beta')(\Phi')=\beta'(\Phi'\cap\alpha')$ for $\alpha',\beta'\in H^{a}_{\ast},\Phi'\in H_{a}^{\ast}$.
\end{proof}
For $b,c\in G$, let $\beta\in H_{b}^{\ast},\gamma\in H_{c}^{\ast}$.
We denote
\begin{equation*}
  (\beta,\gamma)_{D} =\langle\beta\cdot\gamma,\omega^{b\times c}\rangle.
\end{equation*}
Note that $(\beta,\gamma)_{D}$ is $\mathbf{k}$-bilinear. Moreover, we have
 \begin{equation*}
  (\beta,\gamma)_{D}=\langle\beta\cdot\gamma,\omega^{b\times c}\rangle=\langle\gamma ,\omega^{b\times c}\vec{\cap} \beta\rangle=\langle\gamma ,D(\beta\cdot L_{b(b\times c)^{-1}})\rangle.
 \end{equation*}
The following proposition indicates that $(\beta,\gamma)_{D}$ is also $\mathbf{k}[\mathbb{L}_{S}]$-bilinear under certain condition.
\begin{proposition}\label{proposition:adj1}
For $b,c\in G, x\in S$, we have
\begin{equation*}
  (\beta\cdot L_{x},\gamma)_{D}=\langle\beta\cdot \gamma , \omega^{x^{-1}b\times c}\rangle,\quad \beta\in H^{\ast}_{b},\gamma\in H^{\ast}_{c}.
\end{equation*}
If $b=c$, we have $(\beta\cdot L_{x},\gamma)_{D}=(\beta,\gamma\cdot L_{x})_{D}$ for any $\beta,\gamma\in H^{\ast}_{b}$.
\end{proposition}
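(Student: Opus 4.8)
The plan is to reduce everything to the twisting identity established inside the proof of Theorem~\ref{theorem:twist}, namely that for $\alpha\in H^{\ast}_{b}$, $\gamma\in H^{\ast}_{c}$ and $x,y\in S$ one has $(\alpha\cdot L_{x})\cdot(\gamma\cdot L_{y})=(\alpha\cdot\gamma)\cdot L_{z}$, where $z$ is the unique element of $S$ with $b\times c=z(x^{-1}b\times y^{-1}c)$. The only extra ingredients I will need are the definitions of the pairing $\langle-,-\rangle$ and of $(-,-)_{D}$, together with the elementary order fact that $x^{-1}b\leq b$ forces $x^{-1}b\times c\leq b\times c$, so that $z:=(b\times c)(x^{-1}b\times c)^{-1}$ really lies in $S$.

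First I would unwind both sides of the first identity into plain evaluations. Since $\beta\cdot L_{x}\in H^{\ast}_{x^{-1}b}$, the definition of $(-,-)_{D}$ gives $(\beta\cdot L_{x},\gamma)_{D}=\langle(\beta\cdot L_{x})\cdot\gamma,\ \omega^{x^{-1}b\times c}\rangle$, and because $(\beta\cdot L_{x})\cdot\gamma$ already sits in grading $x^{-1}b\times c$ this pairing collapses to the evaluation $\big((\beta\cdot L_{x})\cdot\gamma\big)(\omega^{x^{-1}b\times c})$. On the right-hand side $\beta\cdot\gamma\in H^{\ast}_{b\times c}$ while $\omega^{x^{-1}b\times c}$ lives one step lower, so by definition the pairing $\langle\beta\cdot\gamma,\omega^{x^{-1}b\times c}\rangle$ equals $\big((\beta\cdot\gamma)\cdot L_{z}\big)(\omega^{x^{-1}b\times c})$ with $z$ as above. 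Thus the claimed identity is equivalent to the purely algebraic statement $(\beta\cdot L_{x})\cdot\gamma=(\beta\cdot\gamma)\cdot L_{z}$, which is exactly the twisting identity with $y=e$ (so that $\gamma\cdot L_{e}=\gamma$ and the bookkeeping $b\times c=z(x^{-1}b\times c)$ pins down $z$). Invoking Theorem~\ref{theorem:twist} then finishes the first part.

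For the case $b=c$ I would run the same mechanism twice. Part one with $c=b$ gives $(\beta\cdot L_{x},\gamma)_{D}=\langle\beta\cdot\gamma,\omega^{x^{-1}b}\rangle$, using $x^{-1}b\times b=x^{-1}b$. For the other side I would move $L_{x}$ onto $\gamma$: applying the twisting identity with the first factor untranslated and the second translated by $L_{x}$ yields $\beta\cdot(\gamma\cdot L_{x})=(\beta\cdot\gamma)\cdot L_{x}$, whence $(\beta,\gamma\cdot L_{x})_{D}=\langle\beta\cdot(\gamma\cdot L_{x}),\omega^{x^{-1}b}\rangle=\big((\beta\cdot\gamma)\cdot L_{x}\big)(\omega^{x^{-1}b})$. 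Since the pairing $\langle\beta\cdot\gamma,\omega^{x^{-1}b}\rangle$ also unwinds to the same evaluation $\big((\beta\cdot\gamma)\cdot L_{x}\big)(\omega^{x^{-1}b})$, the two quantities coincide and the second identity follows. This route never interchanges $\beta$ and $\gamma$, so no graded-commutativity sign intervenes.

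The main obstacle is not analytic but bookkeeping: one must consistently track the $G$-grading of each element — that $\beta\cdot L_{x}$ drops its base point from $b$ to $x^{-1}b$, that the persistence-cup product lands in the meet, and that the fundamental class $\omega^{-}$ fixes the base point — and correctly read off which translation $L_{z}$ the pairing $\langle-,\omega^{-}\rangle$ silently inserts whenever the cohomology class and the fundamental class sit in different gradings. Once the gradings are matched, each identity is a one-line consequence of the twisting relation; the only real risk is mislabelling the exponent $z=(b\times c)(x^{-1}b\times c)^{-1}$ or forgetting that the inequality $x^{-1}b\times c\leq b\times c$ is precisely what guarantees $z\in S$, so that $L_{z}$ is a legitimate element of $\mathbf{k}[\mathbb{L}_{S}]$.
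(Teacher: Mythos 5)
Your proof is correct and follows essentially the same route as the paper's: unwind both sides through the definitions of $\langle-,-\rangle$ and $(-,-)_{D}$ until they become evaluations against the fundamental class, then invoke the twisting identity $(\beta\cdot L_{x})\cdot\gamma=(\beta\cdot\gamma)\cdot L_{z}$ from the proof of Theorem \ref{theorem:twist}. If anything, your bookkeeping is more careful than the paper's own computation, which writes the twist as $L_{x}$ where it should in general be $L_{z}$ with $z=(b\times c)(x^{-1}b\times c)^{-1}$ (the two coincide exactly in the case $b=c$ used for the second identity).
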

\begin{proof}
A direct calculation shows that
\begin{equation*}
  \begin{split}
    (\beta\cdot L_{x},\gamma)_{D}= & \langle(\beta\cdot L_{x})\cdot \gamma,\omega^{x^{-1}b\times c}\rangle \\
      =& \langle(\beta\cdot \gamma)\cdot L_{x},\omega^{x^{-1}b\times c}\rangle \\
      =&((\beta\cdot \gamma)\cdot L_{x})(\omega^{x^{-1}b\times c})\\
      =&\langle \beta\cdot \gamma,\omega^{x^{-1}b\times c}\rangle .
  \end{split}
\end{equation*}

\begin{equation*}
   (\beta\cdot L_{x},\gamma)_{D}=\langle(\beta\cdot \gamma)\cdot L_{x},\omega^{x^{-1}b\times c}\rangle  =((\beta\cdot \gamma)\cdot L_{x})(\omega^{x^{-1}b\times c})=\langle \beta\cdot \gamma,\omega^{x^{-1}b\times c}\rangle .
\end{equation*}
If $b=c$, then $b\times x^{-1}b=x^{-1}b\times b=x^{-1}b$. Thus we have
\begin{equation*}
  (\beta\cdot L_{x},\gamma)_{D}=\langle(\beta\cdot \gamma)\cdot L_{x},\omega^{x^{-1}b}\rangle=\langle\beta\cdot (\gamma\cdot L_{x}),\omega^{x^{-1}b}\rangle=(\beta,\gamma\cdot L_{x})_{D}.
\end{equation*}
This completes the proof.
\end{proof}

\subsection{The persistence Poincar\'{e} duality}
In this section, we will further study the persistence duality with respect to the Poincar\'{e} duality. Moreover, we show that the persistent morphism on fundamental classes of homology is crucial to describing the persistent (co)homology.

\begin{proposition}\label{proposition:adj2}
For $a,b\in G$ with $ba^{-1}\in S$, let $f^{a,b}_{\ast}:H_{\ast}^{a}\rightarrow H_{\ast}^{b}$ be a map induced by $\mathcal{M}_{a}\rightarrow \mathcal{M}_{b}$ and $f^{a,b}_{n}(\omega^{a})=\lambda^{a,b}\omega^{b},\lambda^{a,b}\in \mathbf{k}$. Here, $\omega^{a},\omega^{b}$ are the fundamental classes in $H^{a}_{n},H^{b}_{n}$, respectively.
Then we have
\begin{equation*}
  \lambda^{a,b}(\beta,\gamma)_{D}=(\beta\cdot L_{ba^{-1}},\gamma)_{D},\quad\beta,\gamma\in H_{b}^{\ast}.
\end{equation*}
\end{proposition}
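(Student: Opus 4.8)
The plan is to express both sides as explicit evaluations of the ordinary cup product $\beta\cup\gamma\in H^{\ast}_{b}$ against a fundamental class, and then to transport the scalar $\lambda^{a,b}$ across using the hypothesis $f^{a,b}_{n}(\omega^{a})=\lambda^{a,b}\omega^{b}$. The whole argument is a bookkeeping computation chaining together the definition of $(\cdot,\cdot)_{D}$, the adjointness relation $(\eta\cdot L_{x})(\Phi)=\eta(L^{x}\Phi)$ already recorded, and the fact that $L^{ba^{-1}}$ is precisely the induced homology map $f^{a,b}_{\ast}$.

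First I would invoke Proposition \ref{proposition:adj1} with $x=ba^{-1}$ and $c=b$. Since $ba^{-1}\in S$ gives $a\leq b$, we have $x^{-1}b=a$ and the product $x^{-1}b\times c=a\times b=a$, because $a$ is the unique maximal element $\leq a,b$. This immediately yields
\[
  (\beta\cdot L_{ba^{-1}},\gamma)_{D}=\langle\beta\cdot\gamma,\omega^{a}\rangle .
\]
As $\beta,\gamma\in H^{\ast}_{b}$ and $b\times b=b$, the persistence-cup product $\beta\cdot\gamma$ is just the ordinary cup product $\beta\cup\gamma\in H^{\ast}_{b}$.

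Next I would unwind the pairing $\langle\beta\cup\gamma,\omega^{a}\rangle$. Here $\beta\cup\gamma\in H^{\ast}_{b}=H^{\ast}_{(ba^{-1})a}$, so by the definition of the pairing with $x=ba^{-1}$ together with the adjointness relation,
\[
  \langle\beta\cup\gamma,\omega^{a}\rangle=\bigl((\beta\cup\gamma)\cdot L_{ba^{-1}}\bigr)(\omega^{a})=(\beta\cup\gamma)\bigl(L^{ba^{-1}}\omega^{a}\bigr).
\]
Now the hypothesis enters: $L^{ba^{-1}}\omega^{a}=f^{a,b}_{n}(\omega^{a})=\lambda^{a,b}\omega^{b}$. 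Pulling the scalar out by $\mathbf{k}$-linearity and recognizing $(\beta\cup\gamma)(\omega^{b})=\langle\beta\cup\gamma,\omega^{b}\rangle=(\beta,\gamma)_{D}$ (using $b\times b=b$ and that the pairing at $x=e$ is plain evaluation), I obtain $\lambda^{a,b}(\beta,\gamma)_{D}$, which is the claim.

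I do not expect a serious obstacle; the only thing requiring care is the grading bookkeeping — confirming $a\times b=a$ from $a\leq b$, tracking which object each class lives over when the pairing is applied, and identifying $L^{ba^{-1}}:H^{a}_{\ast}\rightarrow H^{b}_{\ast}$ with $f^{a,b}_{\ast}$ so that the fundamental-class hypothesis applies verbatim. One could equally bypass Proposition \ref{proposition:adj1} and argue directly, using that the cohomology pullback $L_{ba^{-1}}$ is a ring map, so $(L_{ba^{-1}}\beta)\cup(L_{ba^{-1}}\gamma)=L_{ba^{-1}}(\beta\cup\gamma)$; both routes collapse to the same evaluation against $L^{ba^{-1}}\omega^{a}$.
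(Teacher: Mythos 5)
Your proof is correct, but it follows a genuinely different route from the paper's. You reduce the claim to Proposition \ref{proposition:adj1} (applied with $c=b$ and $x=ba^{-1}$, so that $x^{-1}b\times c=a\times b=a$), and then finish by unwinding the evaluation pairing: $\langle\beta\cup\gamma,\omega^{a}\rangle=\bigl((\beta\cup\gamma)\cdot L_{x}\bigr)(\omega^{a})=(\beta\cup\gamma)(L^{x}\omega^{a})=\lambda^{a,b}(\beta,\gamma)_{D}$, using only the adjointness relation $(\eta\cdot L_{x})(\Phi)=\eta(L^{x}\Phi)$ and the hypothesis on fundamental classes; cap products never appear. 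The paper instead argues through the persistence-cap product: it invokes the projection formula $L^{x}\Phi\cap\beta=L^{x}(\Phi\cap(\beta\cdot L_{x}))$ from Hatcher, specializes $\Phi=\omega^{a}$ to obtain the operator identity $\lambda^{a,b}D(\beta)=L^{x}\bigl(D(\beta\cdot L_{x})\bigr)$, and then pairs against $\gamma$, identifying both sides with $(\beta,\gamma)_{D}$ and $(\beta\cdot L_{x},\gamma)_{D}$ via $(\beta,\gamma)_{D}=\langle\gamma,D(\beta\cdot L_{b(b\times c)^{-1}})\rangle$. The two arguments are essentially dual: over a field, the projection formula paired against all $\gamma$ amounts to the ring-map property of $L_{x}$ plus adjointness, which is exactly the content of your alternative ``direct'' route. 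Your version is shorter and stays entirely at the level of the pairing; what the paper's route buys is the intermediate identity $\lambda^{a,b}D(\beta)=L^{x}(D(\beta\cdot L_{x}))$, which it reuses verbatim in the proof of Theorem \ref{theorem:duality}. One further remark: the paper's own proof of Proposition \ref{proposition:adj2} never uses Proposition \ref{proposition:adj1}, whereas yours depends on it --- this is legitimate since \ref{proposition:adj1} precedes the statement, and in the special case you need (both classes in $H^{\ast}_{b}$) its verification reduces to the computation $(\beta\cdot L_{x})\cdot\gamma=(L_{x}\beta)\cup(L_{x}\gamma)=L_{x}(\beta\cup\gamma)$, so no hidden circularity or gap arises.
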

\begin{proof}
For $a\in G,x=ba^{-1}\in S$, we denote $L^{x}:H_{\ast}^{a}\rightarrow H_{\ast}^{xa}$ and $L_{x}:H_{a}^{\ast}\rightarrow H_{xa}^{\ast}$. Consider the following diagram.
\begin{equation*}
  \xymatrix{
   H_{q}^{a} \ar@{->}[d]^{L^{x}}&\times &H_{a}^{p}\ar@{->}[r]^{\cap} &H_{q-p}^{a}\ar@{->}[d]^{L^{x}}\\
     H_{q}^{xa}& \times &H_{xa}^{p}\ar@{->}[u]^{L_{x}}\ar@{->}[r]^{\cap} &H_{q-p}^{xa}
  }
\end{equation*}
Then we have \cite{Hatcher}
\begin{equation*}
  L^{x}\Phi\cap \beta=L^{x}(\Phi\cap (\beta\cdot L_{x})),\quad \Phi\in H_{q}^{a},\beta\in H^{p}_{xa}.
\end{equation*}
Take $\Phi=\omega^{a}$, we obtain
\begin{equation*}
   (L^{x}\omega^{a})\cap \beta=L^{x}(D(\beta\cdot L_{x})),\quad \beta\in H^{p}_{xa}.
\end{equation*}
\begin{equation*}
  \xymatrix{
 H^{p}_{xa}\ar@{->}[r]^{D}\ar@{->}[d]_{L_{x}}&H_{n-p}^{xa}\\
 H^{p}_{a}\ar@{->}[r]^{D}&H_{n-p}^{a}\ar@{->}[u]_{L^{x}}\\
 }
\end{equation*}
Suppose that $L^{x}\omega^{a}=\lambda^{a,xa}\omega^{xa}$. Thus one has
\begin{equation*}
  \lambda^{a,xa} D(\beta)=L^{x}(D(\beta\cdot L_{x})).
\end{equation*}
Note that $L^{x}$ is the adjoint operator of $L_{x}$, it follows that
\begin{equation*}
  \langle\gamma,\lambda^{a,xa} D(\beta)\rangle=\langle\gamma, L^{x}(D(\beta\cdot L_{x}))\rangle=\langle\gamma\cdot L_{x}, D(\beta\cdot L_{x})\rangle=\langle\gamma, D(\beta\cdot L_{x})\rangle,\quad \gamma\in H^{p}_{b}.
\end{equation*}
Hence, we have $\lambda^{a,xa}(\beta,\gamma)_{D}=(\beta\cdot L_{x},\gamma)_{D}$. This completes the proof.
\end{proof}
%Combining  Propositions \ref{proposition:adj1} and \ref{proposition:adj2}, one can obtain
%\begin{equation*}
%  \langle \beta\cdot\gamma,\omega^{a}\rangle=\langle \beta\cdot\gamma,\lambda^{a,b}\omega^{b}\rangle,\quad \beta,\gamma\in H_{b}^{\ast},
%\end{equation*}
%which shows that the coefficient $\lambda^{\ast,\ast}$ can be regarded as a characteristic value of persistence-cup product.
Recall the Poincar\'{e} duality theorem.
\begin{theorem}[\cite{Hatcher}]
Let $R$ be a commutative ring with unit.
If $M$ is a closed $R$-orientable $n$-manifold with fundamental class $[M]\in H_{n}(M;R)$, then the map $D:H^{k}(M; R)\rightarrow H_{n-k}(M; R)$ defined by $D(\alpha) = [M]\cap \alpha$ is an isomorphism for all $k$.
\end{theorem}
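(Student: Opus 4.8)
The plan is to prove the theorem in the form best suited to induction rather than attacking the compact manifold $M$ head‑on, following the standard strategy of \cite{Hatcher}. First I would introduce, for every open subset $U\subseteq M$, a compactly‑supported duality map $D_{U}\colon H^{k}_{c}(U;R)\to H_{n-k}(U;R)$. The $R$-orientability hypothesis supplies, for each compact $K\subseteq U$, a canonical fundamental class $\mu_{K}\in H_{n}(M\mid K;R)$, and capping with these classes and passing to the direct limit over $K$ yields $D_{U}$. When $U=M$ is compact one has $H^{k}_{c}(M;R)=H^{k}(M;R)$ and $\mu_{M}=[M]$, so $D_{M}$ is exactly the map $D(\alpha)=[M]\cap\alpha$ of the statement. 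It therefore suffices to prove that $D_{U}$ is an isomorphism for every open $U$.

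The argument then splits into a base case and an inductive step. For the base case $U\cong\mathbb{R}^{n}$ (or any convex open set) both sides are computed explicitly: $H^{k}_{c}(\mathbb{R}^{n};R)$ is $R$ for $k=n$ and $0$ otherwise, $H_{\ast}(\mathbb{R}^{n};R)$ is $R$ in degree $0$, and a direct check shows $D_{U}$ sends a generator to a generator. For the inductive step, given open $U,V$ with $D_{U}$, $D_{V}$, $D_{U\cap V}$ all isomorphisms, I would splice the Mayer--Vietoris sequence for compactly‑supported cohomology against the Mayer--Vietoris sequence for homology into a commutative ladder whose vertical maps are the various duality maps, and apply the five lemma to conclude that $D_{U\cup V}$ is an isomorphism.

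The main obstacle, and the only step demanding genuine care, is verifying that this ladder actually commutes: one must show the cap‑product duality maps are natural for open inclusions and, crucially, that they commute up to a predictable sign with the two connecting homomorphisms of the Mayer--Vietoris sequences. This sign bookkeeping together with the compatibility of $\cap$ with the boundary maps is the technical heart of the proof, and it is precisely here that the coherent gluing of the local fundamental classes $\mu_{K}$ (hence orientability) is used. Once commutativity is established, the five‑lemma step is formal. Finally, to pass from finite unions of Euclidean pieces to an arbitrary open $U$ I would use that both $H^{\ast}_{c}(-;R)$ and $H_{\ast}(-;R)$ commute with direct limits over increasing unions (filtered colimits being exact in $R$-modules), so that an exhaustion argument promotes the result from a cover closed under finite unions to every open $U$; specializing to the compact manifold $U=M$ then gives the stated isomorphism $D\colon H^{k}(M;R)\to H_{n-k}(M;R)$.
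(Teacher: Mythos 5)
Your proposal is correct: the paper states this theorem as a classical result quoted from \cite{Hatcher} and gives no proof of its own, and your outline is precisely the argument of that cited source --- the compactly supported duality maps $D_{U}\colon H^{k}_{c}(U;R)\to H_{n-k}(U;R)$, the convex/Euclidean base case, the Mayer--Vietoris ladder with the five lemma, and the direct-limit passage to arbitrary open sets, specialized at the end to $U=M$. Since $M$ here is compact (a finite union of charts, with chart intersections handled as open subsets of $\mathbb{R}^{n}$ via countable exhaustion), your filtered-colimit step suffices and no further refinement (e.g.\ Hatcher's Zorn's lemma argument for non-second-countable manifolds) is needed.
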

The following result exhibits the Poincar\'{e} duality theorem of the persistence version.
\begin{theorem}\label{theorem:duality}
%Assume that $k$ is a field of characteristic 0.
For $a,b\in G$ with $ba^{-1}\in S$, let $f^{a,b}_{\ast}:H_{\ast}^{a}\rightarrow H_{\ast}^{b}$ be a map induced by $\mathcal{M}_{a}\rightarrow \mathcal{M}_{b}$ and $f^{a,b}_{n}(\omega^{a})=\lambda^{a,b}\omega^{b},\lambda^{a,b}\in \mathbf{k}$. Here, $\omega^{a},\omega^{b}$ are the fundamental classes in $H^{a}_{n},H^{b}_{n}$, respectively.
\begin{itemize}
  \item[$(i)$] If $\lambda^{a,b}\neq 0$, then the map $$D^{a,b}=f^{a,b}_{\ast}\circ D:H^{p}_{a,b}\rightarrow H_{n-p}^{a,b}$$ is an isomorphism for all $p$. Moreover, we have $\beta_{a,b}^{p}=\beta_{a,b}^{n-p}$.
  \item[$(ii)$] If $\lambda^{a,b}=0$, then the  $(a,b)$-persistent cup-space $P_{a,b}^{n}=0$. Moreover, we have
  \begin{equation*}
  \beta_{p}^{a,b}+\beta_{n-p}^{a,b}\leq \beta_{p}^{a},\quad \beta^{p}_{a,b}+\beta^{n-p}_{a,b}\leq \beta^{p}_{a}.
  \end{equation*}
\end{itemize}
\end{theorem}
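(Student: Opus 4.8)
The plan is to reduce the entire theorem to one identity that is already contained in the proof of Proposition \ref{proposition:adj2}. Write $D_a,D_b$ for the Poincar\'e duality isomorphisms of $\mathcal{M}_a,\mathcal{M}_b$. Taking $x=ba^{-1}$ in that proof, the naturality of the cap product yields
\begin{equation*}
  D^{a,b}(f^{*}_{a,b}(\beta))=f^{a,b}_{*}\bigl(D_a(f^{*}_{a,b}(\beta))\bigr)=\lambda^{a,b}D_b(\beta),\qquad \beta\in H^{p}_{b}.
\end{equation*}
Since $H^{p}_{a,b}=\mathrm{im}(f^{*}_{a,b})$, this computes $D^{a,b}$ on all of $H^{p}_{a,b}$, and because $D^{a,b}=f^{a,b}_{*}\circ D_a$ always lands in $\mathrm{im}(f^{a,b}_{*})=H^{a,b}_{n-p}$, the map $D^{a,b}\colon H^{p}_{a,b}\to H^{a,b}_{n-p}$ is well defined. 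First I would record this relation explicitly, as both parts then follow from it together with the non-degeneracy of the Poincar\'e pairing.

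For part $(i)$, assume $\lambda^{a,b}\neq 0$. Injectivity of $D^{a,b}$ is immediate: if $D^{a,b}(f^{*}_{a,b}\beta)=0$ then $\lambda^{a,b}D_b(\beta)=0$, and since $\lambda^{a,b}\neq0$ and $D_b$ is an isomorphism we get $\beta=0$, hence $f^{*}_{a,b}\beta=0$. For surjectivity, the displayed relation gives
\begin{equation*}
  \mathrm{im}(D^{a,b})=\lambda^{a,b}D_b(H^{p}_{b})=H^{b}_{n-p}.
\end{equation*}
Combined with $\mathrm{im}(D^{a,b})\subseteq H^{a,b}_{n-p}\subseteq H^{b}_{n-p}$, this forces $H^{a,b}_{n-p}=H^{b}_{n-p}$ and makes $D^{a,b}$ a bijection, i.e. an isomorphism. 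The Betti identity then follows from $\beta^{p}_{a,b}=\dim H^{p}_{a,b}=\dim H^{a,b}_{n-p}$ and Corollary \ref{corollary:Betti}, which identifies $\dim H^{a,b}_{n-p}$ with $\beta^{n-p}_{a,b}$.

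For part $(ii)$, assume $\lambda^{a,b}=0$. The key computation uses the adjointness $\langle f^{*}_{a,b}\alpha,\omega^{a}\rangle=\langle\alpha,f^{a,b}_{*}\omega^{a}\rangle=\lambda^{a,b}\langle\alpha,\omega^{b}\rangle$. For $u=f^{*}_{a,b}\beta\in H^{p}_{a,b}$ and $v=f^{*}_{a,b}\gamma\in H^{n-p}_{a,b}$ we have $u\cup v=f^{*}_{a,b}(\beta\cup\gamma)$, so $\langle u\cup v,\omega^{a}\rangle=\lambda^{a,b}\langle\beta\cup\gamma,\omega^{b}\rangle=0$; since $\langle-,\omega^{a}\rangle\colon H^{n}_{a}\to\mathbf{k}$ is an isomorphism for connected $\mathcal{M}_a$, we get $u\cup v=0$, and as $P^{n}_{a,b}=\sum_{p}H^{p}_{a,b}\cup H^{n-p}_{a,b}$ we conclude $P^{n}_{a,b}=0$. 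The same computation says that $H^{p}_{a,b}$ and $H^{n-p}_{a,b}$ annihilate each other under the perfect Poincar\'e pairing $H^{p}_{a}\times H^{n-p}_{a}\to\mathbf{k}$, so $H^{p}_{a,b}$ lies in the orthogonal complement of $H^{n-p}_{a,b}$, whose dimension is $\beta^{p}_{a}-\beta^{n-p}_{a,b}$. This yields $\beta^{p}_{a,b}+\beta^{n-p}_{a,b}\le\beta^{p}_{a}$, and the homological inequality follows from Corollary \ref{corollary:Betti} together with $\dim H^{a}_{p}=\dim H^{p}_{a}$.

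The diagram chases are routine once the master relation is in hand. The step needing the most care is the surjectivity argument in $(i)$: one must observe that $\mathrm{im}(D^{a,b})=H^{b}_{n-p}$ \emph{simultaneously} proves the isomorphism and forces the target $H^{a,b}_{n-p}$ to coincide with the full group $H^{b}_{n-p}$ (equivalently, that $f^{a,b}_{n-p}$ is onto when $\lambda^{a,b}\neq0$). In $(ii)$ the one genuine analytic input beyond bookkeeping is the non-degeneracy of the Poincar\'e pairing, which converts the orthogonality of $H^{p}_{a,b}$ and $H^{n-p}_{a,b}$ into the dimension bound; connectedness of $\mathcal{M}_a$ (so that $H^{n}_{a}\cong\mathbf{k}$) is the implicit hypothesis making $P^{n}_{a,b}=0$ rigorous.
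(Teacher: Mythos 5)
Your proof is correct, and part $(i)$ follows the paper's own route: both arguments rest on the master relation $f^{a,b}_{\ast}\circ D_a\circ f^{\ast}_{a,b}=\lambda^{a,b}D_b$ extracted from the proof of Proposition \ref{proposition:adj2}, and your injectivity/surjectivity check is the same computation (the paper exhibits the explicit preimage $(\lambda^{a,b})^{-1}(D_b^{-1}\beta)\cdot L_{ba^{-1}}$ where you instead compute the image; your extra observation that $\lambda^{a,b}\neq 0$ forces $H^{a,b}_{n-p}=H^{b}_{n-p}$, i.e.\ surjectivity of $f^{a,b}_{n-p}$, is a correct strengthening the paper leaves implicit). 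In part $(ii)$ the key vanishing is the same in both proofs --- $f^{\ast}_{a,b}$ kills $H^{n}_{b}$, the paper showing this on the dual generator $\omega_{b}$ and you via the adjointness $\langle f^{\ast}_{a,b}\alpha,\omega^{a}\rangle=\lambda^{a,b}\langle\alpha,\omega^{b}\rangle$ --- but the dimension inequality is reached by different bookkeeping: the paper shows $D_a(H^{p}_{a,b})\subseteq H_{n-p}^{a}\cap\ker f^{a,b}_{n-p}$ and combines injectivity of $D_a$ with the identity $\dim H^{p}_{a}=\dim H^{p}_{a,b}+\dim\ker f^{a,b}_{p}$ supplied by Proposition \ref{proposition:isomorphism}, whereas you observe that $H^{p}_{a,b}$ and $H^{n-p}_{a,b}$ annihilate each other under the nondegenerate Poincar\'{e} cup pairing on $\mathcal{M}_a$ and take orthogonal complements. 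The two routes are equivalent (under the evaluation pairing, $\ker f^{a,b}_{n-p}$ is exactly the annihilator of $H^{n-p}_{a,b}=\mathrm{im}\,f^{\ast}_{a,b}$, and $D_a$ transports this to your orthogonality statement), but yours stays entirely in cohomology and does not need Proposition \ref{proposition:isomorphism} for the cohomological inequality; both proofs still need Corollary \ref{corollary:Betti} to pass to the homological statements. Both your argument and the paper's implicitly assume each $\mathcal{M}_a$ is connected, so that $H^{n}_{a}\cong\mathbf{k}$ with perfect evaluation against $\omega^{a}$; you are right to flag this as the hidden hypothesis.
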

\begin{proof}
For convenience, Let $x=ba^{-1}$. By the proof of Proposition \ref{proposition:adj2}, we have
\begin{equation*}
  \lambda^{a,xa} D(\beta)=L^{x}(D(\beta\cdot L_{x})),\quad \beta\in H^{\ast}_{b}.
\end{equation*}

$(i)$ When $\lambda^{a,xa}\neq 0$. By Poincar\'{e} duality theorem, the $\mathbf{k}$-linear map
\begin{equation*}
  L^{x}\circ D\circ L_{x}:H^{p}_{xa}\rightarrow H^{p}_{a,xa}\rightarrow H_{n-p}^{a,xa}
\end{equation*}
is an isomorphism. Consider the linear map
\begin{equation*}
   L^{x}\circ D:H^{p}_{xa}\cdot L_{x}=H^{p}_{a,xa}\rightarrow H_{n-p}^{a,xa}.
\end{equation*}
For $\beta\in H_{n-p}^{a,xa}$, let $\alpha=(\lambda^{a,xa})^{-1} (D^{-1}\beta)\cdot L_{x}$. Then $L^{x}\circ D (\alpha)=\beta$ and $L^{x}\circ D$ is a surjection. On the other hand, if $L^{x}\circ D(\alpha)=0$ for some $\alpha\in H^{p}_{a,xa}\cdot L_{x}$, then there exists an element $\beta\in H^{p}_{xa}$ such that $\alpha=\beta\cdot L_{x}$. Thus we have
\begin{equation*}
  L^{x}\circ D(\beta\cdot L_{x})=\lambda^{a,xa} D(\beta)=0.
\end{equation*}
Since $\lambda^{a,xa}\neq 0$ and $D$ is an isomorphism, we obtain $\beta=0$. It follows that $\alpha=0$, which shows that $L^{x}\circ D$ is an injection. By Corollary \ref{corollary:Betti}, we have $\beta_{a,b}^{p}=\beta_{a,b}^{n-p}$.

$(ii)$ When $\lambda^{a,xa}=0$. Let $\omega_{xa}\cdot L_{x}=\mu\omega_{a}$. Here, $\omega_{a},\omega_{b}$ are the dual generators of $\omega^{a},\omega^{b}$ in $H_{a}^{n},H_{b}^{n}$, respectively. In view of $L^{x}(D(\omega_{xa}\cdot L_{x}))=0$, we obtain
\begin{equation*}
  0=\mu L^{x} D(\omega_{x})=\mu L^{x}(1)=\mu.
\end{equation*}
This implies that
\begin{equation*}
  P_{a\to b}^{n}=\mathrm{im}(H_{xa}^{\ast}\times H_{xa}^{n-\ast}\stackrel{\cup}{\rightarrow} H_{xa}^{n}\stackrel{L_{x}}{\rightarrow} H_{a}^{n})=0.
\end{equation*}
On the other hand, note that $L^{x}(D(\alpha\cdot L_{x}))=0$ for any $\alpha\in H^{a}_{\ast}$.  One has
\begin{equation*}
 D(H^{p}_{a,xa})= D(H^{p}_{xa}\cdot L_{x})\subseteq  H_{n-p}^{a}\cap \ker L^{x}.
\end{equation*}
By Poincar\'{e} duality theorem, we have
\begin{equation*}
  \dim H^{p}_{a,xa}\leq \dim H_{n-p}^{a}\cap \ker L^{x}.
\end{equation*}
By Equation (\ref{equation:isomorphism}), we obtain $\dim H_{a}^{p}=\dim H_{a,xa}^{p}+\dim H_{p}^{a}\cap\ker L^{x}$. It follows that
\begin{equation*}
  \dim H_{a,xa}^{p}+\dim H_{a,xa}^{n-p}\leq \dim H_{a}^{p}.
\end{equation*}
This gives $\beta^{p}_{a,xa}+\beta^{n-p}_{a,xa}\leq \beta^{p}_{a}$.
By Corollary \ref{corollary:Betti}, one obtains
\begin{equation*}
  \beta_{p}^{a,xa}+\beta_{n-p}^{a,xa}\leq \beta_{p}^{a}.
\end{equation*}
The desired result follows.
\end{proof}

\medskip
\noindent {\bf Acknowledgement}.
This work is supported in part by the Natural Science Foundation of China (NSFC grant no. 11971144), High-level Scientific Research Foundation of Hebei Province and the start-up research fund from BIMSA.
This work is also supported in part by the Tianjin Natural Science Foundation (Grant No. 19JCYBJC30200).
\bibliographystyle{plain}
\bibliography{per_cup}

\bigskip

Wanying Bi

Affiliation: $^1$School of Mathematical Sciences, Hebei Normal University, 050024, China.

$^2$Yanqi Lake Beijing Institute of Mathematical Sciences and Applications, 101408, China.

e-mail: 13244522050@163.com

\medskip

Jingyan Li

Affiliation: Yanqi Lake Beijing Institute of Mathematical Sciences and Applications, 101408, China.

e-mail: jingyanli@bimsa.cn

\medskip

Jian Liu

Affiliation: $^1$School of Mathematical Sciences, Hebei Normal University, 050024, China.

$^2$Yanqi Lake Beijing Institute of Mathematical Sciences and Applications, 101408, China.

e-mail: liujian@bimsa.cn

\medskip

Jie Wu

Affiliation: Yanqi Lake Beijing Institute of Mathematical Sciences and Applications, 101408, China.

e-mail: wujie@bimsa.cn

\medskip

\end{CJK*}
 \end{document}